\numberwithin{equation}{section}
\newcommand{\N}{\mathcal{N}}
\newcommand{\R}{\mathbb{R}}
\newtheorem{teo}{Theorem}[section]
\newtheorem{Corollary}[teo]{Corollary}
\newtheorem{Lemma}[teo]{Lemma}
\newtheorem{Theorem}[teo]{Theorem}
\newtheorem{Proposition}[teo]{Proposition}
\theoremstyle{definition}
\newtheorem{remark}[teo]{Remark}
\begin{document}

\title[Fractional unique continuation principle and asymptotics at the boundary]
{Strong unique continuation and local asymptotics at the boundary for fractional elliptic equations}
\date{January 13, 2022}

\author{Alessandra De Luca, Veronica Felli and Stefano Vita}

\address[A. De Luca, V. Felli]{Dipartimento di Matematica
  e Applicazioni
\newline\indent
Universit\`a degli Studi di Milano - Bicocca
\newline\indent
Via Cozzi 55, 20125, Milano, Italy}
\email{a.deluca41@campus.unimib.it, veronica.felli@unimib.it}

\address[S. Vita]{Dipartimento di Matematica ``Giuseppe Peano''
\newline\indent
Universit\`a degli Studi di Torino
\newline\indent
Via Carlo Alberto 10, 10123, Torino, Italy}
\email{stefano.vita@unito.it}

\thanks{{\it 2020 Mathematics Subject Classification:}
  31B25, 
35R11, 
35C20. 
\\
  \indent {\it Keywords:} Fractional elliptic equations; unique continuation;
monotonicity formula; boundary behavior of solutions.
}

\maketitle

\begin{abstract}
  We study local asymptotics of solutions to fractional elliptic
  equations at boundary points, under some outer homogeneous Dirichlet
  boundary condition. Our analysis is based on a blow-up procedure
  which involves some Almgren type monotonicity formul\ae \ and
  provides a classification of all possible homogeneity degrees of
  limiting entire profiles. As a consequence, we establish a strong
  unique continuation principle from boundary points.
\end{abstract}

\section{Introduction and main results}\label{sec:intr-main-results}
Let $N\geq2$ and $s\in(0,1)$. Dealing with nontrivial solutions to the following fractional equation
\begin{equation}\label{prob1}
(-\Delta)^su=hu \qquad\mathrm{in \ }\Omega
\end{equation}
where $\Omega\subset\R^N$ is a bounded domain, we are interested in a
strong unique continuation property and local asymptotics of solutions
at boundary points where the domain is locally $C^{1,1}$ and some
outer homogeneous Dirichlet boundary condition is prescribed. 

A family
of solutions to some elliptic equation is said to satisfy the
\textit{strong unique continuation property} if no element of the
family has a zero of infinite order, except for the null function.

Asymptotic expansions of solutions to fractional elliptic equations at
interior points of the domain were derived in \cite{FalFel}, even in
the presence of singular homogeneous potentials, by combining Almgren
type monotonicity formulas with blow-up arguments; as a relevant
byproduct of such sharp asymptotic analysis, in \cite{FalFel} unique
continuation principles were established.  The difficulty of defining
a suitable Almgren's type frequency function in a non-local setting
was overcome in \cite{FalFel} by considering the Caffarelli-Silvestre
extension \cite{CafSil1}, which provides an equivalent formulation of
the fractional equation as a local problem in one dimension more.  For
local problems such as second order elliptic equations, the classical
approach developed by Garofalo and Lin \cite{Garlin} allows deriving
unique continuation directly from doubling conditions obtained as a
consequence of the boundedness of an Almgren type frequency
function. In the fractional case instead, the monotonicity formula and
the doubling type conditions obtained in \cite{FalFel} imply unique
continuation properties only for the extended local problem and not
for the fractional one; then in \cite{FalFel} the further step of
classification of blow-up limits is performed in order to derive first
asymptotic estimates, and then unique continuation principles, in the
spirit of \cite{FelFerTer1,FelFerTer2}, see also \cite{FalFel2}.

Since  \cite{FalFel}, the literature devoted to unique continuation
for fractional problems has flourished producing many important results
in several directions; we mention, among others,  \cite{Ruland} for unique
continuation in presence of rough
potentials by  Carleman estimates, 
\cite{Yu} for fractional operators with variable coefficients, and
\cite{FelFer3,FelFer2,ruland-garcia,seo3,seo1,seo2,Yang} for higher
order fractional problems.

The aim of the present paper is to extend the results of
\cite{FalFel} to boundary
points of the domain, i.e. to establish sharp asymptotics and unique
continuation from boundary points for fractional equations of type
\eqref{prob1}.
Possible loss of regularity and  unavoidable interference with the geometry of the
domain make the derivation of monotonicity formulas around  boundary points,   and consequently
the proof of unique continuation, 
more difficult and, at the same time, produce  new interesting
phenomena: in particular, in \cite{Almgren-type} it was shown that, under homogeneous Dirichlet boundary
conditions,   the possible
vanishing rates of solutions  at conical 
boundary points depend of the opening of the vertex.
Unique continuation  from the boundary for
elliptic equations was also
investigated in \cite{AdoEsc,AdoEscKen,Kukavica-Nystrom,Tao}  under
homogeneous Dirichlet conditions and in
\cite{DFV,TZ05} under Neumann type conditions.
Furthermore, we refer to \cite{FallFelliFerrero} for  unique continuation
from Dirichlet-Neumann junctions  for planar mixed boundary value problems 
and to \cite{DelFel} for unique continuation from the edge of a crack.

 The related problem of regularity up to the boundary for
 solutions to fractional elliptic problems was studied  in
 \cite{RosSer1,RosSer2}. We also mention the paper \cite{BFV}, where  
quantitative upper and lower estimates at the boundary were  discussed
for nonnegative solutions to semilinear nonlocal elliptic equations,
giving us a motivation to search for sharp asymptotics at boundary
points; see also \cite{FernandezReal-RosOton} for boundary asymptotics of $s$-harmonic functions with applications to the thin one-phase problem.

In order to give a suitable weak formulation of \eqref{prob1}, we
introduce the functional space 
$\mathcal D^{s,2}(\R^N)$, defined as the completion of 
$C^\infty_c(\R^N)$ 
with respect to the scalar product
\begin{equation}\label{eq:scalar}
(u,v)_{\mathcal D^{s,2}(\R^N)}:=\int_{\R^N} |\xi|^{2s}\, \widehat
u(\xi)\overline{\widehat v(\xi)} \, d\xi
\end{equation}
and the associated norm $\|u\|_{\mathcal D^{s,2}(\R^N)}=\big((u,u)_{\mathcal D^{s,2}(\R^N)}\big)^{1/2}$,
where $\hat u$ denotes the unitary Fourier transform of $u$ in $\R^N$, i.e.
\begin{equation*}
\widehat u(\xi)=\mathcal F u(\xi):=\frac 1{(2\pi)^{N/2}}
\int_{\R^N} e^{-ix \cdot\xi}u(x)\, dx .
\end{equation*}
The fractional Laplacian $(-\Delta)^s$ can be defined as the Riesz isomorphism of
$\mathcal{D}^{s,2}(\R^N)$ with respect to the scalar product
\eqref{eq:scalar}, i.e. 
\begin{equation*}
  \phantom{a}_{( \mathcal{D}^{s,2}(\R^N) )^*}\langle(-\Delta)^s u,
  v\rangle_{ \mathcal{D}^{s,2}(\R^N) } = (u,v)_{\mathcal{D}^{s,2}(\R^N)} 
\end{equation*}
for all $u,v\in \mathcal{D}^{s,2}(\R^N)$.  Then we can define a weak
solution to \eqref{prob1} as a function $u\in \mathcal D^{s,2}(\R^N)$
satisfying
\begin{equation}\label{eq:weak}
  (u,\varphi)_{\mathcal D^{s,2}(\R^N)}=\int_\Omega h(x)u(x) \varphi(x)\,dx,
  \text{ for all }\varphi\in C^\infty_c(\Omega).
\end{equation}
As far as the potential term is concerned, we assume that 
\begin{equation}\label{eq:ass-h}
  \text{there exists }p>\frac{N}{2s} \quad\text{such that}\quad h\in W^{1,p}(\Omega).
\end{equation}
We observe that the right hand side of \eqref{eq:weak} is well defined in
view of assumption \eqref{eq:ass-h}, H\"older's inequality, and the
following well-known Sobolev-type inequality 
\begin{equation}\label{eq:sobolev}
  S_{N,s} \|u\|_{L^{2^*(s)}(\R^N)}^2\leq\|u\|^2_{\mathcal D^{s,2}(\R^N)},
\end{equation}
where $S_{N,s}$ is a positive constant depending only
on $N$ and $s$ and 
\begin{equation}\label{eq:2star}
  2^*(s)=\frac{2N}{N-2s},
\end{equation}
see \cite{CotTav}. 

Let $u\in \mathcal D^{s,2}(\R^N)$ be a solution to \eqref{eq:weak}. Let
us assume that there exists a boundary point $x_0\in\partial\Omega$
such that the boundary $\partial \Omega$ is of class $C^{1,1}$ in a
neighbourhood of $x_0$, i.e. there exist $R>0$ and
$g\in C^{1,1}(\R^{N-1})$ such that, choosing a proper coordinate
system $(x',x_N)\in\R^{N-1}\times\R$,
\begin{align}\label{eq:C11}
  &B'_{R}(x_0)\cap
    \Omega=\{(x',x_N)\in B'_{R}(x_0): x_N<g (x') \} \quad\text{ and }\\
  &\nonumber B'_{R}(x_0)\cap
\partial \Omega=\{(x',x_N)\in B'_{R}(x_0): x_N=g (x') \}, 
\end{align}
where $B_{R}'(x_0)=\{x\in \R^N:|x-x_0|<R\}$ is the ball in $\R^N$
centered at $x_0$ with radius $R$. We prescribe for the solution $u$ a
local outer
homogeneous Dirichlet boundary condition, i.e 
\begin{equation}\label{eq:DBC}
  u=0\quad\text{a.e. in }\Omega^c\cap B'_{R}(x_0). 
\end{equation}
By the extension technique introduced in \cite{CafSil1}, by adding an
additional space variable $t\in[0,+\infty)$, we can reformulate the
nonlocal problem \eqref{prob1} as a local degenerate or singular
problem on the half space 
\begin{equation*}
\R^{N+1}_+=\R^N\times(0,+\infty).
\end{equation*}
We denote the total variable $z=(x,t)\in\R^N\times(0,+\infty)$, with
$x=(x',x_N)=(x_1,...,x_{N-1},x_N)$, and define
$\mathcal{D}^{1,2}(\R^{N+1}_+,t^{1-2s}\,dz)$ as the completion of
$C^\infty_c(\overline{\R^{N+1}_+})$ with respect to the norm
\begin{equation*}
  \|U\|_{\mathcal{D}^{1,2}(\R^{N+1}_+,t^{1-2s}\,dz)}= \sqrt{\int_{\R^{N+1}_+}t^{1-2s}|\nabla U(x,t)|^2\,dx\,dt}.
\end{equation*}
It is well-known that there exists a continuous trace map 
$\mathop{\rm Tr}: \mathcal{D}^{1,2}(\R^{N+1}_+,t^{1-2s}\,dz) \to
\mathcal{D}^{s,2}(\R^N)$ 
which is onto, see \cite{brandle2013concave}.
By \cite{CafSil1}, for every $u\in \mathcal D^{s,2}(\R^N)$, the minimization problem
\begin{equation*}
  \min\left\{\|W\|_{\mathcal{D}^{1,2}(\R^{N+1}_+,t^{1-2s}\,dz)}^2 :
    \, W\in \mathcal{D}^{1,2}(\R^{N+1}_+,t^{1-2s}\,dz),\, \mathop{\rm Tr}W=u\right\}
\end{equation*}
admits a unique minimizer $U=\mathcal H(u) \in
\mathcal{D}^{1,2}(\R^{N+1}_+,t^{1-2s}\,dz)$, which
can be
obtained by convoluting $u$ with the Poisson kernel of the half-space
$\R^{N+1}_+$ and weakly solves 
\begin{equation*}
\begin{cases}
  -\text{div}(t^{1-2s} \nabla U)=0 \quad&\text{in } \R^{N+1}_+, \\
  -\lim_{t\to0^+}t^{1-2s} \partial_t U = \kappa_s
  (-\Delta)^su &\text{in } \R^N\times\{0\},
\end{cases}
\end{equation*}
where
\begin{equation*}
  \kappa_s=\frac{\Gamma(1-s)}{2^{2s-1}\Gamma(s)}>0,
\end{equation*}
i.e.
\begin{equation*}
  \int_{\R^{N+1}_+}t^{1-2s}\nabla \mathcal H(u) (x,t)\cdot\nabla
  W(x,t) \,dx\,dt
  =\kappa_s(u,\mathop{\rm Tr}W)_{\mathcal
    D^{s,2}(\R^N)}\quad\text{ for all }W \in
  \mathcal{D}^{1,2}(\R^{N+1}_+,t^{1-2s}\,dz).
\end{equation*}
As a consequence, 
$u\in \mathcal D^{s,2}(\R^N)$ is a solution \eqref{eq:weak} 
if and only if its extension $U=\mathcal H(u)$ weakly solves 
\begin{equation}\label{prob2}
\begin{cases}
-\mathrm{div}\left(t^{1-2s}\nabla U\right)=0 &\mathrm{in \ }\R^{N+1}_+,\\
\mathrm{Tr}U=u & \mathrm{in \ }\R^N\times\{0\},\\
-\lim_{t\to0^+}t^{1-2s}\partial_tU=\kappa_shu & \mathrm{in \ }\Omega\times\{0\},
\end{cases}
\end{equation}
in a weak sense, i.e 
\begin{equation}\label{eq:45}
\int_{\R^{N+1}_+}t^{1-2s}\nabla U (x,t)\cdot\nabla \phi(x,t)
\,dx\,dt=\kappa_s\int_\Omega hu\mathop{\rm Tr}\phi\,dx
\end{equation}
for every $\phi\in C^\infty_c(\overline{\R^{N+1}_+})$ with $\mathop{\rm Tr}\phi\in C^\infty_c(\Omega)$.

The asymptotic behaviour at $x_0\in\partial\Omega$ of solutions to
\eqref{prob2}, and consequently to \eqref{prob1}, will turn out to be
related to eigenvalues and eigenfunctions of the following weighted
spherical eigenvalue problem with mixed Dirichlet-Neumann boundary
conditions
\begin{equation}\label{eig}
\begin{cases}
  -\mathrm{div}_{\mathbb S^N}\left(\theta_{N+1}^{1-2s}
    \nabla_{\mathbb S^N}\psi\right)=\theta_{N+1}^{1-2s}\mu\psi &\mathrm{in} \ \mathbb S^N_+,\\
  \psi=0 &\mathrm{on} \ \mathbb S^{N-1}\cap\{\theta_N\geq0\},\\
  \lim_{\theta_{N+1}\to0^+}\theta_{N+1}^{1-2s}\nabla_{\mathbb
    S^N}\psi\cdot \nu=0 &\mathrm{on} \ \mathbb
  S^{N-1}\cap\{\theta_N<0\},
\end{cases}
\end{equation}
on the half-sphere 
\begin{equation*}
{\mathbb S}^N_+=
\{(\theta_1,\dots,\theta_N, \theta_{N+1})\in
{\mathbb S}^{N}:\theta_{N+1}>0\},
\end{equation*}
where $\nu=(0,0,\dots,0,-1)$
and $\partial {\mathbb S}^{N}_+={\mathbb S}^{N-1}\times\{0\}$ is identified with
${\mathbb S}^{N-1}$. In order to write the variational formulation of
  \eqref{eig}, we define 
$H^{1}({\mathbb S}^{N}_+,\theta_{N+1}^{1-2s}dS)$ as the completion of
$C^\infty(\overline{{\mathbb S}^{N}_+})$ with respect to the norm
\begin{equation*}
\|\psi\|_{H^{1}({\mathbb S}^{N}_+,\theta_{N+1}^{1-2s}dS)}=\bigg(
\int_{{\mathbb S}^{N}_+}\theta_{N+1}^{1-2s}\big(|\nabla_{{\mathbb
S}^{N}}\psi(\theta)|^2+\psi^2(\theta)\big)dS
\bigg)^{\!\!1/2},
\end{equation*}
where $dS$ denotes the volume element on $N$-dimensional spheres.  Let
$\mathcal H_0$ be the closure of $C^\infty_{\rm c}(\overline{{\mathbb S}^{N}_+}\setminus~\!\!S_1^+)$ in
$H^{1}({\mathbb S}^{N}_+,\theta_{N+1}^{1-2s}dS)$, where
$S^+_{1}=\{(\theta',\theta_N,0)\in \mathbb S^{N-1}:\theta_N\geq 0\}$.
We say that $\mu\in\R$ is an eigenvalue of \eqref{eig} if there exists
$\psi\in \mathcal H_0 \setminus\{0\}$ such that
\begin{equation}\label{defautoval}
\int_{\mathbb S^N_+}\theta_{N+1}^{1-2s}\nabla_{\mathbb
  S^N}\psi\cdot\nabla_{\mathbb S^N}\phi\,dS=
\mu\int_{\mathbb S^N_+}\theta_{N+1}^{1-2s}\psi\phi\,dS\quad\text{for
  any }\phi\in\mathcal H_0.
\end{equation}
By classical spectral theory,  problem \eqref{eig} admits a diverging
    sequence of real eigenvalues with finite multiplicity
    $\{\mu_k\}_{k\geq0}$. In Appendix \ref{sec:color-eigenv-probl}
we obtain the following explicit formula for such eigenvalues
\begin{equation}\label{eq:28}
  \mu_k=(k+s)(k+N-s), \quad k\in \mathbb{N}.
\end{equation}
For all $k\in\mathbb N$, let $M_k\in\mathbb N\setminus\{0\}$ be the
multiplicity of the eigenvalue $\mu_k$ and
$\{Y_{k,m}\}_{m=1,2,...,M_k}$ be a
$L^2(\mathbb S^N_+,\theta_{N+1}^{1-2s}dS)$-orthonormal basis of the
eigenspace of problem \eqref{eig} associated to $\mu_k$. In
particular,
\begin{equation}\label{eq:orthobasis}
\{Y_{k,m}  : k\in\mathbb N, \ m=1,...,M_k\}
\end{equation}
is an orthonormal basis of $L^2(\mathbb S^N_+,\theta_{N+1}^{1-2s}dS)$.

\begin{remark}\label{rem:auto-non-van} 
It is worth highlighting the fact that
    eigenfunctions of problem \eqref{eig} cannot vanish 
    identically on $\mathbb S^{N-1}\cap\{\theta_N<0\}$, i.e. on the
    boundary portion where a Neumann homogeneous condition is imposed. Indeed, if an eigenfunction $\psi$
    associated to the eigenvalue $\mu_k=(k+s)(k+N-s)$ vanishes on $\mathbb S^{N-1}\cap\{\theta_N<0\}$, then the function
$\Psi(\rho\theta)=\rho^{k+s}\psi(\theta)$ would be
    a weak solution to the equation ${\rm div}(t^{1-2s}\nabla \Psi)=0$ in
    $\R^{N-1}\times(-\infty,0)\times (0,+\infty)$ satisfying both Dirichlet and weighted Neumann
    homogeneous boundary conditions on $\R^{N-1}\times(-\infty,0)\times \{0\}$; then its trivial extension to
    $\R^{N-1}\times(-\infty,0)\times \R$
 would violate the unique continuation
    principle for elliptic equations with Muckenhoupt weights proved
    in \cite{Tao} (see also \cite{Garlin}, \cite[Corollary 3.3]{STT}, and
    \cite[Proposition 2.2]{Ruland}).
  \end{remark}

Our first result is a sharp description of the
asymptotic behaviour  of solutions to \eqref{prob1} at a boundary point.

\begin{Theorem}\label{t:asymp-u}
  Let
 $\Omega$ be a bounded domain in $\R^N$ such that there exist $g\in
 C^{1,1}(\R^{N-1})$, 
 $x_0\in \partial\Omega$ and $R>0$ satisfying 
\eqref{eq:C11}. Let $h$ satisfy \eqref{eq:ass-h} and
$u\in \mathcal D^{s,2}(\R^N)$, $u\not\equiv0$,  be a weak solution to \eqref{prob1} in the
sense of \eqref{eq:weak}, satisfying \eqref{eq:DBC}. Then
there
  exists $k_0\in {\mathbb N}$ and an eigenfunction $Y$ of problem
  \eqref{eig} associated to the eigenvalue
  $\mu_{k_0}=(k_0+s)(k_0+N-s)$ such that
 \begin{equation*}
\frac{u(x_0+\lambda x)}{\lambda^{k_0+s}}\rightarrow
|x|^{k_0+s}Y\left(\tfrac{x}{|x|},0\right)\quad \text{in $H^s(B_1')$ as $\lambda\rightarrow 0^+$},
\end{equation*}
 where
$H^s(B_1')$ is the usual fractional Sobolev space on the
$N$-dimensional unit ball $B_1'=B_1' (0)$.
\end{Theorem}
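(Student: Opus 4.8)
The plan is to follow the Caffarelli--Silvestre extension strategy of \cite{FalFel}, adapted to boundary points. First, I would pass to the extended problem \eqref{prob2}, translating so that $x_0=0$, and straighten the boundary near $0$ via a $C^{1,1}$ diffeomorphism that maps $\Omega\cap B_R'$ to a half-space region and fixes the hyperplane $\{t=0\}$; since $g\in C^{1,1}$ with $\nabla g(x_0')$ which one may assume vanishes, this change of variables produces a perturbed equation $-\operatorname{div}(t^{1-2s}A(z)\nabla U)=0$ with a uniformly elliptic matrix $A$ satisfying $A(0)=\mathrm{Id}$ and a Lipschitz-type modulus of continuity, plus the mixed Dirichlet--Neumann conditions on $\{t=0\}$ corresponding to $\{x_N\ge 0\}$ (Dirichlet, from \eqref{eq:DBC}) and $\{x_N<0\}$ (weighted Neumann, with the potential term $\kappa_s h u$).

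The heart of the argument is an Almgren-type monotonicity formula. I would introduce, for the extension $U$ (in the straightened variables), the height function $H(r)=r^{-N}\int_{\partial B_r^+}t^{1-2s}U^2\,dS$ and the Dirichlet-type energy $D(r)=r^{1-N}\int_{B_r^+}t^{1-2s}\bigl(\langle A\nabla U,\nabla U\rangle + \text{l.o.t.}\bigr)\,dz$, and study the frequency $\mathcal N(r)=D(r)/H(r)$. The key steps are: (i) a Pohozaev--Rellich identity for the perturbed operator on half-balls, controlling the boundary terms coming from the straightening and from the potential $h\in W^{1,p}$, $p>N/(2s)$, using \eqref{eq:sobolev} and a trace-Sobolev inequality on $\partial B_r^+$; (ii) showing $H(r)>0$ for small $r$ (no infinite-order vanishing at the level of the extension), which itself follows once one has the differential inequality for $\mathcal N$; (iii) deriving $\frac{d}{dr}\log\mathcal N(r)\ge -C(1+\mathcal N(r))$ type estimates, whence $\mathcal N$ is bounded and has a finite limit $\gamma:=\mathcal N(0^+)\ge 0$; (iv) upgrading to a doubling condition $H(2r)\le C\,H(r)$ and the more precise asymptotics $H(r)\sim c\,r^{2\gamma}$, i.e. $r^{-2\gamma}H(r)$ has a finite positive limit.

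Once the frequency limit $\gamma$ is identified, I would perform the blow-up: set $U^\lambda(z)=U(\lambda z)/\sqrt{H(\lambda)}$ (equivalently normalize by $\lambda^{\gamma}$ up to constants), show via the energy estimates and the doubling bound that $\{U^\lambda\}$ is bounded in $H^1(B_1^+,t^{1-2s}dz)$, extract a weakly convergent subsequence to some $\widetilde U\ne 0$, and improve this to strong $H^1_{loc}$ convergence using that the perturbation coefficients converge to the identity and the potential term scales to zero (this is where $p>N/(2s)$ is exactly what is needed). The limit $\widetilde U$ solves the \emph{unperturbed} equation $-\operatorname{div}(t^{1-2s}\nabla\widetilde U)=0$ in $\R^{N+1}_+$ with the homogeneous mixed conditions on $\{t=0\}$, and has Almgren frequency identically equal to $\gamma$ on all balls; by a standard Euler-equation / separation-of-variables argument this forces $\widetilde U$ to be homogeneous of degree $\gamma$ with angular part an eigenfunction of \eqref{eig}, so $\gamma=\sqrt{\mu_{k_0}+\bigl(\tfrac{N-2s}{2}\bigr)^2}-\tfrac{N-2s}{2}$; combined with the explicit formula \eqref{eq:28} this gives precisely $\gamma=k_0+s$. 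Taking traces on $\{t=0\}$ and using the continuity of the trace map $\mathcal D^{1,2}(\R^{N+1}_+,t^{1-2s}dz)\to\mathcal D^{s,2}(\R^N)$ and the compactness of the trace on bounded sets yields the stated $H^s(B_1')$ convergence; finally one argues that the limit profile, hence $k_0$, is independent of the subsequence because $r^{-2\gamma}H(r)$ and the Fourier coefficients of $U(r\,\cdot)$ on $\partial B_1^+$ against the $Y_{k,m}$ have genuine limits.

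The main obstacle is step (i)--(iii): making the monotonicity formula work with only a $C^{1,1}$ boundary. The straightening map is merely Lipschitz in its gradient, so the matrix $A$ is Lipschitz but \emph{not} $C^1$ up to a degenerate weight, and the Pohozaev identity generates error terms of the form $\int t^{1-2s}(A-\mathrm{Id})\,|\nabla U|^2$ and $\int t^{1-2s}(z\cdot\nabla)A\,|\nabla U|^2$ that must be absorbed into $D(r)$ with the correct power of $r$; controlling these, together with the boundary integrals on the curved portion of $\partial(\Omega\cap B_R')$ and the interaction of the weight $t^{1-2s}$ with the Dirichlet--Neumann interface $\{x_N=0,t=0\}$ (a codimension-two set), is the delicate technical core. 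I expect one also needs a Hardy-type or trace inequality on $\partial B_r^+$ with the Muckenhoupt weight, uniform in $r$, to close the estimates, and the non-vanishing of eigenfunctions on the Neumann portion (Remark \ref{rem:auto-non-van}) will be used to rule out degenerate limiting profiles.
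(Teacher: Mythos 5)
Your overall scheme coincides with the paper's: extend, straighten the boundary, run an Almgren-type monotonicity argument, blow up, classify the homogeneous limit, and take traces. The normalizations you write down for $H$ and $D$ are off (the paper uses $H(r)=r^{-(N+1-2s)}\int_{\partial^+B_r^+}t^{1-2s}\mu\,W^2$ and $E(r)=r^{-(N-2s)}\bigl(\int_{B_r^+}t^{1-2s}A\nabla W\cdot\nabla W-\kappa_s\int_{\Gamma_r^-}\tilde h\,|{\rm Tr}\,W|^2\bigr)$, and the extra factor $\mu(z)=A(y)z\cdot z/|z|^2$ is what makes the Rellich--Ne\u{c}as boundary terms match up), but these are cosmetic.

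The genuine gap is in your step (i). You plan to establish a ``Pohozaev--Rellich \emph{identity}'' for $W$ directly and to absorb the error terms coming from $A-\mathrm{Id}$; and you attribute the obstruction to the merely Lipschitz coefficients of $A$. That is not the actual obstruction, and the plan as stated would fail. Even for the straightened problem with $A=\mathrm{Id}$ and $h\equiv0$, a solution satisfying Dirichlet conditions on $\Gamma^+_r$ and weighted Neumann conditions on $\Gamma^-_r$ is only $s$-H\"older up to the Dirichlet--Neumann junction $\{y_N=0,t=0\}$: its gradient is genuinely singular there, $W\notin H^2$ near the junction, and the integration by parts behind the Rellich--Ne\u{c}as identity cannot be carried out for $W$ on $B_r^+$. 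Interior elliptic regularity, or the boundary regularity results of \cite{JLX} for pure Neumann, do not apply on this codimension-two set, and ``controlling the boundary terms'' does not repair a non-integrable second-order quantity. What the paper does instead is a \emph{double approximation}: it mollifies the potential and, crucially, replaces the flat Dirichlet region $\Gamma^+_r$ by $(N{+}1)$-dimensional sets $\mathcal U_n$ whose boundary piece $\gamma_n$ is a genuine hypersurface lifted off $\{t=0\}$ (defined by $y_N=f_n(t)$ with a carefully designed $f_n$); the approximating solutions $U_n$ are then $H^2$ where needed, the Rellich--Ne\u{c}as identity is integrable on $O_{r,n,\delta}$, and the boundary term on $\gamma_n$ has a definite sign by the geometric Lemma~\ref{segnogamma}. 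Passing to the limit produces a Pohozaev \emph{inequality} (\ref{pohozaev}), not an identity, and that inequality is exactly what feeds into $E'(r)$ and the monotonicity of $\mathcal N$. Without something like this construction the proof of the differential inequality for $\mathcal N$, and hence the existence of $\gamma=\lim_{r\to0^+}\mathcal N(r)$, does not get off the ground.

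The rest of your outline matches the paper: strong $H^1(B_1^+,t^{1-2s}dz)$ convergence of the blow-ups via doubling and a good-radius selection (Lemmas~\ref{lemdoub}--\ref{lemmgradlimit}), the computation $\gamma=\sqrt{\mu_{k_0}+((N-2s)/2)^2}-(N-2s)/2=k_0+s$, the use of Fourier coefficients $\varphi_{k_0,m}$ to show $\lim_{r\to0}r^{-2\gamma}H(r)>0$ and to fix the limiting angular profile independently of the subsequence, and finally the passage to $H^s(B_1')$ convergence of $u(x_0+\lambda\cdot)/\lambda^{k_0+s}$ by continuity of the trace $H^1(B_1^+,t^{1-2s}dz)\to H^s(B_1')$; once Theorem~\ref{t:asymp-U} is in hand, compactness of the trace is not needed, continuity suffices.
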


Theorem \ref{t:asymp-u} will be proved as a consequence of the
following description of the
 asymptotic behaviour
of nontrivial solutions to \eqref{prob2} near $x_0\in\partial\Omega$.

\begin{Theorem}\label{t:asymp-U}
  Let $\Omega$ be a bounded domain in $\R^N$ such that there exist
  $g\in C^{1,1}(\R^{N-1})$, $x_0\in \partial\Omega$ and $R>0$
  satisfying \eqref{eq:C11}. Let $h$ satisfy \eqref{eq:ass-h} and
  $U\in \mathcal{D}^{1,2}(\R^{N+1}_+,t^{1-2s}\,dz)$ be a weak solution
  to \eqref{prob2} in the sense of \eqref{eq:45}, with $U\not\equiv0$
  and $\mathop{\rm Tr} U=u$ satisfying \eqref{eq:DBC}.  Then there
  exists $k_0\in {\mathbb N}$ and an eigenfunction $Y$ of problem
  \eqref{eig} associated to the eigenvalue
  $\mu_{k_0}=(k_0+s)(k_0+N-s)$ such that, letting $z_0=(x_0,0)$,
 \begin{equation*}
\frac{U(z_0+\lambda z)}{\lambda^{k_0+s}}\rightarrow
|z|^{k_0+s}Y(z/|z|)\quad \text{in $H^1(B_1^+,t^{1-2s}dz)$ as $\lambda\rightarrow 0^+$},
\end{equation*}
 where
$H^1(B_1^+,t^{1-2s}dz)$ is the weighted Sobolev space on the half ball
$B_1^+$ defined in Section \ref{sec:funct-sett-constr}.
 \end{Theorem}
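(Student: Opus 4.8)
The plan is to follow the Almgren monotonicity + blow-up scheme of \cite{FalFel}, carefully adapted so that the frequency function is centered at the boundary point $z_0=(x_0,0)$ and sees the mixed Dirichlet--Neumann structure of the half-sphere problem \eqref{eig}.

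First I would straighten the boundary: since $\partial\Omega$ is $C^{1,1}$ near $x_0$, introduce a local diffeomorphism $F$ that flattens $\{x_N=g(x')\}$ to $\{x_N=0\}$ and lifts trivially in the $t$-variable, so that in the new coordinates $U\circ F^{-1}$ solves a problem of the same form but on the model half-space $\R^{N-1}\times(-\infty,0)\times(0,+\infty)$, with the conormal weight $t^{1-2s}$ replaced by a matrix-weighted analogue whose coefficients are $C^{0,1}$ perturbations of the Euclidean ones (equal to identity at the origin because $\nabla g(0')$ may be taken to vanish after a rotation). One must keep the degenerate weight $t^{1-2s}$ untouched by the change of variables — this is why only the tangential part of the domain is flattened. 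After this reduction the Dirichlet condition \eqref{eq:DBC} becomes a homogeneous Dirichlet condition on $\R^{N-1}\times(-\infty,0)\times\{0\}$ and the Neumann-type condition $-\lim_{t\to0^+}t^{1-2s}\partial_t U=\kappa_s h u$ persists on $\R^{N-1}\times(0,+\infty)\times\{0\}$ up to a lower-order error coming from $h$ and from the non-constant coefficients; assumption \eqref{eq:ass-h} with $p>N/2s$ is exactly what makes these errors subcritical.

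Next I would set up, for the straightened solution, the height function $H(r)=r^{-N-1+2s}\int_{\partial B_r^+} t^{1-2s}U^2\,dS$ and the energy $D(r)=r^{-N+1+2s}\int_{B_r^+}t^{1-2s}|\nabla U|^2\,dz$ (modulo the coefficient matrix), and the Almgren frequency $\mathcal{N}(r)=D(r)/H(r)$. The core analytic work is: (i) a Pohozaev-type identity for the weighted operator on the half-ball, respecting the Dirichlet/Neumann split of the boundary — here the Dirichlet portion contributes no boundary term because $U$ vanishes there, and the weighted-Neumann portion contributes only the $hu$ term, which is controlled; (ii) the differential inequality $\mathcal{N}'(r)\ge -C(1+\mathcal{N}(r))$, yielding that $\mathcal{N}$ is bounded and has a finite limit $\gamma:=\lim_{r\to0^+}\mathcal{N}(r)$; (iii) the doubling estimate $H(2r)\le C H(r)$, i.e. $H(r)\simeq r^{2\gamma}$ up to slowly varying factors. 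All of this is a boundary version of \cite{FalFel}, with the extra difficulty that the coefficient perturbations force error terms into each identity; controlling them uses the $C^{0,1}$ bound on the coefficients and a Hardy-type inequality on $B_r^+$ adapted to the weight $t^{1-2s}$ and the Dirichlet condition on $S_1^+$ (so the eigenvalue $\mu_0$ of \eqref{eig} enters here to make the Hardy inequality coercive).

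Then comes the blow-up: define $U^\lambda(z)=U(z_0+\lambda z)/\sqrt{H(\lambda)}$; the doubling estimate gives uniform $H^1(B_1^+,t^{1-2s})$ bounds, so along a subsequence $U^\lambda\rightharpoonup \widetilde U\not\equiv0$, and $\widetilde U$ solves the limiting (constant-coefficient, $h$-free) problem, hence is homogeneous of degree $\gamma$ by the Pohozaev/frequency argument applied to $\widetilde U$ itself (the limit frequency is constant $=\gamma$). Separation of variables $\widetilde U(r\theta)=r^\gamma Y(\theta)$ then forces $Y$ to be an eigenfunction of \eqref{eig} and $\gamma(\gamma+N-2s)=\mu_{k_0}=(k_0+s)(k_0+N-s)$ for some $k_0\in\mathbb N$, whence $\gamma=k_0+s$; Remark \ref{rem:auto-non-van} guarantees $Y$ does not vanish on the Neumann portion, which is what makes the trace on $\{t=0\}$ nontrivial and is needed to then deduce Theorem \ref{t:asymp-u}. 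Finally, to upgrade weak to strong $H^1(B_1^+,t^{1-2s})$ convergence and to pin down the \emph{full} limit (not just along subsequences), I would show via the asymptotics of the Fourier coefficients $\varphi_{k,m}(r):=\int_{\partial B_r^+}t^{1-2s}U\,Y_{k,m}\,dS$ that $H(\lambda)\simeq c\,\lambda^{2(k_0+s)}$ with $c>0$, and that $\varphi_{k_0,m}(\lambda)/\lambda^{k_0+s}$ converges; this identifies $Y=\sum_m(\lim \varphi_{k_0,m})Y_{k_0,m}$ uniquely and gives the stated convergence after undoing the boundary straightening. The main obstacle is step (i)--(iii): obtaining the monotonicity formula with the \emph{boundary}-centered frequency in the presence of both the degenerate weight $t^{1-2s}$ and the flattening-induced Lipschitz coefficients, while correctly handling the mixed boundary conditions so that no spurious boundary terms appear.
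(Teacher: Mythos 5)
Your overall blueprint (flatten the boundary, set up a boundary-centered Almgren frequency via a Pohozaev/Rellich--Ne\v cas computation, prove boundedness of the frequency, blow up, classify limits by the spherical eigenproblem \eqref{eig}, and pin down the limit through the Fourier coefficients of $W$ on spheres) is the same as the paper's. But there is a genuine gap at your step (i). You assert that ``the Dirichlet portion contributes no boundary term because $U$ vanishes there'' and that the Pohozaev identity then follows. This does not go through: after straightening, the Dirichlet set $\Gamma^+_{R_1}=\{y_N\ge0,\ t=0\}$ and the Neumann set $\Gamma^-_{R_1}=\{y_N<0,\ t=0\}$ meet along the edge $\{y_N=0,\ t=0\}$, which has dimension $N-1\ge1$, and the solution $W$ is only $C^{0,s}$ near that edge (not $H^2$, not even $H^1$ with a trace for $\nabla W$). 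So the integration by parts needed to pass from the distributional Rellich--Ne\v cas identity to a Pohozaev identity on $B_r^+$ is not justified for $W$ itself, and the boundary term on the Dirichlet part (which is of the form $\int |\partial_\nu W|^2\,(Az\cdot\nu)$ after using $W=0$ there) cannot simply be declared zero. This is exactly the obstruction the paper identifies as the main new difficulty with respect to the interior case of \cite{FalFel} and the one-dimensional mixed case of \cite{FallFelliFerrero}.

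The paper's resolution, which your plan omits, is the double approximation of Section~\ref{sec:constr-appr-sequ}: one replaces $\tilde h$ by smooth $h_n$ supported away from $\{y_N=0\}$ (via cut-offs $\eta_n$), and replaces the flat Dirichlet wall $\{y_N\ge0,\ t=0\}$ by the curved $(N+1)$-dimensional wall $\gamma_n=\{y_N=f_n(t)\}$ of the smooth domains $\mathcal U_n$. The solutions $U_n$ of the approximating mixed problem \eqref{prob5} are $H^2$ up to $\gamma_n$, and the functions $f_n$ are engineered precisely so that the Dirichlet boundary term satisfies the sign condition $A(y)z\cdot\nu\ge0$ on $\gamma_n$ (Lemma~\ref{segnogamma}). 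As a consequence, when one integrates the Rellich--Ne\v cas identity on $\mathcal U_n\cap B_r$ the Dirichlet contribution can be \emph{discarded} but not cancelled, and after passing to the limit $n\to\infty$ one obtains only a Pohozaev-type \emph{inequality} \eqref{pohozaev} for $W$, not an identity. That inequality is still enough to bound $\mathcal N'(r)$ from below, which is all the monotonicity argument needs, but the sign structure is essential and nontrivial. Without this (or an equivalent regularization argument), your steps (ii)--(iii) are unfounded. Aside from this, the rest of your outline (doubling, blow-up, classification via the spherical eigenproblem, Fourier coefficient asymptotics to get $\liminf r^{-2\gamma}H(r)>0$, and the undoing of the diffeomorphism at the end) matches the paper; you would still need to be careful that the blow-up limit $\widetilde U$ is genuinely homogeneous, which in the paper is deduced from the constancy of the limit frequency $\mathcal N_w$ combined with the equality case of Cauchy--Schwarz, not merely ``by the Pohozaev/frequency argument applied to $\widetilde U$ itself.''
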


Actually the proof of Theorem \ref{t:asymp-U} contains a more precise
characterization of the angular limit profile $Y$, as a linear
combination of the orthonormalized eigenfunctions
$\{Y_{k_0,m}\}_{m=1,2,...,M_{k_0}}$ of \eqref{eig}  associated to the
  eigenvalue $\mu_{k_0}$ with
coefficients explicitly given by formula \eqref{eq:coeff-beta}.

The salient consequence of the precise asymptotic expansions described
above is the
following \emph{strong unique continuation principle} for problems
\eqref{prob1} and \eqref{prob2}, whose proof follows straightforwardly
from Theorems \ref{t:asymp-u} and \ref{t:asymp-U}, taking into account
Remark \ref{rem:auto-non-van}.

\begin{Corollary}\label{c:unique_continuation}\quad \\[-7pt]  
\begin{itemize}
  \item[\rm (i)] Under the same assumptions as in Theorems \ref{t:asymp-u}, let
    $u\in \mathcal D^{s,2}(\R^N)$ be a weak solution to \eqref{prob1}
    (in the sense of \eqref{eq:weak}) satisfying \eqref{eq:DBC} and
    such that $u(x)=O(|x-x_0|^k)$ as $x\to x_0$, for any
    $k\in {\mathbb N}$.  Then $u\equiv 0$ in $\R^N$.
  \item[\rm (ii)] Under the same assumptions as in Theorems \ref{t:asymp-U}, let
    $U\in \mathcal{D}^{1,2}(\R^{N+1}_+,t^{1-2s}\,dz)$ be a weak
    solution to \eqref{prob2} (in the sense of \eqref{eq:45}) with
     $\mathop{\rm Tr} U=u$ satisfying
    \eqref{eq:DBC} and such that $U(z)=O(|z-z_0|^k)$ as
    $z\to z_0$, for any $k\in {\mathbb N}$.  Then $U\equiv 0$ in $\R^{N+1}_+$.
  \end{itemize}
\end{Corollary}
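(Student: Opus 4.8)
The plan is to deduce Corollary~\ref{c:unique_continuation} directly from the blow-up classification in Theorems~\ref{t:asymp-u} and~\ref{t:asymp-U}, using Remark~\ref{rem:auto-non-van} to rule out the degenerate case. I would argue by contradiction. Suppose $U\not\equiv0$ is a weak solution to \eqref{prob2} with $\mathop{\rm Tr}U=u$ satisfying \eqref{eq:DBC} and $U(z)=O(|z-z_0|^k)$ as $z\to z_0$ for every $k\in\mathbb N$. Since $U\not\equiv0$, Theorem~\ref{t:asymp-U} applies and yields an integer $k_0\in\mathbb N$ together with a nontrivial eigenfunction $Y$ of \eqref{eig} associated with $\mu_{k_0}=(k_0+s)(k_0+N-s)$ such that
\begin{equation*}
\lambda^{-(k_0+s)}U(z_0+\lambda z)\to |z|^{k_0+s}Y(z/|z|)\quad\text{in }H^1(B_1^+,t^{1-2s}dz)\text{ as }\lambda\to0^+.
\end{equation*}

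Next I would turn the vanishing hypothesis into a statement about the blow-up limit. Fix any $k>k_0+s$ (e.g. $k=k_0+1$). The assumption $U(z)=O(|z-z_0|^k)$ gives a constant $C>0$ and $r_0>0$ with $|U(z_0+z)|\le C|z|^{k}$ for $|z|\le r_0$, hence, after a change of variables, $\|\lambda^{-(k_0+s)}U(z_0+\lambda\,\cdot)\|_{L^2(B_1^+,t^{1-2s}dz)}\le C'\lambda^{k-(k_0+s)}\to0$ as $\lambda\to0^+$; more robustly, since the $H^1(B_1^+,t^{1-2s}dz)$ convergence above implies $L^2(B_1^+,t^{1-2s}dz)$ convergence, I can instead exploit the fact that the proof of Theorem~\ref{t:asymp-U} produces the exponent $k_0+s$ as the exact limit of the Almgren frequency and the coefficient selecting $Y$ via formula \eqref{eq:coeff-beta}; a pointwise bound $O(|z-z_0|^k)$ with $k>k_0+s$ forces that limit frequency to be at least $k$, contradicting that it equals $k_0+s<k$. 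Either way one concludes $|z|^{k_0+s}Y(z/|z|)\equiv0$, whence $Y\equiv0$, contradicting $Y\not\equiv0$. Therefore no such nontrivial $U$ exists, i.e. $U\equiv0$ in $\R^{N+1}_+$, proving part~(ii).

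For part~(i), let $u\in\mathcal D^{s,2}(\R^N)$ be a weak solution to \eqref{prob1} satisfying \eqref{eq:DBC} with $u(x)=O(|x-x_0|^k)$ for all $k\in\mathbb N$. Its Caffarelli--Silvestre extension $U=\mathcal H(u)$ is then a weak solution to \eqref{prob2}. One checks that the boundary vanishing of $u$ transfers to the extension: because $U$ is obtained by convolving $u$ with the Poisson kernel and solves a degenerate-elliptic equation with the right scaling, the vanishing order of $u$ at $x_0$ is inherited by $U$ at $z_0=(x_0,0)$ — concretely, the $H^1(B_1^+,t^{1-2s}dz)$-limit of $\lambda^{-(k_0+s)}U(z_0+\lambda z)$ restricted to $\{t=0\}$ equals the $H^s(B_1')$-limit of $\lambda^{-(k_0+s)}u(x_0+\lambda x)$ by the trace theorem, and the latter is $0$ by the $O(|x-x_0|^k)$ hypothesis for $k>k_0+s$. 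By Theorem~\ref{t:asymp-u} (applied because $u\not\equiv0$ would give a nontrivial eigenfunction $Y$), this limit is $|x|^{k_0+s}Y(x/|x|,0)$; if this vanishes in $H^s(B_1')$, then $Y$ vanishes on $\mathbb S^{N-1}\cap\{\theta_N<0\}$, which by Remark~\ref{rem:auto-non-van} is impossible for a nontrivial eigenfunction. Hence $u\equiv0$ in $\R^N$.

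The main obstacle is the bookkeeping in the second step: making precise that an \emph{a priori} pointwise vanishing rate $O(|z-z_0|^k)$ of arbitrarily high order is incompatible with the finite limiting homogeneity $k_0+s$ produced by the monotonicity formula. The cleanest route is to observe that the Almgren frequency $N(\lambda)$ associated with $U$ at $z_0$ (constructed in the proof of Theorem~\ref{t:asymp-U}) converges to $k_0+s$; but a bound $|U(z_0+z)|\le C_k|z|^k$ for all $k$ forces the Dirichlet energy on $B_\lambda^+(z_0)$ to decay faster than any polynomial in $\lambda$ relative to the $L^2$ mass on $\partial B_\lambda^+(z_0)$, driving $N(\lambda)\to+\infty$ and contradicting finiteness of the limit. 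Equivalently, and perhaps more transparently in the write-up, one notes that the coefficients in \eqref{eq:coeff-beta} that pick out $Y$ are of the form $\lim_{\lambda\to0}\lambda^{-(k_0+s)}\int_{\partial B_1^+}U(z_0+\lambda z)Y_{k_0,m}(z)\,t^{1-2s}dS$, and each such integral is $O(\lambda^{k-(k_0+s)})=o(1)$ under the hypothesis with $k>k_0+s$, so every coefficient is zero and $Y\equiv0$ — the desired contradiction. All remaining points (transfer of vanishing through the extension, trace compatibility, and the appeal to Remark~\ref{rem:auto-non-van}) are then routine.
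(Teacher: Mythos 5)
Your argument is correct and matches the intended proof: the paper deliberately leaves this as an immediate consequence of the two asymptotics theorems plus Remark~\ref{rem:auto-non-van}, and your scaling argument — for any $k>k_0+s$ the hypothesis gives $\|\lambda^{-(k_0+s)}U(z_0+\lambda\cdot)\|_{L^2(B_1^+,t^{1-2s}dz)}=O(\lambda^{k-k_0-s})\to0$, forcing the $H^1$-limit $|z|^{k_0+s}Y(z/|z|)$ to vanish — supplies exactly the missing detail. Two small remarks. First, for part~(ii) you do not actually need Remark~\ref{rem:auto-non-van}: the contradiction is simply that the blow-up limit in $H^1(B_1^+,t^{1-2s}dz)$ is both $\equiv0$ and a nontrivial multiple of $|z|^{k_0+s}Y(z/|z|)$, which directly contradicts $Y\not\equiv0$. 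Second, for part~(i) the sentence claiming that ``the boundary vanishing of $u$ transfers to the extension'' is both unnecessary and potentially misleading as stated; you neither prove nor need that $U(z)=O(|z-z_0|^k)$. The clean argument, which the rest of your paragraph in fact carries out, is to apply Theorem~\ref{t:asymp-u} directly: $\lambda^{-(k_0+s)}u(x_0+\lambda\cdot)\to|x|^{k_0+s}Y(x/|x|,0)$ in $H^s(B_1')$, while the pointwise hypothesis on $u$ alone gives $\|\lambda^{-(k_0+s)}u(x_0+\lambda\cdot)\|_{L^2(B_1')}\to0$ for $k>k_0+s$; hence $Y(\cdot,0)\equiv0$ on $\mathbb S^{N-1}$, in particular on $\mathbb S^{N-1}\cap\{\theta_N<0\}$, and this is exactly what Remark~\ref{rem:auto-non-van} rules out. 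The alternative routes you sketch (divergence of the Almgren frequency, or vanishing of all $\beta_m$ from \eqref{eq:coeff-beta}) are valid but redundant given the $L^2$ scaling estimate.
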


The proof of Theorem \ref{t:asymp-U} makes use of the procedure developed
in \cite{FalFel,FelFerTer1,FelFerTer2} and consisting in a fine
blow-up analysis
for scaled solutions based on sharp energy estimates, obtained as a consequence of the existence
of the limit of an Almgren type frequency function. Such method
is applied to an equivalent auxiliary problem 
obtained by straightening the boundary of the domain
$\Omega$ through a 
diffeomorphic deformation, which is inspired by \cite{AdoEsc}
 and built specifically to ensure that the extended equation is conserved by reflection
through a straightened vertical boundary, see Section \ref{sect2.1}.

Significant additional difficulties arise
with respect to the non-fractional or interior setting.  
 Since
the optimal regularity of solutions  to
\eqref{prob1} is $s$-H\"older continuity, see
\cite{RosSer2},  
solutions to
\eqref{prob2} could have singular gradient at $\partial\Omega$,
which represents for problem \eqref{prob2} the interface between
mixed Dirichlet and Neumann boundary conditions. 
In fact, problems with mixed boundary conditions
raise delicate regularity issues, which turn out to be more difficult
in dimension $N\geq2$ due to the positive dimension of the junction
set and some role played by the geometry of the domain. 

In particular,  we remark that 
the regularity results known for  non-fractional
problems (for which solutions are smooth up to the boundary) or for
interior points (see e.g. \cite{JLX}) are not available here, even
excluding a neighborhood  of the boundary point if $N\geq2$.
Therefore, for problems that, like
\eqref{prob2}, are characterized by mixed
boundary conditions in dimension $N\geq2$, 
the development of a monotonicity argument around points located at 
Dirichlet-Neumann junctions
presents substantial new difficulties with respect to both the case
treated in \cite{FalFel} of boundary points
around which a Neumann condition is given and the 
case treated in \cite{FallFelliFerrero} of mixed conditions in
dimension $N=1$. 

In the present paper, the difficulties related to lack of regularity at Dirichlet-Neumann
junctions are overcome by a double approximation procedure: by
approximating the potential $h$ with potentials vanishing near the
boundary and the Dirichlet $N$-dimensional region with smooth
$(N+1)$-sets with straight vertical boundary, we will be able to
construct a sequence of approximating solutions (see Section \ref{sec:constr-appr-sequ}) which enjoy enough
regularity to derive Pohozaev type identities, needed to obtain 
Almgren type monotonicity formul\ae \ and consequently to perform blow-up analysis.
We mention that a similar approximation procedure was
developed in \cite{DelFel} for a class of elliptic equations in a domain with a crack.

The paper is structured as follows. In Section
\ref{sec:funct-sett-constr}, after introducing a suitable functional
setting for the study of the extended problem \eqref{prob2}, we present an
equivalent auxiliary problem obtained by straightening the boundary;
then, after providing some Hardy-Sobolev and coercivity type inequalities, in
Subsection \ref{sec:constr-appr-sequ} we perform the approximation
procedure which allows us to establish a Pohozaev type
inequality. Section \ref{sec:almgr-type-freq} is devoted to the proof
of an Almgren type monotonicity formula, which is the key tool for the
blow-up analysis carried out in Section
\ref{sec:blow-up-analysis}. Finally, in Appendix
\ref{sec:some-bound-regul} we present some  boundary regularity
results for singular/degenerate equations in cylinders, while in Appendix
  \ref{sec:color-eigenv-probl} we prove \eqref{eq:28}, through a
classification of  possible homogeneity degrees of homogeneous 
solutions to \eqref{eq:27}.

\section{Functional setting and construction of the approximating domains}\label{sec:funct-sett-constr}
Let us call the total variable $z=(x,t)\in\R^N\times\R$, with
$x=(x',x_N)=(x_1,...,x_{N-1},x_N)$. 
We set the following notations for all $r>0$:
\begin{align*}
&B_r=\{z\in\R^{N+1}:|z|<r\},\quad B_r^+=B_r\cap
\R^{N+1}_+,\\
&B'_r=\{(x,t)\in B_r:t=0\},\quad \partial^+B^+_r=\partial
B_r\cap \R^{N+1}_+.
\end{align*}
In the sequel $B'_r$ will be sometimes identified with the ball in $\R^N$
centered at $0$ with radius $r$.
The weighted Sobolev space $H^1(B_r^+,t^{1-2s}\,dz)$ in the extension context is defined 
as the completion of 
$C^\infty(\overline{B_r^+})$ with respect to the norm
\begin{equation*}
\|U\|_{H^1(B_r^+,t^{1-2s}\,dz)}=\sqrt{\int_{B_r^+}t^{1-2s}\left(|U|^2+|\nabla
U|^2\right)\,dz}.
\end{equation*}
It is well known, see e.g. \cite[Proposition 2.1]{JLX}, that there
exists a well-defined continuous trace map 
$\mathop{\rm Tr}: H^1(B_r^+,t^{1-2s}\,dz) \to L^{2^*(s)}(B_r')$; in
particular there exists a positive constant $C_{N,s}$ depending only
on $N$ and $s$ such that, for all $r>0$ and $U\in H^1(B_r^+,t^{1-2s}\,dz)$, 
\begin{equation}\label{eq:traccia-H1BR}
\|\mathop{\rm Tr}(U)\|_{L^{2^*(s)}(B_r')}^2\leq C_{N,s}
\int_{B_r^+}t^{1-2s}\left(r^{-2}|U(z)|^2+|\nabla
U(z)|^2\right)\,dz.
\end{equation}
We are interested in boundary qualitative properties of solutions to
\eqref{prob1} close to a fixed point
$x_0\in\partial\Omega$. Without loss of generality, up
to translation and rotation, we can assume $x_0=0$ and 
consider the extension $U=\mathcal H(u)$ which, under
assumptions \eqref{eq:C11} and \eqref{eq:DBC}, solves
\begin{equation}\label{prob3}
\begin{cases}
-\mathrm{div}\left(t^{1-2s}\nabla U\right)=0 &\mathrm{in \ } B_R^+,\\
-\lim_{t\to0^+}t^{1-2s}\partial_tU=\kappa_shu &\mathrm{in \ } \Gamma_{
g,R}^-:=\{(x',x_N,0)\in B_R':x_N< g(x')\},\\
U=0 & \mathrm{in \ } \Gamma_{ g,R}^+:=
\{(x',x_N,0)\in B_R':x_N\geq g(x')\},
\end{cases}
\end{equation}
for some $R>0$, where $g$ is the function which
locally parametrizes the boundary $\partial\Omega$ around $x_0=0$
according to \eqref{eq:C11}, with 
\begin{equation}\label{hpdellag}
g\in C^{1,1}(\mathbb{R}^{N-1}),\quad g(0)=0\quad \text{and}\quad\nabla g(0)=0.
\end{equation}
The suitable weighted Sobolev space for energy solutions to
\eqref{prob3} is $H^1_{\Gamma_{ g,R}^+}(B_R^+,t^{1-2s}\,dz)$, defined
as the closure of
$C^\infty_c(\overline{B_R^+}\setminus \Gamma_{ g,R}^+)$ in
$H^1(B_R^+,t^{1-2s}\,dz)$.  By energy solution to \eqref{prob3} we
mean a function $U\in H^1_{\Gamma_{ g,R}^+}(B_R^+,t^{1-2s}\,dz)$ such
that
\begin{equation*}
\int_{B_R^+}t^{1-2s}\nabla
U(x,t)\cdot\nabla\phi(x,t)\,dz-\kappa_s\int_{\Gamma_{
g,R}^-}h\mathop{\rm Tr}U\mathop{\rm Tr}\phi\,dx=0\quad
\text{for all }\phi\in C^\infty_c(B_R^+\cup \Gamma_{ g,R}^-).
\end{equation*}

\subsection{A diffeomorphism to straighten the boundary}\label{sect2.1}
We follow the construction in \cite{AdoEsc}. Let us consider the
following set of variables $(y,t)\in\R^N\times[0,+\infty)$, with
$y=(y',y_N)=(y_1,...,y_{N-1},y_N)$. Let $\rho\in C^\infty_c(\R^{N-1})$
be such that $\rho\geq0$, $\mathrm{supp}(\rho)\subset B_1'$ and
$\int_{\R^{N-1}}\rho(y')\,dy'=1$.  For every $\delta>0$ we define
\begin{equation*}
\rho_\delta(y')=\delta^{-N+1}\rho\left(\frac{y'}{\delta}\right).
\end{equation*}
Let us define also, for every $j=1,...,N-1$,
\begin{equation*}
G_j(y',y_N) = \begin{cases}
\left( \rho_{y_N}\ast \partial_{y_j}g \right)(y')&\text{if }y'\in\R^{N-1},\
y_N>0,\\[5pt]
\partial_{y_j}g (y')&\text{if } y'\in\R^{N-1},\
y_N=0,
\end{cases}
\end{equation*}
where $\ast$ denotes the convolution product.

It is easy to verify that, for all $j=1,\dots,N-1$,
$G_j\in C^\infty(\R^N_+)$, $G_j$ is Lipschitz continous in
$\overline{\R^N_+}$, and
$\frac{\partial G_j}{\partial y_i}\in L^\infty(\R^N_+)$ for every
$i\in\{1,\dots,N\}$.  Moreover, for all $j=1,\dots,N-1$ and
$i=1,\dots,N$,
\begin{equation*}
y_N \frac{\partial
G_j}{\partial y_i}\quad\text{is Lipschitz continuous in
$\overline{\R^N_+}$}.
\end{equation*}
As a consequence, we have that, letting 
\begin{equation*}
\widetilde G_j:\R^N\to\R,\quad \widetilde G_j(y',y_N):=G_j(y',|y_N|)
\end{equation*}
and \begin{equation*}
\psi_j:\R^N\to\R,\quad \psi_j(y',y_N)=y_j-y_N\widetilde G_j(y',y_N),
\end{equation*}
$\widetilde G_j$ is Lipschitz continuous in
$\R^N$
and $\psi_j\in C^{1,1}(\R^N)$ (i.e. $\psi_j$ is continuously
differentiable with Lipschitz gradient) for all $j=1,\dots,N-1$. Let 
\begin{equation*}
\widetilde G(y',y_N)=( \widetilde G_1(y',y_N), \widetilde G_2(y',y_N),\dots,
\widetilde G_{N-1}(y',y_N))
\end{equation*}
and denote as $J_{\widetilde G}(y',y_N)$ the Jacobian matrix of
$\widetilde G$ at $(y',y_N)$. Then $J_{\widetilde G}\in L^\infty(\R^N,\R^{N(N-1)})$ 
and 
\begin{equation}\label{eq:3}
|\widetilde G(y',y_N)-\nabla g(y')|\leq C\,
|y_N|\quad\text{for all }(y',y_N)
\in \R^N,
\end{equation}
for some constant $C>0$ independent of $(y',y_N)$.

Let us consider the local diffeomorphism $F\colon \mathbb{R}^{N+1}\to \mathbb{R}^{N+1}$ defined as
\begin{equation}\label{F}
F(y',y_N,t)=(\psi_1(y',y_N),...,\psi_{N-1}(y',y_N),y_N+g(y'),t).
\end{equation}
We observe that $F$ is of class $C^{1,1}$ and $F(y',0,0)=(y',
g(y'),0)$, namely $F^{-1}$ is straightening the boundary of the set
$\{(x',x_N,0):x_N<g(x')\}$. 

Direct computations and \eqref{eq:3} yield that
\begin{align}\label{eq:JacF}
  J_{F}(y',y_N,t)&=J(y',y_N)\\
  \notag&=	
          \begin{pmatrix}
    1-y_N\frac{\partial \widetilde G_1}{\partial y_1} &
    -y_N\frac{\partial \widetilde G_1}{\partial y_2} & 
    \cdots & -y_N\frac{\partial \widetilde G_1}{\partial y_{N-1}} &
    -\widetilde G_1
    -y_N \frac{\partial
      \widetilde G_1}{\partial y_N}&0
    \\[5pt]
    -y_N\frac{\partial \widetilde G_2}{\partial y_1} & 1-y_N\frac{\partial \widetilde G_2}{\partial y_2} & \cdots & -y_N\frac{\partial \widetilde G_2}{\partial y_{N-1}} & - \widetilde G_2 -y_N \frac{\partial
      \widetilde G_2}{\partial y_N}&0 \\
    \vdots & \vdots & \ddots & \vdots & \vdots &\vdots\\[5pt]
    -y_N\frac{\partial \widetilde G_{N-1}}{\partial y_1} & -y_N\frac{\partial \widetilde G_{N-1}}{\partial y_2} & \cdots & 1-y_N\frac{\partial \widetilde G_{N-1}}{\partial y_{N-1}} & - \widetilde G_{N-1} -y_N \frac{\partial
      \widetilde G_{N-1}}{\partial y_N}&0 \\[5pt]
    \frac{\partial g}{\partial y_1}(y') & \frac{\partial
      g}{\partial y_2}(y') & \cdots & \frac{\partial
      g}{\partial y_{N-1}}(y') & 1&0 \\[5pt]
    0&0&\dots&0&0&1
  \end{pmatrix}\\[5pt]
                 &\notag= \left( \renewcommand{\arraystretch}{1.5}
\begin{array}{c|c|c}
\mathrm{Id}_{N-1}-y_NJ_{\widetilde G}&-\nabla g(y')+O(y_N)&{\mathbf 0}\\\hline
(\nabla g(y'))^T&1&0\\\hline
{\mathbf 0}^T&0&1
\end{array}\right),
\end{align}
where $\nabla g(y')$ is meant as a column vector in $\R^{N-1}$,
${\mathbf 0}$ is the null column vector in $\R^{N-1}$ and
$(\nabla g(y'))^T,{\mathbf 0}^T$ are their transpose; from now on, the
notation $O(y_N)$ will be used to denote blocks of matrices with all
entries being $O(y_N)$ as $y_N\to 0$ uniformly with respect to $y'$
and $t$.

From \eqref{hpdellag} and the fact that $g\in C^{1,1}(\R^{N-1})$ it
follows that $\nabla g(y')=O(|y'|)$ as $|y'|\to0$, then 
\begin{equation}\label{eq:det-jac}
\det J(y',y_N)=1+|\nabla g(y')|^2+O(y_N)=
1+O(|y'|^2)+O(y_N)
\end{equation}
as $y_N\to0$ and $|y'|\to0$. 

In particular we have that $\det J_{F}(0)=1\ne0$; therefore, by the Inverse Function Theorem, $F$ is invertible in a neighbourhood of the origin, i.e.
there exists $R_1>0$ such that
\begin{equation}\label{eq:alpha}
\alpha(y',y_N):=\det J(y',y_N)>0\quad\text{in }B_{R_1}'
\end{equation}
and $F$ is a diffeomorphism of
class $C^{1,1}$ from $B_{R_1}$ to $\mathcal U=F(B_{R_1})$ for some
$\mathcal U$ open neighbourhood of $0$ such that $\mathcal U\subset B_R$. Furthermore
\begin{equation*}
  F^{-1}(\mathcal U\cap\Gamma_{
    g,R}^-)=\Gamma_{R_1}^-\quad\text{and}\quad F^{-1}(\mathcal U\cap\Gamma_{
    g,R}^+)=\Gamma_{R_1}^+,
\end{equation*}
where, for all $r>0$, we denote 
\begin{equation*}
  \Gamma_{r}^-:=\{(y',y_N,0)\in
  B'_{r}:y_N< 0\},\quad \Gamma_{r}^+:=\{(y',y_N,0)\in B'_{r}:y_N\geq
  0\}.
\end{equation*}
Since
\begin{equation*}
  F^{-1}\in C^{1,1}(\mathcal U,B_{R_1}), \quad
  F\in C^{1,1}(B_{R_1},\mathcal U), \quad
  F(0)=F^{-1}(0)=0, \quad J_F(0)=J_{F^{-1}}(0)=\mathrm{Id}_{N+1},
\end{equation*}
we have that 
\begin{align}
  \label{eq:11}& J_{F^{-1}}(x)=\mathrm{Id}_{N+1}+O(|x|)\quad\text{and}\quad 
                 F^{-1} (x)=x+O(|x|^2)\quad\text{as }|x|\to0,\\
  \label{eq:10}
             & J_{F}(y)=\mathrm{Id}_{N+1}+O(|y|)\quad\text{and}\quad 
               F(y)=y+O(|y|^2)\quad\text{as }|y|\to0.
\end{align}
If $U$ is a solution to \eqref{prob3}, then $W=U\circ F$ is solution to
\begin{equation}\label{prob4}
  \begin{cases}
  -\mathrm{div}\left(t^{1-2s}A\nabla W\right)=0 &\mathrm{in \ } B_{R_1}^+,\\
  \lim_{t\to0^+}\left(
    t^{1-2s}A\nabla W\cdot\nu\right)=\kappa_s\tilde h \, \mathop{\rm Tr}W &\mathrm{in \ } \Gamma_{R_1}^-,\\
  W=0 & \mathrm{in \ } \Gamma_{R_1}^+,
\end{cases}
\end{equation}
where $\nu=(0,0,\dots,0,-1)$ is the vertical downward
unit vector,
$A$ is the $(N+1)\times(N+1)$ variable coefficient matrix (not depending
on $t$) given by 
\begin{equation}\label{A}
  A(y)=(J(y))^{-1}((J(y))^{-1})^T|\det J(y)|,
\end{equation}
and 
\begin{equation*}
  \tilde h(y)=\alpha(y)h(F(y,0)),\quad y\in \Gamma_{R_1}^-.
\end{equation*}
Equation \eqref{prob4} is meant 
in
a weak sense, i.e. 
$W$ belongs to $H^1_{\Gamma_{R_1}^+}(B_{R_1}^+,t^{1-2s}\,dz)$ (defined
as the closure of $C^\infty_c(\overline{B_{R_1}^+}\setminus \Gamma_{R_1}^+)$ in $H^1(B_{R_1}^+,t^{1-2s}\,dz)$) 
and satisfies
\begin{equation}\label{eq:9}
  \int_{B_{R_1}^+}t^{1-2s}A(y)\nabla
  W(y,t)\cdot\nabla\phi(y,t)\,dz-\kappa_s\int_{\Gamma_{R_1}^-}\tilde
  h\mathop{\rm Tr}W\mathop{\rm Tr}\phi\,dy=0
\end{equation}
for all $\phi\in C^\infty_c(B_{R_1}^+\cup \Gamma_{R_1}^-)$.

We observe that $A$ is symmetric and, in view of
\eqref{eq:11}--\eqref{eq:10}, uniformly elliptic if
$R_1$ is chosen sufficiently small; furthermore $A$ has $C^{0,1}$
coefficients. 
We also remark that, under assumption~\eqref{eq:ass-h}, 
\begin{equation}\label{eq:tildehw1p}
  \tilde h\in W^{1,p}(\Gamma_{R_1}^-).
\end{equation}
From \eqref{eq:JacF} it follows that 
\begin{equation*}
  J^{-1}=
  \left(
    \begin{array}{c|c}
      M^{-1} & {\mathbf 0} \\[2pt]
      \hline \\[-10pt]
      {\mathbf 0}^T & 1
\end{array}
\right),
\end{equation*}
where ${\mathbf 0}$ is the null column vector in $\R^{N}$ and 
\begin{equation}\label{eq:M}
  M=M(y',y_N)=\left( \renewcommand{\arraystretch}{1.5}
    \begin{array}{c|c}
      \mathrm{Id}_{N-1}-y_NJ_{\widetilde G}&-\nabla g(y')+O(y_N)\\\hline
      (\nabla g(y'))^T&1
    \end{array}\right).
\end{equation}
From \eqref{eq:JacF}  and \eqref{eq:alpha} one can deduce that 
\begin{equation}\label{detJac}
  \mathop{\rm det}M(y',y_N)=\alpha(y',y_N) >0\quad\text{in }B_{R_1}'.
\end{equation}
Let us define 
\begin{equation*}
  B(y',y_N):=\mathop{\rm det}M(y',y_N)(M(y',y_N))^{-1}.
\end{equation*}
By \eqref{eq:M} and a direct calculation we have that 
\begin{equation}\label{B}
  B=
  \left( \renewcommand{\arraystretch}{1.8}
    \begin{array}{c|c}
      {\scriptsize\begin{matrix}
          1+\mathop{\sum}\limits_{j\neq1}\big|\frac{\partial g}{\partial
            y_j}\big|^2\!+\!O(y_N) &
          -\frac{\partial g}{\partial y_1}\frac{\partial g}{\partial y_2}\!+\!O(y_N)& 
          \cdots & -\frac{\partial g}{\partial y_1}\frac{\partial g}{\partial y_{N-1}}\!+\!O(y_N)
          \\[-3pt]
          -\frac{\partial g}{\partial y_2}\frac{\partial g}{\partial
            y_1}\!+\!O(y_N)&	
          1+\mathop{\sum}\limits_{j\neq2}\big|\frac{\partial g}{\partial
            y_j}\big|^2\!+\!O(y_N)& 
          \cdots & -\frac{\partial g}{\partial y_2}\frac{\partial g}{\partial y_{N-1}}\!+\!O(y_N)\\[-3pt]
          \vdots & \vdots & \ddots & \vdots \\[-3pt]
          -\frac{\partial g}{\partial y_{N-1}}\frac{\partial g}{\partial
            y_1}\!+\!O(y_N)	
          & -\frac{\partial g}{\partial y_{N-1}}\frac{\partial g}{\partial y_2}\!+\!O(y_N)&
          \cdots & 1+\mathop{\sum}\limits_{j\neq{N-1}}\big|\frac{\partial g}{\partial
            y_j}\big|^2\!+\!O(y_N)
        \end{matrix} } & \nabla g+O(y_N)\!\\\hline -(\nabla
      g)^T+O(y_N)& 1+O(y_N)
    \end{array}\right).
\end{equation}
Then $(J(y))^{-1}$ can be rewritten as follows
\begin{equation*}
  (J(y))^{-1} =
  \left(
    \renewcommand{\arraystretch}{1.1}
    \begin{array}{c|c}
      \frac1{\alpha(y)}\,B(y) & {\mathbf 0} \\
      \hline \\[-10pt]
      {\mathbf 0}^T & 1
    \end{array}
  \right),
\end{equation*}
thus from \eqref{A} it turns out that
\begin{equation}\label{eq:1}
  A(y)=
  \left(
    \renewcommand{\arraystretch}{1.1}
    \begin{array}{c|c}
  D(y) & 0 \\
  \hline
  0 & \alpha(y)
    \end{array}
  \right),
\end{equation}
where $D=\frac1{\alpha}BB^T$. From \eqref{B},
\eqref{eq:det-jac}, and \eqref{eq:alpha} it follows that
\begin{equation}\label{eq:6}
  D(y',y_N)=
  \left(
    \renewcommand{\arraystretch}{1.5}
    \begin{array}{c|c}
      \mathrm{Id}_{N-1}+O(|y'|^2)+O(y_N) & O(y_N) \\
      \hline
      O(y_N) & 1+O(|y'|^2)+O(y_N)
    \end{array}
  \right),
\end{equation}
where here $O(y_N)$, respectively $O(|y'|^2)$, denotes
blocks of matrices with all entries being $O(y_N)$ as $y_N\to 0$, 
respectively $O(|y'|^2)$ as $|y'|\to 0$. In particular we have that
\begin{equation}\label{sviluppodellaA}
  A(y)=\mathrm{Id}_{N+1}+O(|y|)\quad \text{as $|y|\rightarrow 0$}.
\end{equation}
Let us define, for every $z=(y,t)\in B_{R_1}$,
\begin{equation}\label{beta}
  \beta(z)=\frac{A(y)z}{\mu(z)}=(\beta'(z),\beta_{N+1}(z))=
\left(\frac{D(y)y}{\mu(z)},\frac{\alpha(y)t}{\mu(z)}\right),
\end{equation}
where
\begin{equation}\label{mu}
  \mu(z)=\frac{A(y)z\cdot z}{|z|^2}, \quad z\neq0.
\end{equation}
We observe that, possibly choosing
$R_1$ smaller, $\beta$ is
well-defined, since $\mu(z)>0$ in $B_{R_1}$, and 
\begin{equation}\label{eq:8}
\|A(y)\|_{\mathcal L(\R^{N+1},\R^{N+1})}\leq 2\quad\text{for all
}y\in B_{R_1}'.
\end{equation}
Moreover, 
for every $\xi=(\xi_1,\dots,\xi_N,\xi_{N+1} ) \in \R^{N+1}$ and $y\in
B_{R_1}'$, we define $dA(y)\,\xi\,\xi\in \R^{N+1}$ as the
vector in $\R^{N+1}$ with $i$-th component, for $i=1,...,N+1$, given by
\begin{equation}\label{dA}
(dA(y)\,\xi\,\xi)_i=\sum_{j,k=1}^{N+1}\partial_{z_i} a_{jk}(y)\,\xi_{j}\xi_{k}.
\end{equation}
\begin{Lemma}\label{lemmastimamu}
Let $\mu$ be as in \eqref{mu} and $A$ as in \eqref{A}. Then
\begin{equation}\label{sviluppomu}
\mu(z)=1+O(|z|)\quad \text{as $|z|\rightarrow 0^+$}
\end{equation}
and
\begin{equation}\label{sviluppogradmu}
\nabla\mu(z)=O(1)\quad \text{as $|z|\rightarrow 0^+$}.
\end{equation}
\end{Lemma}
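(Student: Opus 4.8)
The plan is to read off both expansions directly from the structure of $A$ and $\mu$ established in the preceding computations, namely \eqref{sviluppodellaA} and the definition \eqref{mu}. For \eqref{sviluppomu}, I would simply write
\begin{equation*}
\mu(z)=\frac{A(y)z\cdot z}{|z|^2}=\frac{(\mathrm{Id}_{N+1}+O(|y|))z\cdot z}{|z|^2}
=1+\frac{O(|y|)z\cdot z}{|z|^2},
\end{equation*}
and then bound the error term: since $|y|\le|z|$ and $|z\cdot z|\le|z|^2$ (or more precisely, since each entry of the matrix $A(y)-\mathrm{Id}_{N+1}$ is $O(|y|)=O(|z|)$ by \eqref{sviluppodellaA}), the last summand is $O(|z|)$ as $|z|\to0^+$. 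This gives \eqref{sviluppomu} immediately.

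For \eqref{sviluppogradmu}, the point is that $\mu$ is a quotient of a quadratic form with $C^{0,1}$ coefficients by $|z|^2$, so one must check that the homogeneity-zero scaling does not produce a blow-up of the gradient. I would compute
\begin{equation*}
\partial_{z_i}\mu(z)=\frac{\sum_{j,k}\partial_{z_i}a_{jk}(y)\,z_jz_k}{|z|^2}
+\frac{\sum_{j,k}a_{jk}(y)\big(\delta_{ij}z_k+\delta_{ik}z_j\big)}{|z|^2}
-\frac{2z_i\sum_{j,k}a_{jk}(y)z_jz_k}{|z|^4},
\end{equation*}
keeping in mind that $A$ does not depend on $t$, so $\partial_{z_{N+1}}a_{jk}=0$. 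The first term is $O(1)$ because $\nabla_y a_{jk}\in L^\infty(B_{R_1}')$ (the coefficients are Lipschitz) and $|z_jz_k|\le|z|^2$. The second term is $O(1/|z|)$ times $|z|$, i.e. bounded, using \eqref{eq:8}. The third term is $|z|^{-4}$ times a quantity bounded by $2|z|\cdot 2|z|^2=4|z|^3$, hence also $O(1/|z|)\cdot|z|=O(1)$; more carefully, combining the second and third terms one sees the apparent $|z|^{-1}$ singularities cancel in the leading order because $A(0)=\mathrm{Id}_{N+1}$, but in any case each is separately $O(1)$. Thus $\nabla\mu(z)=O(1)$ as $|z|\to0^+$.

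The only mild subtlety — and the step I would single out as the one requiring a little care — is verifying that the seemingly singular terms in $\nabla\mu$ (those carrying a factor $|z|^{-1}$ after naive estimation) are in fact bounded. This is handled by noting that the quadratic form $A(y)z\cdot z$, expanded around $A(0)=\mathrm{Id}_{N+1}$, satisfies $A(y)z\cdot z=|z|^2+O(|y|)|z|^2=|z|^2(1+O(|z|))$, so that $\mu(z)=1+O(|z|)$ with the error itself having bounded $z$-derivative: writing $\mu(z)-1=\big(A(y)z\cdot z-|z|^2\big)/|z|^2$, the numerator is a smooth-in-$t$, Lipschitz-in-$y$ function that vanishes to second order in $z$ at the origin up to the $O(|y|)$ coefficient, and dividing by $|z|^2$ therefore yields something Lipschitz near $0$. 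Since all the ingredients — uniform ellipticity and the bound \eqref{eq:8} on $A$, the Lipschitz regularity of the $a_{jk}$, and the expansion \eqref{sviluppodellaA} — are already in hand from the construction of $F$ and $A$, the lemma follows by these elementary estimates.
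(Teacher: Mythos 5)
Your overall strategy matches the paper's: establish \eqref{sviluppomu} directly from the matrix expansion \eqref{sviluppodellaA}, then differentiate the quotient \eqref{mu} and use the fact that $A(0)=\mathrm{Id}_{N+1}$ to kill the singular contributions in $\nabla\mu$. However, the middle of your argument for \eqref{sviluppogradmu} contains a genuine error. You assert that the second term $\frac{2(A(y)z)_i}{|z|^2}$ and the third term $-\frac{2z_i(A(y)z\cdot z)}{|z|^4}$ are \emph{each separately} $O(1)$. They are not: using only \eqref{eq:8} one obtains the bound $\frac{4}{|z|}$ for each, and indeed both tend to infinity along, say, $z\to0$ with $y$ fixed in a direction where $A(y)\ne\mathrm{Id}_{N+1}$ —- their leading parts are $\pm\frac{2z_i}{|z|^2}$. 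The cancellation you dismiss with ``but in any case each is separately $O(1)$'' is not an optional refinement; it is the whole point. The paper makes this explicit by rewriting $-2\frac{(A(y)z\cdot z)z}{|z|^4}$ as $-2\frac{\mu(z)z}{|z|^2}=-\frac{2z}{|z|^2}[1+O(|z|)]$ and $\frac{2A(y)z}{|z|^2}=\frac{2}{|z|^2}[z+O(|z|^2)]$, so that the $\frac{2z}{|z|^2}$ pieces cancel exactly, leaving $O(1)$.

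That said, your final paragraph repairs the argument: writing $\mu(z)-1=\frac{(A(y)-\mathrm{Id}_{N+1})z\cdot z}{|z|^2}$ with $B(y):=A(y)-\mathrm{Id}_{N+1}$ satisfying $B(0)=0$ and $B$ Lipschitz, both $dB(y)zz$ and $B(y)z$ are $O(|z|^2)$, whence
\begin{equation*}
\nabla\!\left(\tfrac{B(y)z\cdot z}{|z|^2}\right)=\frac{dB(y)zz+2B(y)z}{|z|^2}-\frac{2z\,(B(y)z\cdot z)}{|z|^4}=O(1),
\end{equation*}
with no uncancelled singular terms. This is essentially the same cancellation as in the paper's displayed computation, just built in from the start by subtracting the identity. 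If you retain your component-by-component computation, you must perform the cancellation explicitly rather than claim term-by-term boundedness; or else start directly from the decomposition $A=\mathrm{Id}_{N+1}+B$ as in your last paragraph.
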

\begin{proof}
Estimate \eqref{sviluppomu} follows directly from
\eqref{mu} and
\eqref{sviluppodellaA}. In order to prove
\eqref{sviluppogradmu}, we differentiate \eqref{mu}, obtaining that,
for all $z=(y,t)\in B_{R_1}$,
\begin{equation*}
  \nabla\mu(z)=-2\frac{(A(y)z\cdot z)z}{|z|^4}+
  \frac{dA(y)zz}{|z|^2}+2\frac{A(y)z}{|z|^2}=-2\frac{\mu(z)z}{|z|^2}+\frac{dA(y)zz}{|z|^2}+2\frac{A(y)z}{|z|^2}.
\end{equation*}
Noting that $dA(y)zz=O(|z|^2)$ as $|z|\to0$, since the matrix $A$ has
Lipschitz coefficients, and using \eqref{sviluppomu} and \eqref{sviluppodellaA}, we deduce that 
\begin{equation*}
\begin{split}
\nabla\mu(z)&=-\frac{2z}{|z|^2}[1+O(|z|)]+O(1)+\frac{2}{|z|^2}[z+O(|z|^2)]=O(1)
\end{split}
\end{equation*}
as $|z|\rightarrow 0^+$, thus proving \eqref{sviluppogradmu}.
\end{proof}

\begin{Lemma}\label{lemmabeta}
Let $\beta$ be as in \eqref{beta} and $A$ as in \eqref{A}. Then we
have that, as $|z|\rightarrow 0^+$,
\begin{align}\label{sviluppobeta}
&\beta(z)=z+O(|z|^2)=O(|z|),\\
\label{sviluppojacbeta}&J_\beta(z)=A(y)+O(|z|)= \mathrm{Id}_{N+1}+O(|z|),\\
\label{sviluppodivbeta}&\mathrm{div}\beta(z)=N+1+O(|z|).
\end{align}
\end{Lemma}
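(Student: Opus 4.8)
The plan is to obtain all three expansions directly from two ingredients already at our disposal: the expansion $A(y)=\mathrm{Id}_{N+1}+O(|y|)$ as $|y|\to0$ from \eqref{sviluppodellaA} (which, since $y$ consists of the first $N$ components of $z$ and hence $|y|\le|z|$, reads $A(y)=\mathrm{Id}_{N+1}+O(|z|)$ as $|z|\to0$), together with the estimates $\mu(z)=1+O(|z|)$ and $\nabla\mu(z)=O(1)$ from Lemma~\ref{lemmastimamu}. Throughout I will also use that $A$ has Lipschitz coefficients, so that the first-order partial derivatives $\partial_{z_i}a_{jk}$ are bounded near the origin; in particular $\sum_{j}\big(\partial_{z_k}a_{ij}(y)\big)z_j=O(|z|)$ as $|z|\to0$, uniformly in the other variables (this is the same type of observation used in the proof of Lemma~\ref{lemmastimamu} to get $dA(y)\,z\,z=O(|z|^2)$).

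For \eqref{sviluppobeta} I would simply substitute into the definition \eqref{beta}: $\beta(z)=\frac{A(y)z}{\mu(z)}=\frac{z+O(|z|^2)}{1+O(|z|)}=z+O(|z|^2)=O(|z|)$. For \eqref{sviluppojacbeta} I differentiate componentwise, writing $\beta_i(z)=\mu(z)^{-1}\sum_{j}a_{ij}(y)z_j$ and using the product/quotient rule to get $\partial_{z_k}\beta_i(z)=\mu(z)^{-1}\big(a_{ik}(y)+\sum_{j}\partial_{z_k}a_{ij}(y)\,z_j\big)-\mu(z)^{-2}\big(A(y)z\big)_i\,\partial_{z_k}\mu(z)$. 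By \eqref{sviluppobeta} one has $(A(y)z)_i=O(|z|)$, while $\partial_{z_k}\mu=O(1)$, $\mu^{-1}=1+O(|z|)$, $\mu^{-2}=1+O(|z|)$, and $\sum_j\partial_{z_k}a_{ij}(y)z_j=O(|z|)$; since moreover $a_{ik}(y)$ is bounded, the first summand equals $a_{ik}(y)(1+O(|z|))=a_{ik}(y)+O(|z|)$ and the second is $O(|z|)$, whence $\partial_{z_k}\beta_i(z)=a_{ik}(y)+O(|z|)$, i.e. $J_\beta(z)=A(y)+O(|z|)$, which is $\mathrm{Id}_{N+1}+O(|z|)$ by \eqref{sviluppodellaA}. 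Finally \eqref{sviluppodivbeta} is then immediate: $\mathrm{div}\,\beta(z)=\mathrm{tr}\,J_\beta(z)=\mathrm{tr}\big(\mathrm{Id}_{N+1}+O(|z|)\big)=N+1+O(|z|)$.

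I do not expect any genuine obstacle here: once Lemma~\ref{lemmastimamu} is established, the argument is a routine book\-keeping of Landau symbols. The only point deserving a little care is that $A$ (hence each $a_{ij}$) depends on the variable $y$ only, so $\partial_{z_{N+1}}a_{ij}\equiv0$ and the differentiation with respect to the $t$-variable contributes nothing extra; and that every $O(\cdot)$ appearing must be read as uniform in the remaining variables $(y',t)$, which is guaranteed by the uniform-in-$(y',t)$ nature of the expansions \eqref{eq:6} and \eqref{sviluppodellaA} and of the Lipschitz bounds on the entries of $A$.
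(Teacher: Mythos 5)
Your proposal is correct and takes essentially the same approach as the paper, which disposes of this lemma in a single sentence ("The result follows by combining \eqref{sviluppomu}, \eqref{sviluppogradmu} and \eqref{sviluppodellaA}."). You have simply spelled out the componentwise differentiation and the $O(\cdot)$ bookkeeping that the authors leave implicit, and your computations are sound.
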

\begin{proof}
The result follows by combining \eqref{sviluppomu},
\eqref{sviluppogradmu} and \eqref{sviluppodellaA}. 
\end{proof}

\subsection{Some inequalities}\label{sec:some-inequalities}

We recall from \cite[Lemma 2.4]{FalFel} the following Hardy type
inequality with boundary terms, which will be used throughout the paper.

\begin{Lemma}\label{l:hardy_boundary}
  For all $r>0$ and $w\in H^1(B_r^+,t^{1-2s}\,dz)$
  \begin{align*}
    \bigg(\frac{N-2s}{2}\bigg)^{\!\!2}\int_{B_r^+}t^{1-2s}\frac{w^2(z)}{|z|^2}\,dz
\leq \int_{B_r^+}t^{1-2s}\bigg(\nabla w(z)\cdot\frac{z}{|z|}\bigg)^{\!\!2}dz
+    \bigg(\frac{N-2s}{2r}\bigg)\int_{\partial^+B_r^+}t^{1-2s}w^2dS.
  \end{align*}
\end{Lemma}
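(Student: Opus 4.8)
The plan is to prove Lemma \ref{l:hardy_boundary} by a standard integration-by-parts (or "completion of the square") argument applied to the radial derivative of $w$, exploiting the conformal-type structure of the weight $t^{1-2s}$. First I would note that by density it suffices to prove the inequality for $w\in C^\infty(\overline{B_r^+})$, and then take limits in $H^1(B_r^+,t^{1-2s}\,dz)$; all the integrals below are finite for such $w$. Write $z=(x,t)\in\R^{N+1}_+$, set $\partial_r w=\nabla w\cdot z/|z|$ for the radial derivative, and consider the vector field $V(z)=t^{1-2s}\dfrac{z}{|z|^2}\,w^2(z)$. The idea is to compute $\operatorname{div} V$ and integrate over $B_r^+$, using that the divergence theorem on $B_r^+$ produces a boundary contribution only on $\partial^+ B_r^+$ (the spherical cap), because on the flat part $\{t=0\}$ the outward normal is $-e_{N+1}$ and the weight $t^{1-2s}$ vanishes there, killing that boundary term.

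The key computation is
\[
\operatorname{div}\!\Big(t^{1-2s}\frac{z}{|z|^2}\,w^2\Big)
= (1-2s)\,t^{-2s}\,\frac{t}{|z|^2}\,w^2
+ t^{1-2s}\,w^2\,\operatorname{div}\!\Big(\frac{z}{|z|^2}\Big)
+ 2\,t^{1-2s}\,\frac{w}{|z|^2}\,(z\cdot\nabla w).
\]
Since $\operatorname{div}(z/|z|^2)=(N+1)/|z|^2-2/|z|^2=(N-1)/|z|^2$ in $\R^{N+1}$, and the first term equals $(1-2s)t^{1-2s}w^2/|z|^2$, the two weight-related terms combine to give $t^{1-2s}\,\dfrac{(N+1)-2s}{|z|^2}\,w^2 - 2 t^{1-2s}w^2/|z|^2$; wait—more carefully, $(1-2s)+(N-1)=N-2s$, so the zeroth-order part is exactly $t^{1-2s}\,\dfrac{N-2s}{|z|^2}\,w^2$. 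Integrating over $B_r^+$ and applying the divergence theorem on the cap $\partial^+ B_r^+$, where $z/|z|^2\cdot\nu = 1/r$, yields
\[
(N-2s)\int_{B_r^+}t^{1-2s}\frac{w^2}{|z|^2}\,dz
+ 2\int_{B_r^+}t^{1-2s}\frac{w\,\partial_r w}{|z|}\,dz
= \frac1r\int_{\partial^+ B_r^+}t^{1-2s}w^2\,dS.
\]
Rearranging, $(N-2s)\int_{B_r^+}t^{1-2s}w^2/|z|^2\,dz = \frac1r\int_{\partial^+ B_r^+}t^{1-2s}w^2\,dS - 2\int_{B_r^+}t^{1-2s}\frac{w\,\partial_r w}{|z|}\,dz$. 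Now I apply Young's inequality to the last term: for any $\varepsilon>0$,
\[
\Big|2\int_{B_r^+}t^{1-2s}\frac{w\,\partial_r w}{|z|}\,dz\Big|
\le \varepsilon\int_{B_r^+}t^{1-2s}\frac{w^2}{|z|^2}\,dz
+ \frac1\varepsilon\int_{B_r^+}t^{1-2s}(\partial_r w)^2\,dz.
\]
Choosing $\varepsilon=\frac{N-2s}{2}$ gives $\frac{N-2s}{2}\int_{B_r^+}t^{1-2s}w^2/|z|^2\,dz \le \frac1r\int_{\partial^+ B_r^+}t^{1-2s}w^2\,dS \cdot \frac{?}{}$ — let me redo the bookkeeping: moving the $\varepsilon$-term to the left, $(N-2s-\varepsilon)\int t^{1-2s}w^2/|z|^2 \le \frac1r\int_{\partial^+} t^{1-2s}w^2 + \frac1\varepsilon\int t^{1-2s}(\partial_r w)^2$, and with $\varepsilon=\frac{N-2s}{2}$ the coefficient on the left is $\frac{N-2s}{2}$ and $\frac1\varepsilon=\frac2{N-2s}$, so after multiplying through by $\frac{N-2s}{2}$ I obtain exactly
\[
\Big(\frac{N-2s}{2}\Big)^{\!\!2}\int_{B_r^+}t^{1-2s}\frac{w^2}{|z|^2}\,dz
\le \int_{B_r^+}t^{1-2s}(\partial_r w)^2\,dz
+ \frac{N-2s}{2r}\int_{\partial^+ B_r^+}t^{1-2s}w^2\,dS,
\]
which is the claimed inequality since $(\partial_r w)^2=(\nabla w\cdot z/|z|)^2$.

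The main technical obstacle is not the algebra but the justification of the integration by parts near the singularity $z=0$ and near the flat boundary $\{t=0\}$: one must check that $\int_{B_r^+}t^{1-2s}w^2/|z|^2\,dz$ is finite (which, for $w$ smooth up to the origin, follows because $t^{1-2s}|z|^{-2}$ is locally integrable in $\R^{N+1}$ precisely when $N+1-2+(1-2s)>-1$, i.e. $N>2s$, true since $N\ge2>2s$) and that the contribution of $\operatorname{div} V$ over a small half-ball $B_\delta^+$ vanishes as $\delta\to0^+$, together with the vanishing of the flat-boundary term due to the weight. The cleanest way to handle this rigorously is to run the divergence theorem on the annular region $B_r^+\setminus B_\delta^+$, control the inner spherical integral $\frac1\delta\int_{\partial^+ B_\delta^+}t^{1-2s}w^2\,dS = O(\delta^{N-2s})\to0$, and then pass to the limit; since this is cited verbatim from \cite[Lemma 2.4]{FalFel}, I would simply refer there for these routine density and approximation details.
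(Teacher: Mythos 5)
Your proof is correct: the vector-field identity $\operatorname{div}\big(t^{1-2s}|z|^{-2}z\,w^2\big)=(N-2s)t^{1-2s}|z|^{-2}w^2+2t^{1-2s}|z|^{-1}w\,\partial_r w$ is computed correctly, the flat boundary term vanishes because $t^{2-2s}\to 0$, and the choice $\varepsilon=\tfrac{N-2s}{2}$ in Young's inequality produces exactly the stated constants. The paper itself does not reprove this lemma but cites it from Fall--Felli, and your completion-of-the-square / integration-by-parts argument is the standard proof used there, so the approaches agree.
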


In order to prove a coercivity-type inequality, we provide the
following Sobolev type inequality with boundary terms (see Lemma 2.6
in \cite{FalFel}).
\begin{Lemma}\label{lemma2.6FF} There exists $\tilde{S}_{N,s}>0$ such
that, for all $r>0$ and $V\in H^1(B^+_r, t^{1-2s} \,dz)$,
\begin{equation}\label{2.6FF}
\left(\int_{B'_r}|\mathop{\rm Tr} V|^{2^*(s)}dy\right)^{\frac{2}{2^*(s)}}\leq \tilde{S}_{N,s}\biggl[\frac{N-2s}{2r}\int_{\partial^+B_r^+}t^{1-2s}V^2dS+\int_{B_r^+}t^{1-2s}|\nabla V|^2dz\biggr].
\end{equation}
\end{Lemma}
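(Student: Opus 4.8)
The plan is to obtain \eqref{2.6FF} by combining the weighted trace Sobolev inequality \eqref{eq:traccia-H1BR} with the Hardy inequality with boundary term of Lemma \ref{l:hardy_boundary}. Indeed, \eqref{eq:traccia-H1BR} already controls $\|\mathop{\rm Tr}V\|_{L^{2^*(s)}(B_r')}^2$ by a constant times $r^{-2}\int_{B_r^+}t^{1-2s}V^2\,dz+\int_{B_r^+}t^{1-2s}|\nabla V|^2\,dz$, so the whole point of the argument is to trade the interior weighted $L^2$-term for the spherical boundary term appearing on the right-hand side of \eqref{2.6FF}.

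First I would use that $|z|\le r$ on $B_r^+$, so that $r^{-2}\le|z|^{-2}$ there and hence $r^{-2}\int_{B_r^+}t^{1-2s}V^2\,dz\le\int_{B_r^+}t^{1-2s}|z|^{-2}V^2\,dz$. Applying Lemma \ref{l:hardy_boundary} with $w=V$ and discarding the angular derivative via $\big(\nabla V\cdot\tfrac{z}{|z|}\big)^2\le|\nabla V|^2$ then yields
\[
\frac1{r^2}\int_{B_r^+}t^{1-2s}V^2\,dz\le\Big(\tfrac{2}{N-2s}\Big)^{\!2}\bigg[\int_{B_r^+}t^{1-2s}|\nabla V|^2\,dz+\frac{N-2s}{2r}\int_{\partial^+B_r^+}t^{1-2s}V^2\,dS\bigg].
\]
Substituting this into \eqref{eq:traccia-H1BR} and collecting terms gives
\[
\|\mathop{\rm Tr}V\|_{L^{2^*(s)}(B_r')}^2\le C_{N,s}\bigg[\Big(1+\big(\tfrac{2}{N-2s}\big)^2\Big)\int_{B_r^+}t^{1-2s}|\nabla V|^2\,dz+\big(\tfrac{2}{N-2s}\big)^2\,\frac{N-2s}{2r}\int_{\partial^+B_r^+}t^{1-2s}V^2\,dS\bigg],
\]
so that \eqref{2.6FF} follows by choosing $\tilde S_{N,s}:=C_{N,s}\big(1+(2/(N-2s))^2\big)$, which dominates the coefficients of both terms on the right. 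No separate scaling step is needed, since \eqref{eq:traccia-H1BR} and Lemma \ref{l:hardy_boundary} are both stated for arbitrary $r>0$; had one preferred to reduce to $r=1$ first, one would simply observe that each of the three quantities in \eqref{2.6FF} transforms like $r^{N-2s}$ under $V\mapsto V(r\,\cdot)$.

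The proof presents essentially no obstacle beyond bookkeeping, because the two substantial ingredients — the weighted trace embedding into $L^{2^*(s)}$ recorded in \eqref{eq:traccia-H1BR} (proved by a covering/reflection argument, cf.\ \cite[Proposition 2.1]{JLX}) and the Hardy inequality with spherical boundary term of Lemma \ref{l:hardy_boundary} (cf.\ \cite[Lemma 2.4]{FalFel}) — are both at our disposal. The only point requiring a little care is the interplay of the dimensional constants: one must check that the Hardy constant $\big(\tfrac{N-2s}{2}\big)^2$ is precisely compatible with the weight $\tfrac{N-2s}{2r}$ in front of the boundary term, which is exactly what allows the two inequalities to be chained without leaving any uncontrolled remainder.
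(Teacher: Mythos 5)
Your proof is correct. The paper itself gives no proof of this lemma---it simply cites \cite[Lemma 2.6]{FalFel}---so there is nothing internal to compare against; but your derivation is exactly the natural one and indeed the one carried out in that reference: use the pointwise bound $r^{-2}\le|z|^{-2}$ on $B_r^+$, feed the Hardy inequality of Lemma~\ref{l:hardy_boundary} (after the crude estimate $(\nabla V\cdot z/|z|)^2\le|\nabla V|^2$) into the trace inequality \eqref{eq:traccia-H1BR}, and absorb the constants. The only point worth flagging is that the bound on the interior $L^2$-term you obtain is two-sided in the sense that it produces both a gradient term and the spherical boundary term with coefficient $(2/(N-2s))^2\cdot\frac{N-2s}{2r}$, so strictly speaking $\tilde S_{N,s}$ should be taken as $C_{N,s}\max\{1+(2/(N-2s))^2,\,(2/(N-2s))^2\}$; your choice $C_{N,s}(1+(2/(N-2s))^2)$ is that maximum, so the bookkeeping closes. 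Your remark about scaling is also apt: since both ingredients are already stated for arbitrary $r>0$, no separate reduction to $r=1$ is required, and the $r$-dependence in the coefficients $1/r^2$ and $1/r$ is exactly what makes the three terms in \eqref{2.6FF} scale homogeneously like $r^{N-2s}$ under $V\mapsto V(r\cdot)$.
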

Exploiting \eqref{2.6FF} we now prove the following inequality which
will be useful in the sequel.
\begin{Lemma}\label{lemma3.12FF}
For every $\bar{\alpha}>0$, there exists $r(\bar{\alpha})\in
(0,R_1)$ such that, for any $0<r\leq r(\bar{\alpha})$,
$\zeta\in~\!\!L^p(B'_{R_1})$ such that
$\|\zeta\|_{L^p(B'_{R_1})}\leq\bar{\alpha}$ and $V\in H^1(B_r^+,t^{1-2s}\,dz)$,
\begin{align}\label{coercvity}
\int_{B_r^+}t^{1-2s}A\nabla V\cdot\nabla V\, dz-\kappa_s&\int_{B'_r}\zeta |\mathop{\rm Tr} V|^2dy+\frac{N-2s}{2r}\int_{\partial^+B_r^+}t^{1-2s}\mu V^2\, dS\\
\notag&\geq \tilde C_{N,s}\left(\int_{B_r^+}t^{1-2s}|\nabla V|^2\, dz+\left(\int_{B'_r}|\mathop{\rm Tr} V|^{2^*(s)}dy\right)^{\frac{2}{2^*(s)}}\right),
\end{align}
for some positive constant $\tilde C_{N,s}>0$
depending only on $N$ and $s$.
\end{Lemma}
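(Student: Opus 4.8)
The plan is to combine the three ingredients already at hand: the uniform ellipticity and smallness of $A$ near the origin coming from \eqref{sviluppodellaA}, the expansion $\mu(z)=1+O(|z|)$ from \eqref{sviluppomu}, and the Sobolev inequality with boundary term \eqref{2.6FF}. First I would fix a radius $r_0\in(0,R_1)$ small enough that $A(y)\geq \tfrac12\,\mathrm{Id}_{N+1}$ in $B'_{r_0}$ and $\mu(z)\geq\tfrac12$ in $B_{r_0}$; this is possible by \eqref{sviluppodellaA} and Lemma \ref{lemmastimamu}. On such a ball we then have, for every $0<r\leq r_0$ and $V\in H^1(B_r^+,t^{1-2s}\,dz)$,
\begin{equation*}
\int_{B_r^+}t^{1-2s}A\nabla V\cdot\nabla V\,dz+\frac{N-2s}{2r}\int_{\partial^+B_r^+}t^{1-2s}\mu V^2\,dS
\geq \frac12\left(\int_{B_r^+}t^{1-2s}|\nabla V|^2\,dz+\frac{N-2s}{2r}\int_{\partial^+B_r^+}t^{1-2s}V^2\,dS\right).
\end{equation*}

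Next I would control the boundary potential term. By H\"older's inequality with exponents $\tfrac{2^*(s)}{2}$ and its conjugate, together with $\|\zeta\|_{L^p(B'_{R_1})}\leq\bar\alpha$ (here one uses $p>\tfrac N{2s}$, equivalently $p'<\tfrac{2^*(s)}{2}$, so the remaining exponent on $\mathop{\rm Tr}V$ is below $2^*(s)$ and interpolation/embedding on the bounded set $B'_r$ applies) one gets
\begin{equation*}
\kappa_s\int_{B'_r}|\zeta|\,|\mathop{\rm Tr}V|^2\,dy
\leq \kappa_s\,\|\zeta\|_{L^p(B'_r)}\,|B'_r|^{\gamma}\left(\int_{B'_r}|\mathop{\rm Tr}V|^{2^*(s)}\,dy\right)^{2/2^*(s)}
\leq \kappa_s\,\bar\alpha\, \omega_{N}^{\gamma}\, r^{N\gamma}\left(\int_{B'_r}|\mathop{\rm Tr}V|^{2^*(s)}\,dy\right)^{2/2^*(s)},
\end{equation*}
for a suitable exponent $\gamma>0$ depending only on $N,s,p$ and with $\omega_N=|B_1'|$. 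Applying \eqref{2.6FF} to the last factor, this is bounded by
\begin{equation*}
\kappa_s\bar\alpha\,\omega_N^\gamma\,r^{N\gamma}\,\tilde S_{N,s}\left(\frac{N-2s}{2r}\int_{\partial^+B_r^+}t^{1-2s}V^2\,dS+\int_{B_r^+}t^{1-2s}|\nabla V|^2\,dz\right).
\end{equation*}
Choosing $r(\bar\alpha)\in(0,r_0]$ so small that $\kappa_s\bar\alpha\,\omega_N^\gamma\,\tilde S_{N,s}\,(r(\bar\alpha))^{N\gamma}\leq \tfrac14$, for all $0<r\leq r(\bar\alpha)$ the potential term absorbs a quarter of the right-hand side of the first displayed inequality, leaving
\begin{equation*}
\int_{B_r^+}t^{1-2s}A\nabla V\cdot\nabla V\,dz-\kappa_s\int_{B'_r}\zeta|\mathop{\rm Tr}V|^2\,dy+\frac{N-2s}{2r}\int_{\partial^+B_r^+}t^{1-2s}\mu V^2\,dS\geq \frac14\int_{B_r^+}t^{1-2s}|\nabla V|^2\,dz+\frac18\cdot\frac{N-2s}{2r}\int_{\partial^+B_r^+}t^{1-2s}V^2\,dS.
\end{equation*}

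Finally I would recover the $L^{2^*(s)}$ trace term on the right: apply \eqref{2.6FF} once more to write $\big(\int_{B'_r}|\mathop{\rm Tr}V|^{2^*(s)}\big)^{2/2^*(s)}\leq \tilde S_{N,s}\big(\tfrac{N-2s}{2r}\int_{\partial^+B_r^+}t^{1-2s}V^2\,dS+\int_{B_r^+}t^{1-2s}|\nabla V|^2\,dz\big)$, and combine it with the previous bound to produce a constant $\tilde C_{N,s}$ in front of $\int_{B_r^+}t^{1-2s}|\nabla V|^2\,dz+\big(\int_{B'_r}|\mathop{\rm Tr}V|^{2^*(s)}\big)^{2/2^*(s)}$; concretely one may take $\tilde C_{N,s}=\tfrac1{8}\min\{1,(\tilde S_{N,s})^{-1}\}$ or similar. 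Note $r(\bar\alpha)$ and $\tilde C_{N,s}$ depend only on $N,s$ (and $\bar\alpha$ for the radius), as required, since $\omega_N,\tilde S_{N,s},\kappa_s$ do. The main point to get right is the H\"older bookkeeping in the second step — making sure the exponent $\gamma$ is strictly positive so that the small factor $r^{N\gamma}$ is available for the absorption — and the order of the two absorptions (first the potential into the ellipticity/boundary terms, then splitting off a piece of the gradient to feed \eqref{2.6FF}); everything else is a routine chain of the already-established estimates.
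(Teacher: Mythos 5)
Your proposal is correct and matches the paper's proof in all essentials: ellipticity of $A$ to get $\tfrac12$ of the gradient, positivity of $\mu$ plus the Sobolev trace inequality \eqref{2.6FF} to convert the boundary term, H\"older with $p>N/(2s)$ to pull a positive power of $r$ (your $r^{N\gamma}$ is exactly the paper's $r^{\overline\varepsilon}$ with $\overline\varepsilon=(2sp-N)/p$), and then shrink $r$ to absorb. The only cosmetic difference is the order of absorption — the paper converts the boundary term into the $L^{2^*(s)}$ trace once and compares the potential against that directly, whereas you first send the potential to the gradient/boundary side and then apply \eqref{2.6FF} a second time to recover the $L^{2^*(s)}$ term — but the bookkeeping is equivalent.
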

\begin{proof}
Let us estimate from below each term in the left hand side of
\eqref{coercvity}. To this aim, exploiting \eqref{sviluppodellaA}, we
can choose $r_1\in (0,R_1)$ such that, for all $0<r\leq r_1$
and $V\in H^1(B_r^+,t^{1-2s}\,dz)$,
\begin{equation}\label{eqcoerc1}
\int_{B_r^+}t^{1-2s}A\nabla V\cdot\nabla V\ dz\geq \frac{1}{2}\int_{B_r^+}t^{1-2s}|\nabla V|^2\ dz. 
\end{equation}
Furthermore, thanks to \eqref{sviluppomu}, we can assert that $\mu
\geq 1/4$ in $B_r$ if $0<r\leq r_2$, for some $r_2\in
(0,R_1)$.
Hence, using \eqref{2.6FF}, we deduce that, for all
$0<r\leq r_2$
and $V\in H^1(B_r^+,t^{1-2s}\,dz)$,
\begin{equation}\label{eqcoerc2}
\frac{N-2s}{2r}\int_{\partial^+B_r^+}t^{1-2s}\mu V^2\, dS\geq
\frac{1}{4\tilde{S}_{N,s}} \left(\int_{B'_r}|\mathop{\rm Tr}
V|^{2^*(s)}dy\right)^{\frac{2}{2^*(s)}}-\frac{1}{4}\int_{B^+_r}t^{1-2s}|\nabla
V|^2\, dz.
\end{equation}
By H\"{o}lder's inequality, we infer that,
for all $r\in (0,R_1)$, $V\in
H^1(B_r^+,t^{1-2s}\,dz)$, and $\zeta\in L^p(B'_{R_1})$ such that
$\|\zeta\|_{L^p(B'_{R_1})}\leq\bar{\alpha}$,
\begin{align}\label{useful}
\int_{B'_r}\zeta\,|\mathop{\rm Tr} V|^2\, dy&\leq \tilde
c_{N,s,p}\,r^{\overline\varepsilon}\Vert
\zeta\,\Vert_{L^p(B'_{R_1})}\left(\int_{B'_r}|\mathop{\rm Tr}
V|^{2^*(s)} dy\right)^{\!\!\frac{2}{2^*(s)}}\\
&\notag\leq \tilde c_{N,s,p}\bar{\alpha}\,r^{\overline\varepsilon}\left(\int_{B'_r}|\mathop{\rm Tr} V|^{2^*(s)} dy\right)^{\!\!\frac{2}{2^*(s)}}
\end{align}
for some $\tilde c_{N,s,p}>0$ (depending only on $p,N,s$), where 
\begin{equation}\label{overepsilon}
\overline\varepsilon=\frac{2sp-N}{p}>0.
\end{equation}
Selecting $r_3=r_3(\bar{\alpha})\in (0,R_1)$ such that 
\begin{equation}\label{<1/8}
\kappa_s \tilde c_{N,s,p}\bar{\alpha} r^{\overline\varepsilon}\leq\frac{1}{8\tilde{S}_{N,s}}
\quad\text{for all $0<r\leq r_3$}
\end{equation}
and combining \eqref{eqcoerc1}, \eqref{eqcoerc2},
and \eqref{useful}, we obtain that, for all $0<r\leq r(\bar{\alpha}):=\min\{r_1,r_2,r_3\}$ and 
$V\in H^1(B_r^+,t^{1-2s}\,dz)$,
\begin{equation*}
\begin{split}
\int_{B_r^+}t^{1-2s}A\nabla V\cdot\nabla V\, dz&-\kappa_s\int_{B'_r}\zeta |\mathop{\rm Tr} V|^2dy+\frac{N-2s}{2r}\int_{\partial^+B_r^+}t^{1-2s}\mu V^2\, dS\\
&\geq \frac{1}{4}\int_{B^+_r}t^{1-2s}|\nabla V|^2dz
+\biggl(\frac{1}{4\tilde{S} _{N,s}}-\kappa_s
\tilde c_{N,s,p}\bar{\alpha}\, r^{\overline\varepsilon}\biggr)\left(\int_{B'_r}|\mathop{\rm Tr} V|^{2^*(s)}dy\right)^{\frac{2}{2^*(s)}}\\
&\geq \tilde C_{N,s}\left(\int_{B_r^+}t^{1-2s}|\nabla
V|^2dz+\left(\int_{B'_r}|\mathop{\rm Tr}V|^{2^*(s)}dy\right)^{\frac{2}{2^*(s)}}\right),
\end{split}
\end{equation*}
where $\tilde C_{N,s}:=\min\big\{\frac14,\frac1{8\tilde{S}_{N,s}}\big\}$, thus proving \eqref{coercvity}.
\end{proof}
\begin{remark} 
For $\bar{\alpha}>0$, let $r(\bar{\alpha})$ and $\tilde c_{N,s,p}$ be as 
in Lemma
\ref{lemma3.12FF} and let 
$\zeta\in L^p(B'_{R_1})$ be such that
$\|\zeta\|_{L^p(B'_{R_1})}\leq\bar{\alpha}$. Then, for every $r\in(0,r(\bar{\alpha})]$ and 
$V\in H^1(B^+_r, t^{1-2s} \,dz)$, we have that
\begin{equation}\label{remark}
  \int_{B'_r}\zeta|\mathop{\rm Tr} V|^2dy\leq\tilde{S}_{N,s}\tilde
  c_{N,s,p}
  r^{\overline{\varepsilon}}\bar{\alpha}\frac{2(N-2s)}{r}
  \int_{\partial^+B_r^+}t^{1-2s}\mu V^2 dS+\frac{1}{8\kappa_s}\int_{B^+_r}t^{1-2s}|\nabla V|^2\ dz.
\end{equation}
\end{remark}
\begin{proof}
Applying \eqref{useful} and \eqref{2.6FF}, we obtain \eqref{remark}, taking into account that, for all $0<r\leq r(\bar{\alpha})$, \eqref{<1/8} holds and $\mu \geq 1/4$.
\end{proof}

\subsection{Construction of the approximating sequence and convergence}\label{sec:constr-appr-sequ}

The main  difficulty in the proof of a Pohozaev type
identity for problem \eqref{prob4}, which is needed to
differentiate the Almgren quotient, relies in a substancial  lack of regularity at Dirichlet-Neumann
junctions. Here we face this
difficulty by a double approximation procedure, involving both the
potential $h$ and the $N$-dimensional region $\Gamma_{R_1}^+$ where the  solution to \eqref{prob4} is
forced to vanish.

Let $\eta\in C^{\infty}([0,+\infty))$ be such that
\begin{equation}\label{eq:13}
  \eta\equiv1 \text{ in
    $[0,1/2]$},\quad \eta\equiv0 \text{ in $[1,+\infty)$},\quad 0\leq\eta\leq1 \quad\text{and}\quad
  \eta'\leq0.
\end{equation}
Let 
\begin{equation*}
  f:[0,+\infty)\to\R,\quad f(t)=\eta(t)+(1-\eta(t))t^{1/4}.
\end{equation*}
We observe that 
\begin{equation}\label{eq:pf}
f\in C^{\infty}([0,+\infty)),\quad f(t)=1 \text{ for all
$t\in[0,1/2]$},\quad\text{and}\quad
f(t)-4t\,f'(t) \geq0\text{ for all }t\geq0.
\end{equation}
Furthermore 
\begin{equation}\label{eq:pf2}
f(t)\geq\frac12 \quad\text{and}\quad 
f(t)\geq t^{1/4}\quad \text{for all }t\geq0.
\end{equation}
For every $n\in\mathbb{N}\setminus\{0\}$, we consider the function
\begin{equation*}
f_n(t)=\frac{f(nt)}{n^{1/8}}.
\end{equation*}
Then, \eqref{eq:pf} implies that 
\begin{equation}\label{eq:7}
f_n\in C^{\infty}([0,+\infty)),\quad f_n(t)=n^{-1/8} \text{ for all
$t\in [0,1/2n]$},\quad
f_n(t)-4t\,f_n'(t) \geq0\text{ for all }t\geq0,
\end{equation}
whereas \eqref{eq:pf2} yields
\begin{equation}\label{eq:2}
f_n(t)\geq\frac12 n^{-1/8} \quad\text{and}\quad 
f_n(t)\geq n^{1/8} t^{1/4}\quad \text{for all }t\geq0.
\end{equation}
By \eqref{eq:tildehw1p} and density of smooth
functions in Sobolev spaces,
there exists a sequence of potential terms
$h_n\in C^{\infty}\big(\overline{\Gamma_{R_1}^-}\big)$ such that
\begin{equation}\label{eq:20}
h_n\to \tilde h \quad\text{in }W^{1,p}(\Gamma_{R_1}^-).
\end{equation}
Let 
\begin{equation}\label{eq:alpha0}
\bar{\alpha}_0=\sup_n\|h_n\|_{L^{p}(\Gamma_{R_1}^-)}
\end{equation}
and set 
\begin{equation}\label{eq:R_0}
R_0=r(\bar{\alpha}_0)
\end{equation}
according to the notation introduced in Lemma \ref{lemma3.12FF}.
\begin{remark}\label{r:r0}
Because of the above choice of $R_0$, we have that 
\eqref{coercvity} holds with any $\zeta\in L^p(B'_{R_1})$ such that
$|\zeta|\leq |h_n|$ a.e. (being $h_n$ trivially
extended in $B_{R_1}'\setminus \Gamma_{R_1}^-$), for any $n\in{\mathbb N}\setminus\{0\}$, $r\leq
R_0$, and for
all $V\in H^1(B_r^+,t^{1-2s}\,dz)$.
\end{remark}

Let us define, for all $n\in{\mathbb N}\setminus\{0\}$, 
\begin{equation}\label{gamman}
\gamma_n=\{(y',y_N,t)\in \overline{B_{R_0}^+}: \ y_N=f_n(t)\},
\end{equation}
with $R_0$ as in \eqref{eq:R_0}.
If $(y',y_N,t)\in \gamma_n$, then from \eqref{eq:2}
it follows that 
\begin{equation*}
n^{1/8} t^{1/4}\leq
f_n(t)=y_N\leq R_0,
\end{equation*}
thus proving that
\begin{equation}\label{eq:5}
t\leq \frac{R_0^4}{\sqrt{n}}\quad \text{if}\quad (y',y_N,t)\in \gamma_n.
\end{equation}
The approximating domains are defined as 
\begin{equation}\label{An}
\mathcal U_n=\{(y',y_N,t)\in B ^+_{R_0}:y_N<f_n(t)\}
\end{equation}
with topological boundary
$$\partial \mathcal U_n=\sigma_n\cup\gamma_n\cup\tau_n,$$
where $\gamma_n$ has been defined in \eqref{gamman} and 
\begin{equation*}
\sigma_n=\left\{(y',y_N)\in B'_{R_0} : y_N<\frac{1}{n^{1/8}}\right\},\quad
\tau_n=\left\{(y',y_N,t)\in \partial B_{R_0} : t\geq 0,\ y_N<f_n(t)\right\},
\end{equation*}
see Figure \ref{fig:approx-domains}.

\begin{figure}[ht]
\begin{tikzpicture}[line cap=round,line join=round,>=triangle 45, scale=3]
\draw [->] (-1.2,0)-- (1.2,0) node [right] {$y_N$};
\draw [->] (0,0)-- (0,1.2) node [left] {$t$};
\draw [black, fill=black, opacity=0.3] (-1,0) to [out=90, in=180] (0,1) to [out=0, in=120] (0.866,0.5) to [out=238, in=20] (0.43,0.15) to [out=200, in=90]  (0.33,0.076) to [out=270, in=90] (0.35,0.040) to [out=270, in=90] (0.35,0) -- (-1,0);
\draw  (-1,0) to [out=90, in=180] (-1,0) to [out=90, in=180] (0,1) to [out=0, in=120] (0.866,0.5) to [out=238, in=20] (0.43,0.15) to [out=200, in=90]  (0.33,0.076) to [out=270, in=90] (0.35,0.040) to [out=270, in=90] (0.35,0) -- (-1,0);
\draw[fill] (0.35,0) circle [radius=0.010];
			\node [below] at (0.33,0) {$n^{-1/8}$};
			\draw[color=black] (-0.5,-0.12) node {$\sigma_n$};
\draw[color=black] (-0.6,0.9) node {$\tau_n$};
\draw[color=black] (0.7,0.2) node {$\gamma_n$};			
\end{tikzpicture}
   \caption{Vertical section of the approximating domain $\mathcal U_n$.}
\label{fig:approx-domains}
\end{figure}
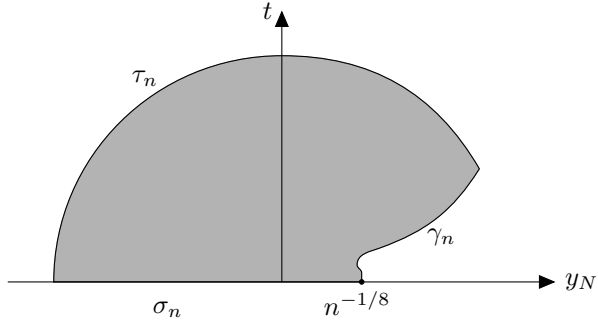

The functions $f_n$ above have been constructed with the aim of
making $\mathcal U_n$ satisfy the following geometric property, which will be needed in the proof of
a
monotonicity formula.
\begin{Lemma}\label{segnogamma}
There exists $\bar n\in{\mathbb N}\setminus\{0\}$ such
that, for all $n\geq\bar n$ and $z=(y,t)\in\gamma_n\cap B_{R_0}^+$,
\begin{equation}\label{eq:gc}
A(y)z\cdot\nu\geq0\quad\text{on }\gamma_n,
\end{equation}
where 
$\gamma_n$ has been defined in \eqref{gamman}
and $\nu=\nu(z)$ is the exterior unit normal vector at $z\in\partial \mathcal
U_n$. 
\end{Lemma}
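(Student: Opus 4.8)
The plan is to realize $\gamma_n$ as a graph, compute the exterior unit normal explicitly, and then read off the sign of $A(y)z\cdot\nu$ from the block-diagonal structure \eqref{eq:1} of $A$ together with the structural inequality \eqref{eq:7} built into $f_n$. Near a point $z=(y,t)=(y',y_N,t)\in\gamma_n\cap B_{R_0}^+$ the set $\mathcal U_n$ is, by \eqref{An} and \eqref{gamman}, the sublevel set $\{\Phi<0\}$ of $\Phi(y,t)=y_N-f_n(t)$, so the exterior unit normal at $z$ is
\[
\nu(z)=\frac{1}{\sqrt{1+(f_n'(t))^2}}\,(0,\dots,0,1,-f_n'(t)),
\]
the nonzero entries lying in the $y_N$- and $t$-slots. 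Since $A(y)z=\big(D(y)y,\alpha(y)t\big)$ by \eqref{eq:1}, this gives
\[
A(y)z\cdot\nu(z)=\frac{(D(y)y)_N-\alpha(y)\,t\,f_n'(t)}{\sqrt{1+(f_n'(t))^2}},
\]
and therefore \eqref{eq:gc} is equivalent to the pointwise inequality $(D(y)y)_N\geq\alpha(y)\,t\,f_n'(t)$ on $\gamma_n\cap B_{R_0}^+$.

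For the left-hand side I would observe that on $\gamma_n$ one has $y_N=f_n(t)\geq\tfrac12 n^{-1/8}>0$ by \eqref{eq:2}, and then use the expansion \eqref{eq:6} of $D$: extracting its last row,
\[
(D(y)y)_N=\big(1+O(|y'|^2)+O(y_N)\big)\,y_N+O(y_N)\,y',
\]
so, since $|y'|,|y_N|\leq|z|<R_0<R_1$ and $y_N>0$, there is a constant $C>0$ depending only on $g$ (hence independent of $n$) with $(D(y)y)_N\geq y_N(1-CR_1)\geq\tfrac12 y_N$, provided $R_1$ has been chosen (or is further shrunk) so that $CR_1\leq\tfrac12$. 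For the right-hand side I would invoke \eqref{eq:7}, which gives $4\,t\,f_n'(t)\leq f_n(t)=y_N$ for every $t\geq0$; combined with $\alpha(y)>0$, $t\geq0$, $f_n(t)\geq0$, and $\alpha(y)\leq2$ (from \eqref{eq:8}, $\alpha$ being the bottom-right entry of $A$ in \eqref{eq:1}, or directly from \eqref{eq:det-jac}), this yields
\[
\alpha(y)\,t\,f_n'(t)\leq\tfrac14\,\alpha(y)\,y_N\leq\tfrac12\,y_N,
\]
the case $t\,f_n'(t)<0$ being trivial. Putting the two bounds together gives $(D(y)y)_N-\alpha(y)\,t\,f_n'(t)\geq\tfrac12 y_N-\tfrac12 y_N=0$, hence \eqref{eq:gc}. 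No largeness of $n$ is actually used, so one may take $\bar n=1$ (for small $n$ the set $\gamma_n\cap B_{R_0}^+$ is in any case empty, as there $y_N\geq\tfrac12 n^{-1/8}$).

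I do not expect a serious obstacle: once the normal is computed and the diagonal structure \eqref{eq:1} exploited, the statement reduces to the elementary pointwise inequality above, whose two halves are supplied precisely by \eqref{eq:6} and by the convolution-type design of $f_n$ encoded in \eqref{eq:7}. The only point needing a little care is the control of the remainders in \eqref{eq:6} over \emph{all} of $\gamma_n\cap B_{R_0}^+$: although \eqref{eq:5} forces $t\leq R_0^4/\sqrt n$ to be small on $\gamma_n$, the height $y_N=f_n(t)$ there ranges up to order $R_0$ and is \emph{not} small, so the lower bound $(D(y)y)_N\geq\tfrac12 y_N$ must come from the smallness of $R_1$ rather than from that of $t$ — which is the only place the radius enters, and the reason one keeps the freedom to shrink $R_1$.
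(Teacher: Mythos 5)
Your proof is correct and follows essentially the same route as the paper: compute the exterior normal to the graph $\{y_N=f_n(t)\}$, exploit the block-diagonal form \eqref{eq:1} of $A$ to reduce \eqref{eq:gc} to $(D(y)y)_N\geq\alpha(y)\,t\,f_n'(t)$, use \eqref{eq:6} (with $R_1$ shrunk) to bound $(D(y)y)_N\geq\tfrac12 y_N$, and invoke the built-in inequality $f_n(t)-4tf_n'(t)\geq0$ from \eqref{eq:7} together with $\alpha\leq2$ to close the estimate. The only cosmetic difference is that you obtain $\alpha\leq2$ from the operator-norm bound \eqref{eq:8} rather than from the determinant expansion \eqref{eq:det-jac}--\eqref{eq:alpha}, and your observation that $\bar n=1$ already works (since $\gamma_n\cap B_{R_0}^+$ is empty for small $n$) is a harmless sharpening of the statement.
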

\begin{proof}
For all $z=(y,t)\in \gamma_n\cap B_{R_0}^+$ we have that 
$\nu=\nu(z)=\frac{\mathfrak n}{|\mathfrak n|}$ where $\mathfrak n=({\mathbf 0},1,-f'_n(t))$.
Hence, from \eqref{eq:1} and \eqref{eq:6} it follows that
\begin{equation*}
A(y)(y,t)\cdot \mathfrak n=(D(y)y,\alpha(y)t)\cdot(({\mathbf 0},1),-f'_n(t))
=y_N(1+O(|y'|)+O(y_N))-\alpha(y)tf'_n(t).
\end{equation*}
Therefore, possibly choosing 
$R_1$ (and consequently $R_0$) smaller from the beginning and recalling
\eqref{eq:det-jac}--\eqref{eq:alpha},
we obtain that 
\begin{align*}
A(y)(y,t)\cdot \mathfrak n\geq \begin{cases}
\frac{y_N}{2}-2tf'_n(t)=\frac{1}{2}(f_n(t)-4tf'_n(t)) &\mathrm{if \ }f'_n(t)\geq0\\
\frac{y_N}{2} &\mathrm{if \ }f'_n(t)\leq0
\end{cases}
\end{align*}
thus concluding that $A(y)(y,t)\cdot \mathfrak n\geq0$ in view of \eqref{eq:7}.
\end{proof}

Now we construct a sequence $U_n$ of solutions to some suitable
approximating problems in the domains $\mathcal U_n$ defined in
\eqref{An}, which converges strongly to $W$ in the weighted Sobolev
space $H^1(B_{R_0}^+,t^{1-2s}\,dz)$. 
The functions $U_n$ will be sufficiently regular to
satisfy a 
Rellich-Ne\u{c}as identity and make it integrable on $\mathcal U_n$,
thus allowing us to obtain a 
Pohozaev type identity for $U_n$ with some remainder terms produced by
the transition of the boundary conditions, whose sign can
anyway be understood thanks to
the geometric property \eqref{eq:gc}; therefore, passing to the limit
in 
the Pohozaev identity satisfied by $U_n$, we end up with inequality
\eqref{pohozaev} for $W$, which will be used to estimate from below the
derivative of the Almgren frequency function \eqref{N} and then to
prove that such frequency has a finite limit at 0 (Proposition \ref{Lemmalimitexists}).

Let $W\in H^1(B_{R_1}^+,t^{1-2s}\,dz)$ be a non-trivial energy
solution to \eqref{prob4}, in the sense clarified in
\eqref{eq:9}. By density, there exists a sequence of functions
$G_n\in C^{\infty}_c(\overline{B_{R_1}^+}\setminus\Gamma_{R_1}^+)$
such that $G_n\to W$ strongly in
$H^1(B_{R_1}^+,t^{1-2s}\,dz)$. Thanks to \eqref{eq:5}, without loss of generality
we
can assume that
$G_n=0$ on $\gamma_n$. 

We construct a sequence of cut-off functions in the following way:
letting $\eta\in C^\infty([0,+\infty))$ be as in
\eqref{eq:13}, we define
\begin{equation}\label{eq:22}
\eta_n:\R^{N}\to\R,\quad \eta_n(y',y_N)=\begin{cases}
1-\eta\left(-\tfrac{n y_N}{2}\right)&\text{if }y_N\leq 0,\\
0&\text{if }y_N> 0.
\end{cases}
\end{equation}
For any fixed $n\in\mathbb{N}$, we consider the following boundary value problem 
\begin{equation}\label{eq:26}
\begin{cases}
-\mathrm{div}\left(t^{1-2s}A\nabla U_n\right)=0 &\mathrm{in \ } \mathcal U_n,\\
\lim_{t\to0^+}\left(
t^{1-2s}A\nabla U_n\cdot\nu\right)=\kappa_s\eta_nh_n \mathop{\rm Tr}U_n &\mathrm{in \ } \sigma_n,\\
U_n=G_n & \mathrm{in \ } \tau_n\cup\gamma_n,
\end{cases}
\end{equation}
in a weak sense, i.e.
\begin{equation}\label{prob5}
\begin{cases}U_n-G_n\in \mathcal H_n,\\[5pt]
{\displaystyle{\int_{\mathcal U_n} t^{1-2s}A\nabla U_n\cdot\nabla\Phi\ dz
-\kappa_s\int_{\sigma_n}\eta_n h_n \mathop{\rm Tr}U_n\,\mathop{\rm Tr}\Phi\ dy}}=0
\quad\text{for all }\Phi\in\mathcal H_n,
\end{cases}
\end{equation}
where $\mathcal{H}_n$ is defined as the closure of
$C^\infty_c({\mathcal U_n}\cup \sigma_n)$ in $H^1(\mathcal
U_n,t^{1-2s}\,dz)$. 
Existence of solutions to \eqref{prob5} and their convergence to $W$
are established in the following proposition.
\begin{Proposition}\label{existenceuniqueness}
For any fixed $n\in\mathbb{N}$, there exists a unique solution $U_n$
to \eqref{prob5}. Moreover $U_n\to W$ strongly in
$H^1(B_{R_0}^+,t^{1-2s}dz)$ 
(where $U_n$ is extended trivially to zero in $B_{R_0}^+\setminus
\mathcal U_n$).
\end{Proposition}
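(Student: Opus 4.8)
The plan is to split the statement into an existence--uniqueness part, proved by the direct method / Lax--Milgram, and a convergence part, proved by energy estimates plus a compactness argument. For the first part, fix $n$ and work in the affine space $G_n+\mathcal H_n$. The bilinear form
\[
a(\Phi,\Psi)=\int_{\mathcal U_n}t^{1-2s}A\nabla\Phi\cdot\nabla\Psi\,dz-\kappa_s\int_{\sigma_n}\eta_n h_n\,\mathop{\rm Tr}\Phi\,\mathop{\rm Tr}\Psi\,dy
\]
is continuous on $\mathcal H_n\times\mathcal H_n$ (using $\eta_nh_n\in L^p$, H\"older, and the trace inequality \eqref{eq:traccia-H1BR}), and it is coercive on $\mathcal H_n$: since $\sigma_n\subset B'_{R_0}$ with $R_0=r(\bar\alpha_0)$ and $|\eta_nh_n|\le|h_n|$ with $\|h_n\|_{L^p}\le\bar\alpha_0$, Lemma \ref{lemma3.12FF} (applied on $B_{R_0}^+\supset\mathcal U_n$ after trivial extension, cf.\ Remark \ref{r:r0}) together with the Hardy-type Lemma \ref{l:hardy_boundary} controls the lower-order term and yields $a(\Phi,\Phi)\ge c\|\Phi\|_{H^1(\mathcal U_n,t^{1-2s}dz)}^2$ for $\Phi\in\mathcal H_n$ (the boundary term on $\partial^+B_{R_0}^+$ vanishes because $\Phi$ has compact support away from it). The linear functional $\Phi\mapsto -a(G_n,\Phi)$ is bounded, so Lax--Milgram produces a unique $V_n\in\mathcal H_n$ with $a(V_n,\Phi)=-a(G_n,\Phi)$ for all $\Phi\in\mathcal H_n$; then $U_n:=V_n+G_n$ is the unique solution of \eqref{prob5}.

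For the convergence part, I would first derive a uniform $H^1(\mathcal U_n,t^{1-2s}dz)$ bound on $U_n$. Testing \eqref{prob5} with $\Phi=U_n-G_n\in\mathcal H_n$ gives
\[
\int_{\mathcal U_n}t^{1-2s}A\nabla U_n\cdot\nabla U_n\,dz-\kappa_s\int_{\sigma_n}\eta_n h_n|\mathop{\rm Tr}U_n|^2\,dy=\int_{\mathcal U_n}t^{1-2s}A\nabla U_n\cdot\nabla G_n\,dz-\kappa_s\int_{\sigma_n}\eta_n h_n\,\mathop{\rm Tr}U_n\,\mathop{\rm Tr}G_n\,dy,
\]
and the coercivity estimate above, combined with the Young inequality on the right and the uniform bound $\|G_n\|_{H^1}\le C$ (since $G_n\to W$), yields $\sup_n\|U_n\|_{H^1(\mathcal U_n,t^{1-2s}dz)}<\infty$. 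Extending $U_n$ by $0$ to $B_{R_0}^+$ (this extension lies in $H^1(B_{R_0}^+,t^{1-2s}dz)$ because $U_n=G_n=0$ on $\gamma_n$ and $U_n\in\mathcal H_n+G_n$ forces the right trace behaviour across $\gamma_n$), we get a bounded sequence in $H^1(B_{R_0}^+,t^{1-2s}dz)$, hence a subsequence converging weakly to some $\widetilde W$; by compactness of the weighted embedding and of the trace (Lemma \ref{lemma2.6FF} / \eqref{eq:traccia-H1BR}) the traces converge in $L^2(B'_{R_0})$. Using $\eta_n\to 1$ a.e.\ on $\{y_N<0\}$, $h_n\to\tilde h$ in $W^{1,p}$, and the convergence $\chi_{\mathcal U_n}\to\chi_{B_{R_0}^+\cap\{y_N<0\}}$, one passes to the limit in the weak formulation \eqref{prob5}: for any $\Phi\in C^\infty_c(B_{R_0}^+\cup\Gamma_{R_0}^-)$, eventually $\Phi$ is an admissible test function (its support avoids $\gamma_n$ for large $n$ by \eqref{eq:5}), and the limit identity shows that $\widetilde W$ solves \eqref{prob4} on $B_{R_0}^+$ with the same boundary data as $W$; by uniqueness for \eqref{prob4} (coercivity again, via Lemma \ref{lemma3.12FF}) we conclude $\widetilde W=W$, so the whole sequence converges weakly. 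To upgrade to strong convergence, I would test the equation for $U_n$ with $U_n-G_n$ and the equation for $W$ with $W-G_n$, subtract, and use the coercivity of $a$ to bound $\|U_n-W\|_{H^1}^2$ by terms that all tend to $0$ (the mismatch terms involve $\|G_n-W\|_{H^1}$, $\|\eta_n h_n-\tilde h\|_{L^p}$, $|\mathcal U_n\triangle(B_{R_0}^+\cap\{y_N<0\})|$, and the weak-limit pairings, all vanishing in the limit).

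The main obstacle is the careful bookkeeping near the moving free boundary $\gamma_n$: one must justify that the trivial extension of $U_n$ stays in $H^1(B_{R_0}^+,t^{1-2s}dz)$ (no spurious jump across $\gamma_n$, which is where the construction $G_n=0$ on $\gamma_n$ from \eqref{eq:5} is essential), and that test functions supported in $B_{R_0}^+\cup\Gamma_{R_0}^-$ become admissible in \eqref{prob5} for $n$ large because $\gamma_n$ collapses onto $\Gamma_{R_0}^+$ as $n\to\infty$; controlling the region $\mathcal U_n\triangle(B_{R_0}^+\cap\{y_N<0\})$ and the transition layer where $\eta_n$ varies (width $\sim 1/n$ near $t=0$) requires the uniform energy bound and the quantitative smallness coming from \eqref{eq:5} and \eqref{overepsilon}. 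Everything else is a routine application of Lax--Milgram, the coercivity Lemma \ref{lemma3.12FF}, and weak/strong compactness in the weighted space.
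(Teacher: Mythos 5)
Your existence--uniqueness step is identical to the paper's: reduce to $V_n=U_n-G_n\in\mathcal H_n$, verify continuity and coercivity of the bilinear form (Remark~\ref{r:r0}, Lemma~\ref{lemma3.12FF}, Hardy), and apply Lax--Milgram. The convergence step is where you diverge, and that is where there is a real gap.

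The paper never passes to a weak limit of $U_n$ and invokes a uniqueness theorem for \eqref{prob4}. It works directly with $V_n=U_n-G_n$: after extracting $V_{n_k}\rightharpoonup V$ in $H^1(B_{R_0}^+,t^{1-2s}dz)$, it uses that $V$ has null trace on $\partial^+B_{R_0}^+$ and on $\Gamma_{R_0}^+$ to insert $V$ as a test function into \eqref{eq:9}, obtaining
\[
0=\int_{B_{R_0}^+}t^{1-2s}A\nabla W\cdot\nabla V\,dz-\kappa_s\int_{\Gamma_{R_0}^-}\tilde h\,\mathop{\rm Tr}W\,\mathop{\rm Tr}V\,dy
=-\lim_{k}\langle F_{n_k},V_{n_k}\rangle=-\lim_{k}b_{n_k}(V_{n_k},V_{n_k}),
\]
and then coercivity gives $\|V_{n_k}\|\to0$ directly, hence $U_{n_k}\to W$ strongly and Urysohn finishes. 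This avoids both the identification step and your final ``subtract two tested equations'' step.

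Your route has two problems. First, a factual one: the limit domain you invoke is wrong. Since $f_n(t)=f(nt)/n^{1/8}\to\infty$ for any fixed $t>0$, the approximating sets satisfy $\mathcal U_n\uparrow B_{R_0}^+$ (only the trace sets $\sigma_n$ shrink to $\Gamma^-_{R_0}$), so $\chi_{\mathcal U_n}\to\chi_{B_{R_0}^+}$, not $\chi_{B_{R_0}^+\cap\{y_N<0\}}$, and $|\mathcal U_n\triangle(B_{R_0}^+\cap\{y_N<0\})|$ does not go to $0$; it is the slab $B_{R_0}^+\setminus\mathcal U_n$ near the corner $\{t=0,\,y_N\ge0\}$ that shrinks. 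Your argument that compactly supported test functions in $B_{R_0}^+\cup\Gamma_{R_0}^-$ are eventually admissible is still correct (that follows from \eqref{eq:5}), so the identification step can be salvaged. Second, and more seriously, your upgrade to strong convergence proposes to test the weak formulation \eqref{eq:9} with $\Phi=W-G_n$. That is not an admissible test function: $W-G_n$ has, in general, nonzero trace on $\partial^+B_{R_0}^+$, and \eqref{eq:9} (or its localization to $B_{R_0}^+$) only admits test functions vanishing there. The subtraction you describe therefore cannot be carried out. The correct move is to test both equations against the \emph{same} admissible function $V_n=U_n-G_n$ --- which is precisely what the paper's identity $\langle F_{n_k},V_{n_k}\rangle=b_{n_k}(V_{n_k},V_{n_k})$ encodes --- and then let coercivity close the estimate; once this is done, your preliminary weak-limit/uniqueness step becomes superfluous.
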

\begin{proof}
$U_n$ solves \eqref{prob5} if and only if
$V_n=U_n-G_n$ satisfies 
\begin{equation}\label{wf}
V_n\in \mathcal H_n\quad\text{and}\quad b_n(V_n,\Phi)=\langle
F_n,\Phi\rangle\quad\text{for all }\Phi\in\mathcal H_n,
\end{equation}
where
\begin{equation}\label{b}
b_n:\mathcal H_n\times\mathcal H_n\to\R,\quad
b_n(V,\Phi)=\int_{\mathcal U_n} t^{1-2s}A\nabla V\cdot\nabla\Phi\, dz
-\kappa_s\int_{\sigma_n}\eta_n h_n \mathop{\rm Tr}V\mathop{\rm Tr}\Phi\, dy
\end{equation}
and
\begin{equation}\label{Fn}
F_n: \mathcal H_n\to\R,\quad\langle F_n,\Phi\rangle= -\int_{\mathcal U_n} t^{1-2s}A\nabla G_n\cdot \nabla\Phi\ dz+\kappa_s\int_{\sigma_n}\eta_nh_n \mathop{\rm Tr}G_n \mathop{\rm Tr}\Phi\ dy.
\end{equation}
From H\"{o}lder's inequality, \eqref{2.6FF}, and the boundedness of
$\{h_n\}$ and $\{G_n\}$ respectively in $L^p(\Gamma_{R_1}^-)$ and in $H^1(B_{R_1}^+,t^{1-2s}\,dz)$, it follows that 
\begin{equation}\label{eq:12}
|\langle F_n,\Phi\rangle|\leq c\,\|\Phi\|_{\mathcal H_n}\quad\text{for all }\Phi\in
\mathcal H_n
\end{equation}
for some constant $c>0$ which does not depend on $n$. In particular 
$F_n\in\mathcal H_n^\star$, being $\mathcal
H_n^\star$ the dual space of $\mathcal H_n$, and $\|F_n\|_{\mathcal
H_n^\star}\leq c$ uniformly in $n$.

The idea is to apply the Lax-Milgram Theorem. In order to do this, we
remark that, using the Hardy inequality in Lemma \ref{l:hardy_boundary},
after extending functions $V_n$ trivially to zero in
$B_{R_0}^+\setminus \mathcal U_n$, the weighted $L^2$-norm of the
gradient
\begin{equation*}
\biggl(\int _{\mathcal U_n}t^{1-2s}|\nabla V_n|^2 dz\biggr)^{1/2}
\end{equation*}
turns out to be an equivalent norm in the space $\mathcal{H}_n$ that
will be denoted as
$\|\cdot\|_{\mathcal H_n}$. It follows that 
$b_n$ is coercive: indeed, for every $V\in\mathcal{H}_n$, we have that
\begin{align}\label{bcoerciva}
b_n(V,V)&=\int_{\mathcal U_n} t^{1-2s}A\nabla V\cdot\nabla V\, dz -
\kappa_s\int_{\sigma_n}\eta_n h_n |\mathop{\rm Tr} V|^2 dy\\
\notag&=\int_{B^+_{R_0}} t^{1-2s}A\nabla V\cdot\nabla V\ dz -\kappa_s\int_{B'_{R_0}}\eta_n h_n |\mathop{\rm Tr} V|^2 dy\\
\notag &\geq \tilde C_{N,s} \int_{B^+_{R_0}} t^{1-2s}|\nabla V|^2 dz=
\tilde C_{N,s}\int_{\mathcal U_n} t^{1-2s}|\nabla V|^2 dz=\tilde
C_{N,s}\Vert V\Vert^2_{\mathcal H_n},
\end{align}
as a consequence of Lemma \ref{lemma3.12FF}, with $\zeta=\eta_nh_n$,
see Remark \ref{r:r0}. Furthermore,
from \eqref{eq:8} and \eqref{remark} it follows that 
\begin{equation}\label{bcontinua}
|b_n(V,W)|\leq \bigg(2+\frac18\bigg)\|V\|_{\mathcal
H_n}\|W\|_{\mathcal H_n}\leq 3\|V\|_{\mathcal H_n}\|W\|_{\mathcal H_n}
\end{equation} 
for all $V,W\in\mathcal H_n$. In particular $b_n$ is continuous.

Hence, from \eqref{bcoerciva}, \eqref{bcontinua}, and the Lax-Milgram
Theorem we can conclude that there exists a unique $V_n\in\mathcal
H_n$ solving \eqref{wf}, which implies also the existence and
uniqueness of a solution $U_n$ to \eqref{prob5}. Moreover, 
combining \eqref{bcoerciva} and \eqref{eq:12} we
also obtain that, extending $V_n$ trivially to zero in $B_{R_0}^+\setminus \mathcal U_n$, 
\begin{equation*}
\|V_n\|_{H^1(B_{R_0}^+,t^{1-2s}\,dz)}\leq \frac{c}{\tilde C_{N,s}}\quad\text{for all }n.
\end{equation*}
From this, it follows 
that there exist $V\in H^1(B_{R_0}^+,t^{1-2s}\,dz)$ and a subsequence
$\{V_{n_k}\}$ of
$\{V_n\}$ such that 
\begin{equation}\label{eq:14}
V_{n_k}\rightharpoonup V\quad\text{weakly in
$H^1(B_{R_0}^+,t^{1-2s}\,dz)$}.
\end{equation}
From the fact that $V_n\in\mathcal H_n$, it follows easily that $V$
has null trace on $\partial^+B^+_{R_0}$ and on $\Gamma_{R_0}^+$. Hence
it can be taken as a test function in \eqref{eq:9} yielding 
\begin{equation}\label{eq:15}
\int_{B_{R_0}^+}t^{1-2s}A\nabla
W\cdot\nabla V \,dz-\kappa_s\int_{\Gamma_{R_0}^-}\tilde
h\mathop{\rm Tr}W\mathop{\rm Tr}V\,dy=0.
\end{equation}
Since $G_n\to W$ strongly in
$H^1(B_{R_1}^+,t^{1-2s}\,dz)$, from \eqref{eq:traccia-H1BR} we deduce
that $\mathop{\rm Tr}G_n\to\mathop{\rm Tr}W$ in
$L^{2^*(s)}(B_{R_1}')$. \eqref{eq:traccia-H1BR} and \eqref{eq:14}
imply that $\mathop{\rm Tr}V_{n_k}\rightharpoonup \mathop{\rm Tr}V$
weakly in
$L^{2^*(s)}(B_{R_1}')$. Furthermore $\eta_nh_n\to \tilde h$ in
$L^{\frac{N}{2s}}(\Gamma_{R_1}^-)$. Hence \eqref{eq:15} yields 
\begin{align*}
0&= \int_{B_{R_0}^+}t^{1-2s}A\nabla
W\cdot\nabla V \,dz-\kappa_s\int_{\Gamma_{R_0}^-}\tilde
h\mathop{\rm Tr}W\mathop{\rm Tr}V\,dy\\
&=
\lim_{k\to\infty}\int_{B_{R_0}^+}t^{1-2s}A\nabla
G_{n_k}\cdot\nabla V_{n_k}
\,dz-\kappa_s\int_{\Gamma_{R_0}^-}\eta_{n_k}h_{n_k}\mathop{\rm
Tr}G_{n_k}\mathop{\rm Tr}V_{n_k}\,dy\\
&=-\lim_{k\to\infty}\langle
F_{n_k},V_{n_k}\rangle =-\lim_{k\to\infty}b_{n_k}(V_{n_k},V_{n_k})
\end{align*}
thus concluding that $\|V_{n_k}\|_{H^1(B_{R_0}^+,t^{1-2s}\,dz)}\to 0$ as
$k\to+\infty$ in view of \eqref{bcoerciva}. Hence $V_{n_k}\to 0$
strongly in
$H^1(B_{R_0}^+,t^{1-2s}\,dz)$ and $U_{n_k}=V_{n_k}+G_{n_k}\to W$
as $k\to+\infty$ strongly in
$H^1(B_{R_0}^+,t^{1-2s}\,dz)$. By
Urysohn's subsequence principle, we finally conclude that
$U_n\to W$ in $H^1(B_{R_0}^+,t^{1-2s}\,dz)$ as $n\to+\infty$.
\end{proof}

\subsection{Pohozaev-type inequalities}
The aim of this section is to prove a Pohozaev-type inequality for the
energy solution $W\in H^1(B_{R_1}^+,t^{1-2s}\,dz)$ to
\eqref{prob4}; in this situation we have to settle for
an inequality instead of a classical Pohozev-type identity because
of the mixed boundary conditions, which produce some extra singular
terms with a recognizable sign when integrating the Rellich-Ne\u{c}as identity.

The idea is to obtain the inequality as limit of ones
for the approximating sequence $U_n$. For every
$r\in(0,R_0)$, $n\in\mathbb N$ such that
$n>r^{-8}$, and
$\delta\in\big(0,\frac1{4n}\big)$, let us consider the following
domain
\begin{equation*}
O_{r,n,\delta}=\mathcal U_n\cap 
\{(y,t)\in 
B_r:t>\delta\}.
\end{equation*}
We note that, if $\delta\in\big(0,\frac1{4n}\big)$, then
$f_n(t)=n^{-1/8}$ for $0\leq t\leq 2\delta$, see \eqref{eq:7}. We
can describe its topological boundary as $\partial O_{r,n,\delta}=\sigma_{r,n,\delta}\cup\gamma_{r,n,\delta}\cup\tau_{r,n,\delta},$
with
\begin{align}
\label{sigmand}
\sigma_{r,n,\delta}&=\left\{(y',y_N,t)\in B_r : y_N<\frac{1}{n^{1/8}}, \ t=\delta\right\},\\
\label{gammand}
\gamma_{r,n,\delta}&=\left\{(y',y_N,t)\in
\overline{B^+_r} : y_N=f_n(t),
\ t\geq\delta\right\},\\
\label{taund}
  \tau_{r,n,\delta}&=\left\{(y',y_N,t)\in \partial^+B^+_r: y_N<f_n(t),
                     \ t \geq \delta\right\},
\end{align}
see Figure \ref{fig:approx-domains-delta}.

\begin{figure}[ht]
\begin{tikzpicture}[line cap=round,line join=round,>=triangle 45, scale=3]
\draw [->] (-1.2,0)-- (1.2,0) node [right] {$y_N$};
\draw [->] (0,0)-- (0,1.2) node [left] {$t$};
\draw [black, fill=black, opacity=0.3] (-0.9995,0.025) to [out=88, in=180] (0,1) to [out=0, in=120] (0.866,0.5) to [out=238, in=20] (0.43,0.15) to [out=200, in=90]  (0.33,0.076) to [out=270, in=90] (0.35,0.040) to [out=270,in=90] (0.35,0.025) -- (-0.9995,0.025);
\draw  (-0.9995,0.025) to [out=88, in=180] (0,1) to [out=0, in=120] (0.866,0.5) to [out=238, in=20] (0.43,0.15) to [out=200, in=90]  (0.33,0.076) to [out=270, in=90] (0.35,0.040) to [out=270,in=90] (0.35,0.025) -- (-0.9995,0.025);
\draw (-1,0) to [out=90, in=268] (-0.9995,0.025);
\draw (0.35,0) to [out=90, in=270] (0.35,0.025);
\draw[fill] (0.35,0) circle [radius=0.010];
			\node [below] at (0.33,0) {$n^{-1/8}$};
\draw[color=black] (-0.5,0.105) node {$\sigma_{r,n,\delta}$};
\draw[color=black] (-0.7,0.9) node {$\tau_{r,n,\delta}$};
\draw[color=black] (0.7,0.13355555555555563) node {$\gamma_{r,n,\delta}$};			
\end{tikzpicture}
   \caption{Vertical section of $O_{r,n,\delta}$.}
\label{fig:approx-domains-delta}
\end{figure}
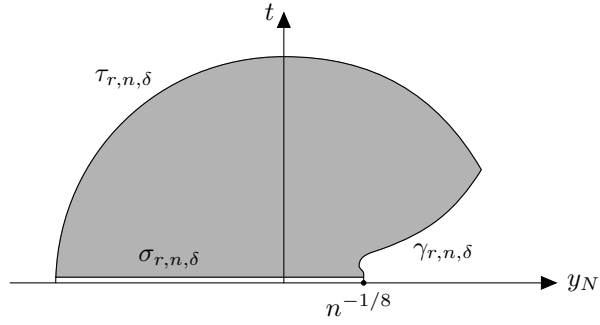

\noindent We also define
\begin{equation*}
  S^-_{r}=\{(y',y_N,t)\in \partial B_r:t=0\text{ and }y_N<0\}.
\end{equation*}
Having the matrix $A$ 
Lipschitz coefficients and being the equation satisfied in a smooth
domain containing $O_{r,n,\delta}$, by classical elliptic regularity theory
(see e.g. \cite[Theorem 2.2.2.3]{grisvard}) we have that $U_n\in
H^2(O_{r,n,\delta})$.
\begin{Proposition}[Pohozaev-type inequality]\label{p:pohozaev}
Let $W\in H^1(B_{R_1}^+,t^{1-2s}\,dz)$ weakly solve \eqref{prob4}. Then, for almost every $r\in(0,R_0)$,
\begin{align}\label{pohozaev}
& \frac{r}{2}\int_{\partial^+B_r^+}t^{1-2s}A\nabla W\cdot\nabla W\,dS- r\int_{\partial^+B_r^+}t^{1-2s}\frac{|A\nabla W\cdot\nu|^2}{\mu}\,dS\\
&\quad+\frac{\kappa_s}{2}\int_{\Gamma^-_{r}}\left(\nabla\tilde h\cdot\beta'+\tilde h\,\mathrm{div}\beta'\right)|\mathrm{Tr}W|^2dy-\frac{\kappa_sr}{2}\int_{S^-_{r}}\tilde h|\mathrm{Tr}W|^2dS'\nonumber\\
&\geq\frac{1}{2}\int_{B_r^+}t^{1-2s}A\nabla W\cdot\nabla W \ \mathrm{div}\beta\,dz-\int_{B_r^+}t^{1-2s}{J}_\beta(A\nabla W)\cdot\nabla W\,dz\nonumber\\
&\notag\quad+\frac{1}{2}\int_{B_r^+}t^{1-2s}(dA\nabla W\nabla W)\cdot\beta\,dz+\frac{1-2s}{2}\int_{B_r^+}t^{1-2s}\frac{\alpha}{\mu}A\nabla W\cdot\nabla W\,dz
\end{align}
and 
\begin{align}\label{pohozaev2}
\int_{B_r^+}t^{1-2s}A\nabla W\cdot\nabla W\,dz=\int_{\partial^+B_r^+}t^{1-2s}(A\nabla W\cdot\nu)\,W\,dS+
\kappa_s\int_{\Gamma^-_{r}}\tilde h|\mathrm{Tr}W|^2dy.
\end{align}
\end{Proposition}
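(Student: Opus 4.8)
The plan is to prove the two statements separately: \eqref{pohozaev2} follows directly from the weak formulation \eqref{eq:9} of \eqref{prob4}, while \eqref{pohozaev} is obtained by a Rellich--Ne\u{c}as (Pohozaev) computation carried out on the smooth approximating solutions $U_n$ of Proposition \ref{existenceuniqueness} and then passing to the limit. For \eqref{pohozaev2}: for a.e.\ $r\in(0,R_0)$ the restriction of $W$ to $\partial^+B_r^+$ lies in the natural weighted Sobolev space on the sphere (by a Fubini/coarea argument), so one tests \eqref{eq:9} with $\phi=W\zeta_\varepsilon$, where $\zeta_\varepsilon\in C^\infty_c(B_r)$ is a radial cut-off equal to $1$ on $B_{r-\varepsilon}$; letting $\varepsilon\to0^+$, the term $\int t^{1-2s}W\,A\nabla W\cdot\nabla\zeta_\varepsilon$ converges to $-\int_{\partial^+B_r^+}t^{1-2s}(A\nabla W\cdot\nu)\,W\,dS$ for a.e.\ $r$, $\int t^{1-2s}\zeta_\varepsilon A\nabla W\cdot\nabla W\to\int_{B_r^+}t^{1-2s}A\nabla W\cdot\nabla W$, and the potential term yields $\kappa_s\int_{\Gamma^-_r}\tilde h|\mathrm{Tr}W|^2dy$, which is \eqref{pohozaev2}.

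For \eqref{pohozaev}, fix $n\geq\bar n$ (with $\bar n$ from Lemma \ref{segnogamma}), $r\in(0,R_0)$ with $n>r^{-8}$, and $\delta\in(0,1/4n)$, so that on the truncated domain $O_{r,n,\delta}$ one has $U_n\in H^2$ and the equation is uniformly elliptic (the weight $t^{1-2s}$ being bounded between two positive constants there). Multiplying $-\mathrm{div}(t^{1-2s}A\nabla U_n)=0$ by the directional derivative $\beta\cdot\nabla U_n$, with $\beta$ as in \eqref{beta}, integrating by parts twice, and using $\mathrm{div}(t^{1-2s}\beta)=t^{1-2s}(\mathrm{div}\beta+(1-2s)\alpha/\mu)$ together with the symmetry of $A$, one obtains the Rellich--Ne\u{c}as identity: minus the boundary integral over $\partial O_{r,n,\delta}$ of $t^{1-2s}\big[(A\nabla U_n\cdot\nu)(\beta\cdot\nabla U_n)-\tfrac12(A\nabla U_n\cdot\nabla U_n)(\beta\cdot\nu)\big]$ equals precisely the four volume integrals on the right-hand side of \eqref{pohozaev}, with $(W,B_r^+)$ replaced by $(U_n,O_{r,n,\delta})$.

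Next one analyses the three boundary pieces $\tau_{r,n,\delta},\gamma_{r,n,\delta},\sigma_{r,n,\delta}$. On $\tau_{r,n,\delta}\subset\partial^+B_r^+$, where $\nu=z/|z|$, one has $\beta\cdot\nu=r$ and $\beta\cdot\nabla U_n=\tfrac r\mu A\nabla U_n\cdot\nu$, so this piece contributes (after the limits below) the first line of the left-hand side of \eqref{pohozaev}. On $\gamma_{r,n,\delta}$ one has $U_n=G_n\equiv0$ (by \eqref{eq:5} the approximating functions $G_n$ may be taken to vanish on $\gamma_n$), so $\nabla U_n$ is parallel to $\nu$ and the integrand reduces, by symmetry of $A$, to $\tfrac12(\partial_\nu U_n)^2(A\nu\cdot\nu)\tfrac{A z\cdot\nu}{\mu}$, which is $\geq0$ for $n\geq\bar n$ by ellipticity of $A$, positivity of $\mu$, and the geometric property \eqref{eq:gc}; discarding this nonnegative term is exactly what turns the identity into the inequality \eqref{pohozaev}. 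On $\sigma_{r,n,\delta}$ ($t=\delta$, $\nu=(0,\dots,0,-1)$) one lets $\delta\to0^+$: since $A$ is block-diagonal (see \eqref{eq:1}), $t^{1-2s}A\nabla U_n\cdot\nu=-t^{1-2s}\alpha\,\partial_tU_n\to\kappa_s\eta_nh_n\,\mathrm{Tr}U_n$ by the Neumann condition in \eqref{eq:26}, the terms carrying an extra factor $\delta$ vanish, and one is left with $\kappa_s\int_{\sigma_n\cap B'_r}\eta_nh_n\,\mathrm{Tr}U_n\,(\beta'\cdot\nabla_y\mathrm{Tr}U_n)=\tfrac{\kappa_s}{2}\int_{\sigma_n\cap B'_r}\eta_nh_n\,\beta'\cdot\nabla(\mathrm{Tr}U_n)^2$; an integration by parts in $y$ (using $\beta'\cdot\nu'=r$ on $\partial B'_r$ and that $\mathrm{Tr}U_n=0$ near the edge $\{y_N=n^{-1/8}\}$) produces the remaining two terms of the left-hand side of \eqref{pohozaev} with $\tilde h,\Gamma^-_r,S^-_r$ replaced by $\eta_nh_n$ and the corresponding subsets. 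Finally one sends $n\to\infty$: the volume integrals converge because $U_n\to W$ strongly in $H^1(B_{R_0}^+,t^{1-2s}dz)$ and $A,dA,\beta,J_\beta,\mu^{-1},\alpha$ are bounded on $B_{R_0}$; the $\sigma_n$-integrals converge since $h_n\to\tilde h$ in $W^{1,p}$, $\mathrm{Tr}U_n\to\mathrm{Tr}W$ in $L^{2^*(s)}$, $\eta_n\to\chi_{\{y_N<0\}}$, and the contribution of $\nabla\eta_n$, of size $O(n)$ on a set of measure $O(1/n)$, is negligible because $\beta'_N=O(|y_N|)=O(1/n)$ there; the spherical terms converge for a.e.\ $r$ along a subsequence for which $\mathrm{Tr}_{\partial^+B_r^+}U_n\to\mathrm{Tr}_{\partial^+B_r^+}W$, which suffices to pass to the limit (as a $\liminf$) in the inequality and gives \eqref{pohozaev}.

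The main obstacle is the passage $\delta\to0^+$ on $\sigma_{r,n,\delta}$: justifying the convergence of the boundary integrand at $t=\delta$ — and in particular the emergence of $\kappa_s\eta_nh_n\,\mathrm{Tr}U_n$ together with the vanishing of the $\delta$-weighted terms (for which one needs, e.g., $t^{1-2s}(\partial_tU_n)^2$ integrable near $t=0$ and $\nabla_yU_n$ bounded) — requires boundary regularity of $U_n$ up to the flat Neumann face $\sigma_n$. This is precisely why the approximating domain $\mathcal U_n$ was engineered so that its Dirichlet portion $\gamma_n$ is detached from $\{t=0\}$, keeping $\sigma_n$ away from the Dirichlet--Neumann interface; the needed estimates are then supplied by the boundary regularity results for singular/degenerate equations in cylinders of Appendix \ref{sec:some-bound-regul}. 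No such regularity is available for $W$ itself (mixed conditions in dimension $N\geq2$), which is exactly why the whole argument must be routed through the regular sequence $U_n$.
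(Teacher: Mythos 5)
Your proposal is correct and, for \eqref{pohozaev}, follows essentially the same route as the paper: Rellich--Ne\u{c}as identity on the approximating solutions $U_n$ over the truncated domain $O_{r,n,\delta}$, discarding the nonnegative $\gamma_{r,n,\delta}$-term via the geometric condition \eqref{eq:gc} and ellipticity, passing $\delta\to0^+$ on $\sigma_{r,n,\delta}$ using the continuity of $t^{1-2s}\partial_t U_n$ and $\nabla_y U_n$ up to $\{t=0\}$ (Appendix \ref{sec:some-bound-regul}), and then sending $n\to\infty$ via the strong $H^1$-convergence of Proposition~\ref{existenceuniqueness} together with the $L^1_r$/a.e.-subsequence arguments for the spherical terms. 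One small difference: for \eqref{pohozaev2} the paper also routes through $U_n$ and $O_{r,n,\delta}$ (multiplying by $U_n$, integrating by parts, then taking $\delta\to0^+$ and $n\to\infty$), whereas you instead test \eqref{eq:9} directly with $W\zeta_\varepsilon$ for a radial cutoff; your argument is sound, slightly more elementary, and avoids the approximation machinery, at the cost of having to justify (via density and Lebesgue-point/coarea reasoning) that $W\zeta_\varepsilon$ is admissible and that the annular term converges to the spherical boundary integral for a.e.~$r$ — both of which are standard. Finally, there is a sign slip in the $\sigma_{r,n,\delta}$ limit: with $t^{1-2s}A\nabla U_n\cdot\nu=-t^{1-2s}\alpha\,\partial_t U_n\to\kappa_s\eta_n h_n\mathop{\rm Tr}U_n$ and the overall minus in front of your boundary integral, the surviving term is $-\kappa_s\int_{\sigma_n\cap B'_r}\eta_n h_n\mathop{\rm Tr}U_n\,(\beta'\cdot\nabla_y\mathop{\rm Tr}U_n)\,dy$, not $+\kappa_s\int\cdots$; with the corrected sign the subsequent integration by parts in $y$ does produce exactly the two potential terms on the left-hand side of \eqref{pohozaev}, so the error is a typo that does not affect the validity of the argument.
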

\begin{remark}
The term $\int_{S^-_{r}}\tilde
h|\mathrm{Tr}W|^2dS'$ is understood for a.e. $r\in
(0,R_0)$ as the $L^1$-function given by the weak derivative of the $W^{1,1}(0,R_0)$-function
$r\mapsto
\int_{\Gamma^-_{r}}\tilde
h|\mathrm{Tr}W|^2dy$.
Likewise, the two terms
\begin{equation*}
\int_{\partial^+B_r^+}t^{1-2s}A\nabla
W\cdot\nabla W\,dS\quad\text{and}\quad \int_{\partial^+B_r^+}t^{1-2s}\frac{|A\nabla
  W\cdot\nu|^2}{\mu}\,dS
\end{equation*}
are understood for a.e. $r\in
(0,R_0)$ as the $L^1$-functions given by the weak derivative of the $W^{1,1}(0,R_0)$-functions
$r\mapsto \int_{B_r^+}A\nabla
W\cdot\nabla W\,dz$ and 
$r\mapsto \int_{B_r^+}t^{1-2s}\frac{|A\nabla W\cdot\nu|^2}{\mu}\,dz$
respectively.
\end{remark}
\begin{proof}
Since $U_n\in
H^2(O_{r,n,\delta})$, the following Rellich-Ne\u{c}as identity holds
in a distributional sense in $O_{r,n,\delta}$:
\begin{multline}\label{RN}
\mathop{\rm div}\left((\tilde A\nabla U_n\!\cdot\!\nabla
U_n)\beta-2(\beta\!\cdot\!\nabla U_n)\tilde A\nabla U_n\right)=(\tilde
A\nabla U_n\!\cdot\!\nabla U_n)\mathop{\rm div}\beta-2(\beta\!\cdot\!\nabla
U_n)\mathop{\rm div}(\tilde A\nabla U_n)\\
+(d\tilde A\nabla U_n\nabla U_n)\cdot\beta-2J_\beta(\tilde A\nabla U_n)\!\cdot\!\nabla U_n,
\end{multline}
where $\tilde A(z)=t^{1-2s}A(y)$
and $\beta$ has been defined in \eqref{beta}.
Since $U_n\in
H^2(O_{r,n,\delta})$ and $\tilde A$ and $\beta$ have Lipschitz
components, we have that 
\begin{equation*}
(\tilde A\nabla U_n\!\cdot\!\nabla
U_n)\beta-2(\beta\!\cdot\!\nabla U_n)\tilde A\nabla U_n\in
W^{1,1}(O_{r,n,\delta})
\end{equation*}
so that we can use the integration by parts formula for Sobolev
functions on the Lipschitz domain $O_{r,n,\delta}$ and obtain, in view of
\eqref{RN} and \eqref{prob5},
\begin{align*}
  &r\int_{\tau_{r,n,\delta}}t^{1-2s}A\nabla U_n\cdot\nabla
    U_n\,dS-2r\int_{\tau_{r,n,\delta}}t^{1-2s}\frac{|A\nabla
    U_n\cdot\nu|^2}{\mu}\,dS\\
  &\quad\quad-\int_{\gamma_{r,n,\delta}}\frac{t^{1-2s}}{\mu}|\partial_\nu
    U_n|^2(A\nu\cdot \nu)(Az\cdot\nu)
    \,dS
    -\int_{\sigma_{r,n,\delta}}\delta^{2-2s}\frac{\alpha}{\mu}A\nabla
    U_n\cdot\nabla U_n\,dy\\
  &\quad\quad+2\int_{\sigma_{r,n,\delta}}\delta^{1-2s}\frac{\alpha}{\mu}\partial_tU_n(D\nabla_y U_n\cdot y)\,dy+2\int_{\sigma_{r,n,\delta}}\delta^{2-2s}\frac{\alpha^2}{\mu}|\partial_t U_n|^2 \,dy\\
  &=\int_{O_{r,n,\delta}}t^{1-2s}A\nabla U_n\cdot\nabla U_n \ \mathrm{div}\beta \,dz-2\int_{O_{r,n,\delta}}t^{1-2s}J_\beta(A\nabla U_n)\cdot\nabla U_n \,dz\\
  &\quad\quad+\int_{O_{r,n,\delta}}t^{1-2s}(dA\nabla U_n\nabla U_n)\cdot\beta \,dz+(1-2s)\int_{O_{r,n,\delta}}t^{1-2s}\frac{\alpha}{\mu}A\nabla U_n\cdot\nabla U_n \,dz.
\end{align*}
In the previous computation we have used the following facts: on
$\tau_{r,n,\delta}$ the outward unit normal vector is
$\nu=\frac{z}{r}$, on $\gamma_{r,n,\delta}$ we have that
$\nabla U_n=\pm|\nabla U_n|\nu$ due to vanishing of $U_n$ on
$\gamma_{r,n,\delta}$, and on $\sigma_{r,n,\delta}$ one has
$\nu=(0,0,\dots,0,-1)$, $A\nabla U_n\cdot\nu=-\alpha\partial_tU_n$ and
\begin{equation*}
A\nabla U_n\cdot z=(D\nabla_yU_n,\alpha\partial_tU_n)\cdot
z=D\nabla_yU_n\cdot y+\alpha\delta\partial_tU_n.
\end{equation*}
From Lemma \ref{segnogamma} and uniform ellipticity of
$A$ it follows that
$$\int_{\gamma_{r,n,\delta}}\frac{t^{1-2s}}{\mu}|\partial_\nu U_n|^2(A\nu\cdot \nu)(Az\cdot\nu)\geq0.$$
Hence, we get the following inequality
\begin{align}\label{eq:17}
  &r\int_{\tau_{r,n,\delta}}\!\!t^{1-2s}A\nabla U_n\!\cdot\!\nabla
    U_n\,dS-2r\int_{\tau_{r,n,\delta}}\!\!t^{1-2s}\frac{|A\nabla
    U_n\!\cdot\!\nu|^2}{\mu}\,dS
    -\int_{\sigma_{r,n,\delta}}\delta^{2-2s}\frac{\alpha}{\mu}A\nabla
    U_n\!\cdot\!\nabla U_n\,dy\\
  \notag&\quad\quad+2\int_{\sigma_{r,n,\delta}}\delta^{1-2s}\frac{\alpha}{\mu}\partial_tU_n(D\nabla_y U_n\cdot y)\,dy+2\int_{\sigma_{r,n,\delta}}\delta^{2-2s}\frac{\alpha^2}{\mu}|\partial_t U_n|^2 \,dy\\
  \notag&\geq\int_{O_{r,n,\delta}}t^{1-2s}A\nabla U_n\cdot\nabla U_n \ \mathrm{div}\beta \,dz-2\int_{O_{r,n,\delta}}t^{1-2s}J_\beta(A\nabla U_n)\cdot\nabla U_n \,dz\\
  \notag&\quad\quad+\int_{O_{r,n,\delta}}t^{1-2s}(dA\nabla U_n\nabla U_n)\cdot\beta \,dz+(1-2s)\int_{O_{r,n,\delta}}t^{1-2s}\frac{\alpha}{\mu}A\nabla U_n\cdot\nabla U_n \,dz.
\end{align}
We are going to pass to the limit as $\delta\to0$. We denote as $O_{r,n}$
the limit domain with its boundary
$\partial O_{r,n}=\sigma_{r,n}\cup\gamma_{r,n}\cup\tau_{r,n}$, 
i.e. 
\begin{align*}
& O_{r,n}=\mathcal U_n\cap B_r,\quad \tau_{r,n}=\left\{(y',y_N,t)\in \partial B_r: y_N<f_n(t),
\ t\geq0\right\},\\
&\gamma_{r,n}=\left\{(y',y_N,t)\in
\overline{B^+_r} : y_N=f_n(t)\right\},\quad 
\sigma_{r,n}=\left\{(y',y_N)\in B_r' :
y_N<n^{-1/8}\right\}.
\end{align*}
We claim that, for every fixed 
$r\in(0,R_0)$ and $n>r^{-8}$, there exists a sequence $\delta_k\to
0^+$ such that
\begin{equation*}
-\int_{\sigma_{r,n,\delta_k}}\delta_k^{2-2s}\frac{\alpha}{\mu}A\nabla
U_n\cdot\nabla
U_n\,dy+2\int_{\sigma_{r,n,\delta_k}}\delta_k^{2-2s}\frac{\alpha^2}{\mu}|\partial_t
U_n|^2\,dy\to0\quad\text{as } k\to\infty.
\end{equation*}
Since $\alpha$ is bounded,
$\mu\geq1/4$ in $B_{R_0}$, and $A$ has bounded coefficients, it is
enough to prove that there exists a sequence $\delta_k\to 0^+$ such
that
$\lim_{k\to\infty}\int_{\sigma_{r,n,\delta_k}}\delta_k^{2-2s}|\nabla
U_n|^2\,dy=0$.
To prove this, we argue by contradiction and assume that there
exist a positive constant $c>0$ and $\delta_0>0$ such that, for any
$\delta\in(0,\delta_0)$,
\begin{equation*}
\frac{c}{\delta}\leq \int_{\sigma_{r,n,\delta}}\delta^{1-2s}|\nabla
U_n(y,\delta)|^2\,dy,
\end{equation*}
which, after  integration in $(0,\delta_0)$, gives the contradiction
\begin{equation*}
\int_0^{\delta_0}\frac{c}{\delta}\,d\delta\leq\int_0^{\delta_0}\delta^{1-2s}\left(\int_{\sigma_{r,n,\delta}}|\nabla
U_n(y,\delta)|^2\,dy\right)\,d\delta\leq\|U_n\|^2_{H^1(B_{R_0}^+,t^{1-2s}\,dz)},
\end{equation*}
since the first integral diverges.

In order to prove the convergence 
\begin{equation*}
2\int_{\sigma_{r,n,\delta}}\delta^{1-2s}\frac{\alpha}{\mu}\partial_tU_n(D\nabla_y
U_n\cdot y)\,dy
\mathop{\longrightarrow}_{\delta\to0}-2\kappa_s\int_{\sigma_{r,n}}\frac{1}{\mu}\eta_nh_n\mathrm{Tr}U_n(D\nabla_y
\mathrm{Tr}U_n\cdot y) \,dy,
\end{equation*}
we exploit a continuity result for
  $t^{1-2s}\partial_tU_n$ and $\nabla_y U_n$ over
  $\overline{\mathcal U_n\cap B_r}$, which allows us to pass to the limit by
the Dominated Convergence Theorem. 
More precisely we claim that, for all $r\in(0,R_0)$
  and $n>r^{-8}$, 
  \begin{equation}\label{eq:16}
    t^{1-2s}\partial_tU_n\in C^0(\overline{\mathcal U_n\cap
      B_r}),\quad  \nabla_y U_n\in C^0(\overline{\mathcal U_n\cap B_r}).
  \end{equation}   
The continuity of $t^{1-2s}\partial_tU_n$ and $\nabla_y U_n$ away from
$\{t=0\}$ easily follows from classical elliptic regularity theory, since $U_n$ is solution of an uniformly
elliptic equation. Nevertheless, Lemma 3.3 in \cite{FalFel} allows us
to prove continuity of $t^{1-2s}\partial_tU_n$ and
  $\nabla_y U_n$ up to $\{t=0\}$ when we stay away from the
corner between $\sigma_{r,n}$ and $\gamma_{r,n}$,
  i.e. away from the edge $\{(y',y_N,t)\in \overline{B_r}:t=0\text{
    and }y_N=n^{-1/8}\}$: to this aim it is enough to
  apply \cite[Lemma 3.3]{FalFel} 
to the function $U_n\circ F^{-1}$. Eventually, we can
deduce continuity of $t^{1-2s}\partial_tU_n$ and
  $\nabla_y U_n$ also in the set $\{(y',y_N,t)\in \overline{B_r}:t\in[0,\frac1{2n}]\text{
    and }y_N\in[0,n^{-1/8}]\}$ as a consequence of the regularity
  result given in  Lemma
  \ref{regularitycorner} applied to the function $U_n\circ F^{-1}$.

Using the fact that, for all $r\in(0,R_0)$
  and $n>r^{-8}$, the terms integrated over
$\tau_{r,n,\delta}$ belong to $L^1(\tau_{r,n})$ in view of \eqref{eq:16}
 and the terms integrated over $O_{r,n,\delta}$ belong to
 $L^1(\mathcal U_n\cap B_r)$ since 
$U_n\in H^1(\mathcal U_n,t^{1-2s}\,dz)$,
then by absolute continuity of the Lebesgue
integral,
  we can pass to the
  limit in \eqref{eq:17} along $\delta=\delta_k$ as $k\to\infty$, thus ending up with the inequality
\begin{align}\label{pohon}
  &r\int_{\tau_{r,n}}\!\!t^{1-2s}A\nabla U_n\!\cdot\!\nabla U_n\,dS-
    2r\int_{\tau_{r,n}}\!\!t^{1-2s}\frac{|A\nabla U_n\!\cdot\!\nu|^2}{\mu}\,dS-2\kappa_s\int_{\sigma_{r,n}}\frac{1}{\mu}
    \eta_nh_n\mathrm{Tr}U_n(D\nabla_y \mathrm{Tr}U_n\!\cdot\! y)\,dy\\
  &\geq\int_{\mathcal U_n\cap B_r}\!\!t^{1-2s}A\nabla U_n \!\cdot \!\nabla U_n 
    \mathrm{div}\beta\,dz-2\int_{\mathcal U_n\cap
    B_r}\!\!t^{1-2s}J_\beta(A\nabla U_n)
    \cdot\nabla U_n\,dz\nonumber\\
  &\quad\quad+\int_{\mathcal U_n\cap B_r}\!\!t^{1-2s}(dA\nabla U_n\nabla
    U_n)
    \!\cdot \!\beta\,dz+(1-2s)\int_{\mathcal U_n\cap B_r}\!\!t^{1-2s}
    \frac{\alpha}{\mu}A\nabla U_n \!\cdot \!\nabla U_n\,dz, \nonumber
\end{align}
for all $r\in(0,R_0)$ and $n>r^{-8}$.
For $r\in(0,R_0)$ fixed, we are going
to pass to the limit in \eqref{pohon} as $n\to+\infty$. We extend the functions
$U_n$ to be zero in $B_r^+\setminus \mathcal U_n$. By the strong
convergence $U_n\to W$ in $H^1(B_{R_0}^+,t^{1-2s}dz)$ (see Proposition
\ref{existenceuniqueness}), it follows  that
\begin{equation*}
\int_0^{R_0}\left(\int_{\partial^+B_r^+}t^{1-2s}\left(|\nabla
    (U_n-W)|^2+|U_n-W|^2\right)\,dS\right)dr\to0,
\end{equation*}
i.e. the sequence of functions $u_n(r)=\int_{\partial^+B_r^+}t^{1-2s}\left(|\nabla
    (U_n-W)|^2+|U_n-W|^2\right)\,dS$ converges to $0$ in $L^1(0,R_0)$ and hence
    a.e. along a subsequence $u_{n_k}$. In particular we have that
\begin{equation}\label{eq:18}
U_{n_k}\to  W\quad\text{as $k\to\infty$ in
    $H^1(\partial^+B_r^+,t^{1-2s}\,dS)$ for a.e. $r\in(0,R_0)$,}
\end{equation}
where $H^1(\partial^+B_r^+,t^{1-2s}\,dS)$ is the completion of
$C^\infty(\overline{\partial^+B_r^+})$ with respect to the norm
\begin{equation*}
\|\psi\|_{H^1(\partial^+B_r^+,t^{1-2s}\,dS)}=\bigg(
\int_{\partial^+B_r^+}t^{1-2s}\left(|\nabla
    \psi|^2+\psi^2\right)\,dS\bigg)^{\!\!1/2}.
\end{equation*}
Let us now discuss the behavior of the term  $\int_{\sigma_{r,n}}\frac{1}{\mu}
    \eta_nh_n\mathrm{Tr}U_n(D\nabla_y \mathrm{Tr}U_n\cdot y)\,dy$ as
    $n\to\infty$.
Since $\eta_n(y',y_N)=0$ if $y_N>-\frac1n$, by the Divergence Theorem we have that 
\begin{align}\label{eq:24}
\int_{\sigma_{r,n}}&\frac{1}{\mu}
    \eta_nh_n\mathrm{Tr}U_n(D\nabla_y \mathrm{Tr}U_n\cdot y)\,dy=
\int_{\Gamma^-_{r}}\frac{1}{\mu}\eta_nh_n\mathrm{Tr}U_n(D\nabla_y
  \mathrm{Tr}U_n\cdot y) \,dy\\
\notag&=\frac{1}{2}\int_{\Gamma^-_{r}}\mathrm{div}_y\left(\frac{1}{\mu}\eta_nh_n|\mathrm{Tr}U_n|^2Dy\right) \,dy-\frac{1}{2}\int_{\Gamma^-_{r}}|\mathrm{Tr}U_n|^2\mathrm{div}_y\left(\frac{1}{\mu}\eta_nh_nDy\right) \,dy\\
\notag&=\frac{1}{2}\int_{S^-_{r}}\frac{1}{\mu}\eta_nh_n|\mathrm{Tr}U_n|^2Dy\cdot\nu\,dS'-\frac{1}{2}\int_{\Gamma^-_{r}}|\mathrm{Tr}U_n|^2\mathrm{div}_y\left(\eta_nh_n\beta'\right) \,dy\\
\notag&=\frac{r}{2}\int_{S^-_{r}}\eta_nh_n|\mathrm{Tr}U_n|^2\,dS'-\frac{1}{2}\int_{\Gamma^-_{r}}|\mathrm{Tr}U_n|^2\left(\eta_n\nabla_yh_n\cdot\beta'+\eta_nh_n\mathrm{div}_y\beta'\right) \,dy\\
\notag&\qquad-\frac{1}{2}\int_{\Gamma^-_{r}}|\mathrm{Tr}U_n|^2h_n\nabla_y\eta_n\cdot\beta' \,dy,
\end{align}
where $\beta'$ has been defined in \eqref{beta}.
 From the strong
convergence $U_n\to W$ in $H^1(B_{R_0}^+,t^{1-2s}dz)$ proved in Proposition
\ref{existenceuniqueness} and \eqref{eq:traccia-H1BR}, it follows  that
\begin{equation*}
\int_0^{R_0}\left(\int_{S_r^-}\left(\eta_nh_n|\mathrm{Tr}U_n|^2-\tilde
    h|\mathrm{Tr}W|^2\right)\,dS'\right) dr\to0,
\end{equation*}
i.e. the sequence of functions
$r\mapsto\int_{S_r^-}\left(\eta_nh_n|\mathrm{Tr}U_n|^2-\tilde
  h|\mathrm{Tr}W|^2\right)\,dS'$
converges to $0$ in $L^1(0,R_0)$ and hence a.e. along a further subsequence,
which we still index by $n_k$. In particular we have that
\begin{equation}\label{eq:19}
\int_{S_r^-}\eta_{n_k}h_{n_k}|\mathrm{Tr}U_{n_k}|^2\,dS'\to
\int_{S_r^-}\tilde h|\mathrm{Tr}W|^2\,dS'\quad\text{as $k\to\infty$ for a.e. }r\in(0,R_0).
\end{equation}
The strong
convergence of $U_n$ to $W$ in $H^1(B_{R_0}^+,t^{1-2s}dz)$, which
implies that $\mathrm{Tr}U_n\to \mathrm{Tr}W$ in $L^{2^*(s)}(B_{R_0}')$ by 
\eqref{eq:traccia-H1BR}, 
the strong convergence \eqref{eq:20} of $h_n$ to $\tilde h$ in
$W^{1,p}(\Gamma_{R_0}^-)$, and the fact that 
$\eta_n\to1$ a.e. in $\Gamma^-_{R_0}$ imply that 
\begin{equation}\label{eq:21}
\lim_{n\to\infty}\int_{\Gamma^-_{r}}|\mathrm{Tr}U_n|^2\left(\eta_n\nabla_yh_n\cdot\beta'+\eta_nh_n\mathrm{div}_y\beta'\right) dy=
\int_{\Gamma^-_{r}}|\mathrm{Tr}W|^2\left(\nabla_y\tilde
  h\cdot\beta'+\tilde h\mathrm{div}_y\beta'\right)dy
\end{equation}
for all $r\in(0,R_0)$.
Finally, we have that, by \eqref{eq:22} and \eqref{beta}, 
\begin{equation*}
\nabla_y\eta_n\cdot \beta'=\frac1\mu\,\frac n{2} \,\eta'\big(-\tfrac{n
  y_N}2\big)(D(y)y)_N.
\end{equation*}
Hence, since \eqref{eq:6} implies that $(D(y)y)_N=O(y_N)$ as $y_N\to0$
and \eqref{eq:13} yields that $\eta'\big(-\tfrac{n
  y_N}2\big)\neq 0$ only for $y_N\in\big(-\frac2n,-\frac1n\big)$, we
conclude that 
\begin{equation*}
\nabla_y\eta_n\cdot \beta'\quad\text{is bounded in $\Gamma_{r}^-$
  uniformly with respect to $n$}.
\end{equation*}
Therefore, by H\"older's inequality,
\begin{multline*}
\left|\int_{\Gamma^-_{r}}|\mathrm{Tr}U_n|^2h_n\nabla_y\eta_n\cdot\beta'
  \,dy\right|\\
\leq {\rm
  const\,}\|\mathrm{Tr}U_n\|_{L^{2^*(s)}(\Gamma^-_{r})}^2\|h_n\|_{L^p
  (\Gamma^-_{r})}\Big|\{(y',y_N)\in \Gamma^-_{r}:-\tfrac2n <y_N<-\tfrac1n\}\Big|_N^{\frac{2s}{pN}(p-\frac{N}{2s})}
\end{multline*}
where $|\cdot|_N$ stands for the $N$-dimensional Lebesgue measure; 
hence, by boundedness of  $\{\mathrm{Tr}U_n\}$ in
$L^{2^*(s)}(\Gamma_r^-)$, ensured by 
\eqref{eq:traccia-H1BR}, and boundedness  of $\{h_n\}$ in
$L^p(\Gamma_{r}^-)$, we conclude that 
\begin{equation}\label{eq:23}
\lim_{n\to\infty}\int_{\Gamma^-_{r}}|\mathrm{Tr}U_n|^2h_n\nabla_y\eta_n\cdot\beta' \,dy=0
\end{equation}
Combining \eqref{eq:19}, 
\eqref{eq:21}, and \eqref{eq:23}, we can pass to the limit in
\eqref{eq:24} along the subsequence, obtaining that 
\begin{multline}\label{eq:25}
\lim_{k\to\infty}\int_{\sigma_{r,n_k}}\frac{1}{\mu}
    \eta_{n_k}h_{n_k}\mathrm{Tr}U_{n_k}(D\nabla_y
    \mathrm{Tr}U_{n_k}\cdot y)\,dy\\
=
\frac r2 \int_{S_r^-}\tilde h|\mathrm{Tr}W|^2\,dS'-\frac12 
\int_{\Gamma^-_{r}}|\mathrm{Tr}W|^2\left(\nabla_y\tilde
  h\cdot\beta'+\tilde h\mathrm{div}_y\beta'\right)dy.
\end{multline}
In view of \eqref{eq:18}, \eqref{eq:25}, and  the strong
convergence of $U_n$ to $W$ in $H^1(B_{R_0}^+,t^{1-2s}dz)$, we can
pass to the limit as $n=n_k\to\infty$ in \eqref{pohon} obtaining
the desired Pohozaev-type inequality \eqref{pohozaev} for the solution $W$.

Finally, to prove \eqref{pohozaev2}, we first multiply equation
  \eqref{eq:26} by $U_n$ and integrate by parts over
  $O_{r,n,\delta}$; then we pass to the limit as $\delta\to0^+$ using
  \eqref{eq:16}  and as $n=n_k\to\infty$, taking into account \eqref{eq:18}.
\end{proof}

\section{Almgren type frequency function}\label{sec:almgr-type-freq}
In this section we analyze the properties of the Almgren
frequency function $\mathcal N(r)$ associated to \eqref{prob4}, see \eqref{N}. To perform a blow-up analysis, the
boundedness of the frequency  will be
crucial; to this aim we are going to prove that $\mathcal N$ possesses a
nonnegative finite limit as $r\to0^+$.

Let $W\in H^1_{\Gamma_{R_1}^+}(B_{R_1}^+,t^{1-2s}\,dz)$ be a
nontrivial weak solution of \eqref{prob4}.
For all $r\in(0,R_1)$ we define 
\begin{equation}\label{E}
E(r)=\frac{1}{r^{N-2s}}\left(\int_{B_r^+}t^{1-2s}A\nabla W\cdot\nabla
  W\, dz-\kappa_s\int_{\Gamma^-_{r}}\tilde h|\mathop{\rm Tr}W|^2\, dy\right)
\end{equation}
and
\begin{equation}\label{H}
H(r)=\frac{1}{r^{N+1-2s}}\int_{\partial^+B_r^+}t^{1-2s}\mu(z) W^2(z)\, dS.
\end{equation}
Let us first estimate the derivative of $H$. 
\begin{Lemma}\label{lemH}
Let $E$ and $H$ be the functions defined as in \eqref{E} and
\eqref{H}. Then
 $H\in W^{1,1}_{\mathrm{loc}}(0,R_1)$ and
\begin{equation}\label{H'}
H'(r)=\frac{2}{r^{N+1-2s}}\int_{\partial^+B_r^+}t^{1-2s}\mu W \frac{\partial W}{\partial \nu}\, dS +H(r)O(1)\quad \text{as $r\rightarrow 0^+$}
\end{equation} 
in a distributional sense and for a.e. $r\in (0,R_1)$, where $\nu=\nu(z)=\frac{z}{|z|}$ denotes the unit outer normal vector to $\partial^+B_r^+$. Moreover 
\begin{equation}\label{eq1}
H'(r)=\frac{2}{r^{N+1-2s}}\int_{\partial^+B_r^+}t^{1-2s} (A\nabla W\cdot\nu)W\, dS+H(r)O(1)
\end{equation}
and
\begin{equation}\label{E=rH'/2}
H'(r)=\frac{2}{r}E(r)+H(r)O(1)
\end{equation}
as $r\rightarrow 0^+$.
\end{Lemma}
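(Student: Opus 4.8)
The plan is to pull everything back to the fixed half-sphere $\mathbb S^N_+$ and then feed in the Pohozaev identity \eqref{pohozaev2}. First I would change variables $z=r\theta$, $\theta\in\mathbb S^N_+$: since $dS=r^N\,dS_\theta$ and $t^{1-2s}=r^{1-2s}\theta_{N+1}^{1-2s}$ on $\partial^+B_r^+$, this gives the clean expression $H(r)=\int_{\mathbb S^N_+}\theta_{N+1}^{1-2s}\,\mu(r\theta)\,W^2(r\theta)\,dS_\theta$. Because $W\in H^1(B_{R_1}^+,t^{1-2s}dz)$, expressing its energy in polar coordinates shows that $r\mapsto W(r\,\cdot)$ belongs to $W^{1,2}_{\mathrm{loc}}\big((0,R_1);L^2(\mathbb S^N_+,\theta_{N+1}^{1-2s}dS)\big)$, with weak radial derivative $\nabla W(r\,\cdot)\cdot\theta$; together with the Lipschitz regularity of $\mu$ (Lemma \ref{lemmastimamu}) this yields $H\in W^{1,1}_{\mathrm{loc}}(0,R_1)$ and, differentiating under the integral sign and undoing the change of variables,
\begin{equation*}
H'(r)=\frac{1}{r^{N+1-2s}}\int_{\partial^+B_r^+}t^{1-2s}(\nabla\mu\cdot\nu)\,W^2\,dS+\frac{2}{r^{N+1-2s}}\int_{\partial^+B_r^+}t^{1-2s}\mu\,W\,\partial_\nu W\,dS .
\end{equation*}
Since $\nabla\mu=O(1)$ by Lemma \ref{lemmastimamu} and $\mu\ge \tfrac{1}{4}$ near the origin (so $\int_{\partial^+B_r^+}t^{1-2s}W^2\,dS\le 4\,r^{N+1-2s}H(r)$), the first term is $H(r)O(1)$, which is \eqref{H'}.

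Next I would replace $\mu\,\partial_\nu W$ by $A\nabla W\cdot\nu$. By the very definition of $\beta$ in \eqref{beta} one has the \emph{exact} identities $A(y)z=\mu(z)\beta(z)$ and $\beta(z)\cdot z=|z|^2$, hence on $\partial^+B_r^+$
\begin{equation*}
A\nabla W\cdot\nu=\frac{1}{r}\,\nabla W\cdot A(y)z=\frac{\mu(z)}{r}\,\nabla W\cdot\beta(z)=\mu(z)\,\partial_\nu W+\frac{\mu(z)}{r}\,\nabla W\cdot\zeta,\qquad \zeta:=\beta(z)-z,
\end{equation*}
where $\zeta\cdot z=0$, $\zeta=O(|z|^2)$ by Lemma \ref{lemmabeta}, and $\zeta_{N+1}=t\big(\alpha(y)/\mu(z)-1\big)=O(|z|)\,t$ by \eqref{beta}, \eqref{eq:det-jac} and Lemma \ref{lemmastimamu}. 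Since $\zeta$ is tangent to the sphere, $\nabla W\cdot\zeta$ involves only tangential derivatives of $W$, and I would integrate by parts along $\partial^+B_r^+$ — legitimate for a.e.\ $r$ (when $W(r\,\cdot)$ lies in the weighted space $H^1(\partial^+B_r^+,t^{1-2s}dS)$), with no contribution from the equatorial boundary since there $\zeta\cdot\mathbf n=-\zeta_{N+1}$ vanishes like $t$, so $t^{1-2s}\zeta_{N+1}\to 0$. This gives
\begin{equation*}
\int_{\partial^+B_r^+}t^{1-2s}\mu\,W\,(\nabla W\cdot\zeta)\,dS=-\frac{1}{2}\int_{\partial^+B_r^+}W^2\,\mathrm{div}_{\partial^+B_r^+}\!\big(t^{1-2s}\mu\,\zeta\big)\,dS,
\end{equation*}
and each of the three summands of $\mathrm{div}_{\partial^+B_r^+}(t^{1-2s}\mu\zeta)=t^{1-2s}\mu\,\mathrm{div}_{\partial^+B_r^+}\zeta+t^{1-2s}(\nabla\mu\cdot\zeta)+(1-2s)\,\mu\,t^{-2s}\zeta_{N+1}$ is $O(r)\,t^{1-2s}$: the first because $\mathrm{div}_{\partial^+B_r^+}\zeta=\mathrm{div}\,\beta-(N+1)-\nu\cdot(\partial_\nu\beta-\nu)=O(|z|)$ by \eqref{sviluppojacbeta}--\eqref{sviluppodivbeta}, the second by $\nabla\mu=O(1)$ and $\zeta=O(|z|^2)$, the third by $\zeta_{N+1}=O(|z|)\,t$. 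Hence that integral is $O(r)\int_{\partial^+B_r^+}t^{1-2s}W^2\,dS=O(r^{N+2-2s})H(r)$, so $\frac{2}{r^{N+1-2s}}\int_{\partial^+B_r^+}t^{1-2s}(\mu\,\partial_\nu W-A\nabla W\cdot\nu)W\,dS=H(r)O(1)$; combining this with \eqref{H'} proves \eqref{eq1}.

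Finally, \eqref{E=rH'/2} would follow immediately: the Pohozaev identity \eqref{pohozaev2} gives $\int_{\partial^+B_r^+}t^{1-2s}(A\nabla W\cdot\nu)W\,dS=\int_{B_r^+}t^{1-2s}A\nabla W\cdot\nabla W\,dz-\kappa_s\int_{\Gamma^-_r}\tilde h|\mathrm{Tr}W|^2\,dy=r^{N-2s}E(r)$ by the definition \eqref{E} of $E$, and substituting into \eqref{eq1} yields $H'(r)=\tfrac{2}{r} E(r)+H(r)O(1)$. The main obstacle is the integration by parts on the half-sphere in the second step: one must handle the degenerate/singular weight $t^{1-2s}$ near the equator and bound $\mathrm{div}_{\partial^+B_r^+}(t^{1-2s}\mu\zeta)$ there, where it is crucial that the vertical component $\zeta_{N+1}$ vanishes to order $t$ — this both kills the equatorial boundary term and controls the $t^{-2s}\zeta_{N+1}$ contribution; the $W^{1,1}_{\mathrm{loc}}$ assertion and the differentiation under the integral in the first step rely on the standard slicing of weighted Sobolev functions.
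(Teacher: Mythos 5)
Your proposal is correct and takes essentially the same route as the paper: both rewrite the difference $A\nabla W\cdot\nu-\mu\,\partial_\nu W$ using the tangent-to-spheres field $\zeta=\beta-z$ (the paper packages it as $\gamma=\mu\zeta/|z|$), integrate by parts on $\partial^+B_r^+$, control the resulting terms via Lemmas \ref{lemmastimamu}--\ref{lemmabeta} and the key fact that the vertical component of $\zeta$ vanishes like $t$, and finally invoke the Pohozaev identity \eqref{pohozaev2}. You are slightly more explicit than the paper about the equatorial boundary term and use a slicing argument for $H\in W^{1,1}_{\mathrm{loc}}$ where the paper tests directly against $C^\infty_c(0,R_1)$, but these are cosmetic differences, not a different proof.
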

\begin{proof}
We observe that $H\in L^1_{\mathrm{loc}}(0,R_1)$ by definition and it can be rewritten as
\begin{equation*}
  H(r)=\int_{{\mathbb S}^{N}_+}\theta_{N+1}^{1-2s}\mu(r\theta)W^2(r\theta)\, dS.
\end{equation*}
Thus, for all test functions $\varphi\in C^\infty_c(0,R_1)$, we have that
\begin{equation*}
\begin{split}
  -\int_0^{R_1}
  H(r)&\varphi'(r)dr=-\int_0^{R_1}
  \biggl(\int_{\mathbb S^N_+}\theta_{N+1}^{1-2s}\mu(r\theta)W^2(r\theta)\, dS\biggr)\varphi'(r)dr\\
  &=-\int_{B_{R_1}^+}t^{1-2s}\frac{\mu(z)W^2(z)}{|z|^{N+2-2s}}\nabla
  \tilde{\varphi}(z)\cdot z\ dz
  =\int_{B_{R_1}^+}\mathrm{div}\biggl(\frac{t^{1-2s}\mu(z)W^2(z)z}{|z|^{N+2-2s}}\biggr) \tilde{\varphi}(z) dz\\
  &=\int_{B_{R_1}^+}t^{1-2s}\biggl(\frac{2\mu(z)W(z)\nabla
    W(z)+W^2(z)\nabla \mu(z)}{|z|^{N+2-2s}}\cdot z\biggr)
  \tilde{\varphi}(z) dz\\
  &=\int_0^{R_1}\biggl(\int_{\mathbb
    S^N_+}\theta_{N+1}^{1-2s}\bigl[2\mu(r\theta)W(r\theta)\nabla
  W(r\theta)\cdot\theta+W^2(r\theta)\nabla\mu(r\theta)\cdot\theta\bigr]dS\biggr)\varphi(r)\,dr,
\end{split}
\end{equation*}
where $\tilde{\varphi}(z):=\varphi(|z|)$, so that
$\varphi'(|z|)=\nabla \tilde{\varphi}(z)\cdot \frac{z}{|z|}$. Hence
the distributional derivative of $H$ in $(0,R_1)$ is given by
\begin{equation}\label{H'distribuz}
H'(r)=\frac{2}{r^{N+1-2s}}\int_{\partial^+B_r^+}t^{1-2s}\mu W\frac{\partial W}{\partial\nu}\, dS+\frac{1}{r^{N+1-2s}}\int_{\partial^+B_r^+}t^{1-2s}W^2\nabla\mu\cdot\nu\, dS.
\end{equation}
Since $W,\nabla W\in L^2(B^+_{R_1},t^{1-2s}dz)$, from
\eqref{sviluppomu} and \eqref{sviluppogradmu} we easily infer that
$H\in W^{1,1}_{\mathrm{loc}}(0,R_1)$ and  \eqref{H'distribuz} also
holds for a.e. $r\in (0,R_1)$. Moreover, combining
\eqref{sviluppomu}, \eqref{sviluppogradmu}, \eqref{H} and
\eqref{H'distribuz}, we obtain \eqref{H'}.

 In order to prove \eqref{eq1}, we define 
\begin{equation*}\label{gamma}
\gamma(z)=\frac{\mu(z)(\beta(z)-z)}{|z|}.
\end{equation*}
Observing that $\gamma(z)\cdot z=0$ by definition, we deduce that,
for a.e. $r\in(0,R_1)$,
\begin{align}\label{eq2}
\int_{\partial^+B^+_r}t^{1-2s}(A\nabla W\cdot \nu) W\, dS&=\int_{\partial^+B^+_r}t^{1-2s} \mu W\frac{\partial W}{\partial\nu}dS+\frac{1}{2}\int_{\partial^+B^+_r}t^{1-2s}\gamma\cdot\nabla(W^2)\, dS\\
\notag&=\int_{\partial^+B^+_r}t^{1-2s} \mu W\frac{\partial W}{\partial\nu}dS-\frac{1}{2}\int_{\partial^+B_r^+}\mathrm{div}(t^{1-2s}\gamma)W^2\, dS\\
\notag&=\int_{\partial^+B^+_r}t^{1-2s} \mu W\frac{\partial W}{\partial\nu}dS+H(r)O(r^{N+1-2s}),
\end{align}
where we used that 
\begin{equation*}
\mathrm{div}(t^{1-2s}\gamma)= t^{1-2s}\mathrm{div}\gamma+(1-2s)\gamma_{N+1}t^{-2s}
\end{equation*}
and
\begin{equation*}
\begin{aligned}
&\gamma_{N+1}(z)=t\,O(1) \quad \text{as $|z|\rightarrow 0^+$},\\
&\mathrm{div}\gamma=\biggl(\frac{\nabla \mu(z)}{|z|}-\frac{\mu(z) z}{|z|^3}\biggr)(\beta(z)-z)+\frac{\mu(z)}{|z|}(\mathrm{div}\beta-(N+1))=O(1)\quad \text{as $|z|\rightarrow 0^+$},
\end{aligned}
\end{equation*}
as a consequence of \eqref{beta}, \eqref{eq:det-jac},
\eqref{sviluppomu}, \eqref{sviluppogradmu}, \eqref{sviluppobeta},
\eqref{sviluppodivbeta}. Hence, from \eqref{H'} and \eqref{eq2} it
follows \eqref{eq1}. From \eqref{E}, \eqref{pohozaev2} and \eqref{eq1} we infer that 
\begin{equation*}
r^{N-2s}E(r)=\int_{\partial^+B_r^+}t^{1-2s} (A\nabla W\cdot\nu)W\, dS=\frac{r^{N+1-2s}}{2}H'(r)+H(r)O(r^{N+1-2s}),
\end{equation*}
as $r\rightarrow 0^+$, which gives \eqref{E=rH'/2}, thus proving the lemma. 
\end{proof}
\begin{Lemma}\label{lemmaH>0}
 The function $H$ defined as in \eqref{H} is strictly positive for
 every $0<r\leq R_0$, with $R_0$ being defined in \eqref{eq:R_0}.
\end{Lemma}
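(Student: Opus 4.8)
The plan is to argue by contradiction, assuming that $H(r_0)=0$ for some $r_0\in(0,R_0]$. Recalling that $\mu\geq 1/4$ in $B_{R_0}$ (possibly after shrinking $R_1$ from the start, as in the proof of Lemma \ref{lemma3.12FF}) and that $H\geq 0$, the vanishing of $H(r_0)$ forces $W=0$ a.e.\ on $\partial^+B_{r_0}^+$, since $t^{1-2s}\mu>0$ a.e.\ there.

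The first step is to deduce that $W\equiv 0$ on the whole half-ball $B_{r_0}^+$. Since $W\in H^1_{\Gamma_{R_1}^+}(B_{R_1}^+,t^{1-2s}\,dz)$ and $W$ has zero trace on $\partial^+B_{r_0}^+$, the zero-extension of $W|_{B_{r_0}^+}$ to $B_{R_1}^+$ has vanishing trace on $\partial^+B_{R_1}^+\cup\Gamma_{R_1}^+$ and can therefore be used as a test function in the weak formulation \eqref{eq:9} of \eqref{prob4}; this yields
\begin{equation*}
\int_{B_{r_0}^+}t^{1-2s}A\nabla W\cdot\nabla W\,dz=\kappa_s\int_{\Gamma_{r_0}^-}\tilde h\,|\mathrm{Tr}\,W|^2\,dy
\end{equation*}
(alternatively one can invoke the Pohozaev identity \eqref{pohozaev2} at radius $r_0$). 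Next I would apply the coercivity inequality \eqref{coercvity} of Lemma \ref{lemma3.12FF} with $V=W|_{B_{r_0}^+}$, with $\zeta=\tilde h$ trivially extended (so that $\|\zeta\|_{L^p(B'_{R_1})}=\|\tilde h\|_{L^p(\Gamma_{R_1}^-)}\leq\bar\alpha_0$), and with $r=r_0\leq R_0=r(\bar\alpha_0)$: the boundary term appearing in \eqref{coercvity} equals $\tfrac{N-2s}{2}r_0^{N-2s}H(r_0)=0$, and, since $\mathrm{Tr}\,W=0$ on $\Gamma_{r_0}^+$, the remaining two terms combine, via the displayed identity, to $0$. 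Hence the left-hand side of \eqref{coercvity} vanishes, and so does $\int_{B_{r_0}^+}t^{1-2s}|\nabla W|^2\,dz$; thus $W$ is constant on the connected set $B_{r_0}^+$, and that constant is $0$ because $\mathrm{Tr}\,W=0$ on $\Gamma_{r_0}^+$.

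It then remains to invoke a unique continuation principle: on the open connected set $B_{R_1}^+=B_{R_1}\cap\{t>0\}$ the weight $t^{1-2s}$ is smooth and locally bounded above and below away from zero, so $W$ weakly solves there the uniformly elliptic equation $\mathrm{div}(A\nabla W)+\tfrac{1-2s}{t}(A\nabla W)_{N+1}=0$, which has Lipschitz principal part and locally bounded lower-order coefficients. Since $W$ vanishes on the nonempty open subset $B_{r_0}^+$, the weak unique continuation property for second order elliptic equations (see e.g.\ \cite{Garlin,Tao}) gives $W\equiv 0$ in $B_{R_1}^+$, contradicting the nontriviality of $W$. This proves $H(r)>0$ for every $0<r\leq R_0$. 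I expect the only point requiring care to be the admissibility of the zero-extension of $W|_{B_{r_0}^+}$ as a test function (or, equivalently, the appeal to \eqref{pohozaev2}) together with the verification that Lemma \ref{lemma3.12FF} applies at radius $r_0\leq R_0$ with $\zeta=\tilde h$, which is precisely the reason $R_0$ was fixed as $r(\bar\alpha_0)$.
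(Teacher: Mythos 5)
Your proof is correct and follows the same overall contradiction strategy as the paper — force $W=0$ on $\partial^+B_{r_0}^+$, show the energy vanishes so that $W\equiv0$ on $B_{r_0}^+$, then propagate by unique continuation to contradict nontriviality of $W$. The one step you handle differently, and more directly, is the derivation of the energy identity $\int_{B_{r_0}^+}t^{1-2s}A\nabla W\cdot\nabla W\,dz=\kappa_s\int_{\Gamma_{r_0}^-}\tilde h\,|\mathrm{Tr}\,W|^2\,dy$: you insert the zero-extension of $W|_{B_{r_0}^+}$ as a test function in \eqref{eq:9} (admissible because $W$ has zero trace on both $\partial^+B_{r_0}^+$ and $\Gamma_{r_0}^+$, so the extension lies in the closure of $C^\infty_c(B_{R_1}^+\cup\Gamma_{R_1}^-)$). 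The paper instead exploits that $\overline R$ is an interior minimum of $H\geq0$ and uses the $W^{1,1}$-formula \eqref{E=rH'/2} to conclude $H'(\overline R)=0$ and hence $E(\overline R)=0$. Both routes are valid; yours is somewhat more elementary and self-contained, while the paper's uses machinery already in place. You also conclude that the constant value of $W$ on $B_{r_0}^+$ is zero from $\mathrm{Tr}\,W=0$ on $\Gamma_{r_0}^+$, whereas the paper invokes the Hardy inequality of Lemma \ref{l:hardy_boundary}; again both work.

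Two small cautions. First, your alternative suggestion of invoking \eqref{pohozaev2} is weaker than your primary test-function argument, because \eqref{pohozaev2} is only established for almost every $r\in(0,R_0)$, and $r_0$ is a fixed radius singled out by the assumption $H(r_0)=0$; without an extra continuity argument one cannot simply specialize \eqref{pohozaev2} to $r=r_0$. Second, for the unique continuation step it is cleanest to phrase it as the paper does: on $B_{R_1}^+\cap\{t>\delta\}$ the equation is uniformly elliptic with Lipschitz principal coefficients and bounded lower-order terms, so $W\equiv0$ there, and then let $\delta\to0^+$. Your phrase ``locally bounded lower-order coefficients'' on all of $\{t>0\}$ amounts to the same exhaustion, but making the $\delta$-cutoff explicit avoids any appearance of applying unique continuation directly up to the degenerate boundary.
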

\begin{proof}
We prove the statement by contradiction. To this aim, we suppose that
there exists $\overline{R}\leq R_0$ such that $H(\overline{R})=0$. Then, since for every $r\leq R_0$ it holds that $\mu\geq 1/4$ in $B_r$,
$\int_{\partial^+B_{\overline{R}}^+} t^{1-2s} W^2 = 0$ and hence
$W\equiv 0$ on $\partial^+B_{\overline{R}}^+$. 
From \eqref{E=rH'/2} it follows that $H$ is differentiable in a
classical sense in $\overline{R}$ and
$H'(\overline{R})=2 \overline{R}^{-1} E(\overline{R})$; on the other
hand, $H(r)\geq 0=H(\overline{R})$ implies that
$0=H'(\overline{R})=2 \overline{R}^{-1}E(\overline{R})$ and hence
$E(\overline{R})=0$. 
Then from \eqref{coercvity} it follows that
\begin{equation}\label{eq3}
0=\int_{B^+_{\overline{R}}}t^{1-2s}A\nabla W\cdot\nabla
W\,dz-\kappa_s\int_{\Gamma^-_{\overline{R}}}\tilde h|\mathop{\rm
  Tr}W|^2\, dy\geq \tilde C_{N,s} \int_{B^+_{\overline{R}}}t^{1-2s}|\nabla W|^2\,dz.
\end{equation} 
By \eqref{eq3} and Lemma \ref{l:hardy_boundary} we can conclude that
$W\equiv 0$ in $B_{\overline{R}}^+$, which in turn leads to
$W\equiv 0$ in $B_{R_1}^+\cap\{t>\delta\}$ from classical unique
continuation principles for second order elliptic equations with
Lipschitz coefficients (see \cite{Garlin}).  Since $\delta>0$ can be
taken arbitrarily small, we end up with $W\equiv 0$ in $B_{R_1}^+$,
which is a contradiction.
\end{proof}
As a consequence of Lemma \ref{lemmaH>0}, the \emph{Almgren type frequency} function 
\begin{equation}\label{N}
\mathcal N(r)=\frac{E(r)}{H(r)}
\end{equation}
is well defined in $(0,R_0]$, with $R_0$ as in \eqref{eq:R_0}. 

In the following lemma we provide an estimate for the derivative of the function $E$.
\begin{Lemma}\label{lemE}
Let $E$ be the function defined as in \eqref{E}. Then $E\in W^{1,1}_{\mathrm{loc}}((0,R_0])$ and 
\begin{equation}\label{E'>=}
E'(r)\geq \frac{2}{r^{N-2s}}\int_{\partial^+B_r^+}t^{1-2s}\frac{|A\nabla W\cdot\nu|^2}{\mu}+O(r^{-1+\bar\delta})\biggl[E(r)+\frac{N-2s}{2}H(r)\biggr]\quad\text{as $r\rightarrow 0^+$}
\end{equation}
for a.e. $r\in (0,R_0)$, where 
\begin{equation}\label{delta}
\bar\delta=\min\{\overline\varepsilon,1\}\in(0,1]
\end{equation}
and $\overline{\varepsilon}$ is defined as in \eqref{overepsilon}.
\end{Lemma}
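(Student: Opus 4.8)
The plan is to differentiate $E(r)=r^{-(N-2s)}D(r)$, where
\begin{equation*}
D(r)=\int_{B_r^+}t^{1-2s}A\nabla W\cdot\nabla W\,dz-\kappa_s\int_{\Gamma^-_{r}}\tilde h|\mathrm{Tr}W|^2\,dy,
\end{equation*}
then feed in the Pohozaev inequality \eqref{pohozaev}, observe that the leading terms cancel, and control the remainders by means of the coercivity inequality \eqref{coercvity} and the subcritical bound \eqref{useful}. Since $t^{1-2s}A\nabla W\cdot\nabla W\in L^1(B_{R_1}^+)$ and, by H\"older's inequality together with \eqref{eq:tildehw1p} and \eqref{eq:traccia-H1BR}, $\tilde h|\mathrm{Tr}W|^2\in L^1(\Gamma^-_{R_1})$, the coarea formula gives $D\in W^{1,1}_{\mathrm{loc}}((0,R_0])$, hence $E\in W^{1,1}_{\mathrm{loc}}((0,R_0])$, with $E'(r)=-(N-2s)r^{-(N-2s)-1}D(r)+r^{-(N-2s)}D'(r)$ and $D'(r)=\int_{\partial^+B_r^+}t^{1-2s}A\nabla W\cdot\nabla W\,dS-\kappa_s\int_{S^-_{r}}\tilde h|\mathrm{Tr}W|^2\,dS'$ for a.e. $r\in(0,R_0)$.

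In \eqref{pohozaev} I would solve for $\int_{\partial^+B_r^+}t^{1-2s}A\nabla W\cdot\nabla W\,dS$, moving the two negative boundary integrals to the other side and multiplying by $2/r$; feeding this lower bound into $D'(r)$, the two occurrences of $\int_{S^-_{r}}\tilde h|\mathrm{Tr}W|^2\,dS'$ cancel, and $E'(r)$ becomes bounded below by $\frac{2}{r^{N-2s}}\int_{\partial^+B_r^+}t^{1-2s}\frac{|A\nabla W\cdot\nu|^2}{\mu}\,dS$ plus $\frac{2}{r^{N-2s+1}}$ times the right-hand side of \eqref{pohozaev}, plus the term $\frac{\kappa_s}{r^{N-2s+1}}\int_{\Gamma^-_{r}}[(N-2s)\tilde h-\nabla\tilde h\cdot\beta'-\tilde h\,\mathrm{div}_y\beta']|\mathrm{Tr}W|^2\,dy$, plus the leftover part $-\frac{N-2s}{r^{N-2s+1}}\int_{B_r^+}t^{1-2s}A\nabla W\cdot\nabla W\,dz$ coming from $-(N-2s)r^{-(N-2s)-1}D(r)$. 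I would then expand the right-hand side of \eqref{pohozaev}: by Lemma \ref{lemmabeta} ($\mathrm{div}\beta=N+1+O(|z|)$, $J_\beta=\mathrm{Id}_{N+1}+O(|z|)$, $\beta=O(|z|)$), by Lipschitz continuity of the entries of $A$ (so $|dA\nabla W\,\nabla W|\le C|\nabla W|^2$), and by $\alpha/\mu=1+O(|z|)$ (from \eqref{eq:det-jac}, \eqref{eq:alpha}, \eqref{sviluppomu}), each of its four integrals equals a constant multiple of $\int_{B_r^+}t^{1-2s}A\nabla W\cdot\nabla W\,dz$ plus a remainder $O(r)\int_{B_r^+}t^{1-2s}|\nabla W|^2\,dz$, the constants summing to $\frac{N+1}{2}-1+\frac{1-2s}{2}=\frac{N-2s}{2}$. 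Hence the resulting term $\frac{N-2s}{r^{N-2s+1}}\int_{B_r^+}t^{1-2s}A\nabla W\cdot\nabla W\,dz$ cancels the leftover above, and one arrives at
\begin{multline*}
E'(r)\ge\frac{2}{r^{N-2s}}\int_{\partial^+B_r^+}t^{1-2s}\frac{|A\nabla W\cdot\nu|^2}{\mu}\,dS+\frac{O(1)}{r^{N-2s}}\int_{B_r^+}t^{1-2s}|\nabla W|^2\,dz\\
+\frac{\kappa_s}{r^{N-2s+1}}\int_{\Gamma^-_{r}}\big[(N-2s)\tilde h-\nabla\tilde h\cdot\beta'-\tilde h\,\mathrm{div}_y\beta'\big]|\mathrm{Tr}W|^2\,dy .
\end{multline*}

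It remains to absorb the last two terms into $O(r^{-1+\bar\delta})[E(r)+\frac{N-2s}{2}H(r)]$. Applying \eqref{coercvity} with $\zeta=\tilde h$ (legitimate since $\|\tilde h\|_{L^p(\Gamma^-_{R_1})}\le\bar{\alpha}_0$ and $r\le R_0$) and recalling the definitions of $E$ and $H$, one gets
\begin{equation*}
\int_{B_r^+}t^{1-2s}|\nabla W|^2\,dz+\Big(\int_{B'_r}|\mathrm{Tr}W|^{2^*(s)}dy\Big)^{\!2/2^*(s)}\le\tilde C_{N,s}^{-1}r^{N-2s}\Big[E(r)+\tfrac{N-2s}{2}H(r)\Big],
\end{equation*}
and the bracket is nonnegative. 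Since $r^{-1+\bar\delta}\ge1$ for $r$ small, the $O(1)$-gradient term is then $\ge-Cr^{-1+\bar\delta}[E(r)+\frac{N-2s}{2}H(r)]$. For the last term, the integrand is bounded pointwise on $\Gamma^-_{r}$ by $C(|\tilde h|+|\nabla\tilde h|)=:\zeta\in L^p(B'_{R_1})$ (using that $\beta'$ and $\mathrm{div}_y\beta'$ are bounded near the origin); applying \eqref{useful} and then the displayed bound above gives $\big|\tfrac{\kappa_s}{r^{N-2s+1}}\int_{\Gamma^-_{r}}[\cdots]|\mathrm{Tr}W|^2\,dy\big|\le Cr^{-1+\overline\varepsilon}[E(r)+\tfrac{N-2s}{2}H(r)]$, and $r^{-1+\overline\varepsilon}\le r^{-1+\bar\delta}$ for $r$ small because $\bar\delta\le\overline\varepsilon$. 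Collecting these bounds yields \eqref{E'>=}.

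The step I expect to be the main obstacle is the expansion of the right-hand side of \eqref{pohozaev}: one must treat its four integrals --- each carrying one of $\mathrm{div}\beta$, $J_\beta$, $dA$, $\alpha/\mu$ --- carefully enough to check that their constant parts combine to exactly $\frac{N-2s}{2}$, the value producing the cancellation with $-(N-2s)r^{-(N-2s)-1}D(r)$, while keeping every remainder in the uniform shape $O(r)\int_{B_r^+}t^{1-2s}|\nabla W|^2\,dz$. A lesser but necessary care point is that \eqref{pohozaev} and all the differentiation formulas hold only for a.e. $r\in(0,R_0)$, so the entire chain must be read in that sense.
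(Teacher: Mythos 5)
Your proposal is correct and follows the same route as the paper's proof: differentiate $E$ via the coarea formula, feed in the Pohozaev inequality \eqref{pohozaev}, invoke Lemmas \ref{lemmastimamu} and \ref{lemmabeta} (together with the Lipschitz bound on $dA$ and $\alpha/\mu = 1 + O(|z|)$) so that the leading $\frac{N-2s}{2}\int_{B_r^+}t^{1-2s}A\nabla W\cdot\nabla W\,dz$ cancels against the $-(N-2s)r^{-1}$ part of the quotient rule, and then absorb the surviving $O(1)$ gradient remainder and the $\Gamma^-_r$ potential term into $O(r^{-1+\bar\delta})\big[E(r)+\tfrac{N-2s}{2}H(r)\big]$ via \eqref{coercvity} and \eqref{useful}. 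Your accounting of the constants ($\tfrac{N+1}{2}-1+0+\tfrac{1-2s}{2}=\tfrac{N-2s}{2}$) and the two spots you flag as requiring care match exactly what the paper records (more tersely) in \eqref{E'=}, \eqref{E'2}, \eqref{term1}, and \eqref{term2}.
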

\begin{proof}
From \eqref{E} we deduce that $E\in L^1_{\mathrm{loc}}(0,R_0)$.
From coarea formula $E\in W^{1,1}_{\mathrm{loc}}((0,R_0])$ and
\begin{align}\label{E'=}
E'(r)=&\frac{2s-N}{r^{N+1-2s}}\left(\int_{B_r^+}t^{1-2s}A\nabla
  W\cdot\nabla W\, dz-\kappa_s\int_{\Gamma^-_{r}}\tilde h|\mathop{\rm Tr}W|^2\,dy\right)\\
\notag&+\frac{1}{r^{N-2s}}\left(\int_{\partial^+B_r^+}t^{1-2s}A\nabla W\cdot\nabla W\, dS-\kappa_s\int_{S^-_{r}}\tilde h|\mathop{\rm Tr}W|^2\,dS'\right)
\end{align}
in a distributional sense and a.e. in $(0,R_0)$. Using \eqref{pohozaev}, Lemma \ref{lemmastimamu} and Lemma \ref{lemmabeta}, we obtain that
\begin{align}\label{E'2}
E'(r)\geq &\frac{2}{r^{N-2s}}\int_{\partial^+B_r^+}t^{1-2s}\frac{|A\nabla W\cdot\nu|^2}{\mu}\,dS+\frac{O(r)}{r^{N+1-2s}}\int_{B^+_r}t^{1-2s}A\nabla W\cdot\nabla W\, dz \\
\notag&+\frac{O(1)}{r^{N+1-2s}}\int_{\Gamma^-_{r}}(|\tilde{h}|+|\nabla
        _y\tilde{h}|)|\mathop{\rm Tr}W|^2\,dy
\end{align}
as $r\rightarrow 0^+$, for a.e. $r\in (0,R_0)$. One can estimate the
last two terms of the right hand side in \eqref{E'2} using
\eqref{coercvity}, thus obtaining 
\begin{equation}\label{term1}
\frac{O(r)}{r^{N+1-2s}}\int_{B^+_r}t^{1-2s}A\nabla W\cdot\nabla W\, dz =O(1)\biggl[E(r)+\frac{N-2s}{2}H(r)\biggr]
\end{equation}
and
\begin{align}\label{term2}
\frac{O(1)}{r^{N+1-2s}}\int_{\Gamma^-_{r}}(|\tilde{h}|+|\nabla
_y\tilde{h}|)W^2\ dy&
=\frac{O(r^{\overline{\varepsilon}})}{r^{N+1-2s}}\biggl(\int_{\Gamma^-_{r}}|\mathop{\rm
  Tr}W|^{2^\ast(s)}\,dy\biggr)^{2/2^\ast(s)}\\
\notag&=O(r^{-1+\overline{\varepsilon}})\biggl[E(r)+\frac{N-2s}{2}H(r)\biggr],
\end{align}
where in \eqref{term2} we have taken into account \eqref{useful} as
well. Estimate \eqref{E'>=} follows from \eqref{E'2},  
\eqref{term1}, and \eqref{term2}.
\end{proof}

\begin{Lemma}\label{lemmaliminf}
Let $\mathcal N$ be the function defined  in \eqref{N}. Then, for every
$0<r\leq R_0$, 
\begin{equation}\label{lower} 
\mathcal N(r)>-\frac{N-2s}{2}
\end{equation}
and
\begin{equation}\label{liminf}
\liminf_{r\rightarrow 0^+} \mathcal N(r)\geq 0.
\end{equation}
\end{Lemma}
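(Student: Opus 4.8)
The plan is to derive both inequalities directly from the coercivity-type estimates of Subsection~\ref{sec:some-inequalities} together with the strict positivity of $H$ established in Lemma~\ref{lemmaH>0}, with no use of any differential inequality for $\mathcal N$. Note first that $\tilde h\in W^{1,p}(\Gamma^-_{R_1})$ by \eqref{eq:tildehw1p}, with $\|\tilde h\|_{L^p(\Gamma^-_{R_1})}\le\bar{\alpha}_0$ by \eqref{eq:20} and \eqref{eq:alpha0}, and $R_0=r(\bar{\alpha}_0)$ by \eqref{eq:R_0}; hence the inequalities \eqref{coercvity} and \eqref{remark} may be applied with $\zeta=\tilde h$ (trivially extended by $0$ outside $\Gamma^-_{R_1}$) for every $0<r\le R_0$, and moreover $\mu\ge\tfrac14$ on $B_{R_0}$. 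Besides these facts and the definitions \eqref{E}, \eqref{H}, \eqref{N}, nothing else is needed.

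For \eqref{lower}, the observation is that, by \eqref{E} and \eqref{H},
\[
r^{N-2s}\Big(E(r)+\tfrac{N-2s}{2}H(r)\Big)=\int_{B_r^+}t^{1-2s}A\nabla W\cdot\nabla W\,dz-\kappa_s\int_{B'_r}\tilde h\,|\mathrm{Tr}W|^2\,dy+\frac{N-2s}{2r}\int_{\partial^+B_r^+}t^{1-2s}\mu W^2\,dS
\]
is exactly the left-hand side of \eqref{coercvity} with $V=W$ and $\zeta=\tilde h$, hence it is bounded below by $\tilde C_{N,s}\big(\int_{B_r^+}t^{1-2s}|\nabla W|^2\,dz+(\int_{B'_r}|\mathrm{Tr}W|^{2^*(s)}\,dy)^{2/2^*(s)}\big)\ge0$; dividing by $H(r)>0$ gives $\mathcal N(r)\ge-\tfrac{N-2s}{2}$. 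For the strict inequality I would argue that, were it an equality for some $r$, this lower bound would vanish, forcing $\nabla W\equiv0$ in the connected set $B_r^+$ and $\mathrm{Tr}W\equiv0$ on $B'_r$; then $W$ would be constant on $B_r^+$ with zero trace, i.e.\ $W\equiv0$ in $B_r^+$, and the classical strong unique continuation principle for uniformly elliptic equations with Lipschitz coefficients, applied to $W$ away from $\{t=0\}$ (exactly as in the proof of Lemma~\ref{lemmaH>0}, cf.\ \cite{Garlin}), would give $W\equiv0$ in $B_{R_1}^+$, contradicting $W\not\equiv0$.

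For \eqref{liminf}, the idea is that the potential contribution to $E$ is of lower order, with a gain of a positive power $r^{\overline\varepsilon}$. I would apply \eqref{remark} with $\zeta=|\tilde h|$ and $\bar\alpha=\bar{\alpha}_0$, combine it with the ellipticity bound $\int_{B_r^+}t^{1-2s}A\nabla W\cdot\nabla W\,dz\ge\tfrac12\int_{B_r^+}t^{1-2s}|\nabla W|^2\,dz$ (see \eqref{eqcoerc1}), and use $\tfrac1r\int_{\partial^+B_r^+}t^{1-2s}\mu W^2\,dS=r^{N-2s}H(r)$ to get, for all $0<r\le R_0$,
\[
r^{N-2s}E(r)=\int_{B_r^+}t^{1-2s}A\nabla W\cdot\nabla W\,dz-\kappa_s\int_{\Gamma^-_{r}}\tilde h\,|\mathrm{Tr}W|^2\,dy\ge\tfrac38\int_{B_r^+}t^{1-2s}|\nabla W|^2\,dz-C\,r^{\overline\varepsilon}\,r^{N-2s}H(r),
\]
for some $C>0$ depending only on $N,s,p,\bar{\alpha}_0$, with $\overline\varepsilon>0$ as in \eqref{overepsilon}. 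Dropping the nonnegative gradient term and dividing by $r^{N-2s}H(r)>0$ gives $\mathcal N(r)\ge-C\,r^{\overline\varepsilon}$ on $(0,R_0]$; letting $r\to0^+$ then yields \eqref{liminf}.

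I do not expect a genuine obstacle here: all the analytic substance has already been packaged into the coercivity inequalities \eqref{coercvity}, \eqref{remark} and into Lemma~\ref{lemmaH>0}. The only points that require a little care are of bookkeeping nature — checking that those inequalities may legitimately be invoked with the true potential $\zeta=\tilde h$ (which is precisely why $R_0$ was chosen equal to $r(\bar{\alpha}_0)$ in \eqref{eq:R_0}) and, for the strict inequality in \eqref{lower}, quoting rather than re-deriving the unique continuation step already used in Lemma~\ref{lemmaH>0}.
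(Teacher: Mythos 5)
Your proposal is correct and follows essentially the same route as the paper: \eqref{lower} is read off from the coercivity inequality \eqref{coercvity} applied with $\zeta=\tilde h$, and \eqref{liminf} from the bound $\mathcal N(r)\geq-\tilde C r^{\overline\varepsilon}$ obtained by combining \eqref{eqcoerc1} and \eqref{remark} with $\frac1r\int_{\partial^+B_r^+}t^{1-2s}\mu W^2\,dS=r^{N-2s}H(r)$. You are in fact slightly more careful than the paper in spelling out why \eqref{lower} is strict (equality would force $\nabla W\equiv0$, $\mathrm{Tr}W\equiv0$ on $B'_r$, hence $W\equiv0$ on $B_r^+$, then $W\equiv0$ everywhere by the unique continuation argument already used in Lemma~\ref{lemmaH>0}), whereas the paper simply asserts that \eqref{lower} follows from \eqref{coercvity}.
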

\begin{proof}
We deduce \eqref{lower} from \eqref{coercvity}. By \eqref{E},
\eqref{H}, \eqref{remark} and \eqref{eqcoerc1}, we obtain that for
all $0<r\leq R_0$
\begin{align*}
r^{N-2s}&E(r)=\int_{B_r^+}t^{1-2s}A\nabla W\cdot\nabla W\, dz-\kappa_s\int_{\Gamma_r^-}\tilde h |\mathop{\rm
  Tr}W|^2\, dy\\
&\geq\frac{3}{8}\int_{B_r^+}t^{1-2s}|\nabla W|^2\, dz-\kappa_s
\tilde{S} _{N,s}\tilde c_{N,s,p} 
r^{\overline{\varepsilon}}\bar{\alpha}_0\frac{2(N-2s)}{r}\int_{\partial^+B_r^+}t^{1-2s}\mu W^2\, dS\geq-\tilde{C}r^{\overline\varepsilon+N-2s}H(r),
\end{align*}
with $\bar{\alpha}_0$ as in \eqref{eq:alpha0} and 
$\tilde{C}:=2(N-2s)\kappa_s\tilde{S}_{N,s}\tilde c_{N,s,p}
\bar{\alpha}_0>0$. From this and \eqref{N} it follows that, for every
$0<r\leq R_0$,
\begin{equation*}
\N(r)\geq-\tilde{C} r^{\overline\varepsilon},
\end{equation*}
which leads to \eqref{liminf}.
\end{proof}

\begin{Lemma}\label{deriveN}
Let $\N$ be the function defined in \eqref{N}. Then $\N\in W^{1,1}_{\mathrm{loc}}((0,R_0])$ and
\begin{equation}\label{N'>=nu1+nu2}
\N'(r)\geq V_1(r)+V_2(r)
\end{equation}
for almost every $r\in(0,R_0)$, where
\begin{equation*}
V_1(r)=\frac{2r\left[\left(\int_{\partial^+B_r^+}t^{1-2s}\frac{|A\nabla W\cdot\nu|^2}{\mu}\,dS\right)\left(\int_{\partial^+B_r^+}t^{1-2s}\mu W^2 \,dS\right)\!-\!\left(\int_{\partial^+B_r^+}t^{1-2s}(A\nabla W\!\cdot\!\nu) W \,dS\right)^2\right]}{\left(\int_{\partial^+B_r^+}t^{1-2s}\mu W^2\right)^2}
\end{equation*}
and
\begin{equation*}
V_2(r)=O(r^{-1+\bar\delta})\left(\N(r)+\frac{N-2s}{2}\right)\quad\text{as $r\rightarrow 0^+$},
\end{equation*}
with $\bar\delta$ as in \eqref{delta}.
\end{Lemma}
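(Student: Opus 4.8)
The plan is to differentiate the Almgren quotient $\mathcal N=E/H$ and to recognize the two terms $V_1$ and $V_2$ by combining the estimates already available. Since $E\in W^{1,1}_{\mathrm{loc}}((0,R_0])$ by Lemma \ref{lemE}, $H\in W^{1,1}_{\mathrm{loc}}(0,R_1)$ by Lemma \ref{lemH}, and $H(r)>0$ for every $r\in(0,R_0]$ by Lemma \ref{lemmaH>0}, the ratio $\mathcal N$ belongs to $W^{1,1}_{\mathrm{loc}}((0,R_0])$ and, for a.e. $r\in(0,R_0)$,
\[
\mathcal N'(r)=\frac{E'(r)H(r)-E(r)H'(r)}{H(r)^2}.
\]

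Next I would introduce the abbreviations
\[
I_1=\int_{\partial^+B_r^+}t^{1-2s}\frac{|A\nabla W\cdot\nu|^2}{\mu}\,dS,\qquad
I_2=\int_{\partial^+B_r^+}t^{1-2s}(A\nabla W\cdot\nu)W\,dS,\qquad
I_3=\int_{\partial^+B_r^+}t^{1-2s}\mu W^2\,dS,
\]
so that $H(r)=r^{-(N+1-2s)}I_3$, while \eqref{E} together with the Pohozaev identity \eqref{pohozaev2} gives $r^{N-2s}E(r)=I_2$. In this notation, \eqref{E'>=} reads $E'(r)\geq 2r^{-(N-2s)}I_1+O(r^{-1+\bar\delta})\bigl[E(r)+\tfrac{N-2s}{2}H(r)\bigr]$ and \eqref{eq1} reads $H'(r)=2r^{-(N+1-2s)}I_2+H(r)O(1)$. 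Multiplying the first estimate by $H(r)>0$ and inserting the exact expression for $H'$ — here one must keep $H'$ as an equality, since $E$ need not have a definite sign — one gets
\[
E'(r)H(r)-E(r)H'(r)\geq \frac{2}{r^{2N+1-4s}}\bigl(I_1I_3-I_2^2\bigr)
+O(r^{-1+\bar\delta})\Bigl[E(r)+\tfrac{N-2s}{2}H(r)\Bigr]H(r)+E(r)H(r)\,O(1),
\]
using $2r^{-(N-2s)}I_1\cdot r^{-(N+1-2s)}I_3=2r^{-(2N+1-4s)}I_1I_3$ and $2r^{-(N+1-2s)}I_2\cdot r^{-(N-2s)}I_2=2r^{-(2N+1-4s)}I_2^2$. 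Since $H(r)^2=r^{-(2N+2-4s)}I_3^2$, dividing through by $H(r)^2$ turns the first term into exactly $2r(I_1I_3-I_2^2)/I_3^2=V_1(r)$ (which is, incidentally, nonnegative by the Cauchy--Schwarz inequality applied to $t^{(1-2s)/2}\mu^{-1/2}(A\nabla W\cdot\nu)$ and $t^{(1-2s)/2}\mu^{1/2}W$ on $\partial^+B_r^+$, a fact needed later but not here).

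Finally I would absorb the two remainder terms into $V_2(r)$: after division by $H(r)^2$ they become $O(r^{-1+\bar\delta})\bigl(\mathcal N(r)+\tfrac{N-2s}{2}\bigr)$ and $O(1)\,\mathcal N(r)$, where in the first one I use that $E(r)+\tfrac{N-2s}{2}H(r)>0$ by \eqref{lower}. By \eqref{lower} the quantity $\mathcal N(r)+\tfrac{N-2s}{2}$ is strictly positive, and by \eqref{liminf} it stays bounded away from $0$ for small $r$; hence near $r=0$ one has $1\leq C\bigl(\mathcal N(r)+\tfrac{N-2s}{2}\bigr)$ and $|\mathcal N(r)|\leq C\bigl(\mathcal N(r)+\tfrac{N-2s}{2}\bigr)$, so that $O(1)\,\mathcal N(r)=O(1)\bigl(\mathcal N(r)+\tfrac{N-2s}{2}\bigr)$; since $\bar\delta\in(0,1]$ implies $O(1)=O(r^{-1+\bar\delta})$ as $r\to0^+$, both remainders have the form $O(r^{-1+\bar\delta})\bigl(\mathcal N(r)+\tfrac{N-2s}{2}\bigr)=V_2(r)$, which yields \eqref{N'>=nu1+nu2}. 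I do not expect a genuine obstacle here beyond careful bookkeeping of the powers of $r$ in the cancellation producing $V_1$ and the use of the sign information on $\mathcal N+\tfrac{N-2s}{2}$ from Lemma \ref{lemmaliminf} to merge the bounded error terms into $V_2$.
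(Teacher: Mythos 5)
Your proof is correct and follows essentially the same route as the paper: you compute $\mathcal N' = (E'H - EH')/H^2$, substitute the estimate \eqref{E'>=} for $E'$ and the identities from \eqref{eq1}/\eqref{pohozaev2} (equivalently \eqref{E=rH'/2}) for $H'$ and $E$, identify the Cauchy--Schwarz combination giving $V_1$, and absorb the leftover $\mathcal N(r)O(1)+O(r)$ into $V_2$ using the lower bounds from Lemma \ref{lemmaliminf}. The only cosmetic difference is that the paper replaces $E(r)=\tfrac{r}{2}H'(r)+H(r)O(r)$ before dividing by $H^2$ whereas you work directly with $E(r)=r^{-(N-2s)}I_2$; the underlying relations and the final bookkeeping with $\bar\delta$ and the positivity of $\mathcal N+\tfrac{N-2s}{2}$ are the same.
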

\begin{proof}
The fact that $\N\in W^{1,1}_{\mathrm{loc}}((0,R_0])$
  follows from Lemmas \ref{lemH}, \ref{lemmaH>0},  and \ref{lemE}.
Moreover, exploiting \eqref{eq1}, \eqref{E=rH'/2} and \eqref{E'>=}, we obtain that
\begin{align}\label{eqN'1}
  &\N'(r)=\frac{E'(r)H(r)-H'(r)E(r)}{H^2(r)}=\frac{E'(r)H(r)}{H^2(r)}-\frac{H'(r)}{H^2(r)}\left(\frac{r}{2}H'(r)+H(r)O(r)\right)\\
  \notag&\geq\frac{2r\left[\left(\int_{\partial^+B_r^+}\!t^{1-2s}\frac{|A\nabla W\cdot\nu|^2}{\mu}dS\right)\!\left(\int_{\partial^+B_r^+}\!t^{1-2s}\mu W^2dS\right)\!-\!\left(\int_{\partial^+B_r^+}\!t^{1-2s}(A\nabla W\cdot\nu) WdS\right)^{\!2}\right]}{\left(\int_{\partial^+B_r^+}t^{1-2s}\mu W^2dS\right)^{\!2}}\\
  &\notag\qquad+O(r^{-1+\bar\delta})\left(\N(r)+\frac{N-2s}{2}\right)+O(r)+O(1)\frac{1}{H(r)}\frac{1}{r^{N-2s}}\int_{\partial^+B_r^+}t^{1-2s}(A\nabla W\cdot\nu) W\,dS.
\end{align}
In order to estimate the last term in \eqref{eqN'1}, we observe that
\begin{equation}\label{eqN'2}
O(1)\frac{1}{H(r)}\frac{1}{r^{N-2s}}\int_{\partial^+B_r^+}t^{1-2s}(A\nabla W\cdot\nu) W\,dS
=\frac{H'(r)}{H(r)}O(r)+O(r)=\N(r)O(1)+O(r),
\end{equation}
where we used \eqref{eq1} and \eqref{E=rH'/2}. Combining
\eqref{eqN'1} and \eqref{eqN'2} we obtain that
\begin{align*}
\N'(r)&\geq
\frac{2r\left[\left(\int_{\partial^+B_r^+}\!t^{1-2s}\frac{|A\nabla W\cdot\nu|^2}{\mu}dS\right)\!\left(\int_{\partial^+B_r^+}\!t^{1-2s}\mu W^2dS\right)\!-\!\left(\int_{\partial^+B_r^+}\!t^{1-2s}(A\nabla W\cdot\nu) WdS\right)^{\!2}\right]}{\left(\int_{\partial^+B_r^+}t^{1-2s}\mu W^2dS\right)^{\!2}}\\
&\qquad+\N(r)O(1)+O(r)+O(r^{-1+\bar\delta})\left(\N(r)+\frac{N-2s}{2}\right)\quad\text{as $r\rightarrow 0^+$}, 
\end{align*}
which yields \eqref{N'>=nu1+nu2} in view of \eqref{liminf}.
\end{proof}

\begin{Proposition}\label{Lemmalimitexists}
Let $\N$ be the function defined  in \eqref{N}. Then there exists
$C_1>0$ such that, for every $r\in (0,R_0]$,
\begin{equation}\label{Nlimitata}
\N(r)\leq C_1.
\end{equation}
Moreover the limit
\begin{equation}\label{limitexists}
\gamma:=\lim_{r\to0^+}\N(r)
\end{equation}
exists, is  finite and nonnegative.
\end{Proposition}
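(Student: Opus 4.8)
The plan is to combine the differential inequality for $\mathcal N$ from Lemma~\ref{deriveN} with the a priori lower bound from Lemma~\ref{lemmaliminf} through a Gr\"onwall type argument. The first observation is that $V_1(r)\geq 0$ for a.e.\ $r\in(0,R_0)$: writing $(A\nabla W\cdot\nu)\,W=\frac{A\nabla W\cdot\nu}{\sqrt\mu}\,\sqrt\mu\,W$ on $\partial^+B_r^+$ (where $\mu>0$ for $r\leq R_0$ by \eqref{sviluppomu}), the Cauchy--Schwarz inequality in $L^2(\partial^+B_r^+,t^{1-2s}\,dS)$ gives
\[
\left(\int_{\partial^+B_r^+}t^{1-2s}(A\nabla W\cdot\nu)\,W\,dS\right)^{\!2}\leq\left(\int_{\partial^+B_r^+}t^{1-2s}\frac{|A\nabla W\cdot\nu|^2}{\mu}\,dS\right)\left(\int_{\partial^+B_r^+}t^{1-2s}\mu W^2\,dS\right),
\]
which is precisely the assertion that the numerator defining $V_1(r)$ is nonnegative; the denominator is positive by Lemma~\ref{lemmaH>0}.

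Next I would prove \eqref{Nlimitata}. Set $f(r)=\mathcal N(r)+\frac{N-2s}{2}$, which is positive on $(0,R_0]$ by \eqref{lower}. By Lemma~\ref{deriveN} and $V_1\geq0$ there exist $C>0$ and $\tilde R\in(0,R_0]$ with $f'(r)=\mathcal N'(r)\geq V_2(r)\geq -Cr^{-1+\bar\delta}f(r)$ for a.e.\ $r\in(0,\tilde R]$. Since $\mathcal N\in W^{1,1}_{\mathrm{loc}}((0,R_0])$ and $f>0$, the function $\log f$ is locally absolutely continuous on $(0,\tilde R]$ with $(\log f)'\geq -Cr^{-1+\bar\delta}$ a.e.; integrating on $[r,\tilde R]$ and using $\bar\delta>0$ one obtains
\[
\log f(r)\leq\log f(\tilde R)+\frac{C}{\bar\delta}\,\tilde R^{\bar\delta},
\]
hence $\mathcal N(r)\leq f(\tilde R)\,e^{C\tilde R^{\bar\delta}/\bar\delta}-\frac{N-2s}{2}$ for every $r\in(0,\tilde R]$. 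On the compact interval $[\tilde R,R_0]$ the continuous representative of $\mathcal N$ is bounded, so taking $C_1$ to be the larger of the two bounds gives \eqref{Nlimitata}.

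It remains to establish \eqref{limitexists}. Now that $\mathcal N$ is bounded on $(0,R_0]$ --- above by $C_1$, below by $-\frac{N-2s}{2}$ --- the error term satisfies $|V_2(r)|=O(r^{-1+\bar\delta})\bigl(\mathcal N(r)+\frac{N-2s}{2}\bigr)\leq\tilde C\,r^{-1+\bar\delta}$ near $0$ for some $\tilde C>0$, and $r^{-1+\bar\delta}$ is integrable at $0$ because $\bar\delta>0$. From $\mathcal N'(r)\geq V_1(r)+V_2(r)\geq -\tilde C r^{-1+\bar\delta}$ it follows that $r\mapsto\mathcal N(r)+\frac{\tilde C}{\bar\delta}r^{\bar\delta}$ has nonnegative distributional derivative on $(0,\tilde R]$, hence agrees a.e.\ with a nondecreasing function; being bounded below, it admits a finite limit as $r\to0^+$, and since $\frac{\tilde C}{\bar\delta}r^{\bar\delta}\to0$ the limit $\gamma=\lim_{r\to0^+}\mathcal N(r)$ exists and is finite. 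Finally $\gamma=\liminf_{r\to0^+}\mathcal N(r)\geq0$ by \eqref{liminf}, which concludes the proof. The only genuinely delicate point is the handling of the $W^{1,1}_{\mathrm{loc}}$ regularity --- the absolute continuity of $\log f$, the application of the fundamental theorem of calculus to integrate the differential inequalities, and the fact that a $W^{1,1}_{\mathrm{loc}}$ function with a.e.\ nonnegative derivative coincides with a monotone function; everything else is an immediate consequence of Lemmas~\ref{deriveN} and~\ref{lemmaliminf} and Cauchy--Schwarz.
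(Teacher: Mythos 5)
Your proof is correct and follows essentially the same route as the paper: first $V_1\geq0$ by Cauchy--Schwarz, then a Gr\"onwall argument on $\N(r)+\tfrac{N-2s}{2}$ for the upper bound, and finally a monotonicity-after-correction argument for existence of the limit, with nonnegativity read off from Lemma~\ref{lemmaliminf}. The only (inessential) difference is that for the existence step you use the additive correction $\N(r)+\tfrac{\tilde C}{\bar\delta}r^{\bar\delta}$ after first exploiting the upper bound to make $|V_2(r)|\lesssim r^{-1+\bar\delta}$, whereas the paper keeps the multiplicative integrating factor $e^{C_2 r^{\bar\delta}/\bar\delta}\bigl(\N(r)+\tfrac{N-2s}{2}\bigr)$ throughout; both are the same Gr\"onwall mechanism.
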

\begin{proof}
From Lemma \ref{deriveN}, we deduce that
$\N'(r)\geq V_2(r)$ a.e. in $(0,R_0)$,
since $V_1(r)\geq 0$ as a consequence of Schwarz's inequality. 
Hence there exist
$0<\hat R<R_0$ and $C_2>0$ such that
\begin{equation}\label{C1}
\N'(r)\geq -C_2r^{-1+\bar\delta}\left(\N(r)+\frac{N-2s}{2}\right),
\end{equation}
for a.e. $r\in (0,\hat R)$. 
Then 
\begin{equation*}
\frac{d}{dr}\left[\log\left(\N(r)+\frac{N-2s}{2}\right)\right]\geq-C_2r^{-1+\bar\delta}
\quad\text{a.e. in }(0,\hat R),
\end{equation*}
and, integrating the above inequality between $(r,\hat R)$ with $r<
\hat R$, we obtain the upper bound
\begin{equation*}
\N(r)\leq\left(\N(\hat R)+\frac{N-2s}{2}\right)e^{C_2\frac{\hat
    R^{\bar\delta}}{\bar \delta}}-\frac{N-2s}{2}\quad\text{for all
}r\in(0,\hat R), 
\end{equation*}
which yields  \eqref{Nlimitata}, in view of the continuity of
$\mathcal N$ in $(0,R_0]$. From \eqref{C1}, we derive that
\begin{equation*}
\frac{d}{dr}\left[e^{C_2\frac{r^{\bar\delta}}{\bar\delta}}\left(\N(r)+\frac{N-2s}{2}\right)\right]\geq
0\quad\text{a.e. in }(0,\hat R),
\end{equation*}
hence
\begin{equation*}
r\mapsto e^{C_2\frac{r^{\bar\delta}}{\bar\delta}}\left(\N(r)+\frac{N-2s}{2}\right)
\end{equation*}
is a monotonically increasing function in $(0,\hat R)$, thus its limit as $r\to 0^+$ does exist, and the same holds true for the limit of the function $\N$. From Lemma \ref{lemmaliminf} and \eqref{Nlimitata}, we can conclude that the limit $\gamma:=\lim_{r\to0^+}\N(r)$ is finite and nonnegative.
\end{proof}
\begin{Lemma}\label{l:k1k2}
Let $\gamma=\lim_{r\to0^+}\N(r)$. Then:
\begin{itemize}
\item[\rm (i)] there exists $k_1>0$ such that, for all $r\in(0, R_0)$,
\begin{equation}\label{Hupper}
H(r)\leq k_1r^{2\gamma};
\end{equation}
\item[\rm (ii)] for any $\sigma>0$, there exists $k_2(\sigma)>0$ such that, for all $r\in(0, R_0)$,
$$H(r)\geq k_2(\sigma)r^{2\gamma+\sigma}.$$
\end{itemize}
\begin{proof}
(i) By \eqref{Nlimitata}, \eqref{C1}, and \eqref{limitexists} $\N'\in L^1(0,R_0)$, then using \eqref{C1} and \eqref{Nlimitata}
\begin{align*}
\N(r)-\gamma&=\int_0^r\N'(s)ds\geq-C_2\int_0^rs^{-1+\bar\delta}\left(\N(s)+\frac{N-2s}{2}\right)ds\\
&\geq-C_2\left(C_1+\tfrac{N-2s}{2}\right)\int_0^rs^{-1+\bar
  \delta}ds=
-C_2\left(C_1+\tfrac{N-2s}{2}\right)
\frac{r^{\bar\delta}}{\bar\delta}\quad\text{for all }r\in(0,R_0).
\end{align*}
Using \eqref{E=rH'/2}, one has
\begin{equation*}
\frac{H'(r)}{H(r)}=\frac{2\N(r)}{r}+O(1)\geq \frac{2\gamma}{r}
-2C_2\left(C_1+\tfrac{N-2s}{2}\right)
\frac{r^{-1+\bar\delta}}{\bar\delta}+O(1)\quad\text{as }r\to0^+.
\end{equation*}
Integrating the above estimate and taking into
  account that $H$ is continuous on $(0,R_0]$, we obtain
  \eqref{Hupper}.

\medskip  (ii) Since $\gamma= \lim_{r\to0^+}\N(r)$, for any $\sigma>0$ there exists $r_\sigma>0$ such that, for any $r\in(0,r_\sigma)$,
$$\N(r)<\gamma+\frac{\sigma}{2},$$ 
and hence
$$\frac{H'(r)}{H(r)}=\frac{2\N(r)}{r}+O(1)<\frac{2\gamma+\sigma}{r}+c,$$
for some positive constant $c$.
Integrating over the interval $(r,r_\sigma)$ and taking into account
that $H$ is continuous and positive in $(0,R_0]$, we prove the
second statement.
\end{proof}
\end{Lemma}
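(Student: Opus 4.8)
The plan is to read off both bounds from the already established identity $H'(r)/H(r)=\tfrac{2\mathcal N(r)}{r}+O(1)$ as $r\to0^+$ — which is \eqref{E=rH'/2} divided by $H(r)$, recalling \eqref{N} — combined with the quantitative information on $\mathcal N$ near the origin coming from Proposition \ref{Lemmalimitexists} and the differential inequality \eqref{C1}. Throughout, the strict positivity and continuity of $H$ on $(0,R_0]$ from Lemma \ref{lemmaH>0} let us pass from an estimate valid on a small interval $(0,r_0)$ to one valid on all of $(0,R_0)$ at the price of enlarging (resp.\ shrinking) the constant.

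For part (i): first, from \eqref{C1} together with the uniform bound $\mathcal N(r)\leq C_1$ of \eqref{Nlimitata} one gets $\mathcal N'(r)\geq -C_2\big(C_1+\tfrac{N-2s}{2}\big)r^{-1+\bar\delta}$ for a.e.\ $r\in(0,\hat R)$; since $\mathcal N'\in L^1(0,R_0)$ and $\mathcal N(r)\to\gamma$ (Proposition \ref{Lemmalimitexists}), integrating on $(0,r)$ yields the quantitative lower bound $\mathcal N(r)\geq\gamma-Cr^{\bar\delta}$ for some $C>0$ and all small $r$. Plugging this into $H'/H=\tfrac{2\mathcal N}{r}+O(1)$ gives $\frac{d}{dr}\log H(r)\geq\frac{2\gamma}{r}-2Cr^{-1+\bar\delta}+O(1)$, where the last two summands are integrable near $0$. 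Integrating on $(r,r_0)$ for a fixed small $r_0$, and using $\int_0^{r_0}\!\big(s^{-1+\bar\delta}+1\big)\,ds<\infty$, produces $\log H(r)\leq 2\gamma\log r+\mathrm{const}$ on $(0,r_0)$, i.e.\ $H(r)\leq k_1 r^{2\gamma}$ there; the estimate extends to $(0,R_0)$ by continuity of $H$ on the compact interval $[r_0,R_0]$, thus proving \eqref{Hupper}.

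For part (ii) the argument is one-sided and lighter: since $\gamma=\lim_{r\to0^+}\mathcal N(r)$, for each $\sigma>0$ there is $r_\sigma\in(0,R_0)$ with $\mathcal N(r)<\gamma+\tfrac\sigma2$ on $(0,r_\sigma)$, so $\frac{d}{dr}\log H(r)=\frac{2\mathcal N(r)}{r}+O(1)<\frac{2\gamma+\sigma}{r}+c$ there for some $c>0$; integrating on $(r,r_\sigma)$ gives $H(r)\geq k_2(\sigma)r^{2\gamma+\sigma}$ on $(0,r_\sigma)$, which is promoted to $(0,R_0)$ by positivity and continuity of $H$ on $[r_\sigma,R_0]$. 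I expect no genuine obstacle here: the computations are routine, and the only point requiring attention is to verify that every lower-order remainder — the $O(1)$ in $H'/H$ and the $O(r^{-1+\bar\delta})$ in the frequency inequality — is integrable near $r=0$, so that after integration it contributes only a bounded additive constant to $\log H$, hence a harmless multiplicative constant to $H$; one must also be careful with signs, since the integration is carried out from $r$ up to a fixed radius.
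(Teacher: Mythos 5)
Your proof is correct and follows essentially the same route as the paper: for (i) you use \eqref{C1} and \eqref{Nlimitata} together with $\mathcal N'\in L^1$ to derive $\mathcal N(r)\geq\gamma-Cr^{\bar\delta}$, plug this into $H'/H=\tfrac{2\mathcal N}{r}+O(1)$ from \eqref{E=rH'/2}, and integrate; for (ii) you use $\mathcal N(r)<\gamma+\sigma/2$ near the origin, plug into the same identity, and integrate, with continuity and positivity of $H$ on $(0,R_0]$ handling the extension to the full interval. This matches the paper's argument step for step.
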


\section{Blow-up analysis and local asymptotics}\label{sec:blow-up-analysis}

\subsection{Blow-up analysis}
As in Section \ref{sec:almgr-type-freq}, let
$W\in H^1_{\Gamma_{R_1}^+}(B_{R_1}^+,t^{1-2s}\,dz)$ be a nontrivial
weak solution of \eqref{prob4}.  For every $\lambda\in (0,R_0)$,
with $R_0$ being as in \eqref{eq:R_0}, let us define
\begin{equation}\label{wlam}
  w^\lambda(z)=\frac{W(\lambda z)}{\sqrt{H(\lambda)}}.
\end{equation}
 We have that $w^\lambda$ is a weak solution to
\begin{equation}\label{prob3lambda}
\begin{cases}
  -\mathrm{div}\left(t^{1-2s}A(\lambda\,\cdot)\nabla w^\lambda\right)=0 &\mathrm{in \ } B_{{R_1}/{\lambda}}^+,\\
  \lim_{t\to0^+}\left( t^{1-2s}A(\lambda\,\cdot)\nabla
    w^\lambda\cdot\nu\right)=\kappa_s\lambda^{2s} \tilde h(\lambda
  \cdot)
  \mathop{\rm Tr}w^\lambda &\mathrm{on \ } \Gamma_{R_1/\lambda}^-,\\
  w^\lambda=0 & \mathrm{on \ } \Gamma_{R_1/\lambda}^+.
\end{cases}
\end{equation}
Moreover we have that
\begin{equation}\label{intwlambda1}
\int_{{\mathbb S}^{N}_+}\theta_{N+1}^{1-2s}\mu(\lambda\theta) |w^\lambda(\theta)|^2\, dS=1.
\end{equation}
\begin{Lemma}\label{Lemmawlamlimit}
The family of functions $\{w^\lambda\}_{\lambda\in (0,R_0)}$ is bounded in $H^1(B_1^+,t^{1-2s}dz)$. 
\end{Lemma}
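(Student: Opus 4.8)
The plan is to transfer all quantities back to the unscaled solution $W$ by the change of variables $z\mapsto\lambda z$, and then combine the uniform bound on the Almgren frequency with the coercivity inequality of Lemma~\ref{lemma3.12FF} and the Hardy inequality of Lemma~\ref{l:hardy_boundary}. Fix $\lambda\in(0,R_0)$, with $R_0$ as in \eqref{eq:R_0}.

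First I would record the scaling identities coming from \eqref{wlam}. Since $\nabla w^\lambda(z)=\tfrac{\lambda}{\sqrt{H(\lambda)}}(\nabla W)(\lambda z)$ and the weight $t^{1-2s}$ is homogeneous of degree $1-2s$, the change of variables $\zeta=\lambda z$ gives
\begin{equation*}
  \int_{B_1^+}t^{1-2s}|\nabla w^\lambda|^2\,dz=\frac{\lambda^{2s-N}}{H(\lambda)}\int_{B_\lambda^+}t^{1-2s}|\nabla W|^2\,d\zeta ,
\end{equation*}
and, in the same way, the left-hand side of \eqref{coercvity} taken with $r=\lambda$, $V=W$ and $\zeta=\tilde h$ equals
\begin{equation*}
  \lambda^{N-2s}\Big(E(\lambda)+\tfrac{N-2s}{2}H(\lambda)\Big)=\lambda^{N-2s}H(\lambda)\Big(\mathcal N(\lambda)+\tfrac{N-2s}{2}\Big),
\end{equation*}
by the very definitions \eqref{E}, \eqref{H}, \eqref{N}. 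Next I would apply Lemma~\ref{lemma3.12FF} with $r=\lambda\le R_0=r(\bar{\alpha}_0)$, $\zeta=\tilde h$ extended by zero outside $\Gamma_{R_1}^-$ (this is legitimate since $\|\tilde h\|_{L^p(B'_{R_1})}=\lim_n\|h_n\|_{L^p(\Gamma_{R_1}^-)}\le\bar{\alpha}_0$ by \eqref{eq:alpha0}), and $V=W$; discarding the nonnegative trace term on the right of \eqref{coercvity} and using Proposition~\ref{Lemmalimitexists}, which yields $\mathcal N(\lambda)\le C_1$, we obtain
\begin{equation*}
  \tilde C_{N,s}\int_{B_\lambda^+}t^{1-2s}|\nabla W|^2\,dz\le\lambda^{N-2s}H(\lambda)\Big(\mathcal N(\lambda)+\tfrac{N-2s}{2}\Big)\le\Big(C_1+\tfrac{N-2s}{2}\Big)\lambda^{N-2s}H(\lambda).
\end{equation*}
By the first scaling identity this means that $\int_{B_1^+}t^{1-2s}|\nabla w^\lambda|^2\,dz$ is bounded by a constant independent of $\lambda\in(0,R_0)$.

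It then remains to bound $\int_{B_1^+}t^{1-2s}|w^\lambda|^2\,dz$, for which I would use Lemma~\ref{l:hardy_boundary} with $r=1$ and $w=w^\lambda\in H^1(B_1^+,t^{1-2s}\,dz)$: since $|z|\le1$ on $B_1^+$,
\begin{equation*}
  \Big(\tfrac{N-2s}{2}\Big)^{\!2}\int_{B_1^+}t^{1-2s}|w^\lambda|^2\,dz\le\int_{B_1^+}t^{1-2s}|\nabla w^\lambda|^2\,dz+\tfrac{N-2s}{2}\int_{\partial^+B_1^+}t^{1-2s}|w^\lambda|^2\,dS .
\end{equation*}
The first term on the right has just been bounded uniformly; for the boundary term I would use that $\mu\ge\tfrac14$ in $B_{R_0}$ (hence $\mu(\lambda\cdot)\ge\tfrac14$ on $B_1^+$), as already exploited in the proof of Lemma~\ref{lemmaH>0}, together with the normalization \eqref{intwlambda1}, which give $\int_{\partial^+B_1^+}t^{1-2s}|w^\lambda|^2\,dS\le4$. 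Combining the two bounds produces the desired uniform estimate for $\|w^\lambda\|_{H^1(B_1^+,t^{1-2s}\,dz)}$.

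The genuinely hard input — the uniform bound on the Almgren frequency $\mathcal N$ — is already available from Proposition~\ref{Lemmalimitexists}, so within this lemma no serious obstacle remains; the only points requiring care are the bookkeeping of the powers of $\lambda$ in the weighted change of variables and the verification that the hypotheses of Lemma~\ref{lemma3.12FF} are met with $\zeta=\tilde h$ and $r=\lambda\le R_0$.
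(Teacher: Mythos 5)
Your proof is correct and follows essentially the same approach as the paper's. The only cosmetic difference is that you invoke Lemma \ref{lemma3.12FF} as a black box applied to $V=W$, $r=\lambda$, $\zeta=\tilde h$ (which is legitimate since $\|\tilde h\|_{L^p}\le\bar\alpha_0$), whereas the paper unfolds the same estimate directly from \eqref{eqcoerc1}, \eqref{<1/8}, and \eqref{remark}; both then combine the upper bound $\mathcal N(\lambda)\le C_1$ from Proposition \ref{Lemmalimitexists} with the Hardy inequality of Lemma \ref{l:hardy_boundary} and the normalization \eqref{intwlambda1} to close the argument.
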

\begin{proof}By \eqref{N} and using \eqref{eqcoerc1}, \eqref{<1/8} and
  \eqref{remark}, we obtain that, for every $\lambda\in (0,R_0)$, 
\begin{equation*}
  \begin{split}
    \N(\lambda)&=\frac{\lambda^{2s-N}}{H(\lambda)}
    \left(\int_{B^+_\lambda}t^{1-2s}A\nabla W\cdot\nabla W\
      dz-\kappa_s\int_{\Gamma^-_\lambda}
      \tilde h |\mathop{\rm Tr} W|^2\ dy\right)\\
    &\geq \frac{\lambda^{2s-N}}{H(\lambda)}\biggl[\frac{3}{8}
    \int_{B^+_\lambda}t^{1-2s}|\nabla W|^2\ dz-\kappa_s\tilde{S}_{N,s}
    \tilde c_{N,s,p}
    \lambda^{\overline{\varepsilon}}\bar{\alpha}_0                                        \frac{2(N-2s)}{\lambda}
    \int_{\partial^+B^+_\lambda}t^{1-2s}\mu W^2 dS\biggr]\\
    &\geq \frac{3}{8}\int_{B^+_1}t^{1-2s}|\nabla w^\lambda|^2\ dz
    -2(N-2s)\kappa_s\tilde{S}_{N,s} \tilde c_{N,s,p} \lambda^{\overline{\varepsilon}}\bar{\alpha}_0\\
    &\geq \frac{3}{8}\int_{B^+_1}t^{1-2s}|\nabla w^\lambda|^2\
    dz-\frac{N-2s}{4},
  \end{split}
\end{equation*}
which together with \eqref{Nlimitata} implies that
$\left\{\|\nabla
  w^\lambda\|_{L^2(B_1^+,t^{1-2s}dz)}\right\}_{\lambda\in (0,R_0)}$
is bounded. From this and \eqref{intwlambda1}, the boundedness of
$\{w^\lambda\}_{\lambda\in (0,R_0)}$ in $H^1(B_1^+,t^{1-2s}dz)$
follows by Lemma \ref{l:hardy_boundary}.
\end{proof}
We are going to prove strong convergence in $H^1(B_1^+,t^{1-2s}dz)$ of
  $\{w^\lambda\}$ along a proper vanishing sequence of $\lambda$'s; to
  this aim, we first need to establish the following doubling properties.
\begin{Lemma}\label{lemdoub}
There exists $C_3>0$ such that
\begin{equation}\label{doubling}
  \frac{1}{C_3}H(\lambda)\leq H(R\lambda)\leq C_3H(\lambda),
\end{equation}
\begin{equation}\label{stima1}
  \int_{B_R^+}t^{1-2s}|\nabla w^\lambda|^2dz\leq C_32^{N-2s}\int_{B_1^+}t^{1-2s}|\nabla w^{R\lambda}|^2dz,
\end{equation}
and
\begin{equation}\label{stima2}
  \int_{B_R^+}t^{1-2s}|w^\lambda|^2dz\leq C_32^{N+2-2s}\int_{B_1^+}t^{1-2s}|w^{R\lambda}|^2 dz.
\end{equation}
for any $\lambda<R_0/2$ and $R\in[1,2]$.
\end{Lemma}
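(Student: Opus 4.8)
The plan is to deduce all three assertions from the two-sided bounds on the Almgren frequency $\N$ that are already available, combined with the dilation scaling of $H$ and of the rescaled functions $w^\lambda$.

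First I would establish the doubling inequality \eqref{doubling} for $H$. From \eqref{N} one has $E(r)=\N(r)H(r)$, so inserting this into \eqref{E=rH'/2} gives
\[
\frac{H'(r)}{H(r)}=\frac{2\N(r)}{r}+O(1)\qquad\text{as }r\to0^+.
\]
Choosing $R_0$ small from the outset, as is done throughout the construction, the remainder $O(1)$ above is bounded on all of $(0,R_0)$, and by \eqref{lower} and \eqref{Nlimitata} we have $-\tfrac{N-2s}{2}<\N(r)\le C_1$ for every $r\in(0,R_0]$, whence $|\N(r)|\le\max\{C_1,\tfrac{N-2s}{2}\}$. It follows that there is a constant $C>0$ with $\bigl|\tfrac{H'(r)}{H(r)}\bigr|\le\tfrac{C}{r}$ for all $r\in(0,R_0)$. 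Since $H$ is strictly positive on $(0,R_0]$ by Lemma \ref{lemmaH>0} and lies in $W^{1,1}_{\mathrm{loc}}(0,R_0)$ by Lemma \ref{lemH}, the function $\log H$ belongs to $W^{1,1}_{\mathrm{loc}}(0,R_0)$; integrating $\tfrac{d}{dr}\log H=\tfrac{H'}{H}$ over $[\lambda,R\lambda]$ one obtains
\[
\Bigl|\log\frac{H(R\lambda)}{H(\lambda)}\Bigr|\le\int_\lambda^{R\lambda}\Bigl|\frac{H'(r)}{H(r)}\Bigr|\,dr\le C\log R\le C\log 2
\]
for every $\lambda<R_0/2$ and $R\in[1,2]$, and exponentiating yields \eqref{doubling}, say with $C_3=2^{C}$ (possibly enlarged below).

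Next I would deduce \eqref{stima1} and \eqref{stima2} from \eqref{doubling} by a change of variables. Recalling from \eqref{wlam} that $w^\lambda(z)=W(\lambda z)/\sqrt{H(\lambda)}$ and performing the substitution $y=\lambda z$, under which $t^{1-2s}\,dz$ picks up the factor $\lambda^{2s-N-2}$, one gets the exact identities
\[
\int_{B_R^+}t^{1-2s}|\nabla w^\lambda|^2\,dz=\frac{\lambda^{2s-N}}{H(\lambda)}\int_{B_{R\lambda}^+}t^{1-2s}|\nabla W|^2\,dz,\qquad
\int_{B_R^+}t^{1-2s}|w^\lambda|^2\,dz=\frac{\lambda^{2s-N-2}}{H(\lambda)}\int_{B_{R\lambda}^+}t^{1-2s}|W|^2\,dz,
\]
together with the analogous ones in which $(R,\lambda)$ is replaced by $(1,R\lambda)$. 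Dividing the first identity by its $(1,R\lambda)$-counterpart, the integral over $B_{R\lambda}^+$ cancels and we are left with
\[
\frac{\int_{B_R^+}t^{1-2s}|\nabla w^\lambda|^2\,dz}{\int_{B_1^+}t^{1-2s}|\nabla w^{R\lambda}|^2\,dz}=R^{N-2s}\,\frac{H(R\lambda)}{H(\lambda)}\le 2^{N-2s}C_3,
\]
where we used $1\le R\le 2$, $N-2s>0$, and \eqref{doubling}; this is \eqref{stima1}. Running the same computation with the two $L^2$-identities produces the ratio $R^{N+2-2s}H(R\lambda)/H(\lambda)\le 2^{N+2-2s}C_3$, which is \eqref{stima2}.

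The only step calling for genuine input is the doubling of $H$, and its essential ingredient — the boundedness of $\N$ from above and below near the origin — is already in hand from Proposition \ref{Lemmalimitexists} and Lemma \ref{lemmaliminf}; everything else is routine bookkeeping with the dilation scaling of the weight $t^{1-2s}$ and with the continuity and strict positivity of $H$ on $(0,R_0]$.
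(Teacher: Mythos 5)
Your argument is correct and takes essentially the same route as the paper: you derive a bound of the form $|H'(r)/H(r)|\le C/r$ from \eqref{E=rH'/2} together with the two-sided control on $\N$ coming from \eqref{lower} and \eqref{Nlimitata}, integrate over $[\lambda,R\lambda]$ to obtain the doubling for $H$, and then deduce \eqref{stima1}--\eqref{stima2} by the dilation scaling of the weight (which the paper delegates to \cite{DelFel}). The paper integrates the upper and lower bounds separately and patches the range up to $R_0/2$ by continuity of $H$, whereas you absorb everything into one symmetric bound; both are valid since the $O(1)$ remainder in \eqref{E=rH'/2} is in fact uniformly bounded on $(0,R_0)$.
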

\begin{proof}
From \eqref{E=rH'/2} we deduce that, for a.e. $r\in (0,R_0)$,
\begin{equation*}
  \frac{H'(r)}{H(r)}=\frac{2\N(r)}{r}+O(1)\quad\text{as }r\to0^+.
\end{equation*}
Hence there exist a positive constant $C>0$ and $\tilde R_0<R_0$ such
that, for all $r\in(0,\tilde R_0)$,
\begin{equation*}
  -C-\frac{N-2s}{r}\leq \frac{H'(r)}{H(r)}\leq C+\frac{2C_1}{r},
\end{equation*}
where we used \eqref{lower} and \eqref{Nlimitata}. 
Integrating the above inequalities over the interval
$(\lambda,R\lambda)$, with $R\in(1,2]$ and $\lambda<\tilde R_0/R$, we
obtain that 
\begin{equation}\label{eq:29}
  2^{-(N-2s)}e^{-C\frac{\tilde R_0}{R}(R-1)}\leq\frac{H(R\lambda)}{H(\lambda)}\leq 4^{C_1}e^{C\frac{\tilde R_0}{R}(R-1)}.
\end{equation}
The above chain of inequalities trivially extends to the case $R=1$.
Estimate \eqref{doubling} follows from \eqref{eq:29} and the
  fact that $H$ is continuous and strictly positive in $(0,R_0]$
  (Lemmas \ref{lemH} and \ref{lemmaH>0}).
By scaling and \eqref{doubling}, we easily deduce \eqref{stima1} and
\eqref{stima2} (see \cite{DelFel} for details in a
  similar situation).
\end{proof}
\begin{Lemma}\label{Rlam}
  Let $w^\lambda$ be as in \eqref{wlam}, with
  $\lambda\in(0,R_0)$. Then there exist $M>0$ and $\lambda_0>0$ such
  that, for any $\lambda\in(0,\lambda_0)$, there exists
  $R_\lambda \in[1,2]$ such that
\begin{equation*}
\int_{\partial^+ B_{R_\lambda}^+}t^{1-2s}|\nabla w^\lambda|^2\,dS\leq M\int_{B_{R_\lambda}^+}t^{1-2s}|\nabla w^\lambda|^2\,dz.
\end{equation*}
\end{Lemma}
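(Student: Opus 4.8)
The plan is to produce the good radius $R_\lambda$ by a mean value (Chebyshev) argument in the radial variable, once the Dirichlet energy of $w^\lambda$ on $B_1^+$ has been bounded from below uniformly in $\lambda$. As a preliminary I would upgrade the bound of Lemma \ref{Lemmawlamlimit} from $B_1^+$ to $B_2^+$: applying the doubling inequalities \eqref{stima1} and \eqref{stima2} with $R=2$ (legitimate for $\lambda<R_0/2$) one may write $\int_{B_2^+}t^{1-2s}|\nabla w^\lambda|^2\,dz\le C_3 2^{N-2s}\int_{B_1^+}t^{1-2s}|\nabla w^{2\lambda}|^2\,dz$, and similarly for the $L^2$ part, and the right hand sides are controlled, uniformly in $\lambda\in(0,R_0/2)$, by Lemma \ref{Lemmawlamlimit} applied at the scale $2\lambda$. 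Hence there is a constant $\bar C>0$, independent of $\lambda\in(0,R_0/2)$, with $\int_{B_2^+}t^{1-2s}\big(|w^\lambda|^2+|\nabla w^\lambda|^2\big)\,dz\le\bar C$.

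The heart of the argument is the uniform lower bound: there exist $c_0>0$ and $\lambda_1\in(0,R_0/2)$ such that $\int_{B_1^+}t^{1-2s}|\nabla w^\lambda|^2\,dz\ge c_0$ for every $\lambda\in(0,\lambda_1)$. I would prove this by contradiction. If $\lambda_n\to0^+$ is such that $\int_{B_1^+}t^{1-2s}|\nabla w^{\lambda_n}|^2\,dz\to0$, then $\{w^{\lambda_n}\}$ is bounded in $H^1(B_1^+,t^{1-2s}dz)$ by Lemma \ref{Lemmawlamlimit}, so along a subsequence $w^{\lambda_n}\rightharpoonup\bar w$ weakly in $H^1(B_1^+,t^{1-2s}dz)$; since in addition $\nabla w^{\lambda_n}\to0$ strongly in $L^2(B_1^+,t^{1-2s}dz)$, uniqueness of weak limits gives $\nabla\bar w\equiv0$, i.e.\ $\bar w$ is constant on the connected set $B_1^+$. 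For $n$ large enough that $R_1/\lambda_n>1$ we have $\Gamma_1^+\subset\Gamma^+_{R_1/\lambda_n}$, hence $\mathop{\rm Tr}w^{\lambda_n}=0$ on $\Gamma_1^+$ by \eqref{prob3lambda}; vanishing of the trace on the fixed set $\Gamma_1^+$ is a closed convex constraint in $H^1(B_1^+,t^{1-2s}dz)$, so it is preserved in the weak limit and $\mathop{\rm Tr}\bar w=0$ on $\Gamma_1^+$. As $\bar w$ is constant and $|\Gamma_1^+|_N>0$, this forces $\bar w\equiv0$. Finally, by the compactness of the trace embedding $H^1(B_1^+,t^{1-2s}dz)\hookrightarrow L^2(\partial^+B_1^+,t^{1-2s}dS)$ we get $w^{\lambda_n}\to0$ strongly in $L^2(\partial^+B_1^+,t^{1-2s}dS)$; since $\mu(\lambda_n z)\le 2$ on $B_1$ for $n$ large (by \eqref{sviluppomu}), this yields $\int_{\partial^+B_1^+}t^{1-2s}\mu(\lambda_n z)|w^{\lambda_n}(z)|^2\,dS\to0$, contradicting the normalization \eqref{intwlambda1}.

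With these two ingredients the conclusion is immediate. By the coarea formula, $\int_1^2\big(\int_{\partial^+B_R^+}t^{1-2s}|\nabla w^\lambda|^2\,dS\big)\,dR=\int_{B_2^+\setminus B_1^+}t^{1-2s}|\nabla w^\lambda|^2\,dz\le\bar C$ for every $\lambda\in(0,R_0/2)$, so there exists $R_\lambda\in[1,2]$ with $\int_{\partial^+B_{R_\lambda}^+}t^{1-2s}|\nabla w^\lambda|^2\,dS\le\bar C$; since $R_\lambda\ge1$, the monotonicity of $r\mapsto\int_{B_r^+}t^{1-2s}|\nabla w^\lambda|^2\,dz$ gives $\int_{B_{R_\lambda}^+}t^{1-2s}|\nabla w^\lambda|^2\,dz\ge\int_{B_1^+}t^{1-2s}|\nabla w^\lambda|^2\,dz\ge c_0$ for $\lambda<\lambda_1$. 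Thus the statement holds with $M=\bar C/c_0$ and $\lambda_0=\lambda_1$.

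The step I expect to be the main obstacle is the uniform lower bound of the second paragraph, and within it the use of compactness of the trace operator onto the spherical portion $\partial^+B_1^+$ for the Muckenhoupt-weighted space $H^1(B_1^+,t^{1-2s}dz)$: this is the only non-elementary ingredient. I would either quote it as a standard property of these weighted Sobolev spaces, as in \cite{FalFel}, or derive it from the compact embedding $H^1(B_1^+,t^{1-2s}dz)\hookrightarrow L^2(B_1^+,t^{1-2s}dz)$ together with a trace inequality with a gain of fractional regularity on $\partial^+B_1^+$.
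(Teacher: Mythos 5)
Your proof is correct. Where the paper assumes for contradiction that no admissible $R_\lambda$ exists — so that the differential inequality $f'_{\lambda_n}(r)>Mf_{\lambda_n}(r)$ holds on all of $[1,2]$ along a vanishing sequence $\lambda_n$, and a Gronwall-type integration forces $\liminf_{\lambda\to0^+}\int_{B_1^+}t^{1-2s}|\nabla w^\lambda|^2\,dz=0$ — you instead first establish the uniform lower bound $\int_{B_1^+}t^{1-2s}|\nabla w^\lambda|^2\,dz\geq c_0>0$ directly, then select $R_\lambda$ by a Chebyshev mean-value argument over the annulus $B_2^+\setminus B_1^+$, using Lemma \ref{lemdoub} to cap the annular energy. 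The contradiction behind your lower bound is the very one the paper invokes once the $\liminf$ has been shown to vanish: weak convergence of $w^{\lambda_n}$ to a constant, the weak closedness of $H^1_{\Gamma_1^+}(B_1^+,t^{1-2s}\,dz)$ forcing that constant to be zero, and compactness of the trace onto $\partial^+B_1^+$ clashing with the normalization \eqref{intwlambda1}. Your reorganization replaces Gronwall by Chebyshev, which is more elementary and makes the underlying dichotomy (uniform lower bound on the $B_1^+$-energy, uniform upper bound on the annular energy) explicit; the analytic content is the same. The compactness of $H^1(B_1^+,t^{1-2s}\,dz)\hookrightarrow\hookrightarrow L^2(\partial^+B_1^+,t^{1-2s}\,dS)$ that you flag as the potential obstacle is indeed the single genuinely nontrivial ingredient, and the paper leans on it at exactly the same point, quoting it as standard (cf.\ \cite{FalFel}).
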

\begin{proof}
We recall that, by Lemma \ref{Lemmawlamlimit}, the set
$\{w^\lambda\}_{\lambda\in(0,R_0)}$ is bounded in
$H^1(B_1^+,t^{1-2s}dz)$ and 
\begin{equation}\label{eq:31}
 w^\lambda\in H^1_{\Gamma_{1}^+}(B_{1}^+,t^{1-2s}\,dz)\quad\text{for
    all }\lambda\in(0,R_0). 
\end{equation}
 Moreover, by Lemma \ref{lemdoub}, we have that $\{w^\lambda\}_{\lambda\in(0,R_0/2)}$ is bounded in $H^1(B_2^+,t^{1-2s}dz)$, hence
\begin{equation}\label{limsup}
  \limsup_{\lambda\to0^+}\int_{B_2^+}t^{1-2s}|\nabla w^\lambda|^2\,dz<+\infty.
\end{equation}
For every $\lambda\in(0,R_0/2)$, let 
\begin{equation*}
  f_\lambda(r)=\int_{B_r^+}t^{1-2s}|\nabla w^\lambda|^2\,dz.
\end{equation*}
Then $f_\lambda$ is absolutely continuous in $[0,2]$ with
distributional derivative given by 
\begin{equation*}
  f_\lambda'(r)=\int_{\partial^+B_r^+}t^{1-2s}|\nabla w^\lambda|^2\,dS\quad\text{for almost every $r\in(0,2)$}.
\end{equation*}
Let us suppose by contradiction that for any $M>0$ there exists a sequence $\lambda_n\to0^+$ such that
\begin{equation*}
  \int_{\partial^+B_r^+}t^{1-2s}|\nabla w^{\lambda_n}|^2\,dS>M\int_{B_r^+}t^{1-2s}|\nabla w^{\lambda_n}|^2\,dz
\end{equation*}
for all $r\in[1,2]$ and $n\in\mathbb{N}$, i.e.
\begin{equation}\label{f'Mf}
f'_{\lambda_n}(r)>Mf_{\lambda_n}(r)
\end{equation}
for a.e. $r\in(1,2)$ and any $n\in\mathbb N$. Integrating \eqref{f'Mf}
over $[1,2]$, we obtain that, for any $n\in\mathbb N$,
$f_{\lambda_n}(2)>e^Mf_{\lambda_n}(1)$,
and hence
\begin{equation*}
  \liminf_{n\to+\infty}f_{\lambda_n}(1)\leq
  \limsup_{n\to+\infty}f_{\lambda_n}(1)
  \leq e^{-M}\limsup_{n\to+\infty}f_{\lambda_n}(2),
\end{equation*}
  which implies that 
\begin{equation}\label{eq:30}
 \liminf_{\lambda\to 0^+}f_{\lambda}(1)\leq e^{-M}\limsup_{\lambda\to 0^+}f_{\lambda}(2),
\end{equation}
for all $M>0$. From \eqref{eq:30} and \eqref{limsup},
  letting $M\to+\infty$ we deduce that $\liminf_{\lambda\to
    0^+}f_{\lambda}(1)=0$.
 Then there exist a sequence $\tilde
   \lambda_n\to0^+$ and some $w\in H^1(B_1^+,t^{1-2s}dz)$  such that $w^{\tilde \lambda_n}\rightharpoonup w$ in $H^1(B_1^+,t^{1-2s}dz)$ with
\begin{equation*}
  \lim_{n\to+\infty}\int_{B_1^+}t^{1-2s}|\nabla
  w^{\tilde\lambda_n}|^2dz=0.
\end{equation*}
However, by compactness of trace map
$H^1(B_1^+,t^{1-2s}dz)\hookrightarrow \hookrightarrow
  L^2(\partial^+B_1^+,t^{1-2s}dS)$, \eqref{intwlambda1}, \eqref{sviluppomu},
and weak lower semicontinuity of norms, we necessarily have that
\begin{equation*}
  \int_{B_1^+}t^{1-2s}|\nabla
  w|^2dz=0\qquad\mathrm{and}\qquad\int_{\partial^+B_1^+}t^{1-2s}w^2dS=1.
\end{equation*}
 Hence there exists $c\in\R$ such that $w\equiv c$ in
  $B_{1}^+$ and $c\neq0$. Since
  $H^1_{\Gamma_{1}^+}(B_{1}^+,t^{1-2s}\,dz)$ is weakly closed in
  $H^1(B_{1}^+,t^{1-2s}\,dz)$, from \eqref{eq:31} we deduce  that
  $w\equiv c\in H^1_{\Gamma_{1}^+}(B_{1}^+,t^{1-2s}\,dz)$, so that
  $0=\mathop{\rm Tr}w\big|_{\Gamma_{1}^+}=c$, a contradiction.
\end{proof}
\begin{Lemma}\label{lemmgradlimit}
Let $w^\lambda$ and $R_\lambda$ be as in the statement of Lemma
\ref{Rlam}.
 Then there exists $\overline M>0$ such that
\begin{equation*}
\int_{\partial^+B_1^+}t^{1-2s}|\nabla w^{\lambda R_\lambda}|^2\,dz\leq \overline M
\end{equation*}
for any $\lambda\in(0,\min\{\lambda_0,R_0/2\})$.
\end{Lemma}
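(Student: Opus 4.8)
The plan is to deduce the bound from the scaling relation between $w^{\lambda R_\lambda}$ and $w^\lambda$, the doubling property \eqref{doubling}, the estimate of Lemma \ref{Rlam}, and the uniform $H^1(B_2^+,t^{1-2s}dz)$-bound for the family $\{w^\lambda\}_{\lambda\in(0,R_0/2)}$, which follows from Lemma \ref{lemdoub} and was already used in the proof of Lemma \ref{Rlam}. Note that $\lambda R_\lambda<R_0$ for $\lambda<\min\{\lambda_0,R_0/2\}$, so that $w^{\lambda R_\lambda}$ is well defined.

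First I would record the scaling identity. Directly from \eqref{wlam} one has, for every $z$,
\[
w^{\lambda R_\lambda}(z)=\sqrt{\frac{H(\lambda)}{H(\lambda R_\lambda)}}\;w^\lambda(R_\lambda z),
\qquad
\nabla w^{\lambda R_\lambda}(z)=\sqrt{\frac{H(\lambda)}{H(\lambda R_\lambda)}}\;R_\lambda\,(\nabla w^\lambda)(R_\lambda z).
\]
Performing the change of variable $\zeta=R_\lambda z$ on the hemisphere $\partial^+B_1^+$ --- so that $\zeta$ ranges over $\partial^+B_{R_\lambda}^+$, the Muckenhoupt weight transforms as $t_z^{1-2s}=R_\lambda^{-(1-2s)}t_\zeta^{1-2s}$, and the $N$-dimensional surface element as $dS_\zeta=R_\lambda^N\,dS_z$ --- one obtains
\[
\int_{\partial^+B_1^+}t^{1-2s}|\nabla w^{\lambda R_\lambda}|^2\,dS
=\frac{H(\lambda)}{H(\lambda R_\lambda)}\;R_\lambda^{\,1+2s-N}\int_{\partial^+B_{R_\lambda}^+}t^{1-2s}|\nabla w^{\lambda}|^2\,dS.
\]

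It then suffices to bound each factor uniformly in $\lambda\in(0,\min\{\lambda_0,R_0/2\})$. By \eqref{doubling} and $R_\lambda\in[1,2]$ we get $H(\lambda)/H(\lambda R_\lambda)\le C_3$, and $R_\lambda^{\,1+2s-N}$ is bounded by a constant depending only on $N$ and $s$ because $R_\lambda$ lies in the compact interval $[1,2]$. For the remaining surface integral I would invoke Lemma \ref{Rlam} and then enlarge the domain of integration:
\[
\int_{\partial^+B_{R_\lambda}^+}t^{1-2s}|\nabla w^{\lambda}|^2\,dS
\le M\int_{B_{R_\lambda}^+}t^{1-2s}|\nabla w^{\lambda}|^2\,dz
\le M\int_{B_{2}^+}t^{1-2s}|\nabla w^{\lambda}|^2\,dz .
\]
Since $\{w^\lambda\}_{\lambda\in(0,R_0/2)}$ is bounded in $H^1(B_2^+,t^{1-2s}dz)$ by Lemma \ref{lemdoub}, the last integral is dominated by a constant $C$ independent of $\lambda$; multiplying the three bounds together yields the conclusion with $\overline M$ equal to the product of these constants.

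I do not expect any genuine obstacle: every analytic ingredient has already been prepared, so the proof reduces to carefully tracking the scaling of the weighted surface integral under the dilation $\zeta=R_\lambda z$. The only point deserving attention is the bookkeeping of the Jacobian factors --- the power $R_\lambda^{1-2s}$ coming from the weight $t^{1-2s}$ and the power $R_\lambda^{N}$ coming from the $N$-dimensional surface measure on $\partial^+B_{R_\lambda}^+$ --- whose combined effect is the single, harmless factor $R_\lambda^{1+2s-N}$, uniformly bounded since $R_\lambda\in[1,2]$.
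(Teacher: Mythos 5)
Your proof is correct and follows essentially the same route as the paper: scaling identity, doubling bound on $H$, Lemma~\ref{Rlam}, and the uniform $H^1$ bound from Lemmas~\ref{Lemmawlamlimit} and~\ref{lemdoub}. The only cosmetic difference is in the final step — the paper applies \eqref{stima1} to return to $\int_{B_1^+}t^{1-2s}|\nabla w^{\lambda R_\lambda}|^2\,dz$ and invokes Lemma~\ref{Lemmawlamlimit}, while you enlarge $B_{R_\lambda}^+$ to $B_2^+$ and use the uniform bound in $H^1(B_2^+,t^{1-2s}dz)$; both bounds stem from the same doubling estimates, so the arguments are equivalent.
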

\begin{proof}
We observe that, by scaling  and \eqref{wlam},
\begin{equation*}
  \int_{\partial^+ B_1^+} t^{1-2s}|\nabla w^{\lambda R_\lambda}|^2\,dS=\frac{R_\lambda^{1-N+2s}H(\lambda)}{H(\lambda R_\lambda)}\int_{\partial^+ B_{R_\lambda}^+}t^{1-2s}|\nabla w^\lambda|^2\,dS,
\end{equation*}
so that, in view of  Lemmas \ref{lemdoub}, \ref{Rlam}, and
\ref{Lemmawlamlimit}, we have that 
\begin{align*}
   \int_{\partial^+ B_1^+} t^{1-2s}|\nabla w^{\lambda
     R_\lambda}|^2\,dS&\leq 2C_3M\int_{B_{R_\lambda}^+}t^{1-2s}|\nabla
   w^{\lambda}|^2\,dz\\
&\leq 2^{1+N-2s}M C_3^2\int_{B_1^+}t^{1-2s}|\nabla
   w^{\lambda R_\lambda}|^2\,dz\leq \overline{M}<+\infty.
\end{align*}
The proof is thereby complete.
\end{proof}

\begin{Proposition}\label{propH}
  Let $W\in H^1_{\Gamma_{R_1}^+}(B_{R_1}^+,t^{1-2s}\,dz)$,
  $W\not\equiv0$, be a nontrivial weak solution of \eqref{prob4}. Let
  $\gamma$ be as in Proposition \ref{Lemmalimitexists}. Then
\begin{itemize}
\item[\rm(i)] there exists $k_0\in\mathbb N$ such that
  $\gamma=s+k_0$;
\item[\rm (ii)] for any sequence $\lambda_n\to0^+$, there exist a
  subsequence $\{\lambda_{n_k}\}$ and an eigenfunction $\psi$
  of problem \eqref{eig} associated to the eigenvalue
  $\mu_{k_0}=(k_0+s)(k_0+N-s)$ such that
  $\|\psi\|_{L^2({\mathbb S}^{N}_+,\theta_{N+1}^{1-2s}dS)}=1$ and
\begin{equation*}
  w^{\lambda_{n_k}}(z)=\frac{W(\lambda_{n_k}
    z)}{\sqrt{H(\lambda_{n_k})}}\to 
  |z|^\gamma\psi\left(\frac{z}{|z|}\right)
\end{equation*}
strongly in $H^1(B_1^+,t^{1-2s}dz)$.
\end{itemize}
\end{Proposition}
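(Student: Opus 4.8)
The plan is to follow the blow-up scheme of \cite{FalFel,FelFerTer1,FelFerTer2,DelFel}, combining the energy estimates coming from the Almgren frequency function with a compactness argument for the rescaled solutions $w^\lambda$ of \eqref{wlam}. Given any sequence $\lambda_n\to0^+$, by Lemma \ref{Lemmawlamlimit} and the doubling estimate \eqref{doubling} the family $\{w^{\lambda_n}\}$ is bounded in $H^1(B_2^+,t^{1-2s}dz)$, so, up to a subsequence, $w^{\lambda_n}\rightharpoonup w$ weakly in $H^1(B_2^+,t^{1-2s}dz)$. Since $H^1_{\Gamma_1^+}(B_1^+,t^{1-2s}dz)$ is weakly closed and \eqref{eq:31} holds, $\mathrm{Tr}\,w=0$ on $\Gamma_1^+$. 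Passing to the limit in the weak formulation of \eqref{prob3lambda}, using that $A(\lambda_n\,\cdot)\to\mathrm{Id}_{N+1}$ uniformly on compact sets by \eqref{sviluppodellaA} and that $\lambda_n^{2s}\tilde h(\lambda_n\,\cdot)\to0$ in $L^p(B_2')$ (a scaling computation based on \eqref{eq:tildehw1p} and $\overline\varepsilon>0$), one gets that $w$ is a weak solution of $-\mathrm{div}(t^{1-2s}\nabla w)=0$ in $B_2^+$, $\lim_{t\to0^+}t^{1-2s}\partial_t w=0$ on $\Gamma_2^-$, $w=0$ on $\Gamma_2^+$; moreover $w\not\equiv0$, because by compactness of the trace map $H^1(B_1^+,t^{1-2s}dz)\hookrightarrow\hookrightarrow L^2(\partial^+B_1^+,t^{1-2s}dS)$ together with \eqref{intwlambda1} and \eqref{sviluppomu} one has $\int_{\partial^+B_1^+}t^{1-2s}w^2\,dS=1$.

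The heart of the proof — and the step I expect to be the main obstacle — is to upgrade this to strong convergence $w^{\lambda_n}\to w$ in $H^1(B_1^+,t^{1-2s}dz)$. Here Lemmas \ref{Rlam} and \ref{lemmgradlimit} are essential: along the radii $R_{\lambda_n}\in[1,2]$ they keep $\nabla w^{\lambda_n}$ bounded in $L^2(\partial^+B_{R_{\lambda_n}}^+,t^{1-2s}dS)$, which makes $w^{\lambda_n}-w$ an admissible test function in the weak formulations of the equations satisfied by $w^{\lambda_n}$ and by $w$ over $B_{R_{\lambda_n}}^+$, and lets one bound the resulting boundary integral over $\partial^+B_{R_{\lambda_n}}^+$ by the product of a bounded $L^2$-norm of $\nabla(w^{\lambda_n}-w)$ with $\|w^{\lambda_n}-w\|_{L^2(\partial^+B_{R_{\lambda_n}}^+,t^{1-2s}dS)}\to0$ (compactness of the trace); the interior remainder terms vanish because $A(\lambda_n\,\cdot)-\mathrm{Id}_{N+1}\to0$ uniformly and $\lambda_n^{2s}\tilde h(\lambda_n\,\cdot)\to0$. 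The coercivity inequality of Lemma \ref{lemma3.12FF} (see Remark \ref{r:r0}) then forces $\int_{B_{R_{\lambda_n}}^+}t^{1-2s}|\nabla(w^{\lambda_n}-w)|^2\,dz\to0$, and the doubling estimates \eqref{stima1}--\eqref{stima2} propagate the strong convergence down to $B_1^+$.

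With strong convergence in hand, I would identify $w$. A change of variables shows that $r\mapsto H(\lambda r)/H(\lambda)$ and $r\mapsto E(\lambda r)/H(\lambda)$ are exactly the height and energy functions associated to $w^\lambda$ for the rescaled problem \eqref{prob3lambda}, so their quotient at radius $r$ equals $\mathcal N(\lambda r)$; since $\mathcal N(\lambda_n r)\to\gamma$ for every $r$ by Proposition \ref{Lemmalimitexists}, the rescaled height equals $1$ at $r=1$ by \eqref{intwlambda1}, and \eqref{E=rH'/2} holds, an integration gives $H(\lambda_n r)/H(\lambda_n)\to r^{2\gamma}$, which together with the strong convergence and \eqref{sviluppomu} yields $r^{2s-N-1}\int_{\partial^+B_r^+}t^{1-2s}w^2\,dS=r^{2\gamma}$ and $r^{2s-N}\int_{B_r^+}t^{1-2s}|\nabla w|^2\,dz=\gamma\,r^{2\gamma}$; hence the (constant-coefficient, potential-free) Almgren quotient of $w$ equals $\gamma$ for all $r\in(0,1)$. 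By the classical rigidity argument — the derivative of this quotient is a nonnegative Cauchy--Schwarz defect, so its identical vanishing forces $\partial_\nu w=\tfrac{\gamma}{|z|}w$ on every sphere $\partial^+B_r^+$ — $w$ is positively homogeneous of degree $\gamma$, i.e. $w(z)=|z|^\gamma\psi(z/|z|)$ with $\psi:=w|_{\partial^+B_1^+}$ and $\|\psi\|^2_{L^2(\mathbb S^N_+,\theta_{N+1}^{1-2s}dS)}=1$. Plugging this form into the equation for $w$ and separating variables, $\psi\in\mathcal H_0\setminus\{0\}$ solves \eqref{eig} with $\mu=\gamma(\gamma+N-2s)$; therefore $\gamma(\gamma+N-2s)=\mu_{k_0}=(k_0+s)(k_0+N-s)$ for some $k_0\in\mathbb N$ by \eqref{eq:28}, and, since $\gamma\ge0$ by Proposition \ref{Lemmalimitexists}, the only admissible root is $\gamma=k_0+s$. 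This proves (i) — the value $\gamma=\lim_{r\to0^+}\mathcal N(r)$ being independent of the chosen sequence — while the construction above, applied to the given sequence, proves (ii).
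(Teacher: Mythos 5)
Your blow-up strategy is the same one the paper uses, and most of the intermediate steps (weak compactness, passage to the limit equation, constancy of the limit Almgren quotient, Cauchy--Schwarz rigidity giving homogeneity, separation of variables yielding an eigenfunction) line up with the paper's proof. There is, however, a genuine gap in your strong-convergence step. You propose to test the Green identities for $w^{\lambda_n}$ and for $w$ on the moving half-ball $B^+_{R_{\lambda_n}}$, and to kill the boundary integral by pairing the $L^2(\partial^+B^+_{R_{\lambda_n}},t^{1-2s}dS)$-bounded normal derivative from Lemma~\ref{Rlam} with $\|w^{\lambda_n}-w\|_{L^2(\partial^+B^+_{R_{\lambda_n}},t^{1-2s}dS)}\to0$ ``by compactness of the trace.'' But the trace map is compact from $H^1(B_2^+,t^{1-2s}dz)$ to $L^2(\partial^+B^+_r,t^{1-2s}dS)$ for each \emph{fixed} $r$; weak $H^1$-convergence does not by itself give trace convergence on a sphere whose radius $R_{\lambda_n}$ changes with $n$. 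This moving-sphere difficulty is exactly why the paper introduces the rescaled family $w^{\lambda R_\lambda}$: its Green identity lives on the \emph{fixed} sphere $\partial^+B_1^+$, where Lemma~\ref{lemmgradlimit} controls the normal derivative, compactness of the trace applies verbatim, and strong convergence of $w^{\lambda_{n_k}R_{\lambda_{n_k}}}$ in $H^1(B_1^+,t^{1-2s}dz)$ can be proved; the convergence is then transferred back to $w^{\lambda_{n_k}}$ at the very end (using the homogeneity of the limit, the doubling estimate \eqref{doubling}, and a subsequential limit $R_{\lambda_{n_k}}\to\overline R$). Your version can be repaired --- e.g.\ by first extracting $R_{\lambda_n}\to\overline R$ and then using the elementary inequality
\begin{equation*}
\|T_{r_1}v-T_{r_2}v\|^2_{L^2(\mathbb S^N_+,\theta_{N+1}^{1-2s}dS)}\le |r_1-r_2|\,\|\nabla v\|^2_{L^2(B_2^+,t^{1-2s}dz)},\qquad T_rv(\theta):=v(r\theta),\ r_1,r_2\in[1,2],
\end{equation*}
which is uniform on $H^1$-bounded sets --- but as written the step is not justified.

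A secondary, more positive remark: your identification of $\gamma$ is a cleaner shortcut than the paper's. From $H(\lambda_n r)/H(\lambda_n)\to r^{2\gamma}$ and the constancy $\mathcal N_w\equiv\gamma$ you read off $\eta(r)=\gamma/r$ directly, so $w$ is $\gamma$-homogeneous, and then the quadratic $\gamma(\gamma+N-2s)=\mu_{k_0}$ together with $\gamma\ge0$ and the explicit formula \eqref{eq:28} gives $\gamma=k_0+s$ at once (indeed $(N-2s)^2+4\mu_{k_0}=(N+2k_0)^2$, and the negative root $s-N-k_0$ is excluded since $N\ge 2>s$). The paper instead integrates $\eta$, solves the radial ODE for $g$, obtains $g(r)=c_1r^{k_0+s}+c_2r^{s-N-k_0}$, and excludes $c_2\neq0$ by an $H^1$/Hardy argument before computing $\mathcal N_w$ explicitly; your route avoids the ODE and the exclusion of the singular branch.
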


\begin{proof} Let $w^\lambda\in H^1_{\Gamma_{1}^+}(B_{1}^+,t^{1-2s}\,dz)$ be as in \eqref{wlam} and $R_\lambda$ as
  in Lemma \ref{Rlam}. From Lemma \ref{Lemmawlamlimit} we deduce that the set
  $\{w^{\lambda R_\lambda}\}_{\lambda\in (0,\min\{\lambda_0,R_0/2\})}$ is bounded in
  $H^1(B_1^+,t^{1-2s}dz)$. Let us consider a sequence 
  $\lambda_n\to0^+$. Then there exist a subsequence $\{\lambda_{n_k}\}$ and
  $w\in H^1_{\Gamma_{1}^+}(B_{1}^+,t^{1-2s}\,dz)$ such that
  $w^{\lambda_{n_k} R_{\lambda_{n_k}}}\rightharpoonup w$ weakly in
  $H^1(B_1^+,t^{1-2s}dz)$. Moreover we have that
\begin{equation}\label{intw=1}
\int_{\partial^+B_1^+}t^{1-2s}w^2dS=1
\end{equation}
by compactness of trace map
$H^1(B_1^+,t^{1-2s}dz)\hookrightarrow \hookrightarrow
  L^2(\partial^+B_1^+,t^{1-2s}dS)$, \eqref{intwlambda1}, and
  \eqref{sviluppomu}. This allows us to conclude that $w$ is non-trivial. 

We now claim strong convergence
\begin{equation}\label{strongconv}
w^{\lambda_{n_k}R_{\lambda_{n_k}}}\rightarrow w \quad\text{in $H^1(B_1^+,t^{1-2s}dz)$}.
\end{equation}
We note that $w^{\lambda_{n_k}R_{\lambda_{n_k}}}$ weakly solves
\eqref{prob3lambda} with
$\lambda=\lambda_{n_k}R_{\lambda_{n_k}}$. Since  $B^+_1\subset
B^+_{R_1/(\lambda_{n_k}R_{\lambda_{n_k}})}$ for sufficiently large
$k$, we then have that
\begin{align}\label{eqnk}
  \int_{B^+_1}t^{1-2s}&A(\lambda_{n_k}R_{\lambda_{n_k}}y)\nabla
  w^{\lambda_{n_k}R_{\lambda_{n_k}}}(z)\cdot\nabla \Phi (z)\,dz\\
  \notag&=\kappa_s(\lambda_{n_k}R_{\lambda_{n_k}})^{2s}\int_{\Gamma^-_1}\tilde
     h(\lambda_{n_k}R_{\lambda_{n_k}}y)
     \mathop{\rm Tr}w^{\lambda_{n_k}R_{\lambda_{n_k}}}(y) \mathop{\rm Tr}\Phi(y)\,dy\\
 \notag  &\qquad+
     \int_{\partial^+B_1^+}\bigl(t^{1-2s}A(\lambda_{n_k}R_{\lambda_{n_k}}y)\nabla
     w^{\lambda_{n_k}R_{\lambda_{n_k}}}(z)\cdot\nu\bigr)\Phi(z)\,dS
\end{align}
for sufficiently large $k$ and for every $\Phi\in 
C^\infty_{c}(\overline{B_{1}^+}\setminus \Gamma_{1}^+)$,
hence by density for every $\Phi\in
H^1_{\Gamma^+_1}(B^+_1,t^{1-2s}dz)$. We are going to pass to the limit
in \eqref{eqnk}. 
To this aim, we observe that \eqref{sviluppodellaA} implies that 
\begin{align}\label{primotermvaa0}
&\bigg|\int_{B^+_1}t^{1-2s}(A(\lambda y)\nabla w^{\lambda}(z)-\nabla
                            w(z))\cdot\nabla \Phi(z)\,dx\bigg|\\
&\notag\leq \bigg|\int_{B^+_1}t^{1-2s} \nabla (w^{\lambda}- w)\cdot\nabla \Phi\,dz\bigg|+C \lambda\int_{B^+_1}t^{1-2s}|\nabla w^\lambda||\nabla \Phi|\,dz\\
\notag&\leq \bigg|\int_{B^+_1}t^{1-2s} \nabla (w^\lambda- w)\cdot\nabla \Phi\,dz\bigg|+C\lambda\left(\int_{B^+_1}t^{1-2s}|\nabla w^\lambda|^2\,dz\right)^{\!1/2}\!\!\!\left(\int_{B^+_1}t^{1-2s}|\nabla \Phi|^2\,dz\right)^{\!1/2}
\end{align}
for some $C>0$ and for sufficiently small $\lambda$, and 
\begin{align}\label{ho(1)}
\lambda^{2s}&\bigg|\int_{\Gamma_1^-}\tilde h(\lambda y) \mathop{\rm Tr}w^\lambda(y) \mathop{\rm Tr}\Phi(y)\,dy\bigg|\\
\notag&\leq
  \lambda^{2s}\left(\int_{B_1'}| \mathop{\rm Tr}w^\lambda(y)|^{2^*(s)}\,dy\right)^{\!\!\frac1{2^*(s)}}
\left(\int_{B_1'}| \mathop{\rm Tr}\Phi(y)|^{2^*(s)}\,dy\right)^{\!\!\frac1{2^*(s)}}
\left(\int_{\Gamma_1^-}|\tilde h(\lambda y)|^{\frac{N}{2s}}\,dy\right)^{\!\!\frac{2s}N}\\
\notag&=O(1)\left(\int_{\Gamma_\lambda^-}|\tilde h(y)|^{\frac{N}{2s}}dy\right)^{\!\!\frac{2s}N}=o(1)\quad\text{as $\lambda\rightarrow 0^+$},
\end{align}
from H\"older's inequality, Lemmas \ref{lemma2.6FF},
\ref{Lemmawlamlimit}, 
and \eqref{intwlambda1}, since
$\mu(\lambda y)\geq1/4$ for all $\lambda\leq R_0$.
Taking $\lambda=\lambda_{n_k}R_{\lambda_{n_k}}$ in
\eqref{primotermvaa0} and \eqref{ho(1)}, letting $k\to\infty$, and
recalling that   $w^{\lambda_{n_k} R_{\lambda_{n_k}}}\rightharpoonup w$ weakly in
  $H^1(B_1^+,t^{1-2s}dz)$, we
obtain that 
\begin{equation}\label{eq:32}
  \lim_{k\to\infty}  \int_{B^+_1}t^{1-2s}A(\lambda_{n_k}R_{\lambda_{n_k}}y)\nabla
  w^{\lambda_{n_k}R_{\lambda_{n_k}}}(z)\cdot\nabla \Phi (z)\,dz=
\int_{B^+_1}t^{1-2s}\nabla
  w\cdot\nabla \Phi \,dz
\end{equation}
and 
\begin{equation}\label{eq:33}
  \lim_{k\to\infty}
(\lambda_{n_k}R_{\lambda_{n_k}})^{2s}\int_{\Gamma^-_1}\tilde
     h(\lambda_{n_k}R_{\lambda_{n_k}}y)
     \mathop{\rm Tr}w^{\lambda_{n_k}R_{\lambda_{n_k}}}(y) \mathop{\rm
       Tr}\Phi(y)\,dy=0.
   \end{equation}
Thanks to \eqref{sviluppodellaA}, we have that 
\begin{align}\label{lastterm}
&\int_{\partial^+B_1^+}\bigl(t^{1-2s}A(\lambda y)\nabla
                                      w^{\lambda}(z)\cdot\nu\bigr)\Phi(z)\,dS\\
\notag&=\int_{\partial^+B_1^+}t^{1-2s}\frac{\partial w^{\lambda}}{\partial\nu}\Phi\,dS+\int_{\partial^+B_1^+}
t^{1-2s}(A(\lambda y)-\mathrm{Id}_N)\nabla
        w^{\lambda}(z)\cdot\nu\,\Phi(z)\,dS\\
\notag&=\int_{\partial^+B_1^+}t^{1-2s}\frac{\partial
        w^{\lambda}}{\partial\nu}\Phi\,dS+O(\lambda)
\left(\int_{\partial^+B_1^+ }t^{1-2s}|\nabla w^\lambda|^2\,dS\right)^{\!1/2}\!\!\!\left(\int_{\partial^+B_1^+}t^{1-2s}\Phi^2\,dS\right)^{\!1/2}.
\end{align} 
Moreover, from Lemma \ref{lemmgradlimit}, up to a further subsequence,
we have that
\begin{equation}\label{dernormconv}
\frac{\partial
  w^{\lambda_{n_k}R_{\lambda_{n_k}}}}{\partial\nu}\rightharpoonup f
	\quad\text{weakly in $L^2(\partial^+B_1^+,t^{1-2s}dS)$}
\end{equation}
for some $f\in L^2(\partial^+B_1^+,t^{1-2s}dS)$. 
Then, taking $\lambda=\lambda_{n_k}R_{\lambda_{n_k}}$ in
\eqref{lastterm}, letting $k\to\infty$, and
taking into account Lemma \ref{lemmgradlimit}, we obtain that 
\begin{equation}\label{eq:34}
  \lim_{k\to\infty}
     \int_{\partial^+B_1^+}\bigl(t^{1-2s}A(\lambda_{n_k}R_{\lambda_{n_k}}y)\nabla
     w^{\lambda_{n_k}R_{\lambda_{n_k}}}(z)\cdot\nu\bigr)\Phi(z)\,dS=
\int_{\partial^+B_1^+}t^{1-2s}f\Phi\,dS.
   \end{equation}
Hence, passing to the
limit as $k\to\infty$ in \eqref{eqnk} and  combining \eqref{eq:32}, \eqref{eq:33}, and \eqref{eq:34}, we
  find that
\begin{equation}\label{eqlimite}
\int_{B_1^+}t^{1-2s}\nabla w\cdot \nabla \Phi\,dz=
\int_{\partial^+B_1^+}t^{1-2s}f\Phi\,dS
\quad \text{for any $\Phi\in
H^1_{\Gamma^+_1}(B^+_1,t^{1-2s}dz)$}.
\end{equation}
 On the other hand, if we take $\Phi=w^{\lambda_{n_k}R_{\lambda_{n_k}}}$ in \eqref{eqnk}, we have that
\begin{align*}
  \int_{B^+_1}t^{1-2s}&A(\lambda_{n_k}R_{\lambda_{n_k}}y)\nabla
  w^{\lambda_{n_k}R_{\lambda_{n_k}}}(z)\cdot
\nabla
  w^{\lambda_{n_k}R_{\lambda_{n_k}}}(z)\,dz\\
  &=\kappa_s(\lambda_{n_k}R_{\lambda_{n_k}})^{2s}\int_{\Gamma^-_1}\tilde
     h(\lambda_{n_k}R_{\lambda_{n_k}}y)
    | \mathop{\rm Tr}w^{\lambda_{n_k}R_{\lambda_{n_k}}}(y)|^2\,dy\\
  &\qquad+
     \int_{\partial^+B_1^+}\bigl(t^{1-2s}A(\lambda_{n_k}R_{\lambda_{n_k}}z)\nabla
     w^{\lambda_{n_k}R_{\lambda_{n_k}}}(z)\cdot\nu\bigr)  w^{\lambda_{n_k}R_{\lambda_{n_k}}} (z)\,dS,
\end{align*}
hence, by \eqref{sviluppodellaA}, arguing as in \eqref{lastterm} and
\eqref{ho(1)} and using Lemma \ref{lemmgradlimit} and
\eqref{dernormconv}, we obtain that 
\begin{align}\label{eq:35}
  \lim_{k\to\infty}\int_{B^+_1} |\nabla
  w^{\lambda_{n_k}R_{\lambda_{n_k}}}|^2&=\lim_{k\to\infty} 
                                         \int_{B^+_1}t^{1-2s}A(\lambda_{n_k}R_{\lambda_{n_k}}y)\nabla
                                         w^{\lambda_{n_k}R_{\lambda_{n_k}}}(z)\cdot
                                         \nabla
                                         w^{\lambda_{n_k}R_{\lambda_{n_k}}}(z)\,dz
  \\
  \notag&=\lim_{k\to\infty}\int_{\partial ^+B^+_1}t^{1-2s}\frac{\partial
          w^{\lambda_{n_k}R_{\lambda_{n_k}}}}{\partial\nu}w^{\lambda_{n_k}R_{\lambda_{n_k}}}dS\\
  \notag&=\int_{\partial^+B^+_1}t^{1-2s} f w\,dS=\int_{B^+_1}t^{1-2s}|\nabla w|^2,
\end{align}
where we used the compactness of the trace operator
$H^1(B_1^+,t^{1-2s}dz)\hookrightarrow \hookrightarrow
  L^2(\partial^+B_1^+,t^{1-2s}dS)$
and \eqref{eqlimite} with $\Phi=w$.
The weak convergence   $w^{\lambda_{n_k} R_{\lambda_{n_k}}}\rightharpoonup w$ in
  $H^1(B_1^+,t^{1-2s}dz)$ together with \eqref{eq:35}
  imply~\eqref{strongconv}.

For every $k\in\mathbb{N}$ and $r\in (0,1]$, let us define 
\begin{multline*}
  E_k(r)=\frac{1}{r^{N-2s}}\biggl[\int_{B^+_r}t^{1-2s}A(\lambda_{n_k}R_{\lambda_{n_k}}y)\nabla
  w^{\lambda_{n_k}R_{\lambda_{n_k}}}\cdot\nabla
  w^{\lambda_{n_k}R_{\lambda_{n_k}}} dz\\
  -\kappa_s \lambda^{2s}_{n_k}R^{2s}_{\lambda_{n_k}}\int_{\Gamma^-_{
      r}}\tilde h(\lambda_{n_k}R_{\lambda_{n_k}}y)|\mathop{\rm
    Tr}w^{\lambda_{n_k}R_{\lambda_{n_k}}}|^2 dy\biggr]
\end{multline*}
and 
\begin{equation*}
  H_k(r)=\frac{1}{r^{N+1-2s}}\int_{\partial^+
    B^+_r}t^{1-2s}\mu(\lambda_{n_k}R_{\lambda_{n_k}}z)
|w^{\lambda_{n_k}R_{\lambda_{n_k}}}(z)|^2 dS.
\end{equation*}
We also define, for any $r\in (0,1]$,
\begin{equation}\label{Ew}
E_w(r)=\frac{1}{r^{N-2s}}\int_{B^+_r}t^{1-2s}|\nabla w(z)|^2dz 
\end{equation}
and 
\begin{equation}\label{Hw}
H_w(r)=\frac{1}{r^{N+1-2s}}\int_{\partial^+ B^+_r} t^{1-2s}w^2(z)\, dS.
\end{equation}
By scaling, one can  easily verify that 
\begin{equation}\label{Nk}
  \N_k(r):=\frac{E_k(r)}{H_k(r)}=\frac{E(\lambda_{n_k}R_{\lambda_{n_k}}r)}
  {H(\lambda_{n_k}R_{\lambda_{n_k}}r)}=\N(\lambda_{n_k}R_{\lambda_{n_k}}r)\quad\text{for all $r\in (0,1]$}.
\end{equation}
From \eqref{strongconv}, \eqref{sviluppodellaA}, and
  \eqref{ho(1)}, it follows that, for any fixed $r\in (0,1]$,
\begin{equation}\label{Ek->Ew}
E_k(r)\rightarrow E_w(r).
\end{equation}
On the other hand, by compactness of the trace operator and \eqref{sviluppomu}, we also have, for any fixed $r\in (0,1]$,
\begin{equation}\label{Hk->Hw}
H_k(r)\rightarrow H_w(r).
\end{equation}
In order to prove that $H_w$ is strictly positive, we
  argue by contradiction and assume that there exists  $r\in (0,1]$
  such that $H_w(r)=0$; then $r$ is a minimum point for $H_w$ and
  hence, arguing as in Lemma \ref{lemH}, we obtain that necessarily
  $0=H_w'(r)=2r^{2s-N-1}\int_{B^+_r}t^{1-2s}|\nabla w(z)|^2dz
$ and hence $w$ is constant in $B_r^+$. 
From Lemma \ref{l:hardy_boundary} we conclude
that $w\equiv 0$ in $B_r^+$, which implies that 
$w\equiv 0$ in $B_1^+$ from
classical unique continuation principles for second order elliptic
equations, thus contradicting \eqref{intw=1}.

 Hence $H_w(r)>0$ for all $r\in (0,1]$, thus the function 
\begin{equation*}
\mathcal N_w:(0,1]\to\R,\quad \mathcal N_w(r):=\frac{E_w(r)}{H_w(r)}
\end{equation*}
is well defined and one can easily prove that it belongs to
$W^{1,1}_{\mathrm{loc}}((0,1])$. From \eqref{Nk}, \eqref{Ek->Ew},
\eqref{Hk->Hw}, and Proposition \ref{Lemmalimitexists}, we deduce that
\begin{equation}\label{Nw=gamma}
\N_w(r)=\lim_{k\rightarrow \infty} \N(\lambda_{n_k}R_{\lambda_{n_k}}r)=\gamma
\end{equation}
for all $r\in (0,1]$. Therefore $\N_w$ is constant in $(0,1]$ and
hence 
\begin{equation}\label{eq:36}
\N'_w(r)=0\quad\text{for any $r\in
(0,1)$}.
\end{equation}
Recalling the equation satisfied by $w$, i.e. \eqref{eqlimite},
and arguing as in 
Lemma \ref{deriveN}
 with $A=\mathrm{Id}_N$ and
 $\tilde h\equiv 0$,  we can prove 
 that, for a.e. $r\in(0,1)$, 
\begin{equation}\label{eq:37}
  \N_w'(r)\geq
  \frac{2r\left[\left(\int_{\partial^+B_r^+}\!t^{1-2s}
        \left|\partial_\nu w\right|^2dS\right)\!\left(\int_{\partial^+B_r^+}\!t^{1-2s}w^2dS\right)\!
      -\!\left(\int_{\partial^+B_r^+}\!t^{1-2s}\partial_\nu w\,
        w\,dS\right)^{\!2}\right]}{\left(\int_{\partial^+B_r^+}t^{1-2s}w^2dS\right)^{\!2}}.
\end{equation}
Combining \eqref{eq:36} and \eqref{eq:37} with  Schwarz's inequality,
we obtain that, for a.e. $r\in(0,1)$, 
\begin{equation*}
  \left(\int_{\partial^+B_r^+}\!t^{1-2s}\left|\partial_\nu
      w\right|^2dS\right)\!
  \left(\int_{\partial^+B_r^+}\!t^{1-2s}w^2dS\right)\!
  -\!\left(\int_{\partial^+B_r^+}\!t^{1-2s}\partial_\nu w\,
    w\,dS\right)^{\!2}=0.
\end{equation*}
Hence,  for a.e. $r\in(0,1)$, $w$ and $\partial_\nu w$ have the same direction as vectors
in $L^2(\partial^+ B^+_r,t^{1-2s}dS)$, so that there exists a
function $\eta=\eta(r)$, defined a.e. in $(0,1)$, such
that $\partial_\nu w(r\theta)=\eta(r)w(r\theta)$ for a.e. $r\in
(0,1)$ and for all $\theta\in \mathbb{S}^N_+$.
It is easy to verify that $\eta(r)=\frac{H'_w(r)}{2H_w(r)}$ for a.e $r\in
(0,1)$, so that $\eta\in
L^1_{\mathrm{loc}}((0,1])$. After integration we obtain that 
\begin{equation}\label{w(r,theta)}
  w(r\theta)= e^{\int_1^r \eta(s)ds}w(\theta)= g(r)\psi(\theta),\quad r\in (0,1),\ \theta\in \mathbb{S}^N_+,
\end{equation}
where $ g(r)=e^{\int_1^r \eta(s)ds}$ and
$\psi=w\big|_{\mathbb{S}^N_+}$. 
We observe that \eqref{intw=1} implies that 
\begin{equation}\label{eq:38}
\|\psi\|_{L^2({\mathbb S}^{N}_+,\theta_{N+1}^{1-2s}dS)}=1.  
\end{equation}
From the fact that $w\in H^1_{\Gamma_{1}^+}(B_{1}^+,t^{1-2s}\,dz)$
it follows that $\psi\in\mathcal H_0$;
moreover, plugging \eqref{w(r,theta)} into \eqref{eqlimite} we obtain
that $\psi$ satisfies \eqref{defautoval} for some $\mu\in\R$, so that
$\psi$ is an eigenfunction of \eqref{eig}. Recalling \eqref{eq:28} and
letting $k_0\in\mathbb{N}$ be
such that $\mu=\mu_{k_0}=(k_0+s)(k_0+N-s)$, we can rewrite the equation
$-\mathrm{div}\left(t^{1-2s}\nabla w\right)=0$ in polar coordinates
exploiting \cite[Lemma 2.1]{FalFel}, thus obtaining, for all
$r\in~\!\!(0,1)$ and $\theta\in\mathbb{S}^N_+$, 
\begin{align*}
0&=\frac{1}{r^N}(r^{N+1-2s}
g')'\theta_{N+1}^{1-2s}\psi(\theta)+r^{-1-2s}
g(r)\mathrm{div}_{\mathbb{S}^N}(\theta_{N+1}^{1-2s}\nabla_{\mathbb{S}^N}\psi(\theta))\\
&=
\frac{1}{r^N}(r^{N+1-2s}
g')'\theta_{N+1}^{1-2s}\psi(\theta)-r^{-1-2s}
g(r)
\theta_{N+1}^{1-2s}\mu_{k_0}\psi(\theta).
\end{align*}
Then $ g(r)$ solves the equation
\begin{equation*}
-\frac{1}{r^N}(r^{N+1-2s} g')'+\mu_{k_0} r^{-1-2s} g(r)=0\quad\text{in
}(0,1)
\end{equation*}
i.e.
\begin{equation*}
- g''(r)-\frac{N+1-2s}{r} g'(r)+\frac{\mu_{k_0}}{r^2} g(r)=0\quad\text{in
}(0,1).
\end{equation*}
Hence $ g(r)$ is of the form 
\begin{equation*}
g(r)=c_1 r^{k_0+s}+ c_2 r^{s-N-k_0}
\end{equation*}
for some $c_1, c_2\in\mathbb{R}$.
Since
  $w\in H^1(B_1^+,t^{1-2s}\,dz)$ and the function
  $|z|^{-1}|z|^{s-N-k_0}\psi\bigl(\frac{z}{|z|}\bigr)\not\in
  L^2(B_1^+,t^{1-2s}\,dz)$,
  from Lemma~\ref{l:hardy_boundary} we deduce that necessarily $c_2=0$ and $ g(r)=c_1 r^{{k_0}+s}$. Moreover, from $ g(1)=1$, we
obtain that $c_1=1$ and then
\begin{equation}\label{c_2=0}
  w(r\theta)=r^{k_0+s}\psi(\theta), 
  \quad\text{for all $r\in (0,1)$ and $\theta\in \mathbb{S}^N_+$}.
\end{equation}
Let us now consider the sequence $\{w^{\lambda_{n_k}}\}$.  Up to a
further subsequence still denoted by $\{w^{\lambda_{n_k}}\}$,
we may suppose that $w^{\lambda_{n_k}}\rightharpoonup \overline{w}$
weakly in $ H^1(B_1^+,t^{1-2s}\,dz)$ for some $\overline{w}\in
H^1(B_1^+,t^{1-2s}\,dz)$ and that $R_{\lambda_{n_k}}\rightarrow
\overline{R}$ for some $\overline{R}\in [1,2]$. 

Strong convergence of $w^{\lambda_{n_k}R_{\lambda_{n_k}}}$ in
$H^1(B^+_1,t^{1-2s}dz)$ implies that, up to a subsequence, both
$w^{\lambda_{n_k}R_{\lambda_{n_k}}}$ and
$|\nabla w^{\lambda_{n_k}R_{\lambda_{n_k}}}|$ are a.e. dominated by a
$L^2(B^+_1,t^{1-2s}dz)$-function uniformly with respect to
$k$. Moreover, by \eqref{doubling}, up to a further subsequence, we may assume
that the limit
\begin{equation*}
\ell:= \lim_{k\rightarrow +\infty}\frac{H(\lambda_{n_k}R_{\lambda_{n_k}})}{H(\lambda_{n_k})}
\end{equation*}
exists and is finite, with $\ell>0$. Then, by Dominated Convergence Theorem, we have 
\begin{align*}
  \lim_{k\rightarrow +\infty}&\int_{B^+_1}t^{1-2s}w^{\lambda_{n_k}}(z)v(z)dz=
                               \lim_{k\rightarrow +\infty}R_{\lambda_{n_k}}^{N+2-2s} \int_{B^+_{1/R_{\lambda_{n_k}}}}t^{1-2s}w^{\lambda_{n_k}}
                               (R_{\lambda_{n_k}}z)v(R_{\lambda_{n_k}} z)dz\\
                             &=\lim_{k\rightarrow
                               +\infty}R_{\lambda_{n_k}}^{N+2-2s}
                               \sqrt{\frac{H(\lambda_{n_k}R_{\lambda_{n_k}})}
                               {H(\lambda_{n_k})}}\int_{B^+_1}t^{1-2s}\chi_{B^+_{1/R_{\lambda_{n_k}}}}\!\!\!\!\!(z)\,
                               w^{\lambda_{n_k}R_{\lambda_{n_k}}}(z)v(R_{\lambda_{n_k}}z)dz\\
                             &=\overline{R}^{N+2-2s}\sqrt{\ell}
                               \int_{B^+_1}t^{1-2s}\chi_{B^+_{1/\overline{R}}}(z)w(z)v(\overline{R}z)dz\\
                             &=\overline{R}^{N+2-2s}\sqrt{\ell}
                               \int_{B^+_{1/\overline{R}}}t^{1-2s}w(z)v(\overline{R}z)dz=\sqrt{\ell}\int_{B^+_1}t^{1-2s}w(z/\overline{R})v(z) dz
\end{align*}
for any $v\in C^\infty (\overline{B_1^+})$. By
density, the above convergence actually holds for all
$v\in L^2(B^+_1,t^{1-2s}dz)$. This proves that
$w^{\lambda_{n_k}}\rightharpoonup \sqrt{\ell}w(\cdot/\overline{R})$
weakly in
$L^2(B^+_1,t^{1-2s}dz)$. Since we know that $w^{\lambda_{n_k}}\rightharpoonup~\!\!\overline{w}$
weakly in $ H^1(B_1^+,t^{1-2s}\,dz)$, we conclude that
$\overline{w}=\sqrt{\ell}w(\cdot/\overline{R})$ and then $w^{\lambda_{n_k}}\rightharpoonup \sqrt{\ell}w(\cdot/\overline{R})$. Moreover
\begin{align*}
    \lim_{k\rightarrow +\infty} \int_{B^+_1}t^{1-2s}&|\nabla
    w^{\lambda_{n_k}}(z)|^2 dz
    =\lim_{k\rightarrow
    +\infty}R_{\lambda_{n_k}}^{N+2-2s}\int_{B^+_{1/R_{\lambda_{n_k}}}}
    t^{1-2s}|\nabla w^{\lambda_{n_k}}(R_{\lambda_{n_k}}z)|^2 dz\\
  &=\lim_{k\rightarrow +\infty}
    R_{\lambda_{n_k}}^{N-2s}
    \frac{H(\lambda_{n_k}R_{\lambda_{n_k}})}{H(\lambda_{n_k})}
    \int_{B^+_1}t^{1-2s}\chi_{B^+_{1/R_{\lambda_{n_k}}}}
    |\nabla w^{\lambda_{n_k}R_{\lambda_{n_k}}}(z)|^2 dz\\
  &=\overline{R}^{N-2s}\ell
    \int_{B^+_1}t^{1-2s}\chi_{B^+_{1/\overline{R}}}(z)|\nabla
    w(z)|^2 dz=\overline{R}^{N-2s}\ell
    \int_{B^+_{1/\overline{R}}}t^{1-2s}|\nabla
    w(z)|^2 dz\\
  &=\int_{B^+_1}t^{1-2s}\Big|\sqrt{\ell}\nabla \Big(w\Big(\frac{z}{\overline{R}}\Big) \Big) \Big|^2 dz.
\end{align*}
This shows that $w^{\lambda_{n_k}}\rightarrow
\overline{w}=\sqrt{\ell}w(\cdot/\overline{R})$ 
strongly in $H^1(B^+_1,t^{1-2s}dz)$. 

By \eqref{c_2=0} $w$ is homogeneous of
  degree $k_0+s$, hence $\overline w=\sqrt\ell
  \,\overline{R}^{-k_0-s}w$. Furthermore \eqref{intwlambda1} and the
  strong convergence $w^{\lambda_{n_k}}\rightarrow
\overline{w}$ in $L^2(\partial^+B_1^+,t^{1-2s}dS)$ imply that
\begin{equation*}
1=\int_{\partial^+B_1^+}t^{1-2s}\overline w^2\,dS=\ell\,\overline{R}^{-2k_0-2s}
\int_{\partial^+B_1^+}t^{1-2s}w^2\,dS=\ell\,\overline{R}^{-2k_0-2s}
\end{equation*}
in view of \eqref{intw=1}, thus implying that
$\overline{w}=w$.

It remains to prove part (i). By \eqref{c_2=0}, \eqref{eq:38} and
the fact that $\psi$ is an eigenfunction of \eqref{eig} with
associated eigenvalue $\mu_{k_0}=(k_0+s)(k_0+N-s)$,  we have that 
\begin{equation*}
\int_{B^+_r}t^{1-2s}|\nabla w(z)|^2
dz=\frac{r^{N+2k_0}}{N+2k_0}\big((k_0+s)^2+\mu_{k_0}\big)=
(k_0+s)r^{N+2k_0}
\end{equation*}
and
\begin{equation*}
\int_{\partial^+ B^+_r}t^{1-2s}w^2 dS= r^{N+1-2s}\int_{\mathbb{S}^N_+} \theta_{N+1}^{1-2s}w^2(r\theta)\, dS= r^{N+2k_0+1}.
\end{equation*}
Therefore, by \eqref{Ew}, \eqref{Hw} and \eqref{Nw=gamma}, it follows that
\begin{equation*}
\gamma=\N_w(r)=\frac{E_w(r)}{H_w(r)}= \frac{r\int_{B^+_r} t^{1-2s} |\nabla w(z)|^2 dz}{\int_{\partial ^+ B^+_r}t^{1-2s}w^2 dS}=k_0+s.
\end{equation*}
This completes the proof.
\end{proof}

To complete the blow-up analysis and detect the sharp asymptotic behaviour of $W$ at $0$, it remains to describe the behavior of $H(\lambda)$ as $\lambda\rightarrow 0^+$.

\begin{Lemma}\label{LemmalimHesiste}
  Let $\gamma=\lim_{r\to0}\N(r)$ be as in Proposition
  \ref{Lemmalimitexists}. Then the limit
  $\lim_{r\to0^+}r^{-2\gamma} H(r)$ exists and is finite.
\end{Lemma}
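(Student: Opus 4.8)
The plan is to show that $r\mapsto r^{-2\gamma}H(r)$ differs from a monotone non-decreasing function only by an exponential factor tending to $1$ as $r\to0^+$, which immediately yields existence and finiteness of the limit.

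The first step is to record the logarithmic derivative of $r^{-2\gamma}H(r)$. Dividing \eqref{E=rH'/2} by $H(r)$ and using the definition \eqref{N} of the frequency function gives $\tfrac{H'(r)}{H(r)}=\tfrac{2\N(r)}{r}+O(1)$ as $r\to0^+$. Since $H$ is continuous and strictly positive on $(0,R_0]$ and belongs to $W^{1,1}_{\mathrm{loc}}(0,R_1)$ by Lemmas \ref{lemH} and \ref{lemmaH>0}, the function $r^{-2\gamma}H(r)$ lies in $W^{1,1}_{\mathrm{loc}}((0,R_0])$ and satisfies, for a.e. $r\in(0,R_0)$,
$$\frac{d}{dr}\Big(\frac{H(r)}{r^{2\gamma}}\Big)=\frac{H(r)}{r^{2\gamma}}\Big(\frac{2(\N(r)-\gamma)}{r}+O(1)\Big).$$

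The second step is the lower bound $\N(r)-\gamma\ge -C\,r^{\bar\delta}$ for $r$ small, with $\bar\delta\in(0,1]$ as in \eqref{delta}. Here I would invoke Lemma \ref{deriveN}: $\N'\ge V_1+V_2$ with $V_1\ge0$ (Schwarz's inequality) and, since $0<\N(r)+\tfrac{N-2s}{2}\le C_1+\tfrac{N-2s}{2}$ by \eqref{lower} and \eqref{Nlimitata}, one has $|V_2(r)|\le\hat C\,r^{-1+\bar\delta}$ near $0$; hence $\N'(r)\ge-\hat C\,r^{-1+\bar\delta}$ a.e. near $0$. Because $\N\in W^{1,1}_{\mathrm{loc}}((0,R_0])$ with $\N'\in L^1$ near $0$ (as already used in the proof of Proposition \ref{Lemmalimitexists} and Lemma \ref{l:k1k2}) and $\N(\rho)\to\gamma$ by \eqref{limitexists}, integrating from $0$ to $r$ yields $\N(r)-\gamma=\int_0^r\N'(s)\,ds\ge-\tfrac{\hat C}{\bar\delta}\,r^{\bar\delta}$.

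Inserting this bound into the derivative formula, using $r^{-2\gamma}H(r)>0$ and the fact that near $r=0$ the term $r^{-1+\bar\delta}$ absorbs the $O(1)$ term (possibly after enlarging the constant), gives $\tfrac{d}{dr}\big(r^{-2\gamma}H(r)\big)\ge -C_4\,r^{-1+\bar\delta}\,r^{-2\gamma}H(r)$ on some interval $(0,\hat R)$, i.e. $\tfrac{d}{dr}\big(e^{C_4 r^{\bar\delta}/\bar\delta}\,r^{-2\gamma}H(r)\big)\ge0$. Thus $r\mapsto e^{C_4 r^{\bar\delta}/\bar\delta}\,r^{-2\gamma}H(r)$ is non-decreasing and nonnegative on $(0,\hat R)$, hence admits a finite limit as $r\to0^+$; since the prefactor tends to $1$, the limit $\lim_{r\to0^+}r^{-2\gamma}H(r)$ exists and is finite. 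The only point requiring a little care is the passage to the limit at $0$ in the integration defining the lower bound for $\N(r)-\gamma$, namely the integrability of $\N'$ near $0$, but this is already available; everything else is routine bookkeeping with the remainder terms.
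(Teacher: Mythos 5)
Your proof is correct, and it reaches the conclusion by essentially the same ingredients the paper uses: the differential identity coming from \eqref{E=rH'/2}, the lower bound $\N'(r)\geq -C r^{-1+\bar\delta}$ from Lemma \ref{deriveN}, and a monotonicity argument. The only difference is in the final packaging. The paper integrates the identity for $\tfrac{d}{dr}(r^{-2\gamma}H(r))$ from $r$ to $\hat R$, splits $\N'=\alpha_1+\alpha_2$ with $\alpha_1\geq0$ and $\alpha_2(r)=-C_4 r^{-1+\bar\delta}$, and argues that the integral carrying $\alpha_1$ is monotone in $r$ while the remaining integral is absolutely convergent; for the latter it invokes the upper bound $H(\rho)\leq k_1\rho^{2\gamma}$ from Lemma \ref{l:k1k2}(i), and it invokes the same bound again to deduce finiteness from existence. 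You instead roll the bad term into an integrating factor $e^{C_4 r^{\bar\delta}/\bar\delta}$ and observe that $e^{C_4 r^{\bar\delta}/\bar\delta}\,r^{-2\gamma}H(r)$ is nonnegative and non-decreasing on $(0,\hat R)$, which gives existence and finiteness of the limit in one stroke, without any appeal to \eqref{Hupper}. This is the same trick the paper itself uses in Proposition \ref{Lemmalimitexists} for the limit of $\N$, so it is not a fundamentally new route, but it is a slightly more economical presentation of the same argument.
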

\begin{proof}
Thanks to \eqref{Hupper}, it is enough to show that the limit
exists. From \eqref{E=rH'/2} we deduce that, a.e. in $(0,R_0)$, 
\begin{align}\label{derivHgamma}
  \frac{d}{dr}\frac{H(r)}{r^{2\gamma}}&=\frac{H'(r)}{r^{2\gamma}}-2\gamma\frac{H(r)}{r^{2\gamma+1}}
                                        =\frac{2}{r^{2\gamma+1}}\left(E(r)+H(r)O(r)-\gamma H(r)\right)\\
  \nonumber&=\frac{2H(r)}{r^{2\gamma+1}}\left(\N(r)-\gamma+O(r)\right)
             =\frac{2H(r)}{r^{2\gamma+1}}\left(\int_0^r\N'(s)\ ds+O(r)\right)
\end{align}
as $r\to 0^+$.
Using the notation of Lemma \ref{Lemmalimitexists}, we
can write $\N'=\alpha_1+\alpha_2$ 
in $(0,\hat R)$, with
\begin{equation*}
  \alpha_1(r)=\N'(r)+C_2r^{-1+\bar\delta}\left(C_1+\frac{N-2s}{2}\right)
  \quad\mathrm{and}\quad\alpha_2(r)=-C_4r^{-1+\bar\delta},
\end{equation*}
where $\bar\delta\in(0,1]$ has been defined in \eqref{delta} and
$C_4=C_2 \left(C_1+\frac{N-2s}{2}\right)$.
Integrating \eqref{derivHgamma} between $(r,\hat R)$, we obtain that
\begin{align*}
 & \frac{H(\hat R)}{\hat R^{2\gamma}}-\frac{H(r)}{r^{2\gamma}}
=\int_r^{\hat R}\tfrac{2H(\rho)}{\rho^{2\gamma+1}}\left({\textstyle{\int_0^\rho\alpha_1(\tau)d\tau}}\right)d\rho
    +\int_r^{\hat R}\tfrac{2H(\rho)}{\rho^{2\gamma+1}}\left({\textstyle{\int_0^\rho\alpha_2(\tau)d\tau}}\right)d\rho+\int_r^{\hat R}\tfrac{H(\rho)}{\rho^{2\gamma}}O(1)d\rho\\
&=\int_r^{\hat R}\tfrac{2H(\rho)}{\rho^{2\gamma+1}}\left({\textstyle{\int_0^\rho\alpha_1(\tau)d\tau}}\right)d\rho
    -\int_r^{\hat R}\frac{H(\rho)}{\rho^{2\gamma}}\left(-\frac{2C_4}{\bar \delta} \rho^{-1+\bar\delta}+O(1)\right)d\rho.
\end{align*}
Since  $\alpha_1\geq0$ by \eqref{Nlimitata} and \eqref{C1}, we have that
$\lim_{r\to 0^+} \int_r^{\hat R}\frac{2H(\rho)}{\rho^{2\gamma+1}}\left(\int_0^\rho\alpha_1(\tau)d\tau\right)d\rho$
exists.  On the other hand, estimate \eqref{Hupper} ensures that $\rho\mapsto
\frac{H(\rho)}{\rho^{2\gamma}}\left(-\frac{2C_4}{\bar \delta}
  \rho^{-1+\bar\delta}+O(1)\right)\in L^1(0,\hat R)$, so that the
limit $\lim_{r\to 0^+} \int_r^{\hat
  R}\frac{H(\rho)}{\rho^{2\gamma}}\left(-\frac{2C_4}{\bar \delta}
  \rho^{-1+\bar\delta}+O(1)\right)d\rho$ exists and is finite. The
lemma is thereby proved.
\end{proof}
The next step is the proof
  that the limit $\lim_{r\to0^+}r^{-2\gamma} H(r)$ is actually
  strictly positive. To this aim, we first define the Fourier
coefficients associated with $W$, with respect to the
  orthonormal basis \eqref{eq:orthobasis} of
  $L^2(\mathbb S^N_+,\theta_{N+1}^{1-2s}dS)$, as
\begin{equation}\label{Fcoeff}
\varphi_{k,m}(\lambda)=\int_{\mathbb
  S^N_+}\theta_{N+1}^{1-2s}W(\lambda\theta)Y_{k,m}(\theta)dS,
\quad\lambda\in(0,R_1],\ k\in{\mathbb N},\ m=1,\dots,M_k.
\end{equation}
We
also define
\begin{align}\label{Upsilon}
  \Upsilon_{k,m}&(\lambda)=-\int_{B^+_\lambda}t^{1-2s}(A-\mathrm{Id}_{N+1})\nabla
                            W(z)\cdot
                            \frac{\nabla_{\mathbb{S}^N}Y_{k,m}(z/|z|)}{|z|}dz\\
\nonumber&+\kappa_s\int_{\Gamma^-_{\lambda}}\tilde{h}(y)\mathrm{Tr}W(y)\mathrm{Tr}Y_{k,m}\Big(\frac{y}{|y|}\Big)dy
+\int_{\partial^+ B^+_\lambda} t^{1-2s}(A-\mathrm{Id}_{N+1}) \nabla
  W\cdot \frac{z}{|z|}Y_{k,m}\Big(\frac{z}{|z|}\Big)dS,
\end{align}
for a.e. $\lambda\in(0,R_1]$, $k\in{\mathbb N}$ and $m\in\{1,2,...,M_k\}$.
\begin{Lemma}\label{Lemmafourier}
  Let $k_0$ be as in Proposition \ref{propH}. Then, for all
  $m\in\{1,2,\dots,M_{k_0}\}$ and $R\in (0,R_0]$, 
\begin{multline}\label{asintoticacoeff}
  \varphi_{k_0,m}(\lambda)=\lambda^{k_0+s}\biggl(R^{-k_0-s}\varphi_{k_0,m}(R)
  +\frac{(k_0+s)R^{-N-2k_0}}{N+2k_0}\int_0^{R}\rho^{k_0+s-1}\Upsilon_{k_0,m}(\rho)\,d\rho\\
  +\frac{N-s+k_0}{N+2k_0}\int_\lambda^{R}\rho^{-N-1+s-k_0}\Upsilon_{k_0,m}(\rho)\,d\rho\biggr)+
O(\lambda^{k_0+s+\bar\delta})
  \quad \text{as $\lambda\rightarrow 0^+$},
\end{multline}
where $\bar\delta$
is defined in \eqref{delta}.
\end{Lemma}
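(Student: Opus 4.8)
The plan is to derive a linear second order ODE satisfied by $\varphi_{k_0,m}$ on $(0,R_1)$, to integrate it by variation of constants, and to fix the two integration constants using the \emph{a priori} bound $|\varphi_{k_0,m}(\lambda)|\le 2\sqrt{H(\lambda)}$ together with Lemma \ref{l:k1k2}(i) and Proposition \ref{propH}. To obtain the ODE, I would test the weak formulation \eqref{eq:9} — which extends by density to all test functions in $H^1_{\Gamma^+_{R_1}}(B^+_{R_1},t^{1-2s}\,dz)$ — against $\Phi(z)=\phi(|z|)\,Y_{k_0,m}(z/|z|)$ with $\phi\in C^\infty_c((0,R_1))$; this is admissible since $Y_{k_0,m}\in\mathcal H_0$, so $\mathrm{Tr}\,\Phi$ vanishes on $\Gamma^+_{R_1}$. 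Splitting $A=\mathrm{Id}_{N+1}+(A-\mathrm{Id}_{N+1})$, passing to polar coordinates, using $\varphi_{k_0,m}'(\rho)=\int_{\mathbb{S}^N_+}\theta_{N+1}^{1-2s}\partial_\rho W(\rho\theta)Y_{k_0,m}(\theta)\,dS$ and that $Y_{k_0,m}$ solves \eqref{eig} with eigenvalue $\mu_{k_0}$ — the boundary terms on $\mathbb{S}^{N-1}$ disappearing because $W=0$ on $\Gamma^+$ and $Y_{k_0,m}$ obeys the weighted Neumann condition on $\mathbb{S}^{N-1}\cap\{\theta_N<0\}$ — and grouping the error terms produced by $A-\mathrm{Id}_{N+1}$ in the interior, by the potential $\tilde h$ on $\Gamma^-$, and by the flux through $\partial^+B^+_\rho$ precisely into $\Upsilon_{k_0,m}$ as in \eqref{Upsilon}, one gets $\varphi_{k_0,m}\in W^{1,1}_{\mathrm{loc}}(0,R_1)$ and, distributionally on $(0,R_1)$,
\begin{equation*}
-\bigl(\rho^{N+1-2s}\varphi_{k_0,m}'(\rho)\bigr)'+\mu_{k_0}\,\rho^{N-1-2s}\varphi_{k_0,m}(\rho)=\Upsilon_{k_0,m}'(\rho).
\end{equation*}
Since $\mu_{k_0}=(k_0+s)(k_0+N-s)$, the homogeneous (Euler) equation has fundamental system $\{\rho^{k_0+s},\rho^{s-N-k_0}\}$, with $\rho^{N+1-2s}$-weighted Wronskian identically $N+2k_0$.

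Next I would estimate $\Upsilon_{k_0,m}$. Using $|A-\mathrm{Id}_{N+1}|=O(|z|)$ (from \eqref{sviluppodellaA}), the coercivity \eqref{coercvity}, the trace inequality \eqref{2.6FF}, the bound \eqref{useful}, and the decays $E(\rho)=\N(\rho)H(\rho)\le C_1k_1\rho^{2(k_0+s)}$, $H(\rho)\le k_1\rho^{2(k_0+s)}$ (Proposition \ref{Lemmalimitexists}, Lemma \ref{l:k1k2}(i), and $\gamma=k_0+s$ by Proposition \ref{propH}), one first obtains $\int_{B^+_\rho}t^{1-2s}|\nabla W|^2\,dz=O(\rho^{N+2k_0})$; then estimating the three terms of \eqref{Upsilon} by H\"older's inequality — the potential term using $\tilde h\in W^{1,p}(\Gamma^-_{R_1})$ with $p>\frac N{2s}$, which gives the gain $\rho^{\overline\varepsilon}$ — one arrives at $|\Upsilon_{k_0,m}(\rho)|=O(\rho^{N+k_0-s+\bar\delta})$ as $\rho\to0^+$, with $\bar\delta=\min\{\overline\varepsilon,1\}$.

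Then I would integrate the ODE by variation of constants, which — after one integration by parts turning $\Upsilon_{k_0,m}'$ into $\Upsilon_{k_0,m}$, and thereby producing exactly the weights $\rho^{k_0+s-1}$ and $\rho^{-N-1+s-k_0}$ of \eqref{asintoticacoeff} — yields, for suitable constants $c_R,c_2$,
\begin{align*}
\varphi_{k_0,m}(\lambda)&=\lambda^{k_0+s}\Bigl(c_R+\frac{N-s+k_0}{N+2k_0}\int_\lambda^R\rho^{-N-1+s-k_0}\Upsilon_{k_0,m}(\rho)\,d\rho\Bigr)\\
&\quad+\lambda^{s-N-k_0}\Bigl(c_2-\frac{k_0+s}{N+2k_0}\int_0^\lambda\rho^{k_0+s-1}\Upsilon_{k_0,m}(\rho)\,d\rho\Bigr).
\end{align*}
By Cauchy--Schwarz on $\mathbb{S}^N_+$, the orthonormality \eqref{eq:orthobasis} and $\mu\ge\frac14$ on $B_{R_0}$, we have $|\varphi_{k_0,m}(\lambda)|\le 2\sqrt{H(\lambda)}\le 2\sqrt{k_1}\,\lambda^{k_0+s}$; since the integral in the last bracket tends to $0$ by the bound on $\Upsilon_{k_0,m}$, this forces $c_2=0$, and then $\lambda^{s-N-k_0}$ times that bracket is $O(\lambda^{k_0+s+\bar\delta})$. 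Identifying finally $c_R=R^{-k_0-s}\varphi_{k_0,m}(R)+\frac{(k_0+s)R^{-N-2k_0}}{N+2k_0}\int_0^R\rho^{k_0+s-1}\Upsilon_{k_0,m}(\rho)\,d\rho$ by evaluating the representation at $\lambda=R$ yields \eqref{asintoticacoeff}.

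The step I expect to be the main obstacle is the rigorous derivation of the ODE: justifying the polar-coordinate integration by parts on $\mathbb{S}^N_+$ and the cancellation of the boundary terms on $\mathbb{S}^{N-1}$ is delicate, because neither $W$ nor $Y_{k_0,m}$ is smooth up to the Dirichlet--Neumann junction $S^+_1$; this is to be handled through the density definition of weak solution together with a slicing argument for $W\in H^1(B^+_{R_1},t^{1-2s}\,dz)$. The other genuinely technical point is the sharp $O(\rho^{N+k_0-s+\bar\delta})$ estimate for $\Upsilon_{k_0,m}$, which is where the $W^{1,p}$-assumption on $\tilde h$ and the \emph{a priori} decay of $E$ and $H$ enter; the remaining steps — variation of constants and ruling out the singular mode — are routine.
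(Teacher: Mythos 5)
Your plan follows the same architecture as the paper's proof: derive a second-order ODE for $\varphi_{k_0,m}$ in the radial variable by testing the weak formulation against angular-radial products involving $Y_{k_0,m}$, write it as
\[
-\bigl(\rho^{N+1+2k_0}(\rho^{-k_0-s}\varphi_{k_0,m})'\bigr)'=\rho^{k_0+s}\Upsilon_{k_0,m}',
\]
integrate by variation of constants, and kill the singular mode $\rho^{s-N-k_0}$ using the a priori growth of $H$.

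There is, however, one genuine gap: the claimed pointwise bound
$|\Upsilon_{k_0,m}(\rho)|=O(\rho^{N+k_0-s+\bar\delta})$ is not attainable. The first two terms of \eqref{Upsilon} are indeed controllable pointwise, since they are integrals over $B_\rho^+$ and $\Gamma_\rho^-$ and can be bounded by the volume quantity $\int_{B_\rho^+}t^{1-2s}|\nabla W|^2\,dz=O(\rho^{N+2k_0})$ and by \eqref{useful}. The third term, however, is a flux integral over the sphere $\partial^+B_\rho^+$, and by Cauchy--Schwarz it is bounded by a constant times
\[
\rho\cdot\Bigl(\int_{\partial^+B_\rho^+}t^{1-2s}|\nabla W|^2\,dS\Bigr)^{1/2}\rho^{\frac{N+1-2s}{2}},
\]
and $\int_{\partial^+B_\rho^+}t^{1-2s}|\nabla W|^2\,dS$ is only an $L^1$ function of $\rho$, not pointwise controlled by $H(\rho)$. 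The paper circumvents this by proving instead the weighted $L^1$ estimate $\int_0^R\rho^{-N-1+s-k_0}|\Upsilon_{k_0,m}(\rho)|\,d\rho\le\mathrm{const}\cdot R^{\bar\delta}$ (its \eqref{eq:40}), absorbing the flux term through an integration by parts in $\rho$ which trades the sphere integral for the bulk integral. Fortunately, all the downstream uses in your argument — showing the bracket $c_2-\frac{k_0+s}{N+2k_0}\int_0^\lambda\rho^{k_0+s-1}\Upsilon_{k_0,m}\,d\rho$ tends to $c_2$, showing $\lambda^{s-N-k_0}\int_0^\lambda\rho^{k_0+s-1}\Upsilon_{k_0,m}\,d\rho=O(\lambda^{k_0+s+\bar\delta})$, and showing the other bracket converges — only need that $L^1$ estimate, so your proof is salvageable by replacing the pointwise claim with the correct $L^1$ one and adding the integration-by-parts step for the flux term.

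Two smaller remarks. First, your way of forcing $c_2=0$, namely $|\varphi_{k_0,m}(\lambda)|\le2\sqrt{H(\lambda)}\le2\sqrt{k_1}\lambda^{k_0+s}$ (by Cauchy--Schwarz, the normalization $\|Y_{k_0,m}\|_{L^2(\mathbb S^N_+,\theta_{N+1}^{1-2s}dS)}=1$, $\mu\ge\frac14$, and Lemma \ref{l:k1k2}(i)), is correct and is in fact slightly more direct than the paper's route, which argues by contradiction through the radial square-integrability enforced by the Hardy inequality (Lemma \ref{l:hardy_boundary}). Second, your stated concern about justifying a polar-coordinate integration by parts on $\mathbb S^N_+$ is unnecessary: the ODE is obtained by plugging the radial-angular test function directly into the weak formulations \eqref{eq:9} (extended by density) and \eqref{defautoval}; no classical integration by parts near the Dirichlet--Neumann junction is ever performed, so no slicing argument is needed.
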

\begin{proof}
Let $k\in{\mathbb N}$ and $m\in \{1,2,...,M_k\}$. Testing
  \eqref{prob4} with $\phi=\omega(|z|)|z|^{-N-1+2s}Y_{k,m}(z/|z|)$ for
  any test function $\omega\in C^{\infty}_c(0,R_1)$ and using
  \eqref{defautoval}, we can easily verify that
    $\varphi_{k,m}$ solves the following second order differential equation 
\begin{equation}\label{differeq}
-\varphi_{k,m}''(\lambda)-\frac{N+1-2s}{\lambda}\varphi_{k,m}'(\lambda)+\frac{\mu_k}{\lambda^2}\varphi_{k,m}(\lambda)=\zeta_{k,m}(\lambda)\qquad\mathrm{in \ }(0,R_1)
\end{equation}
in a distributional sense, with $\mu_k$  as in \eqref{eq:28}, where the distribution
$\zeta_{k,m}\in\mathcal D'(0,R_1)$ is defined by 
\begin{multline*}
_{\mathcal D'(0,R_1)}\langle\zeta_{k,m},\omega\rangle_{\mathcal
   D(0,R_1)}=\kappa_s\int_0^{R_1}\frac{\omega(\lambda)}{\lambda^{2-2s}}\left(\int_{{\mathbb
   S}^{N-1}_-}\tilde h(\lambda\theta')\mathop{\rm Tr}W(\lambda\theta')Y_{k,m}(\theta',0)dS'\right)d\lambda\\
-\int_{B_{R_1}^+}t^{1-2s}(A-\mathrm{Id}_{N+1})\nabla W\cdot\nabla
   \left(\omega(|z|)|z|^{-N-1+2s}Y_{k,m}(z/|z|)
\right)\,dz
\end{multline*}
for any $\omega\in C^{\infty}_c(0,R_1)$, where ${\mathbb S}^{N-1}_-=
\{(\theta_1,\dots,\theta_N)\in
{\mathbb S}^{N-1}:\theta_{N}\leq 0\}$.
Letting $\Upsilon_{k,m}$ be as in \eqref{Upsilon}, by
  direct calculations we have that $\Upsilon_{k,m}\in 
  L^1(0,R_1)$ and 
\begin{equation}\label{eq:distrib}
\Upsilon_{k,m}'(\lambda)=\lambda^{N+1-2s}\zeta_{k,m}(\lambda)\qquad \mathrm{in \ } \mathcal D'(0, R_1).
\end{equation}
In view of \eqref{eq:distrib} and \eqref{eq:28}, we have that \eqref{differeq} is equivalent to 
\begin{equation*}
  -\left(\lambda^{N+1+2k}\left(\lambda^{-k-s}\varphi_{k,m}\right)'\right)'
  =\lambda^{k+s}\Upsilon'_{k,m}\quad \text{in } \mathcal D'(0, R_1).
\end{equation*}
Integrating the above
equation, we obtain that,
for every 
$R\in(0,R_1]$, $k\in{\mathbb N}$ and $m\in \{1,2,\dots,M_{k}\}$, 
  there exists a real number $c_{k,m}(R)$ (depending also on $R$)
such that
\begin{multline}\label{phiW11}
  \left(\lambda^{-k-s}\varphi_{k,m}(\lambda)\right)'=
  -\lambda^{-N-1+s-k}\Upsilon_{k,m}(\lambda)\\-(k+s)
  \lambda^{-N-1-2k}\left(c_{k,m}(R)+\int_\lambda^{R}\rho^{k+s-1}\Upsilon_{k,m}(\rho)\,d\rho\right),
\end{multline}
in the sense of distributions in $(0,R_1)$. From
\eqref{phiW11} we infer that
$\varphi_{k,m}\in W^{1,1}_\mathrm{loc}((0,R_1])$, thus a new integration
leads to
\begin{multline}\label{prima:espress:coeff}
  \varphi_{k,m}(\lambda)=\lambda^{k+s}\left(\frac{\varphi_{k,m}(R)}{R^{k+s}}-\frac{(k+s)c_{k,m}(R)
    }{(N+2k) R^{N+2k}}
    +\frac{N+k-s}{N+2k}\int_\lambda^{R}\rho^{-N-k+s-1}\Upsilon_{k,m}(\rho)\,d\rho\right)\\
  +\frac{(k+s)\lambda^{-N-k+s}}{N+2k}\left(c_{k,m}(R)+\int_\lambda^{R}\rho^{k+s-1}\Upsilon_{k,m}(\rho)\,d\rho\right)
\end{multline}
for all $\lambda\in(0,R_1]$. 

 From now on, we fix $k_0$ as in Proposition
\ref{propH}, $R_0$ as in \eqref{eq:R_0}, 
and $m\in \{1,2,\dots,M_{k_0}\}$.
We prove that
\begin{equation}\label{firststep}
\int_0^{R_0}\rho^{-N-k_0+s-1}|\Upsilon_{k_0,m}(\rho)|\,d\rho <+\infty.
\end{equation}
To this purpose, exploiting \eqref{sviluppodellaA} and using H\"{o}lder's inequality, one can estimate the first term in \eqref{Upsilon} for all $\rho\in (0,R_0)$ as follows
\begin{align}\label{stimaprimoterm}
\biggl|\int_{B^+_\rho}t^{1-2s}(A-\mathrm{I}_{N+1})&\nabla W(z)\cdot \frac{\nabla_{\mathbb{S}^N}Y_{k_0,m}(z/|z|)}{|z|}dz \biggr|\\
\notag&\leq \mathop{\rm const} \sqrt{\int_{B^+_\rho}t^{1-2s}|\nabla W|^2
  dz}\cdot 
\sqrt{\int_{B^+_\rho}t^{1-2s}|\nabla_{\mathbb{S}^N}Y_{k_0,m}(z/|z|)|^2dz}\\
\notag&=:\mathop{\rm const} I_1(\rho)\cdot I_2(\rho),
\end{align}
  where
\begin{align}\label{I1}
  I_1(\rho)=\sqrt{\rho^{N+2-2s}\int_{B^+_1}t^{1-2s}|\nabla W(\rho z)|^2
  dz}&=\rho^{\frac{N-2s}{2}}\sqrt{H(\rho)}
       \sqrt{\int_{B^+_1}t^{1-2s}|\nabla w^\rho(z)|^2dz}\\
  \notag&\leq \mathop{\rm const} \rho^{\frac{N-2s}{2}}\sqrt{H(\rho)},
\end{align}
as a consequence of Lemma \ref{Lemmawlamlimit}, and 
\begin{equation}\label{I2}
\begin{split}
I_2(\rho)=\sqrt{\int_0^\rho
  \tau^{N+1-2s}\bigg(\int_{\mathbb{S}^N_+}\theta_{N+1}^{1-2s}|\nabla_{\mathbb{S}^N}Y_{k_0,m}(\theta)|^2
  dS\bigg)\,d\tau}= \frac{\sqrt{\mu_{k_0}}}{\sqrt{N+2-2s}}\rho^{\frac{N+2-2s}{2}},
\end{split}
\end{equation}
due to \eqref{defautoval}. Combining \eqref{stimaprimoterm},
\eqref{I1}, \eqref{I2}, and \eqref{Hupper} we obtain that, for every $R\in(0,R_0]$,
\begin{multline}\label{stimapezzo1}
  \int_0^{R}\rho^{-N-1+s-k_0}\biggl|\int_{B^+_\rho}t^{1-2s}(A-\mathrm{I}_{N+1})\nabla
  W(z)\cdot \frac{\nabla_{\mathbb{S}^N}Y_{k_0,m}(z/|z|)}{|z|}dz
  \biggr|\, d\rho\\
  \leq \mathop{\rm const} \int_0^{R}\rho^{-s-k_0}\sqrt{H(\rho)}\,ds
\leq \mathop{\rm const} R.
\end{multline}
Moreover, H\"{o}lder's inequality implies that
\begin{equation}\label{secondopezzo}
  \biggl|\int_{\Gamma^-_{\lambda}}\tilde{h}(y)\mathop{\rm Tr}W(y)\mathop{\rm Tr}Y_{k_0,m}\big(\tfrac{y}{|y|}\big)\,dy
  \biggr| \leq 
  \sqrt{\int_{\Gamma^-_\lambda}|\tilde{h}||\mathop{\rm Tr}W|^2 dy}
\cdot\sqrt{\int_{\Gamma^-_\lambda}|\tilde{h}(y)||\mathop{\rm Tr}Y_{k_0,m}\big (\tfrac{y}{|y|}\big)|^2 dy}.
\end{equation}
From \eqref{useful} and homogeneity of the function
  $Y_{k_0,m}(y/|y|)$ it follows that, for all $\rho\in (0,R_0]$,
\begin{multline*}
  \sqrt{\int_{\Gamma^-_\rho}|\tilde{h}(y)||\mathop{\rm
      Tr}Y_{k_0,m}\big(\tfrac{y}{|y|}\big)|^2 dy} \leq \sqrt{\tilde
    c_{N,s,p}}\,\Vert \tilde h\,\Vert_{L^p(\Gamma_{R_1}^-)}^{1/2}
  \rho^{\overline\varepsilon/2}
  \left(\int_{\Gamma_\rho^-}|\mathop{\rm Tr}Y_{k_0,m}\big (\tfrac{y}{|y|}\big)|^{2^*(s)}\ dy\right)^{\frac{1}{2^*(s)}}\\
  = \sqrt{\tilde c_{N,s,p}}\,\Vert \tilde
  h\,\Vert_{L^p(\Gamma_{R_1}^-)}^{1/2}\rho^{\frac{\overline\varepsilon+N-2s}2}\left(\int_{\Gamma_1^-}|\mathop{\rm
      Tr}Y_{k_0,m}\big (\tfrac{y}{|y|}\big)|^{2^*(s)}\
    dy\right)^{\frac{1}{2^*(s)}}.
        \end{multline*}
Using \eqref{useful}, \eqref{coercvity}, and \eqref{Nlimitata}, we obtain
that, for all $\rho\in (0,R_0]$, 
\begin{align*}
\sqrt{\int_{\Gamma^-_\rho}|\tilde{h}||\mathop{\rm Tr}W|^2 dy} &\leq
\sqrt{
\frac{\tilde c_{N,s,p}}{\tilde C_{N,s}}\Vert
  \tilde h\,\Vert_{L^p(\Gamma_{R_1}^-)}\rho^{\overline\varepsilon
    +N-2s}H(\rho)\left(\mathcal N(\rho)+\frac{N-2s}{2}\right)}\\
\notag&\leq \mathop{\rm const}\rho^{\frac{N-2s+\overline{\varepsilon}}{2}}\sqrt{H(\rho)}.
\end{align*}
Putting the above estimates together and recalling \eqref{Hupper}, we conclude that, for every $R\in (0,R_0]$,
\begin{multline}\label{stimapezzo2}
  \int_0^{R}\rho^{-N-k_0+s-1}
  \biggl|\int_{\Gamma^-_{\rho}}\tilde{h}(y)\mathrm{Tr}W(y)
\mathrm{Tr}Y_{k_0,m}\big(\tfrac{y}{|y|}\big)\,dy
  \biggr| d\rho\\
  \leq \mathop{\rm const}\int_0^{R}
  \rho^{-1+\overline{\varepsilon}-k_0-s}\sqrt{H(\rho)}\, d\rho \leq
  \mathop{\rm const}R^{\overline{\varepsilon}}.
\end{multline}
 In order to estimate the last term, we observe that,
since
$\|Y_{k_0,m}\|_{L^2({\mathbb S}^{N}_+,\theta_{N+1}^{1-2s}dS)}=1$,
\begin{equation}\label{I3}
  \int_{B^+_\lambda}t^{1-2s}\big|Y_{k_0,m}\big(\tfrac{z}{|z|}\big)\big|^2
  dz=\int_0^\lambda \tau^{N+1-2s}
  \bigg(\int_{\mathbb{S}^N_+}\theta_{N+1}^{1-2s}|Y_{k_0,m}(\theta)|^2
  dS\bigg) \,d\tau=\frac{\lambda^{N+2-2s}}{N+2-2s}.
\end{equation}
Hence,
integrating by parts, we have that, for every $R\in (0,R_0]$,
\begin{align}\label{terzopezzo}
  \int_0^{R} \rho^{-N+s-1-k_0} &\biggl|\int_{\partial^+ B^+_\rho}
                                 t^{1-2s}(A-\mathrm{Id}_{N+1}) \nabla W\cdot
                                 \frac{z}{|z|}Y_{k_0,m}\big(\tfrac{z}{|z|}\big)dS\biggr| d \rho \\
  \notag&\leq \mathop{\rm const}\int_0^ {R}\rho^{-N+s-k_0}
          \left(\int_{\partial^+ B^+_\rho} t^{1-2s}
          |\nabla W||Y_{k_0,m}\big(\tfrac{z}{|z|}\big)|\, dS\right) d\rho\\
  \notag&=\mathop{\rm const}\bigg( R^{-N+s-k_0} \int_{B^+_{R}}
          t^{1-2s} 
          |\nabla W|\big|Y_{k_0,m}\big(\tfrac{z}{|z|}\big)\big|\,dz\\
  \notag&\quad+(N+k_0-s)\int_0^{R} \rho^{-N+s-1-k_0} \bigg(\int_{B^+_\rho}
          t^{1-2s} |\nabla W|
          \big|Y_{k_0,m}\big(\tfrac{z}{|z|}\big)\big|\,dz\bigg) d\rho\bigg)\\
  \notag&\leq \mathop{\rm const}\left( R^{1-s-k_0}\sqrt{H(R)}+\int_0^{R} \rho^{-s-k_0}\sqrt{H(\rho)}\,d\rho\right) \leq \mathop{\rm const}R,
\end{align}
thanks to \eqref{sviluppodellaA}, H\"{o}lder inequality, \eqref{I1},
\eqref{I3}, and \eqref{Hupper}. From \eqref{Upsilon},
\eqref{stimapezzo1}, \eqref{stimapezzo2}, and \eqref{terzopezzo} it
follows that, for every $R\in (0,R_0]$,
\begin{equation}\label{eq:40}
  \int_0^{R}\rho^{-N-k_0+s-1}|\Upsilon_{k_0,m}(\rho)|\,d\rho\leq 
  \mathop{\rm const}R^{\bar\delta}
\end{equation}
for some $\mathop{\rm const}>0$ independent of $R$.
 From \eqref{eq:40}, we derive immediately  \eqref{firststep}.

From \eqref{firststep} it follows that, for every $R\in (0,R_0]$,
\begin{multline}\label{Olambdasigma}
\lambda^{k_0+s}\left(\frac{\varphi_{k_0,m}(R)}{R^{k_0+s}}-\frac{(k_0+s)c_{k_0,m}(R)
    }{(N+2k_0) R^{N+2k_0}}
+\frac{N+k_0-s}{N+2k_0}\int_\lambda^{R}\rho^{-N-k_0+s-1}\Upsilon_{k_0,m}(\rho)\,d\rho\right)\\
=O(\lambda^{k_0+s})=o(\lambda^{-N-k_0+s})\quad \text{as $\lambda\rightarrow 0^+$}.
\end{multline}
Now we prove that, for every $R\in (0,R_0]$,
\begin{equation}\label{claim}
c_{k_0,m}(R)+\int_0^{R}\rho^{k_0+s-1}\Upsilon_{k_0,m}(\rho)\,d\rho=0.
\end{equation}
In order to do this, first we observe that 
\begin{equation}\label{claim2}
\int_0^{R_0} \rho^{k_0+s-1}|\Upsilon_{k_0,m}(\rho)| d\rho<+\infty,
\end{equation}
as a direct consequence of \eqref{firststep}, since
  $k_0+s-1>-N-k_0+s-1$. Suppose by contradiction that \eqref{claim}
does not hold true for some $R\in (0,R_0]$; then from
\eqref{prima:espress:coeff}, \eqref{Olambdasigma} and \eqref{claim2},
we should have that
\begin{equation*}
  \varphi_{k_0,m}(\lambda)\sim
  \frac{(k_0+s)\lambda^{-N-k_0+s}}{N+2k_0}\left(c_{k_0,m}(R)
    +\int_0^{R}\rho^{k_0+s-1}\Upsilon_{k_0,m}(\rho)\,d\rho\right)\quad\text{as $\lambda\rightarrow 0^+$},
\end{equation*}
and hence
\begin{equation*}
\int_0^{R_0}\lambda ^ {N-1-2s} |\varphi_{k_0,m}(\lambda) | ^2 d\lambda = +\infty.
\end{equation*}
On the other hand, by \eqref{Fcoeff}, we have that 
\begin{equation*}
\int_0^{R_0}\lambda ^ {N-1-2s} |\varphi_{k_0,m}(\lambda) | ^2 d\lambda\leq \int_0^{R_0}\lambda ^ {N-1-2s}\left(\int_{\mathbb{S}^N_+}\theta_{N+1}^{1-2s}|W(\lambda\theta)|^2 dS\right)d\lambda=\int_{B_{R_0}^+}t^{1-2s}\frac{W^2(z)}{|z|^2}dz<\infty,
\end{equation*}
as a consequence of Lemma \ref{l:hardy_boundary},
giving rise to a contradiction. Hence \eqref{claim} holds true. From
\eqref{claim} and \eqref{eq:40} we deduce that, for every $R\in (0,R_0]$,
\begin{align*}
\biggl|
\lambda^{-N-k_0+s}&\left(c_{k_0,m}(R)+\int_\lambda^{R}\rho^{k_0+s-1}\Upsilon_{k_0,m}(\rho)\,d\rho\right)\biggr|=
\lambda^{-N+s-k_0}\biggl|\int_0^\lambda \rho^{k_0+s-1}\Upsilon_{k_0,m}(\rho)\,d\rho\biggr|\\
 &\leq \lambda^{-N+s-k_0}\int_0^\lambda \rho^{N+2k_0}|\rho^{-N-1+s-k_0}\Upsilon_{k_0,m}(\rho)|\,d \rho\\
&\leq \lambda^{k_0+s}\int_0^{\lambda}
\rho^{-N-1+s-k_0}|\Upsilon_{k_0,m}(\rho)|\,d \rho
= O(\lambda^{k_0+s+\bar\delta})\quad \text{as $\lambda\rightarrow 0^+$}.
\end{align*}
Combining this last information with \eqref{claim} and
\eqref{prima:espress:coeff}, we finally obtain \eqref{asintoticacoeff}. 
\end{proof}

Using Lemma  \ref{Lemmafourier}, we now prove
that $\lim_{r\rightarrow 0^+}r^{-2\gamma}H(r)=\lim_{r\rightarrow 0^+}r^{-2(k_0+s)}H(r)>0$.
\begin{Lemma}\label{LemmalimH>0}
Let $\gamma=\lim_{r\to0}\N(r)$ be as in Proposition
  \ref{Lemmalimitexists}. 
  Then
\begin{equation*}
\lim_{r\rightarrow 0^+}r^{-2\gamma}H(r)>0.
\end{equation*}

\end{Lemma}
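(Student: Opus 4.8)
The strategy is to combine the blow-up classification of Proposition~\ref{propH} with the sharp control on the Fourier coefficients $\varphi_{k_0,m}$ given by Lemma~\ref{Lemmafourier}, and to pin down the value of the limit $\lim_{r\to0^+}r^{-2\gamma}H(r)$ (which exists and is finite by Lemma~\ref{LemmalimHesiste}) in terms of the leading coefficients of the $\varphi_{k_0,m}$'s. First I would extract from Lemma~\ref{Lemmafourier} the existence of a genuine (not merely subsequential) limit: since $\rho\mapsto\rho^{-N-1+s-k_0}\Upsilon_{k_0,m}(\rho)\in L^1(0,R_0)$ by \eqref{firststep}, the term $\int_\lambda^R\rho^{-N-1+s-k_0}\Upsilon_{k_0,m}(\rho)\,d\rho$ in \eqref{asintoticacoeff} converges as $\lambda\to0^+$, so that for every $m\in\{1,\dots,M_{k_0}\}$ the limit $\Lambda_m:=\lim_{\lambda\to0^+}\lambda^{-k_0-s}\varphi_{k_0,m}(\lambda)$ exists and is finite; moreover, using the quantitative bound \eqref{eq:40} for $\int_0^\lambda\rho^{-N-1+s-k_0}|\Upsilon_{k_0,m}(\rho)|\,d\rho$, the asymptotics \eqref{asintoticacoeff} sharpen to
\[
\varphi_{k_0,m}(\lambda)=\Lambda_m\,\lambda^{k_0+s}+O(\lambda^{k_0+s+\bar\delta})\qquad\text{as }\lambda\to0^+,
\]
with $\bar\delta$ as in \eqref{delta}.

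Next I would fix an arbitrary sequence $\lambda_n\to0^+$ and, via Proposition~\ref{propH}, pass to a subsequence (not relabelled) along which $w^{\lambda_n}=W(\lambda_n\,\cdot)/\sqrt{H(\lambda_n)}\to|z|^\gamma\psi(z/|z|)$ strongly in $H^1(B_1^+,t^{1-2s}dz)$, where $\psi$ is an eigenfunction of \eqref{eig} associated to $\mu_{k_0}$ with $\|\psi\|_{L^2(\mathbb S^N_+,\theta_{N+1}^{1-2s}dS)}=1$. By compactness of the trace $H^1(B_1^+,t^{1-2s}dz)\hookrightarrow\hookrightarrow L^2(\partial^+B_1^+,t^{1-2s}dS)$ and since $|z|=1$ on $\partial^+B_1^+$, the strong convergence gives, for each $m$, recalling \eqref{Fcoeff},
\[
\frac{\varphi_{k_0,m}(\lambda_n)}{\sqrt{H(\lambda_n)}}=\int_{\mathbb S^N_+}\theta_{N+1}^{1-2s}\,w^{\lambda_n}(\theta)\,Y_{k_0,m}(\theta)\,dS\ \longrightarrow\ \beta_m:=\int_{\mathbb S^N_+}\theta_{N+1}^{1-2s}\,\psi(\theta)\,Y_{k_0,m}(\theta)\,dS .
\]
Since $\{Y_{k_0,m}\}_{m=1}^{M_{k_0}}$ is an $L^2(\mathbb S^N_+,\theta_{N+1}^{1-2s}dS)$-orthonormal basis of the $\mu_{k_0}$-eigenspace (see \eqref{eq:orthobasis}) and $\|\psi\|_{L^2}=1$, we have $\sum_{m=1}^{M_{k_0}}\beta_m^2=1$; in particular $\beta_{m_0}\neq0$ for some index $m_0$.

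Finally I would match the two expansions along $\lambda_n$: from $\varphi_{k_0,m_0}(\lambda_n)=\sqrt{H(\lambda_n)}\,(\beta_{m_0}+o(1))$ and $\varphi_{k_0,m_0}(\lambda_n)=\Lambda_{m_0}\lambda_n^{k_0+s}+O(\lambda_n^{k_0+s+\bar\delta})$ one gets $\sqrt{H(\lambda_n)}=\big(\Lambda_{m_0}\lambda_n^{k_0+s}+O(\lambda_n^{k_0+s+\bar\delta})\big)/(\beta_{m_0}+o(1))$. Here $\Lambda_{m_0}$ cannot vanish: if $\Lambda_{m_0}=0$ this identity would force $\sqrt{H(\lambda_n)}=O(\lambda_n^{k_0+s+\bar\delta})$, i.e. $H(\lambda_n)=O(\lambda_n^{2\gamma+2\bar\delta})$, contradicting the lower bound $H(\lambda_n)\geq k_2(\bar\delta)\,\lambda_n^{2\gamma+\bar\delta}$ of Lemma~\ref{l:k1k2}(ii) (applied with $\sigma=\bar\delta$, since $\gamma=k_0+s$). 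Hence $\Lambda_{m_0}\neq0$, and passing to the limit in $r^{-2\gamma}H(r)$ along $\lambda_n$ yields $H(\lambda_n)\lambda_n^{-2\gamma}\to(\Lambda_{m_0}/\beta_{m_0})^2>0$; since the full limit $\lim_{r\to0^+}r^{-2\gamma}H(r)$ exists by Lemma~\ref{LemmalimHesiste}, it must equal $(\Lambda_{m_0}/\beta_{m_0})^2$, which is strictly positive. The only delicate point is the one isolated in the first paragraph: that $\lambda^{-k_0-s}\varphi_{k_0,m}(\lambda)$ has a genuine limit and that the remainder in Lemma~\ref{Lemmafourier} is of strictly higher order $O(\lambda^{k_0+s+\bar\delta})$, so that it is beaten by the polynomial lower bound for $H$ with an arbitrarily small loss; this is exactly what reconciles the two (a priori different-looking) descriptions of $\varphi_{k_0,m_0}(\lambda_n)$ and closes the argument.
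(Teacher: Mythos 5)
Your argument is correct and rests on precisely the same three ingredients as the paper's proof: the sharp expansion $\varphi_{k_0,m}(\lambda)=\Lambda_m\lambda^{k_0+s}+O(\lambda^{k_0+s+\bar\delta})$ coming from Lemma \ref{Lemmafourier} together with the quantitative bound \eqref{eq:40}, the almost-optimal lower bound of Lemma \ref{l:k1k2}(ii), and the blow-up convergence of Proposition \ref{propH}. The paper phrases the same comparison as a contradiction on $H$ (assume $\lambda^{-2\gamma}H(\lambda)\to 0$, conclude via Parseval that every leading coefficient vanishes and hence $(w^\lambda,\psi)_{L^2(\mathbb S^N_+,\theta_{N+1}^{1-2s}dS)}=O(\lambda^{\bar\delta/2})$, contradicting $w^{\lambda_{n_k}}\to\psi_0$ with $\|\psi_0\|=1$), whereas you run it constructively on a single mode $m_0$ with $\beta_{m_0}\neq 0$ and, as a bonus, identify the limit as $(\Lambda_{m_0}/\beta_{m_0})^2$; this is a reorganization of the same idea rather than a different route.
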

\begin{proof}
By \eqref{sviluppomu} and using the Parseval identity we have that
\begin{align}\label{Hlambda}
  H(\lambda)&= \int_{\mathbb{S}^N_+}
              \theta_{N+1}^{1-2s}\mu(\lambda\theta)|W(\lambda\theta)|^2dS\\
  \notag&=(1+O(\lambda))\int_{\mathbb{S}^N_+}
          \theta_{N+1}^{1-2s}|W(\lambda\theta)|^2dS=(1+O(\lambda))
          \sum ^\infty_{k=0}\sum_{m=1}^{M_k}|\varphi_{k,m}(\lambda)|^2.
\end{align}
Let $k_0\in\mathbb N$ be as in Proposition \ref{propH}, thus $\gamma=k_0+s$. 
We argue by contradiction, assuming that 
\begin{equation}\label{assurdo}
  \lim_{\lambda\rightarrow 0^+}\lambda^{-2\gamma}H(\lambda)=0.
\end{equation}
Hence from \eqref{Hlambda} it follows that
$\lim_{\lambda\rightarrow
  0^+}\lambda^{-(k_0+s)}\varphi_{k_0,m}(\lambda)=0$  for any $m\in\{1,2,\dots,M_{k_0}\}$.
This and Lemma \ref{Lemmafourier} lead to 
\begin{multline}\label{lim=0}
  R^{-k_0-s}\varphi_{k_0,m}(R)
  +\frac{(k_0+s)R^{-N-2k_0}}{N+2k_0}\int_0^{R}\rho^{k_0+s-1}\Upsilon_{k_0,m}(\rho)\,d\rho\\
  +\frac{N-s+k_0}{N+2k_0}\int_0^{R}\rho^{-N-1+s-k_0}\Upsilon_{k_0,m}(\rho)\,d\rho=0,
\end{multline}
for all $m\in\{1,2,\dots,M_{k_0}\}$ and for every $R\in (0,R_0]$. 
From \eqref{lim=0}, \eqref{asintoticacoeff}, and \eqref{eq:40} it follows that 
\begin{equation*}
  \varphi_{k_0,m}(\lambda)=-\lambda^{k_0+s}\frac{N-s+k_0}{N+2k_0}
  \int_0^\lambda\rho^{-N-1+s-k_0}\Upsilon_{k_0,m}(\rho)\,d\rho+
  O(\lambda^{k_0+s+\bar\delta})=O(\lambda^{k_0+s+\bar\delta})
\end{equation*}
as $\lambda\rightarrow 0^+$ for all $m\in\{1,2,\dots,M_{k_0}\}$.
Hence
\begin{equation}\label{eq:39}
\sqrt{H(\lambda)}\,(w^\lambda,\psi)_{L^2({\mathbb S}^{N}_+,\theta_{N+1}^{1-2s}dS)}=O(\lambda^{k_0+s+\bar\delta})\quad\text{as
}\lambda\to0^+
\end{equation}
for every $\psi\in \mathop{\rm span}\{
Y_{k_0,m}:m=1,\dots, M_{k_0}\}$.
 From Lemma \ref{l:k1k2}-(ii), $\sqrt{H(\lambda)}\geq
\sqrt{k_2(\bar\delta)} \lambda^{k_0+s+\frac{\bar\delta}2}$ for
$\lambda$ small, so that \eqref{eq:39} yields
\begin{equation}\label{eq:43}
(w^\lambda,\psi)_{L^2({\mathbb S}^{N}_+,\theta_{N+1}^{1-2s}dS)}=O(\lambda^{\bar\delta/2})\quad\text{as
}\lambda\to0^+
\end{equation}
for every $\psi\in \mathop{\rm span}\{
Y_{k_0,m}:m=1,\dots, M_{k_0}\}$.  On the other hand, by
Proposition \ref{propH} and 
continuity of the trace map from $H^1(B_1^+,t^{1-2s}dz)$ into
$L^2(\partial^+B_1^+,t^{1-2s}dS)$,
 for any sequence $\lambda_n\to0^+$, there exist a
  subsequence $\{\lambda_{n_k}\}$ and $\psi_0\in \mathop{\rm span}\{
Y_{k_0,m}:m=1,\dots, M_{k_0}\}$ such that
  \begin{equation}\label{eq:44}
    \|\psi_0\|_{L^2({\mathbb
        S}^{N}_+,\theta_{N+1}^{1-2s}dS)}=1\quad\text{and}
    \quad w^{\lambda_{n_k}}\to\psi_0 \quad\text{in }L^2({\mathbb S}^{N}_+,\theta_{N+1}^{1-2s}dS).
\end{equation}
From (\ref{eq:43}) and (\ref{eq:44}) we deduce that
\[
0=\lim_{k\to\infty}(w^{\lambda_{n_k}},\psi_0)_{L^2({\mathbb S}^{N}_+,\theta_{N+1}^{1-2s}dS)}
=\|\psi_0\|_{L^2({\mathbb S}^{N}_+,\theta_{N+1}^{1-2s}dS)}^2=1,
\]
thus reaching a contradiction.
\end{proof}
\begin{Theorem}\label{t:4.9}
Let $k_0\in\mathbb N$ be as in Proposition \ref{propH}. Let
  $M_{k_0}\in\mathbb N\setminus\{0\}$ be the multiplicity of the
  eigenvalue $\mu_{k_0}=(k_0+s)(k_0+N-s)$ and let
  $\{Y_{k_0,m}\}_{m=1,...,M_{k_0}}$ be a
  $L^2(\mathbb S^N_+,\theta_{N+1}^{1-2s}dS)$-orthonormal basis of the
  eigenspace of \eqref{eig} associated to $\mu_{k_0}$. Then, for every
  $m\in \{1,2,\dots,M_{k_0}\}$, there exists $\beta_m\in{\mathbb R}$
  such that $(\beta_1,\beta_2,\dots,\beta_{M_{k_0}})\neq(0,0,\dots,0)$,
\begin{equation*}
\frac{W(\lambda z)}{\lambda^{k_0+s}}\rightarrow |z|^{k_0+s}\sum_{m=1}^{M_{k_0}}\beta_m Y_{k_0,m}(z/|z|)\quad \text{in $H^1(B_1^+,t^{1-2s}dz)$ as $\lambda\rightarrow 0^+$},
\end{equation*}
and
\begin{multline}\label{eq:coeff-beta}
  \beta_m=R^{-(k_0+s)}\varphi_{k_0,m}(R)+\frac{(k_0+s)
    R^{-N-2k_0}}{N+2k_0}\int_0^{R}\rho^{k_0+s-1}
  \Upsilon_{k_0,m}(\rho)\,d\rho\\
  +\frac{N-s+k_0}{N+2k_0}\int_0^{R}\rho^{-N-1+s-k_0}\Upsilon_{k_0,m}(\rho)\,d\rho\quad
\text{for all }R\in(0,R_0],
\end{multline}
with $\varphi_{k_0,m} $ and $\Upsilon_{k_0,m}$ given by \eqref{Fcoeff}
and \eqref{Upsilon} respectively. 
\end{Theorem}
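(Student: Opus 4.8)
The plan is to assemble ingredients already at our disposal: the existence and strict positivity of
\[
\mathcal L:=\lim_{r\to0^+}r^{-2\gamma}H(r),\qquad \gamma=k_0+s,
\]
guaranteed by Lemmas \ref{LemmalimHesiste} and \ref{LemmalimH>0} together with Proposition \ref{propH}; the asymptotic expansion of the Fourier coefficients $\varphi_{k_0,m}$ of Lemma \ref{Lemmafourier}; and the subsequential strong $H^1$ compactness of the blow-up family $w^\lambda$ furnished by Proposition \ref{propH}.

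First I would read off from \eqref{asintoticacoeff}, using that $\int_\lambda^{R}\rho^{-N-1+s-k_0}\Upsilon_{k_0,m}(\rho)\,d\rho\to\int_0^{R}\rho^{-N-1+s-k_0}\Upsilon_{k_0,m}(\rho)\,d\rho$ (finite by \eqref{firststep}) and that $O(\lambda^{k_0+s+\bar\delta})=o(\lambda^{k_0+s})$ since $\bar\delta>0$, that for every $m\in\{1,\dots,M_{k_0}\}$
\[
\lim_{\lambda\to0^+}\lambda^{-(k_0+s)}\varphi_{k_0,m}(\lambda)=\beta_m,
\]
where $\beta_m$ is the right-hand side of \eqref{eq:coeff-beta}; since the left-hand limit is manifestly independent of $R$, formula \eqref{eq:coeff-beta} holds for every $R\in(0,R_0]$, as claimed. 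Combining this with $\sqrt{H(\lambda)}=\sqrt{\mathcal L}\,\lambda^{k_0+s}(1+o(1))$ (which is exactly $\lambda^{-2\gamma}H(\lambda)\to\mathcal L>0$) yields, for the blow-up family of \eqref{wlam},
\[
\bigl(w^\lambda,Y_{k_0,m}\bigr)_{L^2(\mathbb S^N_+,\theta_{N+1}^{1-2s}dS)}=\frac{\varphi_{k_0,m}(\lambda)}{\sqrt{H(\lambda)}}\longrightarrow\frac{\beta_m}{\sqrt{\mathcal L}}\qquad\text{as }\lambda\to0^+ .
\]

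Next I would fix an arbitrary sequence $\lambda_n\to0^+$ and invoke Proposition \ref{propH}(ii) to extract a subsequence along which $w^{\lambda_{n_k}}\to|z|^{k_0+s}\psi(z/|z|)$ strongly in $H^1(B_1^+,t^{1-2s}dz)$, with $\psi$ an eigenfunction of \eqref{eig} at $\mu_{k_0}$ normalized by $\|\psi\|_{L^2(\mathbb S^N_+,\theta_{N+1}^{1-2s}dS)}=1$. Writing $\psi=\sum_{m=1}^{M_{k_0}}c_mY_{k_0,m}$ and using continuity of the trace $H^1(B_1^+,t^{1-2s}dz)\to L^2(\partial^+B_1^+,t^{1-2s}dS)$ (so that the trace of the $H^1$-limit is the $L^2$-limit of the traces), the displayed convergence of the Fourier coefficients forces $c_m=\beta_m/\sqrt{\mathcal L}$ for every $m$. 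Hence $\psi$ is uniquely determined, $\sum_m\beta_m^2=\mathcal L>0$ so that $(\beta_1,\dots,\beta_{M_{k_0}})\neq(0,\dots,0)$, and since every subsequential $H^1$-limit of $\{w^\lambda\}$ coincides with $|z|^{k_0+s}\sum_m\frac{\beta_m}{\sqrt{\mathcal L}}Y_{k_0,m}(z/|z|)$, Urysohn's subsequence principle gives the convergence of the whole family,
\[
w^\lambda\longrightarrow|z|^{k_0+s}\sum_{m=1}^{M_{k_0}}\frac{\beta_m}{\sqrt{\mathcal L}}\,Y_{k_0,m}(z/|z|)\qquad\text{strongly in }H^1(B_1^+,t^{1-2s}dz).
\]
Multiplying by $\sqrt{H(\lambda)}/\lambda^{k_0+s}\to\sqrt{\mathcal L}$ produces the stated limit for $W(\lambda z)/\lambda^{k_0+s}$.

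The substantive difficulties are concentrated in the lemmas being invoked — above all Lemma \ref{LemmalimH>0} (positivity of $\lim_{r\to0^+}r^{-2\gamma}H(r)$) and the Fourier-coefficient expansion of Lemma \ref{Lemmafourier}. Given those, the only point in the present argument requiring a little care is the upgrade from the merely subsequential $H^1$-compactness of Proposition \ref{propH} to convergence of the full family: this is handled by identifying each subsequential limit through its Fourier coefficients (which converge as a genuine limit in $\lambda$) and then appealing to Urysohn's principle.
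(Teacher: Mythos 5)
Your proposal is correct and follows essentially the same path as the paper's proof: both rest on (a) the subsequential strong $H^1$ compactness of Proposition \ref{propH}, (b) the Fourier-coefficient expansion of Lemma \ref{Lemmafourier}, (c) the existence and positivity of $\lim_{r\to0^+}r^{-2\gamma}H(r)$ from Lemmas \ref{LemmalimHesiste}--\ref{LemmalimH>0}, and (d) Urysohn's subsequence principle, with the subsequential limit pinned down by matching its Fourier coefficients against the genuine limit $\lim_{\lambda\to0^+}\lambda^{-(k_0+s)}\varphi_{k_0,m}(\lambda)$. The only difference is organizational: you compute $\beta_m$ as a full limit up front and then check that every subsequential $H^1$-limit must have those coefficients, whereas the paper extracts the subsequential limit first and then shows its coefficients are subsequence-independent by the same Fourier identification; the two presentations carry identical content, and your observation that $\sum_m\beta_m^2=\mathcal L>0$ (from $\|\psi\|_{L^2(\mathbb S^N_+,\theta_{N+1}^{1-2s}dS)}=1$) is a clean way to see $(\beta_1,\dots,\beta_{M_{k_0}})\neq 0$.
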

\begin{proof}
  If we consider any sequence of strictly positive real numbers
  $\lambda_n\rightarrow 0^+$, then from Proposition \ref{propH} and Lemmas
  \ref{LemmalimHesiste} and \ref{LemmalimH>0}, we deduce that there
  exist a subsequence $\{\lambda_{n_k}\}$ and real numbers
  $\beta_1,\beta_2, \dots\beta_{M_{k_0}}$ not all equal to $0$ such that
\begin{equation}\label{thm1}
  \frac{W(\lambda_{n_k} z)}{\lambda_{n_k}^{k_0+s}}\rightarrow
  |z|^{k_0+s}
  \sum_{m=1}^{M_{k_0}}\beta_m Y_{k_0,m}(z/|z|)\quad \text{in
    $H^1(B_1^+,t^{1-2s}dz)$ as $k\rightarrow \infty$}.
\end{equation}
We claim that the coefficients $\beta_m$ depend neither on the
sequence $\{\lambda_n\}$, nor on its subsequence $\{\lambda_{n_k}\}$. To
this aim, we observe that \eqref{Fcoeff}, \eqref{thm1}, and the
continuity of the trace map from $H^1(B_1^+,t^{1-2s}dz)$ into $L^2(\partial^+B_1^+,t^{1-2s}dS)$ imply that,
for all $m\in \{1,2,\dots,M_{k_0}\}$,
\begin{equation*}
\begin{split}
\lim_{k\rightarrow \infty} \lambda_{n_k}^{-(k_0+s)}\varphi_{k_0,m}(\lambda_{n_k})&=\lim_{k\rightarrow +\infty} \int_{\mathbb{S}^N_+}\theta_{N+1}^{1-2s}\lambda_{n_k}^{-\gamma} W(\lambda_{n_k}\theta)Y_{k_0,m}(\theta)dS\\
&= \sum_{i=1}^{M_{k_0}}\beta_i\int_{\mathbb{S}^N_+}\theta_{N+1}^{1-2s}Y_{k_0,i}(\theta)Y_{k_0,m}(\theta)dS=\beta_m,
\end{split}
\end{equation*}
for all $m\in\{1,2,\dots,M_{k_0}\}$. At the same time, after fixing $R<R_0$, by \eqref{asintoticacoeff} we have that
\begin{equation*}
\begin{split}
\lim_{k\rightarrow \infty} \lambda_{n_k}^{-(k_0+s)}\varphi_{k_0,m}(\lambda_{n_k})=&R^{-(k_0+s)}\varphi_{k_0,m}(R)+\frac{(k_0+s)R^{-N-2k_0}}{N+2k_0}\int_0^{R}\rho^{k_0+s-1}\Upsilon_{k_0,m}(\rho)\,d\rho\\
&+\frac{N-s+k_0}{N+2k_0}\int_0^{R}\rho^{-N-1+s-k_0}\Upsilon_{k_0,m}(\rho)\,d\rho,
\end{split}
\end{equation*}
hence, by uniqueness of the limit, we can deduce that, for all $m\in\{1,2,\dots,M_{k_0}\}$,
\begin{equation*}
\begin{split}
  \beta_m=&
  R^{-(k_0+s)}\varphi_{k_0,m}(R)+\frac{(k_0+s)R^{-N-2k_0}}{N+2k_0}\int_0^{R}\rho^{k_0+s-1}\Upsilon_{k_0,m}(\rho)\,d\rho\\
  &+\frac{N-s+k_0}{N+2k_0}\int_0^{R}\rho^{-N-1+s-k_0}\Upsilon_{k_0,m}(\rho)\,d\rho.
\end{split}
\end{equation*}
This is enough to conclude that the coefficients $\beta_m$ depend
neither on the sequence $\{\lambda_n\}$, nor on its subsequence
$\{\lambda_{n_k}\}$.  Urysohn's Subsequence Principle allows us
to conclude that the convergence in \eqref{thm1} holds as
$\lambda\to0^+$, thus completing the proof.
\end{proof}

We are now in position to prove Theorem \ref{t:asymp-U}.
  \begin{proof}[Proof of Theorem \ref{t:asymp-U}]
Up to a translation, we can assume that $x_0=0$.    If $U$ is as in the assumptions of Theorem \ref{t:asymp-U}, then,
    letting $F$ as in Section \ref{sect2.1}, 
 $W=U\circ F\in H^1_{\Gamma_{R_1}^+}(B_{R_1}^+,t^{1-2s}\,dz)$ is a
nontrivial weak solution of \eqref{prob4}.
We notice that the nontriviality of $U$ in any neighbourhood of $0$,
and consequently  of $W$ in $B_{R_1}^+$, can
be easily deduced from nontriviality of $U$ in $\R^{N+1}_+$ and 
classical unique continuation principles for second order elliptic
equations with Lipschitz coefficients \cite{Garlin}.

 Then, by Proposition  \ref{propH}
 and Theorem \ref{t:4.9}, there exist $k_0\in{\mathbb N}$  and  an
 eigenfunction $Y$ of problem  \eqref{eig} associated to the eigenvalue
 $\mu_{k_0}=(k_0+s)(k_0+N-s)$  such that 
\begin{equation}\label{eq:42}
  \frac{W(\lambda z)}{\lambda^{k_0+s}}\rightarrow
  |z|^{k_0+s}Y(z/|z|)
  \quad \text{in $H^1(B_1^+,t^{1-2s}dz)$ as $\lambda\rightarrow 0^+$}.
\end{equation}
We observe that 
\begin{equation}\label{eq:41}
 \frac{U(\lambda z)}{\lambda^{k_0+s}}= \frac{W(\lambda
   G_\lambda(z))}{\lambda^{k_0+s}},\quad 
 \nabla\left(\frac{U(\lambda \cdot)}{\lambda^{k_0+s}}\right)(z)=\nabla\left( \frac{W(\lambda
\cdot)}{\lambda^{k_0+s}}\right)(   G_\lambda(z))\,J_{G_{\lambda}}(z),
\end{equation}
where 
\begin{equation*}
G_\lambda(z)=\frac{1}{\lambda}F^{-1}(\lambda z).
\end{equation*}
From \eqref{eq:11} we have that 
\begin{equation}\label{eq:4}
  G_\lambda(z)=z+O(\lambda)\quad\text{and}\quad 
J_{G_{\lambda}}(z)=\mathrm{Id}_{N+1}+O(\lambda)
\end{equation}
as $\lambda\to0^+$ uniformly with respect to $z\in B_1^+$.
From \eqref{eq:4} one can easily deduce that, if $f_\lambda\to f$ in
$L^2(B^+_1,t^{1-2s}dz)$, then  $f_\lambda\circ G_\lambda\to f$ in
$L^2(B^+_1,t^{1-2s}dz)$. In view of \eqref{eq:42} and \eqref{eq:41},
this yields the conclusion.
 \end{proof}

As a direct consequence of Theorem \ref{t:asymp-U} and of the
 equivalent formulation of problem \eqref{prob1} given in
 \eqref{prob2}, we obtain Theorem \ref{t:asymp-u}
 \begin{proof}[Proof of Theorem \ref{t:asymp-u}]
If $u\in \mathcal D^{s,2}(\R^N)$, $u\not\equiv0$,  is a nontrivial
weak solution to \eqref{prob1}, then its extension $U=\mathcal H(u) \in
\mathcal{D}^{1,2}(\R^{N+1}_+,t^{1-2s}\,dz)$ weakly solves
\eqref{prob2} in the weak sense specified in \eqref{eq:45}, see
\cite{CafSil1} and Section \ref{sec:intr-main-results}.    Then the
conclusion follows from Theorem \ref{t:asymp-U} applied to $U$ and the continuity of
the trace map from $H^1(B_1^+,t^{1-2s}dz)$ into $H^s(B_1')$, see e.g.
\cite[Proposition 2.1]{JLX}.
 \end{proof}

\appendix

\section{Some boundary regularity results at edges of cylinders}\label{sec:some-bound-regul}
Let us consider the following local problem: $\Omega\subset\R^N$ is a
$C^{1,1}$ domain, $x_0\in\partial\Omega$, $R,T>0$ and $U$ is  a weak solution to
\begin{equation}\label{corner}
\begin{cases}
\mathrm{div}\left(t^{1-2s}\nabla U\right)=0 &\text{in }C_{R,T}(x_0),\\
U=0 & \text{in }D_{R,T}(x_0),\\
\lim_{t\to0}t^{1-2s}\partial_tU=0 &\text{in }\sigma_{R,T}(x_0),
\end{cases}
\end{equation}
where we denoted 
\begin{gather*}
C_{R,T}(x_0) :=(B'_R(x_0)\cap\Omega)\times(0,T), \quad D_{R,T}(x_0)
:=(B'_R(x_0)\cap\partial\Omega)\times(0,T),\\
 \sigma_{R,T}(x_0)
=(B'_R(x_0)\cap\Omega)\times\{0\};
\end{gather*}
i.e. $U$ belongs to the space  $\mathcal H$ defined as the closure of
the set 
\begin{equation*}
\{v\in C^\infty(\overline {C_{R,T}(x_0)}):v=0\text{ in a
  neighbourhood of $D_{R,T}(x_0)$}\}
\end{equation*}
in
$H^1(C_{R,T}(x_0),t^{1-2s}\,dz)$, and
\begin{equation*}
\int_{C_{R,T}(x_0)} t^{1-2s}\nabla U\cdot\nabla\Phi\ dz=0
\quad\text{for all }\Phi\in
C^\infty_c(C_{R,T}(x_0)\cup \sigma_{R,T}(x_0)).
\end{equation*}
The following regularity result holds true.
\begin{Lemma}\label{regularitycorner}
Let
$\alpha\in(0,1)$, $\beta\in(0,1)\cap(0,2-2s)$, $r<R$, and $\tau<T$.
 Then there exists a positive constant $C$ such
 that, 
for every weak solution $U$ to \eqref{corner},
\begin{equation*}
\|U\|_{C^{1,\alpha}(C_{r,\tau}(x_0))}+\|t^{1-2s}\partial_tU\|_{C^{0,\beta}(
C_{r,\tau}(x_0))}\leq C\|U\|_{L^2(C_{R,T}(x_0),t^{1-2s}dz)}.
\end{equation*}
\end{Lemma}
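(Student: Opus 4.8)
The plan is to reduce Lemma~\ref{regularitycorner}, by straightening $\partial\Omega$ near $x_0$ and then reflecting across the flattened Dirichlet wall, to a known up-to-the-boundary regularity estimate for a divergence-form equation with Muckenhoupt $A_2$ weight $t^{1-2s}$ and a \emph{homogeneous} weighted Neumann condition on $\{t=0\}$, of the type in \cite{JLX} and \cite[Lemma 3.3]{FalFel}. After a translation and a rotation we may assume $x_0=0$ and that $\partial\Omega$ is locally the graph $\{x_N=g(x')\}$ with $g$ as in \eqref{hpdellag}; let $F$ be the $C^{1,1}$ diffeomorphism of Section~\ref{sect2.1}. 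Since $F$ fixes the variable $t$ and its first $N$ components are independent of $t$, for suitable radii $R',T'$ it maps the straight half-cylinder $\mathcal Q^-=\{(y',y_N,t):|(y',y_N)|<R',\ y_N<0,\ 0<t<T'\}$ into $C_{R,T}(0)$, carrying $\{y_N=0\}$ into $D_{R,T}(0)$ and $\{t=0\}$ into $\sigma_{R,T}(0)$; hence $W=U\circ F$ weakly solves $-\mathrm{div}(t^{1-2s}A\nabla W)=0$ in $\mathcal Q^-$, vanishes on $\{y_N=0\}$, and satisfies the weighted conormal Neumann condition on $\{t=0\}$, where $A$ is symmetric, uniformly elliptic, has $C^{0,1}$ coefficients, $A(0)=\mathrm{Id}_{N+1}$, and has the block structure \eqref{eq:1}--\eqref{eq:6}; in particular all entries $a_{i,N}$ with $i\neq N$ vanish on $\{y_N=0\}$. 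Since $F$ and $F^{-1}$ are $C^{1,1}$, preserve $t$, and have spatial part independent of $t$, one has $\nabla U=(\nabla W\circ F^{-1})J_{F^{-1}}$ and $t^{1-2s}\partial_tU=(t^{1-2s}\partial_tW)\circ F^{-1}$, and the weighted $L^2$-norms over corresponding cylinders are comparable; so it suffices to prove the $C^{1,\alpha}$ and $C^{0,\beta}$ estimates for $W$ on a smaller half-cylinder.

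The crucial step is to reflect $W$ oddly across the flattened wall. On the full cylinder $\mathcal Q=\{|(y',y_N)|<R'\}\times(0,T')$ set $\widetilde W=W$ on $\mathcal Q^-$ and $\widetilde W(y',y_N,t)=-W(y',-y_N,t)$ for $y_N>0$, and define $\widetilde A(y',y_N,t)=R\,A(y',-y_N,t)\,R$ for $y_N>0$ and $\widetilde A=A$ for $y_N\le 0$, where $R=\mathrm{diag}(1,\dots,1,-1,1)$ reverses the $y_N$-coordinate. Because $W$ vanishes on $\{y_N=0\}$, $\widetilde W\in H^1(\mathcal Q,t^{1-2s}dz)$; because the entries $a_{i,N}$ ($i\neq N$) vanish on $\{y_N=0\}$, these being precisely the entries that get an odd reflection, $\widetilde A$ is again symmetric, uniformly elliptic and $C^{0,1}$; and a standard computation — using $\nabla\widetilde W(y)=-R\nabla W(y',-y_N,t)$ for $y_N>0$, that $R$ fixes the $t$-direction, and that test functions may be symmetrized against the wall where $W=0$ — shows that $\widetilde W$ weakly solves $\mathrm{div}(t^{1-2s}\widetilde A\nabla\widetilde W)=0$ in $\mathcal Q$, with no interface contribution on $\{y_N=0\}$, and with the homogeneous weighted Neumann condition $\lim_{t\to0^+}t^{1-2s}\widetilde A\nabla\widetilde W\cdot e_{N+1}=0$ on $\{t=0\}$. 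Moreover $\|\widetilde W\|_{L^2(\mathcal Q,t^{1-2s}dz)}=\sqrt2\,\|W\|_{L^2(\mathcal Q^-,t^{1-2s}dz)}\le C\|U\|_{L^2(C_{R,T}(0),t^{1-2s}dz)}$.

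Next I would invoke the interior and up-to-$\{t=0\}$ regularity theory for weighted Lipschitz-coefficient equations with homogeneous Neumann data — \cite{JLX} together with the argument of \cite[Lemma 3.3]{FalFel} — at every point of a concentric smaller cylinder (interior points by balls, points of $\{t=0\}$ by half-balls; the edge of the original problem has become an ordinary point of $\{t=0\}$, and near the top and the part of the lateral boundary contained in $\Omega$ the equation is uniformly elliptic, where one uses interior estimates). This yields, for every $\alpha\in(0,1)$ and every $\beta\in(0,1)\cap(0,2-2s)$, that $\widetilde W\in C^{1,\alpha}$ and $t^{1-2s}\partial_t\widetilde W\in C^{0,\beta}$ on that smaller cylinder, with norms bounded by $C\|\widetilde W\|_{L^2(\mathcal Q,t^{1-2s}dz)}$. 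Restricting to $\{y_N\le 0\}$ and composing back with the $C^{1,1}$ map $F$ — which preserves $C^{1,\alpha}$ for $\alpha<1$ and $C^{0,\beta}$, and for which $t^{1-2s}\partial_tU=(t^{1-2s}\partial_tW)\circ F^{-1}$ — and undoing the initial rigid motion gives the desired estimate for $U$ on $C_{r,\tau}(x_0)$.

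I expect the main obstacle to be the reflection step: checking that the precise block structure \eqref{eq:1}--\eqref{eq:6} of the straightened coefficient matrix — which is exactly the feature for which the diffeomorphism $F$ of Section~\ref{sect2.1} was designed — lets the odd extension $\widetilde W$ be a genuine weak solution across $\{y_N=0\}$ with a coefficient matrix that remains Lipschitz, so that the Dirichlet wall is eliminated and the corner/edge regularity is reduced to the already available regularity of degenerate Neumann problems up to $\{t=0\}$.
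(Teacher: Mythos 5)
Your reduction via the diffeomorphism $F$ and the odd reflection across the flattened Dirichlet wall matches the paper's mechanism exactly, including the crucial observation that the block structure \eqref{eq:1}--\eqref{eq:6} forces $a_{i,N}=0$ on $\{y_N=0\}$ for $i<N$, which keeps the reflected coefficient matrix $\widetilde A$ Lipschitz and makes the odd extension a genuine weak solution across the wall.

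The gap is in the final regularity step. You stop with the Neumann problem on the half-cylinder $\{t>0\}$ and cite \cite{JLX} and \cite[Lemma 3.3]{FalFel} for the $C^{1,\alpha}$ and $C^{0,\beta}$ estimates; those references do not deliver this at the required level of generality. \cite[Lemma 3.3]{FalFel} gives only \emph{continuity} of $t^{1-2s}\partial_t W$ and $\nabla_y W$ up to $\{t=0\}$ (the paper itself invokes it only for continuity, in the proof of Proposition \ref{p:pohozaev}), and \cite{JLX} does not contain weighted Schauder estimates for a variable Lipschitz coefficient matrix $\widetilde A$. Moreover your argument gives no account of the constraint $\beta<2-2s$ in the statement, which is a genuine feature of the result. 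The paper bypasses all of this by performing a \emph{second} reflection --- an even reflection in $t$ --- so that the problem becomes $\mathrm{div}\bigl(|t|^{1-2s}\widetilde A\nabla\widetilde W\bigr)=0$ in a full ball, with no boundary conditions left, where the Sire--Terracini--Vita theory for even solutions applies directly: \cite[Theorem 1.2]{SirTerVit1} gives the $C^{1,\alpha}$ bound; then \cite[Lemma 7.1]{SirTerVit1} shows that $V=|t|^{1-2s}\partial_t\widetilde W$ solves the conjugate degenerate equation with weight $|t|^{2s-1}$, \cite{FabKenSer} gives local H\"older continuity of $V$, \cite[Proposition 2.10]{SirTerVit2} produces the even auxiliary function $\Phi=V/(t|t|^{1-2s})$ solving the equation with weight $|t|^{3-2s}$, and a second application of \cite[Theorem 1.2]{SirTerVit1} gives $\Phi\in C^{0,\gamma}$ for every $\gamma\in(0,1)$, whence $V=t|t|^{1-2s}\Phi\in C^{0,\delta}$ with $\delta=\min\{2-2s,\gamma\}$. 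This is precisely where the restriction $\beta<2-2s$ originates. To close your proof you would need to reproduce this two-stage reflection and conjugate-equation argument rather than appeal to half-space Neumann regularity that the cited references do not establish.
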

\begin{proof}
Denoting the total variable $z=(x,t)\in\R^N\times(0,+\infty)$, with
$x=(x',x_N)\in \R^{N-1}\times \R$, let us consider $g\in
C^{1,1}(\R^{N-1})$ such that $B'_R(x_0)\cap\Omega=\{x=(x',x_N)\in
B'_R(x_0): x_N<g(x')\}$. Without loss of generality we can assume that
$x_0=0$, $g(0)=0$ and $\nabla g(0)=0$.
Starting from this function $g$, we can argue as in Section
\ref{sect2.1} and construct a function $F$ as in \eqref{F}, which
turns out to be a diffeomorphism in a neighbourhood of $0$. Hence
there exist positive constants $r_0<R$ and $\tau_0<T$ such that 
the composition $W=U\circ F$ weakly solves the following straightened problem
\begin{equation*}
\begin{cases}
\mathrm{div}\left(t^{1-2s}A\nabla W\right)=0 &\text{in }\Gamma_{r_0}^-\times(0,\tau_0),\\
W=0 &\text{in } (B'_{r_0}\cap\{y_N=0\})\times(0,\tau_0),\\
\lim_{t\to0}t^{1-2s}A\nabla W\cdot\nu=0 &\text{in }\Gamma_{r_0}^-,
\end{cases}
\end{equation*}
with $A=A(y)$ being as in \eqref{A}; in particular the matrix $A(y)$ does not depend on the vertical variable $t$, is
symmetric, uniformly elliptic, and possesses $C^{0,1}$ coefficients.

Let us  consider the odd reflection of $W$ 
(which we still denote as $W$)
through the hyperplane $\{y_N=0\}$ in $B'_{r_0}\times (0,\tau_0)$, i.e. we
set 
$W(y',y_N,t)=-W(y',-y_N,t)$ for $y_N<0$;  it is easy to verify that $W$ 
weakly satisfies
\begin{equation*}
\begin{cases}
\mathrm{div}\left(t^{1-2s}\widetilde A\nabla W\right)=0 &\text{in }B_{r_0}'\times(0,\tau_0),\\
\lim_{t\to0}t^{1-2s}\widetilde A\nabla W\cdot\nu=0 &\text{in }B_{r_0}',
\end{cases}
\end{equation*}
	where
	\begin{equation*}
	\widetilde{A}(y)=\widetilde{A}(y',y_N):=\begin{cases}
	A(y',y_N), &\text{if }y_N\leq 0, \\
	SA(y',-y_N) S, &\text{if }y_N>0,
	\end{cases}
	\end{equation*}
	with
	\begin{equation*}
	S:=
\left( \renewcommand{\arraystretch}{1.5}
\begin{array}{c|c|c}
\mathrm{Id}_{N-1}&{\mathbf 0}&{\mathbf 0}\\\hline
{\mathbf 0}^T&-1&0\\\hline
{\mathbf 0}^T&0&1
\end{array}\right),
\end{equation*}
We observe that no discontinuities
appear in the coefficients of the matrix $\widetilde A$ since,
denoting as $(a_{ij})$ the entries of the matrix $A$,
$a_{i,N}(y',0,t)=0$ for all $i<N$ thanks to \eqref{eq:1} and \eqref{eq:6}.
Then the matrix  $\widetilde A$ has Lipschitz continuous coefficients.
Let us  then consider the even reflection of $W$ 
(which we still denote as $W$)
through the hyperplane $\{t=0\}$ in $B'_{r_0}\times (-\tau_0,\tau_0)$, i.e. we
set 
$W(y',y_N,t)=W(y',y_N,-t)$ for $t<0$; due to the homogeneous Neumann
type boundary condition satisfied by $W$ on $B_{r_0}'$ and the fact that
the matrix $A$ is independent of $t$, we obtain that
such even reflection through  $\{t=0\}$ weakly solves 
\begin{equation*}
\mathrm{div}\left(|t|^{1-2s}\widetilde A\nabla W\right)=0 \quad\text{in }B_{r_0}'\times(-\tau_0,\tau_0).
\end{equation*}
From \cite[Lemma 7.1]{SirTerVit1} it follows that 
$V=|t|^{1-2s}\partial_tW\in H^1_{\rm
  loc}(B_{r_0}'\times(-\tau_0,\tau_0),|t|^{2s-1}\,dz)$ is a weak solution to
\begin{equation*}\label{corner3}
\mathrm{div}\left(|t|^{2s-1}\widetilde A\nabla V\right)=0 \quad\text{in } B_{r_0}'\times(-\tau_0,\tau_0) 
\end{equation*}
which is odd with respect to $\{t=0\}$,
i.e. $V(y',y_N,-t)=-V(y',y_N,-t)$.

From \cite[Theorem 1.2]{SirTerVit1} it follows that, for all $r\in(0,r_0)$ and
$\tau\in(0,\tau_0)$, $W\in  C^{1,\alpha}(B_{r}'\times(-\tau,\tau))$
and $\|W\|_{ C^{1,\alpha}(B_{r}'\times(-\tau,\tau))}\leq
{\rm const}\|W\|_{L^2(B_{r_0}'\times(-\tau_0,\tau_0),
|t|^{1-2s}\,dz)}$ for some ${\rm const}>0$ (independent of $W$). 
Furthermore, \cite{FabKenSer} ensures that $V$ is
locally H\"older continous. More precisely, \cite[Proposition 2.10]{SirTerVit2} yields that the
function 
$\Phi(x,t)=\frac{V(x,t)}{t|t|^{1-2s}}$, which  is even in the variable $t$,
belongs to the weighted Sobolev space   $H^1_{\rm
  loc}(B_{r_0}'\times(-\tau_0,\tau_0),|t|^{3-2s}\,dz)$, and weakly solves 
\begin{equation*}
\mathrm{div}\left(|t|^{3-2s}\widetilde A\nabla \Phi\right)=0 \quad\text{in } B_{r_0}'\times(-\tau_0,\tau_0),
\end{equation*}
thanks to the fact that the matrix $\widetilde A$ is independent of $t$.

From \cite[Theorem 1.2]{SirTerVit1} we have that
$\Phi\in  C^{0,\gamma}(B_{r}'\times(-\tau,\tau))$ for all
$\gamma\in(0,1)$, $r\in(0,r_0)$ and
$\tau\in(0,\tau_0)$, 
and 
\begin{equation*}
\|\Phi\|_{ C^{0,\gamma}(B_{r}'\times(-\tau,\tau))}=
\left\|\frac{V}{t|t|^{1-2s}}\right\|_{ C^{0,\gamma}(B_{r}'\times(-\tau,\tau))}
\leq
{\rm const\,}\|V\|_{L^2(B_{r_0}'\times(-\tau_0,\tau_0),
|t|^{2s-1}\,dz)}
\end{equation*} for some ${\rm const}>0$ (independent of $V$). Therefore $V\in
C^{0,\delta}(B_{r}'\times(-\tau,\tau))$ with
$\delta=\min\{2-2s,\gamma\}$ and 
$\|V\|_{ C^{0,\delta}(B_{r}'\times(-\tau,\tau))}\leq
{\rm const\,}\|V\|_{L^2(B_{r_0}'\times(-\tau_0,\tau_0),
|t|^{2s-1}\,dz)}$.

The conclusion follows by recalling that $U=W\circ F^{-1}$ with
$F^{-1}$ being of class $C^{1,1}$ and taking into account the
particular form of the matrix in \eqref{eq:JacF}.
\end{proof}

\section{Homogeneity degrees and
  eigenvalues of the spherical problem}\label{sec:color-eigenv-probl}
  
In this appendix, we derive an  explicit formula for the eigenvalues
of problem \eqref{eig}, which follows  from a complete
classification of  possible homogeneity degrees of homogeneous weak
solutions to the problem 
\begin{equation}\label{eq:27}
\begin{cases}
-\mathrm{div}\left(t^{1-2s}\nabla \Psi\right)=0 &\text{in } \R^{N+1}_+,\\
\lim_{t\to0^+}\left(
t^{1-2s}\nabla \Psi\cdot\nu\right)=0 &\text{in }\Gamma^-,\\
\Psi=0 &\text{in }\Gamma^+,
\end{cases}
\end{equation}
where $\Gamma^-:=\{(y',y_N,0)\in
\R^N\times\{0\}:y_N< 0\}$ and $\Gamma^+:=\{(y',y_N,0)\in
\R^N\times\{0\}:y_N\geq 0\}$. 

\begin{Proposition}\label{gamma=k+s}
Let $\Psi\in \cap_{r>0}H^1_{\Gamma_{r}^+}(B_r^+,t^{1-2s}\,dz)$ be a
weak solution to \eqref{eq:27}, i.e. 
\begin{equation*}
\int_{\R^{N+1}_+}t^{1-2s}\nabla\Psi\cdot\nabla\Phi\,dz=0,\quad\text{for
  all $\Phi\in C^\infty_c(\overline{\R^{N+1}_+}\setminus \Gamma^+)$}.
\end{equation*}
If, for some $\gamma\geq0$, $\Psi(z)=|z|^\gamma\Psi(\frac{z}{|z|})$,
then there exists $j\in\mathbb N$ such that $\gamma=j+s$.
\end{Proposition}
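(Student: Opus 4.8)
The plan is to transfer the problem onto the unit half-sphere, where homogeneity turns \eqref{eq:27} into the spherical eigenvalue problem \eqref{eig}, and then to compute the whole spectrum of \eqref{eig} explicitly by separation of variables; the admissible homogeneity degrees are then read off from the eigenvalues. First I would dismiss the trivial case $\Psi\equiv0$ and, writing $z=|z|\,\theta$ with $\theta\in\mathbb S^N_+$, set $\psi:=\Psi|_{\mathbb S^N_+}$, so that $\Psi(z)=|z|^\gamma\psi(\theta)$. The hypothesis $\Psi\in H^1_{\Gamma_{r}^+}(B_r^+,t^{1-2s}\,dz)$ for all $r>0$ gives, by a restriction argument, $\psi\in\mathcal H_0\setminus\{0\}$. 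Testing the weak formulation of \eqref{eq:27} against functions $\Phi(z)=\chi(|z|)\,\phi(z/|z|)$ with $\chi\in C^\infty_c(0,+\infty)$ and $\phi\in C^\infty_c(\overline{\mathbb S^N_+}\setminus S_1^+)$, and using the polar decomposition of $-\mathrm{div}(t^{1-2s}\nabla\,\cdot\,)$ from \cite[Lemma 2.1]{FalFel} (the same computation performed for $w(r\theta)=g(r)\psi(\theta)$ in the proof of Proposition \ref{propH}), one finds that $\psi$ satisfies \eqref{defautoval} with
\begin{equation*}
\mu=\gamma(\gamma+N-2s).
\end{equation*}
Hence $\mu$ is an eigenvalue of \eqref{eig}, and the statement reduces to showing that every eigenvalue of \eqref{eig} is of the form $(k+s)(k+N-s)$ for some $k\in\mathbb N$. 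Granting this, put $a=k+s$ and $b=k+N-s$: then $a-b=2s-N$, $ab=(k+s)(k+N-s)$, and the two roots of $t^2+(N-2s)t-ab=0$ are $t=\tfrac12\big(-(N-2s)\pm(a+b)\big)\in\{\,k+s,\ s-N-k\,\}$; since $s-N-k<0\le\gamma$, the identity $\gamma(\gamma+N-2s)=ab$ forces $\gamma=k+s$.

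It therefore remains to classify the eigenvalues of \eqref{eig}, which I would do by separating variables in coordinates adapted to the splitting $\R^{N+1}=\R^{N-1}_{y'}\times\R_{y_N}\times\R_t$. Writing $\theta'=\sin\phi\,\omega$, $\theta_N=\cos\phi\cos\alpha$, $\theta_{N+1}=\cos\phi\sin\alpha$ with $(\omega,\phi,\alpha)\in\mathbb S^{N-2}\times[0,\tfrac\pi2]\times[0,\pi]$ (and the obvious conventions when $N\le 2$), the round metric becomes the warped product $d\phi^2+\sin^2\!\phi\,g_{\mathbb S^{N-2}}+\cos^2\!\phi\,d\alpha^2$, the weight factorizes as $\theta_{N+1}^{1-2s}=(\cos\phi)^{1-2s}(\sin\alpha)^{1-2s}$, and the Dirichlet part $S_1^+$ of $\partial\mathbb S^N_+$ becomes $\{\alpha=0\}$ while the Neumann part becomes $\{\alpha=\pi\}$. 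Inserting $\psi=Z(\omega)\,P(\phi)\,Q(\alpha)$, problem \eqref{eig} decouples into: the spherical-harmonic equation for $Z$ on $\mathbb S^{N-2}$, with eigenvalues $\ell(\ell+N-3)$, $\ell\in\mathbb N$; the singular Sturm--Liouville problem for $Q$ on $(0,\pi)$,
\begin{equation*}
-\big((\sin\alpha)^{1-2s}Q'\big)'=\nu\,(\sin\alpha)^{1-2s}Q,\qquad Q(0)=0,\qquad \lim_{\alpha\to\pi^-}(\sin\alpha)^{1-2s}Q'(\alpha)=0;
\end{equation*}
and a Jacobi-type equation for $P$ on $(0,\tfrac\pi2)$ involving the parameters $\ell$, $\nu$, $\mu$, to be solved with the finite-energy conditions (equivalently, the admissible indicial behaviour) at the endpoints $\phi=0$ and $\phi=\tfrac\pi2$.

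The technical heart is the explicit resolution of these ordinary differential equations. For the $\alpha$-equation I would pass to $x=\cos\alpha$ and substitute $Q=(1-x)^s\widetilde Q$: the requirement that $Q$ vanish at $\alpha=0$ with finite $t^{1-2s}$-weighted energy selects, among the two local solutions there (indicial exponents $0$ and $2s$), the one with $\widetilde Q$ analytic at $x=1$, while the Neumann condition makes $\widetilde Q$ analytic also at $x=-1$; hence $\widetilde Q$ must be a polynomial and one obtains $\nu=\nu_j:=(j+s)(j+1-s)$, $j\in\mathbb N$. An analogous reduction of the $\phi$-equation (factoring out the powers $\sin^\ell\phi$ and $(\cos\phi)^{\,j+s}$ dictated by the indicial exponents at $\phi=0$ and $\phi=\tfrac\pi2$, then passing to $x=\cos2\phi$) forces $P$ to be a Jacobi polynomial of some degree $m\in\mathbb N$ in $x$, and matching the leading coefficient of the resulting hypergeometric equation yields $\mu=(k+s)(k+N-s)$ with $k=\ell+j+2m\in\mathbb N$. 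Letting $\ell,j,m$ run over $\mathbb N$ shows that the eigenvalues of \eqref{eig} are exactly $\{(k+s)(k+N-s):k\in\mathbb N\}$, which, combined with the reduction above, proves the proposition (and incidentally gives formula \eqref{eq:28}). I expect the main obstacle to be precisely this last step: at each of the four endpoints $\alpha\in\{0,\pi\}$ and $\phi\in\{0,\tfrac\pi2\}$ one must single out the local behaviour consistent both with membership in $H^1(\mathbb S^N_+,\theta_{N+1}^{1-2s}dS)$ and with the boundary conditions encoded in $\mathcal H_0$, and only then recognize that analyticity at both ends of a second-order equation of hypergeometric type isolates the polynomial solutions and pins down the eigenvalues.
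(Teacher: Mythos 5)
Your proposal takes a genuinely different route from the paper's. The paper's argument never touches the spectrum of \eqref{eig} directly: it works with the homogeneous extension $\Psi$ itself, compares it with the Poisson-type extension $V$ of its trace $\phi$ built in the Abatangelo--Ros-Oton framework, shows via a weighted Liouville theorem (Lemma~\ref{Liouville}, applied after odd reflection to $\widetilde W/(t|t|^{2s-1})$) that $W=V-\Psi$ is essentially a polynomial, and then invokes the classification of $\gamma$-homogeneous functions $\phi$ with $(-\Delta)^s\phi$ polynomial on $\Gamma^-$ and $\phi=0$ on $\Gamma^+$ --- namely $\phi(x)=p(x)(x_N)_-^s$ --- to read off $\gamma=j+s$. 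The eigenvalue formula \eqref{eq:28} is then \emph{derived from} the proposition, not used to prove it. You reverse the logical flow: you reduce the proposition to the explicit computation of the spectrum of \eqref{eig} by separation of variables. Both strategies are conceptually sound, and yours, if completed, would also yield \eqref{eq:28} directly; the paper's route trades the ODE computations for a more structural argument requiring no knowledge of the spherical spectrum, at the price of importing the machinery of the fractional Laplacian modulo polynomials.

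That said, there is a real gap in the final step, and it is bigger than you suggest. To conclude that the eigenvalues are \emph{exactly} $\{(k+s)(k+N-s):k\in\mathbb N\}$ you must show that the separated eigenfunctions $\psi=Z(\omega)P(\phi)Q(\alpha)$ are \emph{complete} in $\mathcal H_0$, for otherwise a non-separated eigenfunction with a different eigenvalue could exist and your reduction would say nothing about it. This means establishing discreteness and completeness for the two singular Sturm--Liouville problems in $\alpha$ and $\phi$ (compactness of the relevant weighted embeddings, plus spanning), and assembling a genuine tensor decomposition of $L^2(\mathbb S^N_+,\theta_{N+1}^{1-2s}dS)$ respecting the warped metric $\cos^2\!\phi\,d\alpha^2$ and the factorized weight $(\cos\phi)^{1-2s}(\sin\alpha)^{1-2s}$. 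The endpoint $\phi=\pi/2$ also needs care: it is a codimension-two corner of the Dirichlet--Neumann interface (the equator $\{\theta_N=\theta_{N+1}=0\}\subset S_1^+$), and the admissible indicial behaviour there must be selected from membership in $\mathcal H_0$, not merely from finite energy. Finally, the step ``analytic at both endpoints $\Rightarrow$ polynomial $\Rightarrow$ quantization of $\nu$, $\mu$'' rests on the hypergeometric connection-coefficient computation, which is only sketched. None of these issues is fatal in principle, but taken together they constitute the bulk of an independent proof of \eqref{eq:28}, and your write-up leaves them essentially undone; you rightly flag this as ``the main obstacle,'' but it is the crux, not a technical afterthought.
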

The proof of  Proposition \ref{gamma=k+s} requires  a polynomial
Liouville type theorem for even solutions to degenerate equations with
a weight which is possibly out of the $A_2$-Muckenhoupt class. To this
aim, Lemma \ref{Liouville} below provides a generalization of Lemma
2.7 in \cite{CafSalSil}.
For all $a\in(-1,+\infty)$ and $r>0$, we define  
$H^1(B_r,|t|^{a}\,dz)$ as the completion of 
$C^\infty(\overline{B_r})$ with respect to the norm
$\sqrt{\int_{B_r}|t|^{a}\left(|\Psi|^2+|\nabla
\Psi|^2\right)\,dz}$ and  $H^{1,a}_{\rm loc}(\R^{N+1})$ as 
\begin{equation*}
H^{1,a}_{\rm loc}(\R^{N+1})=\{\Psi\in L^2_{\rm
  loc}(\R^{N+1},|t|^{a}\,dz): \Psi\in H^1(B_r,|t|^{a}\,dz)\text{ for
  all }r>0\}.
\end{equation*}
We also define 
\begin{equation*}
H^{1,a}_{\rm loc}(\overline{\R^{N+1}_+})=\{\Psi\in L^2_{\rm
  loc}(\overline{\R^{N+1}_+},t^{a}\,dz): \Psi\in H^1(B_r^+,t^{a}\,dz)\text{ for
  all }r>0\}.
\end{equation*}

\begin{Lemma}\label{Liouville}
Let $a\in(-1,+\infty)$ and $v\in H^{1,a}_{\rm loc}(\R^{N+1})$ be a weak solution to
\begin{equation}\label{evenliouville}
\mathrm{div}(|t|^a\nabla v)=0 \quad\text{in }\R^{N+1}
\end{equation}
which is even in t, i.e.
\begin{equation*}
v(x,-t)=v(x,t)\quad\text{a.e. in }\R^{N+1}.
\end{equation*}
If there exist $k\in\mathbb N$ and $c>0$ such that
\begin{equation*}
|v(z)|\leq c(1+|z|^k)\qquad\mathrm{for \ all \ }z\in\R^{N+1},
\end{equation*}
then $v$ is a polynomial.
\end{Lemma}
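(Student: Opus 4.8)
The plan is to adapt the Liouville argument of \cite[Lemma 2.7]{CafSalSil} to the possibly super-degenerate weight $|t|^a$ with $a\in(-1,+\infty)$, replacing the Fabes--Kenig--Serapioni theory \cite{FabKenSer} (which covers only the $A_2$-range $a\in(-1,1)$) by the scale-invariant interior regularity estimates for \emph{even} solutions of degenerate equations with weight $|t|^a$ established in \cite{SirTerVit1}; the evenness in $t$ is what rescues the range $a\geq1$. The baseline step would be to show that every \emph{bounded} even weak solution $w\in H^{1,a}_{\rm loc}(\R^{N+1})$ of \eqref{evenliouville} is constant: for $R>0$ the rescaled function $w_R=w(R\,\cdot)$ is again an even weak solution on $B_2$ with $\|w_R\|_{L^\infty(B_2)}\leq\|w\|_{L^\infty(\R^{N+1})}$, so the interior $C^{0,\alpha}$ estimate of \cite{SirTerVit1} gives $[w]_{C^{0,\alpha}(B_R)}=R^{-\alpha}[w_R]_{C^{0,\alpha}(B_1)}\leq CR^{-\alpha}\|w\|_{L^\infty(\R^{N+1})}\to0$ as $R\to+\infty$, whence $w$ is constant.

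Next I would bootstrap in the horizontal variables. Since \eqref{evenliouville} is translation invariant in $x$, the $x$-difference quotients of $v$ are again even weak solutions, and a Caccioppoli inequality (obtained by testing \eqref{evenliouville} with $\varphi^2v$, which is legitimate for every $a>-1$) shows that they stay bounded in $H^1(B_r,|t|^a\,dz)$ uniformly in the increment; hence $D^\beta_x v\in H^{1,a}_{\rm loc}(\R^{N+1})$ is an even weak solution of \eqref{evenliouville} for every multi-index $\beta$ in $x$, and iterating the interior $C^{1,\alpha}$ estimate of \cite{SirTerVit1} (combined with classical Schauder away from $\{t=0\}$) yields $v\in C^{m,\alpha}$ in the $x$-variables for every $m$. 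Rescaling once more and using $|v(z)|\leq c(1+|z|^k)$, the interior estimate applied to $v_R$ on $B_2$ gives $|D^\beta_x v(z)|\leq C_\beta|z|^{k-|\beta|}$ for $|z|$ large; choosing $|\beta|=k+1$ produces a continuous, even weak solution of \eqref{evenliouville} that vanishes at infinity, hence is identically zero by the baseline step. Therefore $v(x,t)=\sum_{|\beta|\leq k}c_\beta(t)\,x^\beta$, with each $c_\beta$ even in $t$ and locally in $H^1(|t|^a\,dt)$.

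It then remains to identify the coefficients. Substituting the expansion into \eqref{evenliouville}, which away from $\{t=0\}$ reads $\Delta_x v+\partial_{tt}v+\tfrac{a}{t}\partial_t v=0$, and collecting the coefficient of each monomial $x^\gamma$, one obtains the triangular system of one-dimensional degenerate ODEs $(|t|^a c_\gamma')'=|t|^a f_\gamma$ in $\mathcal D'(\R)$, where $f_\gamma$ is an explicit linear combination of the coefficients $c_{\gamma+2e_i}$ of strictly higher degree, already determined. For the top degrees $|\gamma|\in\{k-1,k\}$ one has $f_\gamma\equiv0$, and the only even $H^1_{\rm loc}(|t|^a\,dt)$-solutions of $(|t|^ac')'=0$ on the whole line are the constants (the competing solution is proportional to $t\,|t|^{-a}$, which is either odd in $t$ or not in the weighted space, and in any case $|t|^a$ times it carries a Dirac mass in its distributional derivative); descending in the degree, since the map $c\mapsto c''+\tfrac{a}{t}c'$ sends even polynomials in $t$ onto even polynomials two degrees lower (because $2m(2m-1+a)\neq0$ for $m\geq1$ and $a>-1$), each $c_\gamma$ is forced to be an even polynomial. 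Hence $v$ is a polynomial.

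The delicate point throughout is the super-degenerate range $a\geq1$: there $|t|^a\notin A_2(\R^{N+1})$, the classical De Giorgi--Nash--Moser machinery is unavailable, and one genuinely needs the regularity theory for \emph{even} solutions from \cite{SirTerVit1}, together with some care about which derivatives and difference quotients actually belong to $H^{1,a}_{\rm loc}$. This is exactly the feature that makes Lemma \ref{Liouville} a proper extension of \cite[Lemma 2.7]{CafSalSil} rather than a restatement.
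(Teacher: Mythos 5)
Your proposal is correct and follows essentially the same route as the paper: both derive scale-invariant interior $C^{1,\alpha}$-type estimates for even solutions from \cite{SirTerVit1}, iterate them in the horizontal directions to obtain bounds on $D^{\beta}_x v$, and compare with the polynomial growth to kill the high-order $x$-derivatives. The paper then defers the final step to the argument of \cite[Lemma 2.7]{CafSalSil} after observing that $\partial^2_{tt}v+\tfrac{a}{t}\partial_t v=-\Delta_x v$ is again an even solution, whereas you unpack that same step as a triangular system of weighted ODEs for the $t$-dependent coefficients; the two are equivalent formulations of the same calculation, so the difference is only presentational.
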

\begin{proof}
  Let $a>-1$ and $v\in H^{1,a}_{\rm loc}(\R^{N+1})$ be a weak solution
  to \eqref{evenliouville} even in $t$. For $\alpha\in(0,1)$ and
  $k\in\mathbb N$, let $D^{\beta_k}_xv$ be a partial derivative in the
  variables $x=(x_1,...,x_N)$ of order $k=|\beta_k|$, with
  $\beta_k\in\mathbb N^N$ multiindex. Then, there exists a positive
  constant $C$ depending only on $N,\alpha,a,k$ such that
\begin{equation}\label{sup}
\sup_{B_{r/2}}|D^{\beta_k}_xv|\leq \frac{C}{r^k}\sup_{B_{r}}|v|
\end{equation}
and
\begin{equation}\label{seminorm}
[D^{\beta_k}_xv]_{C^{0,\alpha}(B_{r/2})}\leq \frac{C}{r^{k+\alpha}}\sup_{B_{r}}|v|,
\end{equation}
where 
$[w]_{C^{0,\alpha}(\Lambda)}:=\sup_{z,z'\in\Lambda}|z-z'|^{-\alpha}|w(z)-w(z')|$.
In order to prove the previous inequalities we apply some local regularity estimates for even solutions contained in \cite{SirTerVit1}. If $k=0$, then the inequalities follow by scaling
\begin{equation*}
\|v\|_{C^{0,\alpha}(B_{1/2})}\leq C\|v\|_{L^\infty(B_{1})}
\end{equation*}
proved in \cite[Theorem 1.2 part $i)$]{SirTerVit1}. If $k\geq1$, we remark that any
partial derivation in variables $x_i$ for $i\in\{1,...,N\}$ commutes
with the operator $\mathrm{div}(|t|^a\nabla \cdot)$ and
$D^{\beta_k}_xv$ are actually even solutions to the same equation, 
(see \cite[Section 7]{SirTerVit1} for
details). Hence, inequalities \eqref{sup} and \eqref{seminorm} follow
by scaling and iterating the estimate
\begin{equation*}
\|v\|_{C^{1,\alpha}(B_{1/2})}\leq C\|v\|_{L^\infty(B_{1})}
\end{equation*}
proved in \cite[Theorem 1.2 part $ii)$]{SirTerVit1}. Indeed, fixed a multiindex
$\beta_k$, we can choose 
\begin{equation*}
r_k=1/2<r_{k-1}<...<r_0=1,
\end{equation*}
then
\begin{align*}
\|D^{\beta_k}_xv\|_{C^{0,\alpha}(B_{1/2})}&\leq C_{k-1}\sup_{B_{r_{k-1}}}|D^{\beta_{k-1}}_xv|\leq C_{k-1}C_{k-2}\sup_{B_{r_{k-2}}}|D^{\beta_{k-2}}_xv|\\
&\leq ...\leq\left( \prod_{i=0}^{k-1} C_{i}\right)\sup_{B_{1}}|v|.
\end{align*}
Once we have \eqref{sup} and \eqref{seminorm}, we can proceed exactly
as in proof of \cite[Lemma 2.7]{CafSalSil}. We have only to remark
that for any $a\in(-1,+\infty)$, given an even solution to \eqref{evenliouville}
$v$, then $\partial^2_{tt}v+\frac{a}{t}\partial_tv=-\Delta_xv$ is also an even
  solution to \eqref{sup}.
\end{proof}
Now we are able to prove Proposition \ref{gamma=k+s}.
\begin{proof}[Proof of Proposition \ref{gamma=k+s}]
Let $\Psi\in H^{1,1-2s}_{\rm loc}(\overline{\R^{N+1}_+})$ be a
weak solution to \eqref{eq:27}, such that 
\begin{equation*}
\Psi(z)=|z|^\gamma\Psi\bigg(\frac{z}{|z|}\bigg)\quad\text{in }\R^{N+1}_+,
\end{equation*}
 for some $\gamma\geq0$.  The homogeneity condition trivially
implies a polynomial global bound on the growth of $\Psi$. The same
bound is inherited by the trace $\phi=\mathop{\rm Tr}\Psi$ on
$\R^{N}=\partial \R^{N+1}_+$, which is also
$\gamma$-homogeneous. Moreover, $\phi\in C^{\infty}(\Gamma^-)$ by
\cite[Theorem 1.1]{SirTerVit1} and $\phi\in C^{0}(\R^N)$ by
\cite[Proposition 5.3]{Nia}. With these
premises, we can define the extension $V$ of $\phi$ in the sense of
\cite[Lemma 3.3]{AbaRos}. Actually, we introduce a minor change in the
definition of the extension given in \cite{AbaRos}; that is, for every $R>0$ we define
\begin{equation}\label{estensioneregolare}
\phi_R=\phi\eta_R
\end{equation}
(instead of $\phi_R=\phi\chi_{B'_R}$), where
$\eta_R\in C^{\infty}_c(B'_{2R})$ is a radially decreasing cut-off
function with $|\eta_R|\leq1$ and $\eta_R\equiv1$ in $B'_R$.  We
remark that the adjusted family of functions $\phi_R$ convoluted with the
usual Poisson kernel of the upper half-space converge in a
suitable way to the same extension $V$ obtained by Abatangelo and
Ros-Oton in \cite{AbaRos}. Moreover, defining the extension starting from
\eqref{estensioneregolare}, we can easily ensure that
$V\in H^{1,1-2s}_{\rm loc}(\overline{\R^{N+1}_+})$ and that it is weak
solution to \eqref{eq:27}. Nevertheless, also $V$ inherits from $\phi$
an at most 
polynomial growth. Let us consider
$W=V-\Psi\in H^{1,1-2s}_{\rm loc}(\overline{\R^{N+1}_+})$, which
 weakly solves
\begin{equation*}
\begin{cases}
\mathop{\rm div}(t^{1-2s}\nabla W)=0 &\mathrm{in \ }\R^{N+1}_+,\\
\mathop{\rm Tr}W=0 &\mathrm{on \ }\R^{N}=\partial \R^{N+1}_+.
\end{cases}
\end{equation*}
Then, denoting as $\widetilde W$ the odd reflection of $W$ through
   $\R^{N}=\partial \R^{N+1}_+$,  by \cite[Proposition 2.10]{SirTerVit2} 
\begin{equation*}
   \overline W=\frac{\widetilde W}{t|t|^{2s-1}}\in
   H^{1,1+2s}_{\rm loc}(\R^{N+1})
 \end{equation*}
   is an even entire weak solution to
  \eqref{evenliouville} with $a=1+2s\in(1,3)$.
We 
have that $\overline W$ satisfies the assumptions of Lemma
\ref{Liouville}, 
being a polynomial bound on its growth ensured by the polynomial bounds of $\Psi$ and
$V$. From  Lemma
\ref{Liouville} we can
 promptly conclude that $\overline W$ is a polynomial.  We also have
 that 
\begin{equation*}
t^{1-2s}\partial_tV=t^{1-2s}\partial_t\Psi+t^{1-2s}\partial_t(t^{2s}\overline W)=t^{1-2s}\partial_t\Psi+P_k
\end{equation*}
for some polynomial $P_k$ of degree $k\in{\mathbb N}$. Hence, passing
to the trace of the weighted derivative above, by \cite[Lemma
3.3]{AbaRos} it follows that
\begin{equation*}
(-\Delta)^s\phi\mathop{=}\limits^{k+1}0\quad\text{in }\Gamma^-
\end{equation*}
and $\phi=0$ in $\Gamma^+$, where the above identity is meant in the
sense of the notion of  ``fractional Laplacian modulus polynomials of
degree at most $k$''  given in \cite[Definition 3.1]{AbaRos}, see also
\cite{DSV}. Hence, by \cite[Theorem 3.10]{AbaRos}, we have that
\begin{equation*}
\phi(x)=p(x)(x_N)_-^s,
\end{equation*}
for some polynomial $p$. By
homogeneity of $\phi$, this implies that necessary there exists
$j\in{\mathbb N}$ such that $\gamma=j+s$.
\end{proof}

We are now going to derive from Proposition \ref{gamma=k+s} the explicit formula \eqref{eq:28} for the
eigenvalues of problem \eqref{eig}. 
We first observe that, if $\mu$ is an eigenvalue of
\eqref{eig} with an associated eigenfunction $\psi$, then
 the function  
$\Psi(\rho\theta)=\rho^\sigma\psi(\theta)$
 with $\sigma=
-\frac{N-2s}{2}+\sqrt{\left(\frac{N-2s}{2}\right)^2+\mu}$ belongs to
$H^{1,1-2s}_{\rm loc}(\overline{\R^{N+1}_+})$  and is a
weak solution to \eqref{eq:27}. From Proposition  \ref{gamma=k+s} we
then deduce that there exists $j\in{\mathbb N}$ such that $\sigma=j+s$
and hence 
\begin{equation*}
\mu=(j+s)(j+N-s).
\end{equation*}
Viceversa, we prove now that all
numbers of the form $\mu=(j+s)(j+N-s)$ with
$j\in\mathbb{N}$  
are eigenvalues of \eqref{eig}. For any fixed $j\in\mathbb{N}$, we
consider the function $\Psi$ defined, in cylindrical coordinates, as  
\begin{equation*}
\Psi(x',r\cos \tau,r\sin \tau)=r^{s+j}\left|\sin\bigg(\frac
  \tau2\bigg)\right|^{2s}
{_2}F_1\bigg(-j,j+1;1-s;\frac{1+\cos\tau}2\bigg),\ r\geq 0,\  \tau\in [0,2\pi],
\end{equation*}
where ${_2}F_1$ is the hypergeometric function.
From \cite{RosSer3} we have that $\Psi\in H^{1,1-2s}_{\rm loc}(\overline{\R^{N+1}_+})$ is a
weak solution to \eqref{eq:27}. Furthermore $\Psi$ is homogeneous of
degree $s+j$ and therefore the function $\psi:=\Psi\big|_{{\mathbb
    S}^{N}_+}$ belongs to $\mathcal H_0$, $\psi\not\equiv0$, and 
\begin{equation*}
\Psi(\rho\theta)=\rho^{s+j}\psi(\theta),\quad \rho\geq0,\ \theta\in
{\mathbb S}^{N}_+.
\end{equation*}
Plugging the above characterization of $\Psi$ into  \eqref{eq:27}, we obtain that 
\begin{equation*}
\rho^{j-1-s}\Big((j+s)(j+N-s) \theta_{N+1}^{1-2s}\psi(\theta)+\mathrm{div}_{\mathbb S^N}\left(\theta_{N+1}^{1-2s}\nabla_{\mathbb S^N}\psi\right)\Big)=0,\quad \rho>0,\ \theta\in {\mathbb S}^{N}_+,
\end{equation*}
so that $(j+s)(j+N-s)$ is an eigenvalue of \eqref{eig}. 

We then conclude that the set of all eigenvalues of problem
\eqref{eig} is 
$\left\{(j+s)(j+N-s):\, j\in \mathbb{N}\right\}$.

\section*{Acknowledgments}
We thank Giorgio Tortone for kindly providing us with the reference \cite{AbaRos}.


\begin{thebibliography}{10}

\bibitem{AbaRos}
{\sc Abatangelo, N., and Ros-Oton, X.}
\newblock Obstacle problems for integro-differential operators: higher
  regularity of free boundaries.
\newblock {\em Adv. Math. 360\/} (2020), 106931, 61.

\bibitem{AdoEsc}
{\sc Adolfsson, V., and Escauriaza, L.}
\newblock {$C^{1,\alpha}$} domains and unique continuation at the boundary.
\newblock {\em Comm. Pure Appl. Math. 50}, 10 (1997), 935--969.

\bibitem{AdoEscKen}
{\sc Adolfsson, V., Escauriaza, L., and Kenig, C.}
\newblock Convex domains and unique continuation at the boundary.
\newblock {\em Rev. Mat. Iberoamericana 11}, 3 (1995), 513--525.

\bibitem{BFV}
{\sc Bonforte, M., Figalli, A., and V\'{a}zquez, J.~L.}
\newblock Sharp boundary behaviour of solutions to semilinear nonlocal elliptic
  equations.
\newblock {\em Calc. Var. Partial Differential Equations 57}, 2 (2018), Paper
  No. 57, 34.

\bibitem{brandle2013concave}
{\sc Br\"{a}ndle, C., Colorado, E., de~Pablo, A., and S\'{a}nchez, U.}
\newblock A concave-convex elliptic problem involving the fractional
  {L}aplacian.
\newblock {\em Proc. Roy. Soc. Edinburgh Sect. A 143}, 1 (2013), 39--71.

\bibitem{CafSil1}
{\sc Caffarelli, L., and Silvestre, L.}
\newblock An extension problem related to the fractional {L}aplacian.
\newblock {\em Comm. Partial Differential Equations 32}, 7-9 (2007),
  1245--1260.

\bibitem{CafSalSil}
{\sc Caffarelli, L.~A., Salsa, S., and Silvestre, L.}
\newblock Regularity estimates for the solution and the free boundary of the
  obstacle problem for the fractional {L}aplacian.
\newblock {\em Invent. Math. 171}, 2 (2008), 425--461.

\bibitem{CotTav}
{\sc Cotsiolis, A., and Tavoularis, N.~K.}
\newblock Best constants for {S}obolev inequalities for higher order fractional
  derivatives.
\newblock {\em J. Math. Anal. Appl. 295}, 1 (2004), 225--236.

\bibitem{DelFel}
{\sc De~Luca, A., and Felli, V.}
\newblock Unique continuation from the edge of a crack.
\newblock {\em Math. Eng. 3}, 3 (2021), Paper No. 023, 40.

\bibitem{DFV}
{\sc Dipierro, S., Felli, V., and Valdinoci, E.}
\newblock Unique continuation principles in cones under nonzero {N}eumann
  boundary conditions.
\newblock {\em Ann. Inst. H. Poincar\'{e} Anal. Non Lin\'{e}aire 37}, 4 (2020),
  785--815.

\bibitem{DSV}
{\sc Dipierro, S., Savin, O., and Valdinoci, E.}
\newblock Definition of fractional {L}aplacian for functions with polynomial
  growth.
\newblock {\em Rev. Mat. Iberoam. 35}, 4 (2019), 1079--1122.

\bibitem{FabKenSer}
{\sc Fabes, E.~B., Kenig, C.~E., and Serapioni, R.~P.}
\newblock The local regularity of solutions of degenerate elliptic equations.
\newblock {\em Comm. Partial Differential Equations 7}, 1 (1982), 77--116.

\bibitem{FalFel}
{\sc Fall, M.~M., and Felli, V.}
\newblock Unique continuation property and local asymptotics of solutions to
  fractional elliptic equations.
\newblock {\em Comm. Partial Differential Equations 39}, 2 (2014), 354--397.

\bibitem{FalFel2}
{\sc Fall, M.~M., and Felli, V.}
\newblock Unique continuation properties for relativistic {S}chr\"{o}dinger
  operators with a singular potential.
\newblock {\em Discrete Contin. Dyn. Syst. 35}, 12 (2015), 5827--5867.

\bibitem{FallFelliFerrero}
{\sc Fall, M.~M., Felli, V., Ferrero, A., and Niang, A.}
\newblock Asymptotic expansions and unique continuation at
  {D}irichlet-{N}eumann boundary junctions for planar elliptic equations.
\newblock {\em Math. Eng. 1}, 1 (2019), 84--117.

\bibitem{Almgren-type}
{\sc Felli, V., and Ferrero, A.}
\newblock Almgren-type monotonicity methods for the classification of behaviour
  at corners of solutions to semilinear elliptic equations.
\newblock {\em Proc. Roy. Soc. Edinburgh Sect. A 143}, 5 (2013), 957--1019.

\bibitem{FelFer3}
{\sc Felli, V., and Ferrero, A.}
\newblock Unique continuation and classification of blow-up profiles for
  elliptic systems with {N}eumann boundary coupling and applications to higher
  order fractional equations.
\newblock {\em Nonlinear Anal. 196\/} (2020), 111826, 28.

\bibitem{FelFer2}
{\sc Felli, V., and Ferrero, A.}
\newblock Unique continuation principles for a higher order fractional
  {L}aplace equation.
\newblock {\em Nonlinearity 33}, 8 (2020), 4133--4190.

\bibitem{FelFerTer1}
{\sc Felli, V., Ferrero, A., and Terracini, S.}
\newblock Asymptotic behavior of solutions to {S}chr\"{o}dinger equations near
  an isolated singularity of the electromagnetic potential.
\newblock {\em J. Eur. Math. Soc. (JEMS) 13}, 1 (2011), 119--174.

\bibitem{FelFerTer2}
{\sc Felli, V., Ferrero, A., and Terracini, S.}
\newblock On the behavior at collisions of solutions to {S}chr\"{o}dinger
  equations with many-particle and cylindrical potentials.
\newblock {\em Discrete Contin. Dyn. Syst. 32}, 11 (2012), 3895--3956.

\bibitem{FernandezReal-RosOton}
{\sc Fern\'andez-Real, X., and Ros-Oton, X.}
\newblock Stable cones in the thin one-phase problem.
\newblock {\em Preprint, \tt arXiv:2009.11626\/} (2020).


\bibitem{ruland-garcia}
{\sc Garc\'{\i}a-Ferrero, M.~A., and R\"{u}land, A.}
\newblock Strong unique continuation for the higher order fractional
  {L}aplacian.
\newblock {\em Math. Eng. 1}, 4 (2019), 715--774.

\bibitem{Garlin}
{\sc Garofalo, N., and Lin, F.-H.}
\newblock Monotonicity properties of variational integrals, {$A_p$} weights and
  unique continuation.
\newblock {\em Indiana Univ. Math. J. 35}, 2 (1986), 245--268.

\bibitem{grisvard}
{\sc Grisvard, P.}
\newblock {\em Elliptic problems in nonsmooth domains}, vol.~24 of {\em
  Monographs and Studies in Mathematics}.
\newblock Pitman (Advanced Publishing Program), Boston, MA, 1985.

\bibitem{JLX}
{\sc Jin, T., Li, Y., and Xiong, J.}
\newblock On a fractional {N}irenberg problem, part {I}: blow up analysis and
  compactness of solutions.
\newblock {\em J. Eur. Math. Soc. (JEMS) 16}, 6 (2014), 1111--1171.

\bibitem{Kukavica-Nystrom}
{\sc Kukavica, I., and Nystr\"{o}m, K.}
\newblock Unique continuation on the boundary for {D}ini domains.
\newblock {\em Proc. Amer. Math. Soc. 126}, 2 (1998), 441--446.

\bibitem{Nia}
{\sc Niang, A.}
\newblock Boundary regularity for a degenerate elliptic equation with mixed
  boundary conditions.
\newblock {\em Commun. Pure Appl. Anal. 18}, 1 (2019), 107--128.

\bibitem{RosSer1}
{\sc Ros-Oton, X., and Serra, J.}
\newblock The {D}irichlet problem for the fractional {L}aplacian: regularity up
  to the boundary.
\newblock {\em J. Math. Pures Appl. (9) 101}, 3 (2014), 275--302.

\bibitem{RosSer3}
{\sc Ros-Oton, X., and Serra, J.}
\newblock Boundary regularity for fully nonlinear integro-differential
  equations.
\newblock {\em Duke Math. J. 165}, 11 (2016), 2079--2154.

\bibitem{RosSer2}
{\sc Ros-Oton, X., and Serra, J.}
\newblock Boundary regularity estimates for nonlocal elliptic equations in
  {$C^1$} and {$C^{1,\alpha}$} domains.
\newblock {\em Ann. Mat. Pura Appl. (4) 196}, 5 (2017), 1637--1668.

\bibitem{Ruland}
{\sc R\"{u}land, A.}
\newblock Unique continuation for fractional {S}chr\"{o}dinger equations with
  rough potentials.
\newblock {\em Comm. Partial Differential Equations 40}, 1 (2015), 77--114.

\bibitem{seo3}
{\sc Seo, I.}
\newblock On unique continuation for {S}chr\"{o}dinger operators of fractional
  and higher orders.
\newblock {\em Math. Nachr. 287}, 5-6 (2014), 699--703.

\bibitem{seo1}
{\sc Seo, I.}
\newblock Carleman inequalities for fractional {L}aplacians and unique
  continuation.
\newblock {\em Taiwanese J. Math. 19}, 5 (2015), 1533--1540.

\bibitem{seo2}
{\sc Seo, I.}
\newblock Unique continuation for fractional {S}chr\"{o}dinger operators in
  three and higher dimensions.
\newblock {\em Proc. Amer. Math. Soc. 143}, 4 (2015), 1661--1664.

\bibitem{STT}
{\sc Sire, Y., Terracini, S., and Tortone, G.}
\newblock On the nodal set of solutions to degenerate or singular elliptic
  equations with an application to {$s$}-harmonic functions.
\newblock {\em J. Math. Pures Appl. (9) 143\/} (2020), 376--441.

\bibitem{SirTerVit1}
{\sc Sire, Y., Terracini, S., and Vita, S.}
\newblock Liouville type theorems and regularity of solutions to degenerate or
  singular problems part {I}: even solutions.
\newblock {\em Comm. Partial Differential Equations 46}, 2 (2021), 310--361.

\bibitem{SirTerVit2}
{\sc Sire, Y., Terracini, S., and Vita, S.}
\newblock Liouville type theorems and regularity of solutions to degenerate or
  singular problems part {II}: odd solutions.
\newblock {\em Math. Eng. 3}, 1 (2021), Paper No. 5, 50.

\bibitem{TZ05}
{\sc Tao, X., and Zhang, S.}
\newblock Boundary unique continuation theorems under zero {N}eumann boundary
  conditions.
\newblock {\em Bull. Austral. Math. Soc. 72}, 1 (2005), 67--85.

\bibitem{Tao}
{\sc Tao, X., and Zhang, S.}
\newblock Weighted doubling properties and unique continuation theorems for the
  degenerate {S}chr\"{o}dinger equations with singular potentials.
\newblock {\em J. Math. Anal. Appl. 339}, 1 (2008), 70--84.

\bibitem{Yang}
{\sc Yang, R.}
\newblock On higher order extensions for the fractional {L}aplacian.
\newblock {\em Preprint, \tt arXiv:1302.4413\/} (2013).

\bibitem{Yu}
{\sc Yu, H.}
\newblock Unique continuation for fractional orders of elliptic equations.
\newblock {\em Ann. PDE 3}, 2 (2017), Paper No. 16, 21.

\end{thebibliography}
\end{document}